\newtheorem*{rep@theorem}{\rep@title}
\newcommand{\newreptheorem}[2]{%
\newenvironment{rep#1}[1]{%
 \def\rep@title{#2 \ref{##1}}%
 \begin{rep@theorem}}%
 {\end{rep@theorem}}}
\numberwithin{equation}{section}
\newtheorem{theorem}{Theorem}[section]
\newtheorem{lemma}[theorem]{Lemma}
\newtheorem{remark}{Remark}[section]
\newtheorem{assumption}{Assumption}[section]
 \newcommand{\bq}{\begin{equation}}
  \newcommand{\eq}{\end{equation}}
    \newcommand{\intrr}{\iint_{\R^d \times \R^d}}
\def\XXint#1#2#3{{\setbox0=\hbox{$#1{#2#3}{\int}$ }
\vcenter{\hbox{$#2#3$ }}\kern-.6\wd0}}
\newcommand{\bfF}{{\bf F}}
\newcommand{\bfM}{{\bf M}}
\newcommand{\bfW}{{\bf W}}
\newcommand{\bfX}{{\bf X}}
\newcommand{\bbE}{\mathbb E}
\newcommand{\bbI}{\mathbb I}
\newcommand{\bbN}{\mathbb N}
\newcommand{\bbP}{\mathbb P}
\newcommand{\bbR}{\mathbb R}
\newcommand{\calC}{\mathcal C}
\newcommand{\calD}{\mathcal D}
\newcommand{\calE}{\mathcal E}
\newcommand{\calF}{\mathcal F}
\newcommand{\calG}{\mathcal G}
\newcommand{\calK}{\mathcal K}
\newcommand{\calM}{\mathcal M}
\newcommand{\calO}{\mathcal O}
\newcommand{\calP}{\mathcal P}
\newcommand{\calT}{\mathcal T}
\newcommand{\calW}{\mathcal W}
\newcommand{\swabM}{\textswab M}
\newcommand{\bp}{\begin{pmatrix}}
\newcommand{\ep}{\end{pmatrix}}
\DeclareMathOperator*{\essinf}{ess\,inf}
\newcommand{\N}{\mathbb{N}}
\newcommand{\R}{\mathbb{R}}
\newcommand{\ve}{\varepsilon}
\newcommand{\dx}{\textnormal{d}x}
\newcommand{\lt}{\left}
\newcommand{\rt}{\right}
\newcommand{\intr}{\int_{\R^d}}
\newcommand{\bflambda}{{\boldsymbol{\lambda}}}
\newcommand{\Lambdaa}{\Lambda_\alpha}
\newcommand{\dcal}{\Delta_t^\calM}
\newcommand{\dm}{\Delta_t^m}
\newcommand{\mrho}{m_f^h[\bar{\rho}_t]}
\newcommand{\xm}{\bar{X}_t}
\newcommand{\bbflambda}{\bar{\bflambda}}
\newcommand{\mmu}{\bar{\mu}_t^N}
\date{\today}
\begin{document}

\title[A modified Consensus-Based Optimization model]{A modified Consensus-Based Optimization model: consensus formation and uniform-in-time propagation of chaos}

\author[Choi]{Young-Pil Choi}
\address[Young-Pil Choi]{\newline \text{ }\quad Department of Mathematics, Yonsei University, Seoul 03722, Republic of Korea}
\email{ypchoi@yonsei.ac.kr}

\author[Lee]{Seungchan Lee}
\address[Seungchan Lee]{\newline \text{ }\quad Department of Mathematics, Yonsei University, Seoul 03722, Republic of Korea}
\email{seungchan2718@yonsei.ac.kr}

\author[Song]{Sihyun Song}
\address[Sihyun Song]{\newline \text{ }\quad Department of Mathematics, Yonsei University, Seoul 03722, Republic of Korea}
\email{ssong@yonsei.ac.kr}

\date{\today}
\keywords{Consensus-Based Optimization, interacting particle systems, propagation of chaos, large-time behavior, McKean--Vlasov dynamics.}

\begin{abstract}
We introduce a modified Consensus-Based Optimization model that admits a fully unified and rigorous analysis of its finite-particle dynamics, the associated McKean--Vlasov equation, and their optimization behavior under a single set of structural framework. The key ingredient is a regularized Gibbs weight that stabilizes the consensus point and avoids degeneracies present in the classical formulation, eliminating the need for cutoffs, rescaling, or boundedness assumptions on the objective function. Our first main result establishes large-time consensus for the particle system: when the drift exceeds an explicit threshold, all particles converge exponentially to a common random limit that concentrates near the global minimizer. Our second result proves uniform-in-time propagation of chaos, providing quantitative and dimension-free convergence of the empirical measure to the McKean--Vlasov dynamics. Finally, we show that the mean-field system reaches deterministic consensus and that its consensus point approaches the global minimizer in the regime of highly concentrated Gibbs weights. Together, these results yield a unified and internally consistent theoretical framework for consensus-based optimization under substantially relaxed regularity assumptions on the objective function.
\end{abstract}

\maketitle
\tableofcontents

%
%
%
%
%
%
%
%
%
%

\section{Introduction}

In the broader landscape of global optimization, metaheuristic algorithms such as random search \cite{Ras63}, simulated annealing \cite{KGV83, LA87}, genetic algorithms \cite{Hol92}, and particle swarm optimization \cite{Ken10, KE95} have been widely used to tackle high-dimensional and nonconvex problems where gradient information is unavailable or unreliable. These approaches combine stochastic exploration and local exploitation mechanisms, often inspired by collective behavior or evolutionary processes, to overcome the limitations of deterministic optimization methods. However, while they have proven empirically successful, most of them lack rigorous mathematical guarantees of global convergence, primarily due to their algorithmic complexity and intrinsic stochasticity \cite{BNO03, BDGG09, BDT99}.

A major step toward a mathematically tractable metaheuristic was taken with the introduction of the \emph{Consensus-Based Optimization (CBO)} framework in \cite{PTTM17}.
This model describes a system of interacting particles that collectively search for a global minimizer through attraction toward a consensus point, which is defined as a Gibbs-weighted average of their positions. The original formulation in \cite{PTTM17} already incorporated stochastic effects, but the rigorous analysis was first carried out for a simplified deterministic setting, where the discontinuous drift switch was removed and the stochastic diffusion term was neglected for analytical tractability. The corresponding stochastic model was later analyzed at the mean-field level in \cite{CCTT18}, which established convergence to global minimizers under suitable assumptions and initiated the PDE-based theory of CBO dynamics. These foundational works laid the analytical basis for interpreting CBO as a stochastic interacting-particle algorithm performing an effective ``convexification'' of nonconvex landscapes in the mean-field limit. For an overview of recent advances in CBO, we refer the reader to \cite{BHKR2025, GKHV25, Tot22} and the references therein.

In the following years, several studies further advanced the analytical understanding of the CBO model. 
The high-dimensional formulation introduced in \cite{CJLZ21} established convergence results in settings where each particle evolves under its own independent Brownian perturbation, providing a dimension-free analytical framework.  Other works focused on a componentwise multiplicative noise structure based on the Hadamard product, in which the diffusion acts anisotropically along each coordinate direction; see, for example, \cite{HJK20, HJK21, WB25}. In this formulation, each particle evolves under independent stochastic perturbations, and the multiplicative nature of the noise allows for more flexible but technically involved dynamics. In parallel, a number of generalizations have been proposed to incorporate additional stochastic or geometric mechanisms, such as constrained CBO models \cite{BHKLMY22, BHP23, CTV23}, polarization-based models \cite{BRW25}, and mirror-descent-type variants \cite{BHKR2025}.  
Further developments include formulations of CBO on manifolds \cite{FHPS20, FHPS21, FHPS22, HKKKY22}, consensus-based sampling methods \cite{CHSV22}, rescaled or truncated-noise systems \cite{HK25a, HK25b, HKS25, FRRS25}, and variants with L\'evy noise or memory effects \cite{KST23, TW20}.  

The mean-field limit of CBO toward the corresponding McKean--Vlasov process has been rigorously studied in several works, though a major analytical challenge arises from the lack of regularity and nonlinearity of the consensus point.  
Quantitative finite-time mean-field limit results were obtained in \cite{GHV25, FHPS20, FHPS21, FHPS22, FKR24, KST23}, while qualitative convergence over finite horizons was investigated in \cite{HQ22}.  
Uniform-in-time mean-field limit estimates have recently been proved for modified variants of CBO, including the rescaled formulation \cite{HK25b}, the classical model under bounded objective functions \cite{GKHV25}, and cutoff-based systems ensuring bounded particle configurations \cite{BEZ25}. These developments have considerably deepened the theoretical understanding of CBO, but most existing analyses rely on strong assumptions on the objective function $f$, such as global boundedness, Lipschitz regularity, or convexity, or on structural modifications of the dynamics, such as truncation, rescaling, or compactification of the noise.  

In the present work, we propose a \emph{modified CBO system} that addresses these limitations while preserving the essential consensus-seeking mechanism of the original model. Our formulation enables a fully dimension-free rigorous analysis without imposing upper bounds on $f$ or introducing artificial cutoffs in the noise. The main novelty lies in the introduction of a \emph{regularized Gibbs weight}, which stabilizes the consensus point and simplifies the stochastic structure of the system.
Unlike previous formulations that rely on nonlinear or multiplicative diffusion terms, our model features a linear diffusion aligned with the consensus direction, yielding cleaner energy estimates and uniform-in-time propagation of chaos bounds independent of the space dimension.

Mathematically, we work on a complete filtered probability space $(\Omega, \calF, \{\calF_t\}_{t\ge 0}, \bbP)$ and consider the interacting particle system
\begin{equation}\label{I: eq: main}
\begin{cases}
    dX_t^i = -\lambda (X_t^i - m_t^h) \, dt  + \sigma (X_t^i - m_t^h ) \, dW_t^i, \quad i=1,2,\ldots, N,\\
    m_t^h := \displaystyle \frac{\sum_{i=1}^N X_t^i \psi_h(X_t^i) }{\sum_{i=1}^N \psi_h(X_t^i) },  
    \end{cases}
\end{equation}
where $\lambda,\sigma>0$ are positive parameters representing, respectively, the strength of drift toward consensus and the intensity of stochastic diffusion, and $\{W_t^i\}_{i=1}^N$ are independent one-dimensional Wiener processes.  The term $m_t^h$ represents the consensus point toward which all particles are attracted.
It is constructed as a weighted average of the particle positions with respect to the regularized Gibbs weight
\[
\psi_h(x) :=\omega_f^\alpha(x)+h(\alpha),  \quad \omega_f^\alpha(x)=e^{-\alpha f(x)},  \quad \alpha > 0,
\]
where $h$ is a strictly positive function of $\alpha$. This additional term $h(\alpha)$ guarantees that the denominator in the definition of $m_t^h$ remains uniformly bounded away from zero, thereby removing possible singularities and stabilizing the dynamics. The parameter $\lambda$ will later be required to satisfy a quantitative stability threshold ensuring exponential contraction of trajectories. By construction, the dynamics are symmetric with respect to particle indices. In particular, when the initial data $\{X_0^i\}_{i=1}^N$ are i.i.d.\ and the driving Wiener processes are independent, the unique strong solution $\{X_t^i\}_{i=1}^N$ remains exchangeable for all $t\ge0$, since both the drift and diffusion coefficients depend only on the empirical measure. A short proof of this property is provided in Appendix \ref{app_exch}.

There is a key structural improvement behind our modification. In contrast to the classical CBO model, whose consensus point
\[
m^0_t := \frac{\sum_{i=1}^N X_t^i \omega_f^\alpha(X_t^i)}{\sum_{i=1}^N \omega_f^\alpha(X_t^i)}
\]
can become unstable when the Gibbs weights $\omega_f^\alpha(X_t^i)$ are exceedingly small, our regularized formulation introduces a stabilizing term $h(\alpha)$ in the weight $\psi_h(x) = \omega_f^\alpha(x) + h(\alpha)$, ensuring that the denominator of $m_t^h$ remains uniformly bounded away from zero. Equivalently, the new consensus point admits the interpolative representation
\[
    m_t^h=\beta_t m_t^0 +(1-\beta_t)\lt(\frac1N\sum_{i=1}^N X^i_t\rt), \quad \beta_t := \frac{\sum_{i=1}^N\omega_f^\alpha(X_t^i)}{\sum_{i=1}^N\psi_h(X_t^i)}.
\]
This structure provides a powerful self-regulating mechanism: when the Gibbs weights dominate ($\beta_t \approx 1$), the dynamics closely follow the classical CBO flow, whereas when the weights become negligible ($\beta_t \approx 0$), the system automatically shifts toward the empirical average, thereby maintaining stability and avoiding premature collapse to a poor local region. This interpolation between the weighted consensus and the empirical mean not only prevents numerical and analytical degeneracies but also enhances the exploration capability of the model, allowing particles to escape shallow traps in the optimization landscape. Consequently, our modification yields a model that is both analytically well-posed and algorithmically more robust than the classical CBO formulation, and it provides a unified framework in which the large-time behavior of the particle system, its mean-field limit, and the optimization properties of the McKean--Vlasov dynamics can all be rigorously analyzed under the same structural assumptions on $f$.

Throughout the paper, we assume that the objective function $f$ is a locally Lipschitz function and attains a unique positive global minimizing point, and in particular 
\[
\exists! \, x_*\in \R^d \text{ for which } f(x_*) = \underline{f} := \inf_{x\in \R^d} f(x)>0.
\]
The local Lipschitz property of $f$ is not explicitly used in this work, but we impose it as it guarantees the existence of a strong solution to \eqref{I: eq: main} (see Appendix \ref{sec: exist}). Further, we assume that the strictly positive function $h$ satisfies
\[\limsup_{\alpha\to\infty}\frac{e^{-\alpha \underline{f}}}{h(\alpha)}<\infty,\]
which guarantees that the constant $\Lambda_\alpha$ which appears in the main results is finite. This imposes no significant restriction, as other than the above condition (and positivity) the choice of $h$ is otherwise flexible.

%
%
%
%
%
%
%
%
%
%

\subsection{Main results}
The analysis developed in this work consists of three complementary components:
\begin{itemize}
\item the large-time behavior and optimization properties of the finite particle system,
\item the uniform-in-time propagation of chaos linking the particle and mean-field descriptions, and
\item the large-time behavior and optimization properties of the associated McKean--Vlasov equation.
\end{itemize}

Unlike much of the existing literature, where these aspects are treated separately and under different structural assumptions on the objective function $f$, our approach provides a unified and internally consistent framework in which all three results follow from the same core hypotheses. Within this framework, the particle dynamics, the mean-field limit, and the optimization behavior of $f$ are derived under a common set of assumptions, ultimately leading to the convergence of the particle system toward the global minimizer of $f$.

We begin with the large-time analysis of the particle trajectories. This part relies on a stochastic stability framework that establishes both $L^p$ and almost sure convergence of the system under suitable conditions on the interaction strength $\lambda$ (which controls the drift toward the consensus point), the noise amplitude $\sigma$, and the modified weight function $\psi_h$. Our goal in this part is to characterize the asymptotic behavior of the system and identify conditions under which all particles converge toward a common limit, the consensus state, that subsequently concentrates near the global minimizer of $f$. The analysis of the large-time behavior requires only mild assumptions on the initial data and a lower bound on the interaction strength $\lambda$, quantified by the threshold condition $\lambda > \bflambda_{p,\alpha,\sigma}$ (see Theorem \ref{thm_main1} for its precise expression). This condition provides exponential decay of pairwise distances between particles and guarantees that all trajectories asymptotically collapse to a common random limit.  
To further connect this consensus state to the minimization of $f$, we impose additional regularity conditions on $f$, ensuring sufficient smoothness and growth control of its gradient and Hessian.  These allow the application of It\^o's formula to the weighted energy functional $\mathbb{E}\omega_f^\alpha(X_t^i)$, which plays a crucial role in linking the stochastic consensus dynamics to the optimization landscape of $f$. 

\begin{assumption}\label{assum: omega}
    The map $x\mapsto \omega_f^\alpha(x):=e^{-\alpha f(x)}$ is Lipschitz continuous, and its Lipschitz constant $L_{\omega_f^\alpha}$ satisfies $
    \displaystyle\lim_{\alpha \to\infty}L_{\omega_f^\alpha}=0.$
\end{assumption}

\begin{assumption}\label{assump: f hessian}
    We assume $f\in \calC^2(\bbR^d)$, and there exist constants $c_0,c_1\ge 0$ such that
\[
            \nabla^2 f(x) \preceq c_0 \bbI_d + c_1 \nabla f(x) [\nabla f(x)]^{\top},
\]
in other words
\[
            c_0 \bbI_d + c_1 \nabla f(x) [\nabla f(x)]^{\top}-\nabla ^2f(x)\text{ is positive semi-definite} \quad \forall x\in \R^d.
\]
\end{assumption}

Prior to stating our main result, we leave some remarks on the diverse range of objective functions which satisfy the proposed assumptions above.
\begin{remark}
    Assumption \ref{assum: omega} is, for instance, readily satisfied in the following cases:
    \begin{itemize}
    \item $f$ is any even-degree polynomial (with $\underline{f}>0$). Indeed, in this case we can write $f(x) = p(x) + \underline{f}$ where $p\ge 0$ is an even polynomial. Note that the explicit expression of $L_{\omega_f^\alpha}$ is
    \[L_{\omega_f^\alpha}=\alpha\cdot \sup_{x\in\bbR^d}|\nabla f(x)e^{-\alpha f(x)}| = \alpha e^{-\alpha \underline{f}} \sup_{x\in \R^d} |\nabla p(x) e^{-\alpha p(x)}|.\]
    Since the supremum on the right-hand side is uniformly in $\alpha$ bounded for any $p$, we deduce that the right-hand side tends to zero as $\alpha\to\infty$.
    \item $f$ is Lipschitz continuous. Indeed, 
    the same expression above shows that
    \[L_{\omega_f^\alpha}\leq \|\nabla f\|_{L^\infty} \alpha e^{-\alpha\underline{f}} \xrightarrow[\alpha\to \infty]{} 0.\]
    \end{itemize}
\end{remark}

 \begin{remark}
Assumption \ref{assump: f hessian} is automatically satisfied if, for instance, the objective function $f$ has globally bounded second derivatives.  
Moreover, both Assumptions \ref{assum: omega} and \ref{assump: f hessian} are fulfilled by several standard benchmark functions used in global optimization.  
A representative example is the \emph{Rastrigin function} (see, e.g., \cite{PTTM17}), defined by
\[
        f_R(x)=\frac{1}{d}\sum_{i=1}^d\left[(x_i-B)^2-10\cos(2\pi(x_i-B))+10\right]+C,
\]
where $B, C \in \R$ are constant shifts. In this case, the function's linear gradient growth (resulting from its quadratic structure) and bounded Hessian ensure that Assumptions \ref{assum: omega} and \ref{assump: f hessian} are fulfilled.
\end{remark}

The above structural conditions provide quantitative control of the energy evolution and form the basis for our first main result stated below. We emphasize that, in the setting of independent noises, to the best of our knowledge, our work provides the first rigorous proof of consensus in a CBO model without any cutoff in the noise or rescaling of the consensus point.

 \begin{theorem}[Large-time consensus and convergence to the minimizer]
\label{thm_main1}
Let $\{X_t^i\}_{i=1}^N$ be a global strong solution to the system \eqref{I: eq: main} with i.i.d. initial data $\{X_0^i\}_{i=1}^N$, distributed as $X_{\rm in}\in L^p(\Omega)$ for some $p\ge2$. Define the consensus threshold
\[
\bflambda_{p,\alpha,\sigma}:=(p-1)\Lambda_\alpha^{\frac2p}\sigma^2 \quad \text{with } \Lambda_\alpha := \frac{e^{-\alpha\underline{f}} + h(\alpha)}{h(\alpha)}.
\]
If $\lambda>\bflambda_{p,\alpha,\sigma}$, then the following hold:
\begin{enumerate}[label=(\roman*)]
    \item (\emph{Large-time convergence})  
    There exists a random vector $X_\infty = X_\infty^{N,\alpha} \in L^p(\Omega)$ such that
    \[
    X_t^i \xrightarrow[t\to \infty]{} X_\infty \quad \text{in } L^p(\Omega) \text{ for all } i=1,\dots,N.
    \]
    If, in addition, $p>2$, the convergence holds almost surely.
    
    \item (\emph{Consensus to the minimizer})  
    Suppose furthermore that Assumptions \ref{assum: omega} and \ref{assump: f hessian} hold, and that 
    $\lambda > \bflambda_{p,\alpha,\sigma} \vee \bflambda_{2,\alpha,\sigma}$.
    If there exists $0<\epsilon\leq 1$ such that
\[
        (1-\epsilon)\mathbb{E}\omega_f^\alpha(X_\textnormal{in})\geq  \frac{\lambda L_{\omega_f^\alpha}}{\lambda-\boldsymbol{\lambda}_{2, \alpha, \sigma}}\left(2\Lambda_\alpha \textnormal{Var}(X_{\rm in})\right)^\frac{1}{2}+\frac{\alpha \sigma^2c_0\Lambdaa e^{-\alpha\underline{f}}}{2(\lambda-\boldsymbol{\lambda}_{2, \alpha, \sigma})} \textnormal{Var}(X_{\rm in})
\]
    then, the limit variable $X_\infty$ satisfies
    \[
        \essinf_{\omega\in\Omega} f(X_\infty(\omega)) \le  \essinf_{\omega\in\Omega} f(X_{\rm in}(\omega)) +o(1)\quad (\alpha \to \infty).
    \]
    Consequently, if the global minimizer $x^*$ of $f$ lies in $\textnormal{supp}(\textnormal{Law}(X_{\rm in}))$, then
    \[
        \essinf_{\omega\in\Omega} f(X_\infty(\omega))
        \le
        \underline f + o(1)
        \quad (\alpha \to \infty),
    \]
    showing that the consensus point $X_\infty$ asymptotically approaches near the global minimizer of $f$.
\end{enumerate}
\end{theorem}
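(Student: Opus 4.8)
The plan is to obtain part (i) from a single pairwise‑distance Lyapunov functional, then bootstrap its $p=2$ instance into part (ii) by monitoring the weighted energy $\mathbb{E}\,\omega_f^\alpha(X_t^i)$.

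\emph{Part (i).} The decisive structural fact is that $m_t^h=\sum_{k}w_k^t X_t^k$ is a convex combination of the particle positions, with weights $w_k^t=\psi_h(X_t^k)/\sum_j\psi_h(X_t^j)$ obeying $0\le w_k^t\le \Lambda_\alpha/N$, since $\omega_f^\alpha\le e^{-\alpha\underline f}$ while $\sum_j\psi_h(X_t^j)\ge Nh(\alpha)$. I set $D_t:=\sum_{i,j=1}^N\mathbb{E}\,|X_t^i-X_t^j|^p$ and, writing $Z_t^{ij}:=X_t^i-X_t^j$, apply It\^o's formula (after the localization discussed below); because the two driving Brownian motions are independent, the cross term in the quadratic variation drops and
\[
\frac{d}{dt}\,D_t \le -\lambda p\,D_t+\frac{p(p-1)}{2}\sigma^2\,\mathbb{E}\Big[\sum_{i,j}|Z_t^{ij}|^{p-2}\big(|X_t^i-m_t^h|^2+|X_t^j-m_t^h|^2\big)\Big].
\]
The key estimate is that, pointwise after summing over pairs, $\sum_{i,j}|Z_t^{ij}|^{p-2}|X_t^i-m_t^h|^2\le \Lambda_\alpha^{2/p}\sum_{i,j}|Z_t^{ij}|^p$: one first applies Jensen to the convex combination, $|X_t^i-m_t^h|^p\le\sum_k w_k^t|X_t^i-X_t^k|^p\le(\Lambda_\alpha/N)\sum_k|X_t^i-X_t^k|^p$, raises this to the power $2/p$, and couples it with the H\"older bound $\sum_j|Z_t^{ij}|^{p-2}\le N^{2/p}(\sum_j|Z_t^{ij}|^p)^{(p-2)/p}$; summing over $i$ the exponents recombine to $1$. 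This gives $D_t'\le -p(\lambda-\bflambda_{p,\alpha,\sigma})D_t$, hence $D_t\le e^{-p(\lambda-\bflambda_{p,\alpha,\sigma})t}D_0$. Since $m_t^h$ is a convex combination, $\|X_t^i-m_t^h\|_{L^p}^p\le(\Lambda_\alpha/N)\sum_k\mathbb{E}\,|X_t^i-X_t^k|^p$ decays at the same rate; inserting this into the Duhamel identity $X_t^i-X_s^i=-\lambda\int_s^t(X_r^i-m_r^h)\,dr+\sigma\int_s^t(X_r^i-m_r^h)\,dW_r^i$ and applying Burkholder--Davis--Gundy shows $(X_t^i)_t$ is Cauchy in $L^p(\Omega)$, so $X_t^i\to X_\infty^i$; as $\|X_t^i-X_t^j\|_{L^p}\to 0$, the limits coincide and $X_\infty:=X_\infty^i$ is well defined, with $\|X_t^i-X_\infty\|_{L^p}$ decaying exponentially. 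For $p>2$ the a.s.\ statement follows from the same identity: $\int_0^\infty|X_r^i-m_r^h|\,dr<\infty$ a.s., and the stochastic integral is an $L^p$‑bounded martingale with a.s.\ finite terminal quadratic variation, hence converges a.s.; a Borel--Cantelli argument over unit intervals, using the exponential $L^p$ rate and a maximal inequality, promotes this to a.s.\ convergence of the full path.

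\emph{Part (ii).} I monitor $\phi(t):=\mathbb{E}\,\omega_f^\alpha(X_t^i)=\mathbb{E}\,e^{-\alpha f(X_t^i)}$ through It\^o's formula,
\[
\phi'(t)=-\lambda\,\mathbb{E}\big[\nabla\omega_f^\alpha(X_t^i)\cdot(X_t^i-m_t^h)\big]+\frac{\sigma^2}{2}\,\mathbb{E}\big[(X_t^i-m_t^h)^\top\nabla^2\omega_f^\alpha(X_t^i)(X_t^i-m_t^h)\big].
\]
For the first term, Assumption \ref{assum: omega} and Cauchy--Schwarz give the lower bound $-\lambda L_{\omega_f^\alpha}(\mathbb{E}\,|X_t^i-m_t^h|^2)^{1/2}$. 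For the second, $\nabla^2\omega_f^\alpha=e^{-\alpha f}(-\alpha\nabla^2 f+\alpha^2\nabla f\,\nabla f^\top)$; Assumption \ref{assump: f hessian} together with $\alpha\ge c_1$ renders the rank‑one contribution nonnegative and leaves $(X_t^i-m_t^h)^\top\nabla^2\omega_f^\alpha(X_t^i)(X_t^i-m_t^h)\ge -\alpha c_0 e^{-\alpha\underline f}|X_t^i-m_t^h|^2$. Using exchangeability, the $p=2$ case of part (i) (legitimate as $\lambda>\bflambda_{2,\alpha,\sigma}$), and $\mathbb{E}\,|X_0^i-X_0^j|^2=2\,\textnormal{Var}(X_{\rm in})$, I insert $\mathbb{E}\,|X_t^i-m_t^h|^2\le(\Lambda_\alpha/N)\sum_k\mathbb{E}\,|X_t^i-X_t^k|^2\le 2\Lambda_\alpha\,\textnormal{Var}(X_{\rm in})\,e^{-2(\lambda-\bflambda_{2,\alpha,\sigma})t}$ and integrate over $[0,\infty)$; since $\omega_f^\alpha$ is bounded continuous and $X_t^i\to X_\infty$ in distribution, $\phi(t)\to\mathbb{E}\,\omega_f^\alpha(X_\infty)$, and the two exponential integrals reproduce exactly
\[
\mathbb{E}\,\omega_f^\alpha(X_\infty)\ge\mathbb{E}\,\omega_f^\alpha(X_{\rm in})-\frac{\lambda L_{\omega_f^\alpha}}{\lambda-\bflambda_{2,\alpha,\sigma}}\big(2\Lambda_\alpha\,\textnormal{Var}(X_{\rm in})\big)^{1/2}-\frac{\alpha\sigma^2 c_0\Lambda_\alpha e^{-\alpha\underline f}}{2(\lambda-\bflambda_{2,\alpha,\sigma})}\,\textnormal{Var}(X_{\rm in}),
\]
so the hypothesis forces $\mathbb{E}\,\omega_f^\alpha(X_\infty)\ge\epsilon\,\mathbb{E}\,\omega_f^\alpha(X_{\rm in})$. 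Finally, $e^{-\alpha f(X_\infty)}\le e^{-\alpha\,\essinf_{\omega} f(X_\infty)}$ a.s.\ gives $e^{-\alpha\,\essinf_{\omega} f(X_\infty)}\ge\epsilon\,\mathbb{E}\,e^{-\alpha f(X_{\rm in})}$, i.e. $\essinf_{\omega} f(X_\infty)\le -\tfrac1\alpha\log\epsilon-\tfrac1\alpha\log\mathbb{E}\,e^{-\alpha f(X_{\rm in})}$; a Laplace estimate, $\mathbb{E}\,e^{-\alpha f(X_{\rm in})}\ge e^{-\alpha(\essinf_{\omega} f(X_{\rm in})+\eta)}\,\mathbb{P}(f(X_{\rm in})<\essinf_{\omega} f(X_{\rm in})+\eta)$ for every $\eta>0$, yields $\limsup_{\alpha\to\infty}(-\tfrac1\alpha\log\mathbb{E}\,e^{-\alpha f(X_{\rm in})})\le\essinf_{\omega} f(X_{\rm in})$, while $-\tfrac1\alpha\log\epsilon\to 0$; this is the first conclusion. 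If $x^*\in\textnormal{supp}(\textnormal{Law}(X_{\rm in}))$, continuity of $f$ and $f\ge\underline f=f(x^*)$ force $\essinf_{\omega} f(X_{\rm in})=\underline f$, giving the last assertion.

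\emph{Main obstacle.} Two points require care. First, getting the sharp exponent $2/p$ on $\Lambda_\alpha$ in the threshold: a direct estimate of $|X_t^i-m_t^h|^2$ loses it, and it is recovered only by performing the Jensen step in the $L^p$ scale \emph{before} the H\"older pairing. Second, the rigorous justification of the It\^o computations — since the data lie only in $L^p$, the local‑martingale terms need not be true martingales, so one stops at $\tau_R=\inf\{t:\max_i|X_t^i|>R\}$, derives the (in)equalities for the stopped processes, and passes to $R\to\infty$ by Fatou/monotone convergence using the a priori bound $\mathbb{E}\sup_{s\le T}|X_s^i|^p<\infty$ from the well‑posedness theory; the $p=2$ instance used in part (ii) and the integrability of the Hessian term demand the same treatment, and the constants generated by the time integration in part (ii) must be tracked exactly to reproduce the stated threshold inequality.
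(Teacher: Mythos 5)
Your proposal is correct and reaches the same threshold and the same conclusions as the paper's own proof; the overall architecture — an Itô/Lyapunov estimate on pairwise distances to get exponential $L^p$ contraction, a Duhamel plus Burkholder--Davis--Gundy argument for uniform bounds and $L^p$ convergence, a martingale-convergence/Borel--Cantelli upgrade to almost-sure convergence, and a weighted-energy estimate combined with the Laplace principle for part (ii) — is the same. The main variation is in how you obtain the sharp constant $\Lambda_\alpha^{2/p}$: you perform the Jensen step on the convex combination $m_t^h=\sum_k w_k^t X_t^k$ and the Hölder pairing \emph{pointwise}, sum over $(i,j)$ so the exponents $(p-2)/p$ and $2/p$ telescope, and only then take expectations, tracking $D_t=\sum_{i,j}\mathbb{E}|X_t^i-X_t^j|^p$. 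The paper instead applies Hölder at the level of expectations, $\mathbb{E}[|Z^{ij}|^{p-2}|X^k-m^h|^2]\le(\mathbb{E}|Z^{ij}|^p)^{(p-2)/p}(\mathbb{E}|X^k-m^h|^p)^{2/p}$, plus the bound $\mathbb{E}|X^k-m^h|^p\le\Lambda_\alpha\max_{i,j}\mathbb{E}|Z^{ij}|^p$, and then runs Grönwall on the maximal pair. Your sum-over-pairs formulation sidesteps the mild technicality of applying Grönwall to a running maximum, and is arguably a bit cleaner; it also makes the key role of the bound $0\le w_k^t\le\Lambda_\alpha/N$ explicit. Likewise, for the almost-sure statement, the paper goes through Kolmogorov's continuity theorem to get Hölder modifications and then Borel--Cantelli, whereas you observe directly that $\int_0^\infty|X_r^i-m_r^h|\,dr<\infty$ a.s.\ and that the stochastic integral is an $L^p$-bounded martingale hence a.s.\ convergent, so $X_t^i$ converges a.s.\ without needing the full Hölder machinery (your Borel--Cantelli/maximal-inequality remark is strictly speaking redundant once these two facts are in place, but it is not wrong). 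Part (ii) is essentially identical to the paper's (Lemma 2.8 plus the Laplace principle), modulo working with $\mathbb{E}\,\omega_f^\alpha(X_t^i)$ for a single $i$, which is justified by exchangeability.
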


We now turn to our second main result, which concerns the uniform-in-time propagation of chaos for the particle system \eqref{I: eq: main}. This result provides a quantitative and dimension-free description of the convergence of the $N$-particle dynamics toward its corresponding McKean--Vlasov equation as $N \to \infty$, thereby establishing a rigorous mean-field limit for the modified CBO model. It complements the large-time analysis developed in the previous section by connecting the microscopic particle description with its macroscopic mean-field counterpart.

Our approach is inspired by the framework of \cite{GKHV25}, which established uniform-in-time propagation of chaos for the original CBO equations through a novel interpretation of the weighted mean (see Section \ref{subsec: apriori obs}).
Here, we extend that theory to our modified formulation under significantly weaker analytical assumptions.
In particular, we do not require the objective function $f$ to be upper bounded, and it suffices to assume that the Gibbs weight $\omega_f^\alpha = e^{-\alpha f}$ is Lipschitz continuous, instead of $f$ itself.
Moreover, our analysis relaxes the moment constraints on the initial data: when moments strictly higher than four are imposed, we obtain a quantitative decay rate for the $2$-Wasserstein distance between the empirical and mean-field measures.
Under higher-order moment conditions (e.g., eighth moments), we recover the $\frac1N$ convergence rate of \cite{GKHV25}, but under less restrictive assumptions on $f$.

We denote by $\bar X_t$ the solution to the following McKean--Vlasov equation, driven by the one-dimensional Wiener process $\{W_t\}_{t\ge 0}$:
\begin{equation}\label{eq: coupling}
    \begin{cases}
    \displaystyle \bar{X}_t=\bar{X}_0-\lambda\int_0^t(\bar{X}_s-m_f^h[\bar{\rho}_s])ds+\sigma\int_0^t(\bar{X}_s-m_f^h[\bar{\rho}_s])dW_s, \\[3mm]
  \displaystyle m_f^h[\rho]=\frac{\intr x\psi_h(x)\rho(dx)}{\intr \psi_h(x)\rho(dx)}, \quad  \bar{\rho}_t := \textnormal{Law}(\bar X_t).
    \end{cases}
\end{equation}
The law $\bar{\rho}_t$ satisfies the nonlinear Fokker--Planck equation
\bq\label{eq_pde}
    \partial_t \bar{\rho}_t=\nabla \cdot \left(\lambda (x-m_f^h[\bar{\rho}_t])\bar{\rho}_t\right)+\frac{\sigma^2}{2}\nabla^2 :\Big[(x-m_f^h[\bar{\rho}_t])\otimes(x-m_f^h[\bar{\rho}_t])\bar{\rho}_t \Big].
\eq

\begin{theorem}[Uniform-in-time propagation of chaos]\label{thm: propagation of chaos}
Let $p\ge 2$, $q>2$, and $\alpha>0$. Let $\{X_0^i\}_{i=1}^N$ be i.i.d. random variables with common law $\bar\rho_0\in\calP_{pq}(\bbR^d)$, and let $\{X_t^i\}_{i=1}^N$ be the strong solution to the particle system \eqref{I: eq: main} with initial data $\{X_0^i\}_{i=1}^N$. Let $\bar\rho_t$ be the law of the McKean--Vlasov process $\bar X_t$ solving \eqref{eq: coupling} with initial law $\bar\rho_0$.  Assume that $f$ satisfies Assumption \ref{assum: omega}.   Then for any
\[
\lambda > \max\lt\{ (pq-1)(1+\Lambda_\alpha^{\frac{2}{pq}})\sigma^2 + 2(p-1)(3+\Lambda_\alpha^{\frac{2}{p}})\sigma^2,\, (pq-1)\Lambda_\alpha^{\frac{2}{pq}}\sigma^2 + 2(p-1) (1 + \Lambda_\alpha) \sigma^2  \rt\},
\]
there exists a positive constant $C >0$, depending only on $\lambda,  \sigma, p, q, \Lambdaa$ and $\bar{\rho}_0$, such that for all $t \geq 0$,
\begin{align}\label{main_res}
\begin{aligned}
\bbE\calW_p(\mu_t^N,\bar\rho_t) &\le   C \bbE\calW_p(\mu_0^N,\bar\rho_0) + \frac{C}{N^{\frac{1}{2}\wedge\frac{q-2}{2p}}}  \cr
&\quad + C \left\{\begin{array}{ll}
N^{- \frac12} +N^{- \frac{q-1}q}& \!\!\!\hbox{if $p>\frac d2$},  \\[+3pt]
N^{-\frac12} \log(1+N)+N^{-\frac{q-1}q} &\!\!\! \hbox{if $p= \frac d2$ }, \\[+3pt]
N^{-\frac pd}+N^{-\frac{q-1}q} &\!\!\!\hbox{if $p\in (0,\frac d2)$ and $pq\ne \frac {d}{d-p}$}.
\end{array}\right.
\end{aligned}
\end{align}
Here $\calW_p$ represents the $p$-Wasserstein distance, and $\mu_t^N := \frac{1}{N}\sum_{i=1}^N\delta_{X_t^i}$ is the empirical measure associated with the particle system \eqref{I: eq: main}.
\end{theorem}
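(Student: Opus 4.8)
The plan is to run a synchronous‑coupling argument in the spirit of \cite{GKHV25}, exploiting the regularization $h(\alpha)$ to make the consensus map $\rho\mapsto m_f^h[\rho]$ a genuinely Lipschitz‑type functional. Introduce i.i.d.\ copies $\{\bar X_t^i\}_{i=1}^N$ of the McKean--Vlasov dynamics \eqref{eq: coupling}, each driven by the \emph{same} Wiener process $W^i$ and started from $\bar X_0^i=X_0^i$, and set $\bar\mu_t^N:=\frac1N\sum_{i=1}^N\delta_{\bar X_t^i}$. By the triangle inequality,
\[
\bbE\calW_p(\mu_t^N,\bar\rho_t)\le\bbE\calW_p(\mu_t^N,\bar\mu_t^N)+\bbE\calW_p(\bar\mu_t^N,\bar\rho_t)\le\big(\bbE|X_t^1-\bar X_t^1|^p\big)^{1/p}+\bbE\calW_p(\bar\mu_t^N,\bar\rho_t),
\]
so it suffices to control $\bbE|X_t^1-\bar X_t^1|^p$ \emph{uniformly in $t$} (the coupling error) and to invoke a quantitative law of large numbers for $\bbE\calW_p(\bar\mu_t^N,\bar\rho_t)$ (the statistical error). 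The structural fact that makes this possible, in contrast to the classical weight, is that $h(\alpha)\le\psi_h(x)\le h(\alpha)\Lambda_\alpha$, so that the denominator of $m_f^h$ is bounded below by $h(\alpha)>0$ and $\Lambda_\alpha^{-1}\le\psi_h(x)/\psi_h(y)\le\Lambda_\alpha$.

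I would first record uniform‑in‑time moment bounds $\sup_{t\ge0}\bbE|X_t^1|^{pq}<\infty$ and $\sup_{t\ge0}\bbE|\bar X_t|^{pq}<\infty$: applying It\^o's formula to $|X_t^i|^{pq}$, using exchangeability and the convexity estimate $\bbE|m_t^h|^{pq}\le\Lambda_\alpha\,\bbE|X_t^1|^{pq}$ (from $m_t^h=\int x\,d\tilde\mu_t^N$ for the $\psi_h$‑reweighting $\tilde\mu_t^N$ of $\mu_t^N$, together with $\psi_h(x)\le\Lambda_\alpha\int\psi_h\,d\mu_t^N$), one obtains a closed scalar differential inequality whose damping coefficient is negative precisely when $\lambda$ exceeds the $(pq-1)$‑part of the stated maximum; this is where $\bar\rho_0\in\calP_{pq}$ and that part of the threshold (echoing $\bflambda_{pq,\alpha,\sigma}$) are used, and the analogous computation on \eqref{eq: coupling} handles $\bar X_t$. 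Next comes the heart of the proof, the coupled energy estimate. With $Z_t^i:=X_t^i-\bar X_t^i$ and $\delta_t:=m_f^h[\mu_t^N]-m_f^h[\bar\rho_t]$ we have $dZ_t^i=-\lambda(Z_t^i-\delta_t)\,dt+\sigma(Z_t^i-\delta_t)\,dW_t^i$, and It\^o applied to $|Z_t^i|^p=(|Z_t^i|^2)^{p/2}$ produces the dissipative term $-p\lambda|Z_t^i|^p$, the cross term $p\lambda|Z_t^i|^{p-2}Z_t^i\!\cdot\!\delta_t$, and a multiplicative‑noise It\^o correction bounded by $c_p\sigma^2|Z_t^i|^{p-2}|Z_t^i-\delta_t|^2$ with $c_p$ of order $p(p-1)$; Young's inequalities convert these into $-p\lambda|Z_t^i|^p$, a multiple of $|Z_t^i|^p$ of order $(p-1)\sigma^2$, and sources in $|\delta_t|^p$. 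To close it I bound $\bbE|\delta_t|^p$ by splitting $\delta_t=(m_f^h[\mu_t^N]-m_f^h[\bar\mu_t^N])+(m_f^h[\bar\mu_t^N]-m_f^h[\bar\rho_t])$ and using the identity $m_f^h[\mu]-m_f^h[\nu]=\big(\int\psi_h\,d\mu\big)^{-1}\int(x-m_f^h[\nu])\psi_h(x)\,(\mu-\nu)(dx)$ with the decomposition $\psi_h=\omega_f^\alpha+h(\alpha)$: the $h(\alpha)$‑part contributes only $\calW_1(\mu,\nu)$ with constant $1$, and the $\omega_f^\alpha$‑part is controlled via $L_{\omega_f^\alpha}$ and the moment bounds; the synchronous‑coupling part of $\bbE|\delta_t|^p$ is thereby $\lesssim\bbE|Z_t^1|^p$, leaving only the fluctuation term $\bbE|m_f^h[\bar\mu_t^N]-m_f^h[\bar\rho_t]|^p$. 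Substituting, the coefficient of $\bbE|Z_t^1|^p$ in the resulting differential inequality is strictly negative exactly under the $2(p-1)$‑part of the threshold, and Gr\"onwall gives, uniformly in $t\ge0$,
\[
\bbE|Z_t^1|^p\ \lesssim\ e^{-ct}\,\bbE|Z_0^1|^p+\sup_{s\ge0}\bbE\big|m_f^h[\bar\mu_s^N]-m_f^h[\bar\rho_s]\big|^p,\qquad c>0.
\]

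Finally I estimate the two error sources. For the fluctuation term, $m_f^h[\bar\mu_t^N]-m_f^h[\bar\rho_t]$ equals $\big(\int\psi_h\,d\bar\mu_t^N\big)^{-1}$ times the average $\frac1N\sum_i\big(g_t(\bar X_t^i)-\bbE g_t(\bar X_t)\big)$ of i.i.d.\ centered vectors, with $g_t(x):=(x-m_f^h[\bar\rho_t])\psi_h(x)$ of linear growth; since $\bar X_t$ has a uniform‑in‑time $pq$‑th moment, $pq>p\ge2$, and the denominator is bounded below and concentrates, Rosenthal/Marcinkiewicz--Zygmund give $\bbE|m_f^h[\bar\mu_t^N]-m_f^h[\bar\rho_t]|^p\lesssim N^{-p/2}$, which is \emph{dimension‑free}. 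The statistical error $\bbE\calW_p(\bar\mu_t^N,\bar\rho_t)$ is bounded, uniformly in $t$, by the Fournier--Guillin empirical‑measure rates, the $pq$‑th moment producing the moment‑tail term $N^{-(q-1)/q}$. Combining these through the triangle inequality, taking $p$‑th roots, and adding the initial‑coupling contribution $\bbE\calW_p(\mu_0^N,\bar\rho_0)$ (itself only a statistical term when $\bar X_0^i=X_0^i$) yields \eqref{main_res}; the hybrid exponent $\tfrac12\wedge\tfrac{q-2}{2p}$ records how the finite moment order $pq$ limits the H\"older/Rosenthal steps — in particular a truncation of particle positions at a level controlled by the $pq$‑th moment — once the feedback of the statistical error through $\delta_t$ is tracked.

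The step I expect to be the main obstacle is the coupled energy estimate: the splittings must be arranged so that the combined destabilization from the cross term $p\lambda|Z_t^i|^{p-2}Z_t^i\!\cdot\!\delta_t$, from the multiplicative‑noise correction $\sim p(p-1)\sigma^2|Z_t^i-\delta_t|^2|Z_t^i|^{p-2}$, and from the coupling‑induced part of $\bbE|\delta_t|^p$ is strictly dominated by the $-p\lambda|Z_t^i|^p$ dissipation — this is simultaneously what pins down the explicit two‑part threshold and what makes the estimate uniform in time rather than exponentially growing. A secondary difficulty is the constant bookkeeping needed so that the final $C$ depends on the stable quantity $\Lambda_\alpha$ (via the weight‑ratio bounds) and on $\bar\rho_0$ only through its $pq$‑th moment, rather than on $h(\alpha)$ or $L_{\omega_f^\alpha}$ separately, and the consistent treatment of the multiplicative (not additive) noise throughout the $|Z_t^i|^p$ It\^o expansion.
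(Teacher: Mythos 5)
Your proposal has the right skeleton — synchronous coupling $\bar X_t^i = X_0^i$-started copies of the McKean–Vlasov dynamics, the triangle inequality decomposition $\calW_p(\mu_t^N,\bar\rho_t)\le\calW_p(\mu_t^N,\bar\mu_t^N)+\calW_p(\bar\mu_t^N,\bar\rho_t)$, a Marcinkiewicz–Zygmund bound of $N^{-1/2}$ for $m_f^h[\bar\mu_t^N]-m_f^h[\bar\rho_t]$, and Fournier–Guillin for the statistical term — but the heart of your argument, the Gr\"onwall inequality
\[
\bbE|Z_t^1|^p\ \lesssim\ e^{-ct}\,\bbE|Z_0^1|^p+\sup_{s\ge0}\bbE\big|m_f^h[\bar\mu_s^N]-m_f^h[\bar\rho_s]\big|^p,\qquad c>0,
\]
is unattainable by the route you describe, and this is where the paper diverges from you in a non-cosmetic way. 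When you apply It\^o to $|Z_t^i|^p$ directly, the cross term $p\lambda |Z_t^i|^{p-2}\langle Z_t^i,\delta_t\rangle$ competes with the dissipation $-p\lambda|Z_t^i|^p$. Optimizing Young's inequality in a weight $\epsilon$, the net coefficient of $\tfrac1N\sum_i\bbE|Z_t^i|^p$ coming from these two pieces alone is $p\lambda\,(C_\delta^{1/p}-1)$, where $C_\delta$ is the best constant in $\bbE|\delta_t|^p\le C_\delta\cdot\tfrac1N\sum_j\bbE|Z_t^j|^p+(\text{fluctuation})$. But $C_\delta\ge 1$ unavoidably: in the decomposition $\delta_t=\Delta_t^\calM+\big(m_f^h[\mu_t^N]-\calM(\mu_t^N)-m_f^h[\bar\mu_t^N]+\calM(\bar\mu_t^N)\big)+(\text{fluctuation})$ the leading piece $\Delta_t^\calM=\tfrac1N\sum_j Z_t^j$ satisfies $|\Delta_t^\calM|^p\le\tfrac1N\sum_j|Z_t^j|^p$ with equality when all $Z_t^j$ coincide. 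The $h(\alpha)$-part you isolate only reproduces this constant-$1$ contribution (since $h(\alpha)/Z_\mu\le 1$ but is not bounded away from $1$), and the $\omega_f^\alpha$-part is not small just because moments are bounded — its prefactor is proportional to $V_p^{1/p}$ of the two measures (cf.\ Lemma \ref{lem: bounding calB}), which the uniform moment bounds alone do not make vanish. So $C_\delta\ge1$, the net coefficient is $\ge0$, and no amount of Young-weight tuning produces $c>0$. The $\sigma^2$ contributions only make it worse.

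What actually closes the argument in the paper is a different mechanism: it splits $Z_t^i$ into the centered fluctuation $Z_t^i-\Delta_t^\calM$ and the mean $\Delta_t^\calM$, and tracks $\bar\calD_{t,p}=\tfrac1N\sum_i|Z_t^i-\Delta_t^\calM|^p$ and $\bar\calO_{t,p}=|\Delta_t^\calM|^p$ separately. The centered variable inherits the full $-\lambda$ dissipation with no $\delta_t$ in its drift (the $\delta_t$-dependence cancels upon subtracting the empirical average), and the mean part evolves only through $\Delta_t^\calM-\Delta_t^m$ plus fluctuations. That difference is controlled by Lemma \ref{lem: bounding calB}, whose Lipschitz constant is proportional to $V_p^{1/p}(\mu_t^N)+V_p^{1/p}(\bar\mu_t^N)$; the high-probability concentration estimates (Lemmas \ref{lem: coupling concentration} and \ref{lem: again concentration estimate}) then show these empirical $p$-variances decay like $e^{-\tilde\kappa t}$ with high probability. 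The resulting differential inequalities (Lemmas \ref{lem: barcalD} and \ref{lem: baecalO}) have the form $\tfrac{d}{dt}u\le Ce^{-\tilde\theta t}u + Ce^{-\tilde\theta t}N^{-\frac p2\wedge\frac{q-2}{2}}$, and the uniform-in-time bound comes from $\int_0^\infty Ce^{-\tilde\theta t}\,dt<\infty$, not from a strictly negative Gr\"onwall coefficient. This is also where the two-part threshold actually enters: it is the admissibility condition for the concentration estimates, not a sign condition on the drift for $\bbE|Z_t^1|^p$. A secondary issue is your proposed proof of the uniform $pq$-moment bounds: applying It\^o to $|X_t^i|^{pq}$ does not give a closed dissipative inequality (the drift contracts toward $m_t^h$, not $0$); the paper instead integrates the SDE and uses the exponential decay of $\bbE|X_s^i-m_s^h|^p$ from Lemma \ref{lem: diff Lp} together with BDG to bound the moments uniformly.
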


\begin{remark} If the initial data are chosen synchronously, i.e., $\bar X_0^i = X_0^i$ almost surely for all $i=1,\dots,N$, then the first term on the right-hand side of \eqref{main_res} vanishes. Consequently, the right-hand side decays to zero as $N\to\infty$, uniformly in time. This yields the convergence in probability \[ \sup_{t\ge0}\lim_{N\to\infty}\bbP\big(\calW_p(\mu_t^N,\bar\rho_t)\ge\epsilon\big)=0 \quad \text{for any }\epsilon>0, \] which establishes the uniform-in-time propagation of chaos for the system. This argument parallels the classical mean-field limit framework (see \cite[Lemma 3.19]{CD22} or \cite[Proposition 2.2]{S91}), with the essential improvement that the convergence obtained here is uniform in time, owing to the uniform-in-time stability estimate proved in Theorem \ref{thm: propagation of chaos}. \end{remark}

 
We now complement the particle-level analysis by studying the corresponding mean-field dynamics. While the proof strategy parallels that of the particle system, relying on a contraction mechanism toward a weighted mean, the McKean--Vlasov equation enjoys two structural simplifications that lead to a sharper large-time description. First, the dynamics are driven by a single Wiener process and a self-consistent drift, so no fluctuation between particles remains at the mean-field level. Second, the contraction acts directly on the law $\bar\rho_t$, yielding a closed evolution for its variance. As a consequence, once $\lambda$ exceeds an explicit threshold (the analogue of $\bflambda_{p,\alpha,\sigma}$), the variance $\mathrm{Var}(\bar X_t)$ decays uniformly in time, and $\bar X_t$ converges to a \emph{deterministic} limit $x_\infty^\alpha$. Under the same structural assumptions on $f$ used in the particle-level analysis (Assumptions \ref{assum: omega}--\ref{assump: f hessian}), the weighted energy $\|\omega_f^\alpha\|_{L^1(\bar\rho_t)}$ again provides a quantitative link between the consensus point and the optimization landscape.  This yields an explicit bound on $f(x_\infty^\alpha)$ and shows that, in the regime $\alpha\to\infty$, the mean-field consensus point approaches the global minimizer of $f$.

We summarize these conclusions in the following theorem.

\begin{theorem}[Mean-field consensus and convergence to the minimizer]
\label{thm:mf_main}
Let $\bar X_t$ denote a global solution to the McKean--Vlasov equation \eqref{eq: coupling} with initial law $\bar\rho_0\in\mathcal P_p(\R^d)$ for some $p\ge2$. Define the mean-field consensus threshold
\[
\bbflambda_{p, \alpha, \sigma}: = \bflambda_{p,\alpha,\sigma} + (p-1)\sigma^2 =(p-1)(1+\Lambda_\alpha^{2/p})\sigma^2.
\]
If $\lambda>\bbflambda_{p, \alpha, \sigma}$, then the following hold:
\begin{enumerate}[label=(\roman*)]
\item \emph{(Deterministic consensus)}  
There exists a deterministic point $x_\infty^\alpha\in \R^d$ such that
\[
\bar X_t \xrightarrow[t\to \infty]{} x_\infty^\alpha \quad\text{in }L^p(\Omega),
\]
or equivalently,
\[
\mathcal W_p(\bar\rho_t,\delta_{x_\infty^\alpha})\xrightarrow[t\to \infty]{}0.
\]

\item \emph{(Consensus to minimizer)}  
Assume in addition that $f$ satisfies Assumptions \ref{assum: omega} and \ref{assump: f hessian}. If there exists $0<\epsilon\le1$ such that
    \[
    (1-\epsilon)\|\omega_f^\alpha\|_{L^1(\bar{\rho}_0)}\geq \frac{\lambda L_{\omega_f^\alpha}\Lambdaa}{\lambda-\bbflambda_{2, \alpha, \sigma}}\textnormal{Var}(\bar{X}_0)^\frac{1}{2}
        +\frac{\alpha\sigma^2c_0e^{-\alpha \underline{f}}\Lambdaa}{2(\lambda-\bbflambda_{2, \alpha, \sigma})}\textnormal{Var}(\bar{X}_0),
        \]
    then the limit point $x_\infty^\alpha$ satisfies
    \[
    f(x_\infty^\alpha)\leq \inf _{x\in \textnormal{supp}(\bar{\rho}_0)}f(x)+o(1)\quad (\alpha \to\infty).
    \]
    Consequently, if the global minimizer $x^*$  of $f$ lies in $\textnormal{supp}(\bar{\rho}_0)$, then 
    \[
    f(x_\infty^\alpha)\leq \underline{f}+o(1)\quad (\alpha \to\infty).
    \]
\end{enumerate}
\end{theorem}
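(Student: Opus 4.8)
The plan is to follow the proof of Theorem~\ref{thm_main1}, exploiting the simplification that at the mean-field level the only fluctuation to control is the deterministic scalar $\textnormal{Var}(\bar X_t)$ (and, for the $L^p$ statement, $\bbE|\bar X_t-\bbE\bar X_t|^p$), which obeys a closed Gr\"onwall inequality. For part~(i), the nonlinearity enters only through $m_f^h[\bar\rho_t]$, and the key elementary fact is that the $\psi_h$-reweighting density is pointwise bounded by $\Lambdaa$: since $h(\alpha)\le\psi_h(x)\le e^{-\alpha\underline f}+h(\alpha)$ and $\int\psi_h\,d\bar\rho_t\ge h(\alpha)$, Jensen's inequality gives $|m_f^h[\bar\rho_t]-\bbE\bar X_t|^2\le\Lambdaa\,\textnormal{Var}(\bar X_t)$ and, more generally, with $a_t:=\bbE\bar X_t-m_f^h[\bar\rho_t]$, the density bound together with H\"older yields $|a_t|\le\Lambdaa^{1/p}(\bbE|\bar X_t-\bbE\bar X_t|^p)^{1/p}$. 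I would then pass to the centered process $Z_t:=\bar X_t-\bbE\bar X_t$, which solves $dZ_t=-\lambda Z_t\,dt+\sigma(Z_t+a_t)\,dW_t$ with $a_t$ deterministic, apply It\^o's formula to $|Z_t|^p$ (valid since $|\cdot|^p\in C^2$ for $p\ge2$), take expectations to drop the martingale term, and control the It\^o correction using $|Z_t+a_t|^2\le2|Z_t|^2+2|a_t|^2$ and the bound on $|a_t|$, getting $\bbE[|Z_t|^{p-2}|Z_t+a_t|^2]\le2(1+\Lambdaa^{2/p})\bbE|Z_t|^p$ and hence $\frac{d}{dt}\bbE|Z_t|^p\le-p(\lambda-\bbflambda_{p,\alpha,\sigma})\bbE|Z_t|^p$. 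Therefore $\bbE|Z_t|^p$, in particular $\textnormal{Var}(\bar X_t)$, decays exponentially once $\lambda>\bbflambda_{p,\alpha,\sigma}$. Since $\frac{d}{dt}\bbE\bar X_t=-\lambda a_t$ with $|a_t|\le\Lambdaa^{1/2}\textnormal{Var}(\bar X_t)^{1/2}$ integrable in $t$, the curve $t\mapsto\bbE\bar X_t$ is Cauchy and converges to a deterministic $x_\infty^\alpha\in\R^d$; combining with $\bbE|Z_t|^p\to0$ gives $\bbE|\bar X_t-x_\infty^\alpha|^p\to0$, equivalently $\calW_p(\bar\rho_t,\delta_{x_\infty^\alpha})\to0$. (The uniform-in-time moment bounds used along the way follow from these decay estimates and $\bar\rho_0\in\calP_p(\R^d)$.)

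For part~(ii), as in the particle case I would monitor the weighted energy $E_t:=\|\omega_f^\alpha\|_{L^1(\bar\rho_t)}=\bbE\,\omega_f^\alpha(\bar X_t)$ and show it never drops below $\epsilon\|\omega_f^\alpha\|_{L^1(\bar\rho_0)}$. Using $\nabla\omega_f^\alpha=-\alpha\omega_f^\alpha\nabla f$, $\nabla^2\omega_f^\alpha=\alpha\omega_f^\alpha(\alpha\,\nabla f\otimes\nabla f-\nabla^2 f)$, and applying It\^o's formula to $\omega_f^\alpha(\bar X_t)$ (equivalently, testing~\eqref{eq_pde} against $\omega_f^\alpha$), one finds that $\frac{d}{dt}E_t$ equals the expectation of
\[
\alpha\,\omega_f^\alpha(\bar X_t)\Big[\lambda\,(\bar X_t-m_t)\cdot\nabla f(\bar X_t)+\frac{\sigma^2}{2}\Big(\alpha\big((\bar X_t-m_t)\cdot\nabla f(\bar X_t)\big)^2-(\bar X_t-m_t)^\top\nabla^2 f(\bar X_t)(\bar X_t-m_t)\Big)\Big],
\]
where $m_t:=m_f^h[\bar\rho_t]$. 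Invoking Assumption~\ref{assump: f hessian} to bound $(\bar X_t-m_t)^\top\nabla^2 f(\bar X_t)(\bar X_t-m_t)\le c_0|\bar X_t-m_t|^2+c_1((\bar X_t-m_t)\cdot\nabla f(\bar X_t))^2$, for $\alpha\ge c_1$ the $((\bar X_t-m_t)\cdot\nabla f)^2$ terms carry the nonnegative coefficient $\frac{\sigma^2}{2}(\alpha-c_1)$ and can be dropped; then using Assumption~\ref{assum: omega} in the form $\alpha\omega_f^\alpha|\nabla f|=|\nabla\omega_f^\alpha|\le L_{\omega_f^\alpha}$ and $\omega_f^\alpha\le e^{-\alpha\underline f}$, one obtains
\[
\frac{d}{dt}E_t\ \ge\ -\lambda L_{\omega_f^\alpha}\,\bbE|\bar X_t-m_t|-\frac{\alpha\sigma^2 c_0 e^{-\alpha\underline f}}{2}\,\bbE|\bar X_t-m_t|^2.
\]
Bounding $\bbE|\bar X_t-m_t|$ and $\bbE|\bar X_t-m_t|^2$ by multiples of $\Lambdaa\,\textnormal{Var}(\bar X_t)^{1/2}$ and $\Lambdaa\,\textnormal{Var}(\bar X_t)$ (as above), and inserting the exponential decay $\textnormal{Var}(\bar X_t)\le\textnormal{Var}(\bar X_0)\,e^{-2(\lambda-\bbflambda_{2,\alpha,\sigma})t}$ from the $p=2$ instance of part~(i) — this is exactly why the hypothesis forces $\lambda>\bbflambda_{2,\alpha,\sigma}$ — the right-hand side is integrable on $[0,\infty)$, and integrating in $t$ gives
\[
\inf_{t\ge0}E_t\ \ge\ \|\omega_f^\alpha\|_{L^1(\bar\rho_0)}-\frac{\lambda L_{\omega_f^\alpha}\Lambdaa}{\lambda-\bbflambda_{2,\alpha,\sigma}}\,\textnormal{Var}(\bar X_0)^{1/2}-\frac{\alpha\sigma^2 c_0 e^{-\alpha\underline f}\Lambdaa}{2(\lambda-\bbflambda_{2,\alpha,\sigma})}\,\textnormal{Var}(\bar X_0),
\]
which under the stated hypothesis is $\ge\epsilon\|\omega_f^\alpha\|_{L^1(\bar\rho_0)}$.

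To conclude, I would let $t\to\infty$: by part~(i) $\bar X_t\to x_\infty^\alpha$ in probability, and since $0\le\omega_f^\alpha\le e^{-\alpha\underline f}$ is bounded and continuous, dominated convergence gives $E_t\to\omega_f^\alpha(x_\infty^\alpha)=e^{-\alpha f(x_\infty^\alpha)}$; hence $e^{-\alpha f(x_\infty^\alpha)}\ge\epsilon\|\omega_f^\alpha\|_{L^1(\bar\rho_0)}$, i.e. $f(x_\infty^\alpha)\le-\frac1\alpha\log\epsilon-\frac1\alpha\log\int_{\R^d}e^{-\alpha f}\,d\bar\rho_0$. A Laplace-type lower bound on the last integral — for any $\delta>0$, $\int e^{-\alpha f}\,d\bar\rho_0\ge e^{-\alpha f_\delta}\,\bar\rho_0(\{f<f_\delta\})$ with $f_\delta:=\inf_{\textnormal{supp}(\bar\rho_0)}f+\delta$ and $\bar\rho_0(\{f<f_\delta\})>0$ by continuity of $f$ — yields $\limsup_{\alpha\to\infty}f(x_\infty^\alpha)\le\inf_{\textnormal{supp}(\bar\rho_0)}f+\delta$; letting $\delta\downarrow0$ gives $f(x_\infty^\alpha)\le\inf_{\textnormal{supp}(\bar\rho_0)}f+o(1)$ as $\alpha\to\infty$, and if $x^*\in\textnormal{supp}(\bar\rho_0)$ then $\inf_{\textnormal{supp}(\bar\rho_0)}f=\underline f$, which is the final claim. (The constraint $\alpha\ge c_1$ is harmless because the conclusion is asymptotic in $\alpha$.)

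The step I expect to be the main obstacle is part~(ii): making the It\^o computation for $E_t$ and the ensuing generator lower bound fully rigorous, in particular checking that for large $\alpha$ the $((\bar X_t-m_t)\cdot\nabla f)^2$ contribution is genuinely favorable and that the potentially unbounded factor $|\nabla f|$ is absorbed \emph{entirely} through $|\nabla\omega_f^\alpha|\le L_{\omega_f^\alpha}$ (this is precisely the joint role of Assumptions~\ref{assum: omega} and~\ref{assump: f hessian}), and then carrying the constants carefully enough that the integrated energy loss reproduces the explicit threshold condition displayed in the statement — which in turn relies on the part~(i) variance estimate having decay rate at least $2(\lambda-\bbflambda_{2,\alpha,\sigma})$.
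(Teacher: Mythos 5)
Your proposal is correct and follows essentially the same route as the paper's proof: for part (i) you rely on the closed Grönwall estimate $\tfrac{d}{dt}\bbE|Z_t|^p\le -p(\lambda-\bbflambda_{p,\alpha,\sigma})\bbE|Z_t|^p$ (the paper's Lemma \ref{lem: mean field limit exponential decay}), together with the $L^1$-in-time integrability of $\tfrac{d}{dt}\calM(\bar\rho_t)$ to construct the deterministic limit; for part (ii) you apply It\^o's formula to $\omega_f^\alpha(\bar X_t)$, use Assumptions \ref{assum: omega}–\ref{assump: f hessian} exactly as in the paper's Lemma \ref{lem_imp}, integrate against the $p=2$ variance decay, and close with the same Laplace-principle argument (of which you supply the standard explicit version that the paper only cites). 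The only cosmetic difference is that you name the centered process $Z_t$ and the deterministic offset $a_t$ explicitly, whereas the paper works directly with $\bbE|\bar X_t-\bbE\bar X_t|^p$ and the inequality \eqref{eq: EbarX-mrho}; these are the same computation.
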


\medskip
\noindent\textbf{Asymptotic convergence to the global minimizer.}
Finally, we describe how the three main theorems established above combine to yield the convergence of the $N$-particle system to the unique global minimizer $x^* \in \textnormal{supp}(\bar{\rho}_0)$ in the joint limit $t\to\infty$, $N\to\infty$, and $\alpha\to\infty$.

Let $\mu_t^N$ denote the empirical distribution of the particle system \eqref{I: eq: main}, and let $\bar\rho_t$ be the solution to the McKean--Vlasov equation \eqref{eq: coupling}.  For any $t\ge 0$, we decompose
\begin{align*}
\bbE\calW_p(\mu_t^N, \delta_{x^*}) &\leq \bbE\calW_p(\mu_t^N, \bar{\rho}_t) + \calW_p(\bar{\rho}_t, \delta_{x_\infty^\alpha}) + \calW_p(\delta_{x_\infty^\alpha}, \delta_{x^*})\\
&= \bbE\calW_p(\mu_t^N, \bar{\rho}_t) + \calW_p(\bar{\rho}_t, \delta_{x_\infty^\alpha}) + |x_\infty^\alpha - x^*|,
\end{align*}
where the last equality follows from the identity $\calW_p(\delta_a,\delta_b)=|a-b|$.

By Theorem \ref{thm:mf_main} (i), the mean-field dynamics converge to the deterministic limit 
$x_\infty^\alpha$ as $t\to\infty$.  
Similarly, by Theorem \ref{thm_main1} (i), the particle system converges to a random 
consensus point $X_\infty^{N,\alpha}$.
Letting $t\to\infty$ in the inequality above yields
\[
 \bbE |X_\infty^{N,\alpha} - x^*| = \bbE\calW_p(\delta_{X_\infty^{N,\alpha}}, \delta_{x^*})  \leq \sup_{t\geq 0}\bbE\calW_p(\mu_t^N, \bar{\rho}_t)+ |x_\infty^\alpha - x^*|. 
 \]
Assuming the initial data of the particle system are synchronously coupled,
we may apply Theorem \ref{thm: propagation of chaos} and take $\limsup_{N\to\infty}$, obtaining
\[
\limsup_{N\to\infty}\bbE|X_\infty^{N,\alpha} -x^*|\leq |x_\infty^\alpha - x^*|. 
\]

Finally, Theorem \ref{thm:mf_main} (ii) shows that 
$x_\infty^\alpha\to x^*$ as $\alpha\to\infty$, provided $x^*\in\textnormal{supp}(\bar\rho_0)$.  
Taking $\limsup_{\alpha\to\infty}$ then gives the full asymptotic convergence,
\[
\limsup_{\alpha\to\infty}\limsup_{N\to\infty}\bbE|X_\infty^{N,\alpha} -x^*|=0.
\]
This establishes that, under appropriate scaling of $N$ and $\alpha$, the interacting particle system concentrates asymptotically near the global minimizer $x^*$ of $f$.

%
%
%
%
%
%
%
%
%
%

\subsection{Organization of the paper}
The rest of the paper is organized as follows. In Section \ref{sec_par}, we establish Theorem \ref{thm_main1}. We first prove global well-posedness of the finite-particle SDE system (the complete proof is given in the appendix), and then derive quantitative $L^p$-contraction and uniform moment bounds leading to the convergence stated in Theorem \ref{thm_main1} (i). Under additional regularity assumptions, we obtain a weighted-energy estimate via It\^{o}'s formula and the Laplace principle to conclude Theorem \ref{thm_main1} (ii). Section \ref{sec_propa} is devoted to the uniform-in-time propagation of chaos. We develop a stability framework comparing the particle and mean-field systems, establish differential inequalities for fluctuation energies, and combine them with probabilistic concentration estimates to prove Theorem \ref{thm: propagation of chaos}.   Section \ref{sec:mfs} addresses the well-posedness and large-time analysis of the mean-field system. We first show that the McKean--Vlasov equation admits a unique strong solution. We then prove deterministic consensus formation using uniform decay estimates and finally establish quantitative convergence of the consensus point toward global minimizers, thereby proving Theorem \ref{thm:mf_main}.  The appendices collect auxiliary results. Appendix \ref{app_exch} verifies exchangeability of the finite-particle dynamics, Appendix \ref{sec: exist} provides the detailed well-posedness proof and uniform moment bound, and Appendix \ref{app: stability} contains the concentration and stability estimates for the particle system.

%
%
%
%
%
%
%
%
%
%

\section{Well-posedness and large-time analysis of the particle system}\label{sec_par}

This section proves Theorem \ref{thm_main1}. 
We proceed in three steps: 
(i) well-posedness of the finite-particle SDE system; 
(ii) quantitative $L^p(\Omega)$ contraction of pairwise distances and uniform $L^p$ bounds, yielding the $L^p$ (and, for $p>2$, almost sure) convergence asserted in Theorem \ref{thm_main1} (i); 
(iii) a weighted-energy estimate based on It\^o's formula under {Assumptions \ref{assum: omega}} and \ref{assump: f hessian}, which, together with Laplace's principle, implies Theorem \ref{thm_main1} (ii).

%
%
%
%
%
%
%
%
%
%

\subsection{Existence and uniqueness of strong solutions} 
We begin by establishing the well-posedness of the finite-particle system \eqref{I: eq: main}, 
which serves as the foundation for all subsequent estimates. 
Since the analysis that follows relies on pathwise properties of $\{X_t^i\}_{i=1}^N$, we first confirm the global existence and uniqueness of strong solutions. Although the argument parallels that of \cite[Theorem 2.1]{CCTT18}, we provide a complete proof in Appendix \ref{sec: exist} for the sake of self-containment.

\begin{theorem}[Global well-posedness]\label{theorem: uniqueness}
Let $N\in\mathbb{N}$ and $d\ge1$.  
For each $i=1,\dots,N$, let $X_t^i:\Omega\to\mathbb{R}^d$ satisfy the SDE system
\eqref{I: eq: main} with initial data $X_0^i\in L^2(\Omega;\mathbb{R}^d)$.
Then the system admits a unique strong solution $\{X_t^i\}_{t\ge0}$ satisfying
\[
\mathbb{E}|X_t^i|^2 < \infty \quad \text{for all } t\ge0.
\]
\end{theorem}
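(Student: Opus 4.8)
\textbf{Proof proposal for Theorem \ref{theorem: uniqueness}.}
The plan is to apply the classical existence--uniqueness theory for SDEs with locally Lipschitz coefficients, combined with an a priori moment bound that rules out finite-time blow-up. I would treat the full system as a single SDE in $\R^{Nd}$ for the vector $\mathbf{X}_t = (X_t^1,\dots,X_t^N)$, with drift $b(\mathbf{X}) = -\lambda\,(X^i - m^h[\mathbf{X}])_{i=1}^N$ and a block-diagonal diffusion $\Sigma(\mathbf{X})$ whose $i$-th block is $\sigma (X^i - m^h[\mathbf{X}])\,\mathbb{I}_d$ acting on the independent Wiener processes $W^i$.

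First I would show that $\mathbf{X}\mapsto m^h[\mathbf{X}]$ is locally Lipschitz on $\R^{Nd}$. This is where the regularized weight pays off: the denominator $\sum_{i=1}^N \psi_h(X^i) = \sum_{i=1}^N(\omega_f^\alpha(X^i) + h(\alpha)) \ge N h(\alpha) > 0$ is bounded uniformly away from zero, so $m^h$ is a ratio of a locally Lipschitz numerator (using that $f$, hence $\omega_f^\alpha = e^{-\alpha f}$, is locally Lipschitz, and that $x\mapsto x\,\psi_h(x)$ is then locally Lipschitz) over a locally Lipschitz, strictly positive denominator. Consequently $b$ and $\Sigma$ are locally Lipschitz, and the standard theory (e.g.\ \cite[Theorem 2.1]{CCTT18} or Ikeda--Watanabe) yields a unique strong solution up to a possibly finite explosion time $\tau_\infty = \lim_k \tau_k$, where $\tau_k := \inf\{t\ge 0 : |\mathbf{X}_t| \ge k\}$.

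Next I would derive the a priori second-moment estimate that forces $\tau_\infty = \infty$. Applying It\^o's formula to $|X_t^i|^2$, using that $|m_t^h| \le \max_j |X_t^j| \le |\mathbf{X}_t|$ (a convex combination of the particle positions, by the interpolative representation $m^h_t = \beta_t m^0_t + (1-\beta_t)\frac1N\sum_i X^i_t$), one gets after summing over $i$ and taking expectations of the stopped process
\[
\bbE|\mathbf{X}_{t\wedge\tau_k}|^2 \le \bbE|\mathbf{X}_0|^2 + C\int_0^t \bbE|\mathbf{X}_{s\wedge\tau_k}|^2\,ds,
\]
where $C = C(\lambda,\sigma,N)$ absorbs the quadratic bounds $|X^i - m^h|^2 \le 4|\mathbf{X}|^2$ on both the drift and the diffusion contributions; the stochastic integral is a martingale on $[0,t\wedge\tau_k]$ and drops out. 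Grönwall's inequality gives $\bbE|\mathbf{X}_{t\wedge\tau_k}|^2 \le \bbE|\mathbf{X}_0|^2 e^{Ct}$, uniformly in $k$. Then $k^2\,\bbP(\tau_k \le t) \le \bbE|\mathbf{X}_{t\wedge\tau_k}|^2 \le \bbE|\mathbf{X}_0|^2 e^{Ct}$, so $\bbP(\tau_k \le t)\to 0$ as $k\to\infty$ for every fixed $t$, whence $\tau_\infty = \infty$ almost surely and the solution is global. Passing to the limit $k\to\infty$ in the Grönwall bound (via Fatou) yields $\bbE|X_t^i|^2 \le \bbE|\mathbf{X}_t|^2 < \infty$ for all $t\ge 0$.

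The main obstacle is the local Lipschitz regularity of the consensus map $m^h$; once that is in hand, everything else is the routine localization-plus-Grönwall argument. Worth flagging: the argument genuinely uses the regularization $h(\alpha)>0$ to keep the denominator bounded below \emph{globally} in $\mathbf{X}$ (not just locally), which is exactly what avoids the need for the cutoffs or extra assumptions required in the classical CBO well-posedness analysis. The detailed computation — tracking the It\^o correction term from the diffusion and the precise constant $C$ — is deferred to Appendix \ref{sec: exist}.
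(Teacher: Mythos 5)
Your proposal follows essentially the same route as the paper's Appendix \ref{sec: exist}: establish local Lipschitz continuity and linear growth of the drift and diffusion coefficients (exploiting $\psi_h \ge h(\alpha)>0$ to keep the denominator of $m^h$ bounded away from zero), then close the argument with an It\^o/Gr\"onwall second-moment bound to rule out finite-time explosion. Your version is in fact slightly more careful in one respect — you run the Gr\"onwall estimate on the stopped process $\mathbf{X}_{t\wedge\tau_k}$ and then send $k\to\infty$ via Markov and Fatou, whereas the paper writes the differential inequality for $\bbE|\mathbf{X}_t|^2$ directly, implicitly assuming the finiteness and differentiability it is in the course of establishing; both are standard, and the paper's version can be made rigorous exactly as you propose.
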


%
%
%
%
%
%
%
%
%
%

\subsection{Convergence analysis in $L^{p}(\Omega)$}\label{subsection: Lp}

In this subsection, we investigate the large-time behavior of the system in the $L^p(\Omega)$ sense. 
Our goal is to show that, under a suitable threshold condition on the interaction strength $\lambda$, 
the pairwise distances between particles decay exponentially in time. This contraction property forms the cornerstone of the proof of Theorem \ref{thm_main1} (i), since it implies convergence of all trajectories to a common random limit, the consensus state.

A key simplification arises by expressing the dynamics in terms of relative positions. The next lemma quantifies the $L^p$-contraction of pairwise distances.
\begin{lemma}\label{lem: diff Lp}
    Let $p\geq 2$ and assume $\{X_0^i\}_{i=1}^N\subset L^p(\Omega)$. Then for each $1\leq i, j\leq N$ and $t\geq 0$, we have $X_t^i-X_t^j\in L^p(\Omega)$ and 
\[
        \bbE|X_t^i-X_t^j|^p\leq \max_{1 \leq i,j \leq N}\bbE|X_0^i-X_0^j|^pe^{-p(\lambda-(p-1)\Lambdaa^{2/p}\sigma^2)t},
\]
    where 
    \[
      \Lambda_\alpha=\frac{e^{-\alpha\underline{f}}+h(\alpha)}{h(\alpha)}.
      \]
    Consequently, if $\lambda>\bflambda_{p, \alpha, \sigma}=(p-1)\Lambdaa^{2/p}\sigma^2$, then
\[
        X_t^i-X_t^j\xrightarrow[t\to\infty]{}0\quad\textnormal{in }  L^p(\Omega).
\]
    In particular, we have
\bq\label{ineq_key}
       \max_{1 \leq i \leq N}\bbE|X_t^i - m_t^h|^{p} \le \Lambdaa  \max_{1 \leq i,j \leq N}\bbE|X_0^i - X_0^j|^{p} e^{-p(\lambda - \bflambda_{p, \alpha, \sigma})t}.
\eq
\end{lemma}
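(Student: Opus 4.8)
The plan is to pass to relative coordinates and exploit that both particles in a pair are attracted to the \emph{same} consensus point. Fix $1\le i,j\le N$ (the case $i=j$ being trivial) and set $Z_t:=X_t^i-X_t^j$. Subtracting the two equations in \eqref{I: eq: main}, the drift collapses to the pure contraction $-\lambda Z_t$, while the diffusion is $\sigma(X_t^i-m_t^h)\,dW_t^i-\sigma(X_t^j-m_t^h)\,dW_t^j$, a difference of two terms driven by \emph{independent} scalar Wiener processes. Applying It\^o's formula to the $\mathcal C^2$ map $z\mapsto|z|^p$ ($p\ge2$) and taking expectations, the mixed quadratic-variation contribution vanishes by independence, and after estimating $z^\top(vv^\top+ww^\top)z=(v\cdot z)^2+(w\cdot z)^2\le|z|^2(|v|^2+|w|^2)$ for the associated diffusion matrix one arrives at
\[
\frac{d}{dt}\,\bbE|Z_t|^p\;\le\;-p\lambda\,\bbE|Z_t|^p\;+\;\frac{p(p-1)\sigma^2}{2}\,\bbE\!\left[\,|Z_t|^{p-2}\big(|X_t^i-m_t^h|^2+|X_t^j-m_t^h|^2\big)\right].
\]
(The use of It\^o's formula and the vanishing of the stochastic-integral expectation are made rigorous in the standard way, localizing with $\tau_n:=\inf\{t\ge0:\max_i|X_t^i|\ge n\}$, for which the stopped stochastic integrals have bounded integrands and hence are true martingales, and passing $n\to\infty$; the limit bound obtained below shows a posteriori that all quantities are finite.)

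The decisive structural input is that the regularization makes all barycentric weights uniformly small. Since $h(\alpha)\le\psi_h(x)=\omega_f^\alpha(x)+h(\alpha)\le e^{-\alpha\underline f}+h(\alpha)$, the weights $\theta_\ell^t:=\psi_h(X_t^\ell)\big/\sum_{k=1}^N\psi_h(X_t^k)$ satisfy $0\le\theta_\ell^t\le\Lambda_\alpha/N$ for every $\ell$. Writing $X_t^i-m_t^h=\sum_{\ell=1}^N\theta_\ell^t(X_t^i-X_t^\ell)$ and using convexity of $z\mapsto|z|^p$, one gets the pointwise bound $|X_t^i-m_t^h|^p\le\frac{\Lambda_\alpha}{N}\sum_{\ell=1}^N|X_t^i-X_t^\ell|^p$, hence
\[
\bbE|X_t^i-m_t^h|^p\;\le\;\Lambda_\alpha\,u(t),\qquad u(t):=\max_{1\le k,l\le N}\bbE|X_t^k-X_t^l|^p .
\]
Feeding this into the previous display through H\"older's inequality with exponents $\tfrac{p}{p-2}$ and $\tfrac p2$, namely $\bbE[|Z_t|^{p-2}|X_t^i-m_t^h|^2]\le(\bbE|Z_t|^p)^{\frac{p-2}{p}}(\bbE|X_t^i-m_t^h|^p)^{\frac2p}\le u(t)^{\frac{p-2}{p}}\big(\Lambda_\alpha u(t)\big)^{\frac2p}=\Lambda_\alpha^{2/p}u(t)$, yields, for \emph{every} pair $(i,j)$,
\[
\frac{d}{dt}\,\bbE|X_t^i-X_t^j|^p\;\le\;-p\lambda\,\bbE|X_t^i-X_t^j|^p\;+\;p(p-1)\Lambda_\alpha^{2/p}\sigma^2\,u(t).
\]

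To close the estimate I would not differentiate the maximum directly, but instead apply the linear Gr\"onwall lemma to each pair, $\bbE|X_t^i-X_t^j|^p\le\bbE|X_0^i-X_0^j|^p e^{-p\lambda t}+p(p-1)\Lambda_\alpha^{2/p}\sigma^2\int_0^t e^{-p\lambda(t-s)}u(s)\,ds$, take the maximum over $(i,j)$ (the integral term is pair-independent), and set $w(t):=e^{p\lambda t}u(t)$ to obtain $w(t)\le w(0)+p(p-1)\Lambda_\alpha^{2/p}\sigma^2\int_0^t w(s)\,ds$; Gr\"onwall's inequality then gives $u(t)\le u(0)\,e^{-p(\lambda-(p-1)\Lambda_\alpha^{2/p}\sigma^2)t}=u(0)\,e^{-p(\lambda-\bflambda_{p,\alpha,\sigma})t}$, which is the claimed bound on $\bbE|X_t^i-X_t^j|^p$. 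The $L^p(\Omega)$ convergence $X_t^i-X_t^j\to0$ is then immediate once $\lambda>\bflambda_{p,\alpha,\sigma}$, and \eqref{ineq_key} follows by combining $\bbE|X_t^i-m_t^h|^p\le\Lambda_\alpha u(t)$ with this decay. The main obstacle is the coupling: because $m_t^h$ depends on all $N$ particles, there is no autonomous equation for a single pairwise distance. The remedy is precisely the uniform weight bound $\theta_\ell^t\le\Lambda_\alpha/N$ — available thanks to the strict positivity of $h(\alpha)$ — together with propagating the maximum $u(t)$ rather than an individual pair; and one must apply H\"older at the level of expectations (not pointwise) so that the sharp power $\Lambda_\alpha^{2/p}$, hence the exact threshold $\bflambda_{p,\alpha,\sigma}=(p-1)\Lambda_\alpha^{2/p}\sigma^2$, appears.
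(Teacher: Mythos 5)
Your proof is correct and follows essentially the same route as the paper: It\^o's formula on $|X_t^i-X_t^j|^p$, the Cauchy--Schwarz and H\"older steps yielding the coefficient $\Lambda_\alpha^{2/p}$, the uniform weight bound $\psi_h(X_t^\ell)/\sum_k\psi_h(X_t^k)\le\Lambda_\alpha/N$ giving $\bbE|X_t^i-m_t^h|^p\le\Lambda_\alpha\,u(t)$, and Gr\"onwall on the maximal pairwise quantity $u(t)$. The one difference is that you close the Gr\"onwall argument more carefully (integrate each pair, maximize the resulting inequality, then apply Gr\"onwall to $w(t)=e^{p\lambda t}u(t)$), which properly handles the fact that the maximizing pair may change with $t$ — a subtlety the paper passes over with ``apply Gr\"onwall to the maximal pair.''
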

\begin{proof}
We begin with the equation for $X_t^i-X_t^j$:
\[
    d(X_t^i-X_t^j)=-\lambda (X_t^i-X_t^j)dt+\sigma (X_t^i-m_t^h)dW_t^i-\sigma(X_t^j-m_t^h)dW_t^j.
\]
Applying the It\^o formula to $|X_t^i-X_t^j|^p$ yields
\begin{align*}
    d|X_t^i-X_t^j|^p=&-p\lambda |X_t^i-X_t^j|^pdt +\frac{p\sigma^2}{2}|X_t^i-X_t^j|^{p-2}\left(|X_t^i-m_t^h|^2+|X_t^j-m_t^h|^2\right)dt\\
    &+\frac{p(p-2)\sigma^2}{2}|X_t^i-X_t^j|^{p-4}\langle X_t^i-X_t^j, X_t^i-m_t^h\rangle ^2dt\\
    &+\frac{p(p-2)\sigma^2}{2}|X_t^i-X_t^j|^{p-4}\langle X_t^i-X_t^j, X_t^j-m_t^h\rangle ^2dt\\
    & +p\sigma|X_t^i-X_t^j|^{p-2}\langle X_t^i-X_t^j, X_t^i-m_t^h\rangle dW_t^i\\
    &-p\sigma|X_t^i-X_t^j|^{p-2}\langle X_t^i-X_t^j, X_t^j-m_t^h\rangle dW_t^j.
\end{align*}
Taking expectations eliminates the martingale terms and gives
\begin{align*}
    \frac{d}{dt}\bbE|X_t^i-X_t^j|^p=&-p\lambda \bbE|X_t^i-X_t^j|^p +\frac{p\sigma^2}{2}\bbE\left[|X_t^i-X_t^j|^{p-2}\left(|X_t^i-m_t^h|^2+|X_t^j-m_t^h|^2\right)\right]\\
    &+\frac{p(p-2)\sigma^2}{2}\bbE\left[|X_t^i-X_t^j|^{p-4}\langle X_t^i-X_t^j, X_t^i-m_t^h\rangle ^2\right]\\
    &+\frac{p(p-2)\sigma^2}{2}\bbE\left[|X_t^i-X_t^j|^{p-4}\langle X_t^i-X_t^j, X_t^j-m_t^h\rangle ^2\right].
\end{align*}

Next, from the definition of $m_t^h$ and the bounds $h(\alpha) \le \psi_h \le e^{-\alpha \underline{f}} + h(\alpha)$, we obtain by H\"older's inequality
\begin{align}\label{eq: xt - mt}
\begin{aligned} 
    \bbE|X_t^i-m_t^h|^p &=  \bbE\left|\frac{\sum_{k=1}^N(X_t^i-X_t^k)\psi_h(X_t^k)}{\sum_{k=1}^N\psi_h(X_t^k)}\right|^p \cr
  & \leq \bbE \lt| \frac{\lt(\sum_k |X_t^i-X_t^k|^p \psi_h(X_t^k)\rt)^{1/p} \lt(\sum_k \psi_h(X_t^k) \rt)^{(p-1)/p} }{ \sum_k \psi_h (X_t^k) } \rt|^p  \cr
&  \leq \frac{\Lambdaa}{N}\sum_{k=1}^N\bbE|X_t^i-X_t^k|^p \cr 
&  \leq \Lambdaa\max_{1 \leq i,j \leq N}\bbE|X_t^i-X_t^j|^p. 
\end{aligned}
\end{align}

Using this bound and again applying H\"older's inequality, we have for $k=i,j$:
\begin{align*} 
        \bbE\left[|X_t^i-X_t^j|^{p-2}| X_t^k-m_t^h|^2\right]
       & \leq \left(\bbE|X_t^i-X_t^j|^p\right)^{\frac{p-2}{p}}\left(\bbE|X_t^k-m_t^h|^p\right)^\frac{2}{p}\\
        & \leq \left(\bbE|X_t^i-X_t^j|^p\right)^{\frac{p-2}{p}}\left(\Lambdaa\max_{1 \leq i,j \leq N}\bbE|X_t^i-X_t^j|^p\right)^\frac{2}{p} \\
        & \leq \Lambdaa^\frac{2}{p}\max_{1 \leq i,j \leq N}\bbE|X_t^i-X_t^j|^p.
\end{align*}
Substituting these estimates into the previous differential inequality, we arrive at
\[
    \frac{d}{dt}\bbE|X_t^i-X_t^j|^p\leq -p\lambda \bbE|X_t^i-X_t^j|^p+p(p-1)\Lambdaa^\frac{2}{p}\sigma^2\max_{1 \leq i,j \leq N}\bbE|X_t^i-X_t^j|^p.
\]
Applying Gr\"onwall's lemma to the maximal pair $(i, j)$ gives
\[
    \max_{1 \leq i,j \leq N}\bbE|X_t^i-X_t^j|^p\leq \max_{1 \leq i,j \leq N}\bbE|X_t^i-X_t^j|^pe^{-p(\lambda-(p-1)\Lambdaa^{2/p}\sigma^2)t}.
\]
Finally, combining this with \eqref{eq: xt - mt} deduces the desired estimate \eqref{ineq_key}.
\end{proof}

Next, we show that the particles themselves remain uniformly bounded in $L^p(\Omega)$ for all times.  In particular, Lemma \ref{lem: diff Lp} already guarantees exponential decay of the inter-particle distances in $L^p(\Omega)$, and we now use this property to derive a uniform bound for each trajectory $X_t^i$.

\begin{lemma}\label{lem: Lp bound}
    Let $p\geq 2$. If $\lambda> \boldsymbol{\lambda}_{p,\alpha,\sigma}$ and $\{X_0^i\}_{i=1}^N\subset L^{p}(\Omega)$, then we have
\[
        X_t^i\in L^\infty((0,\infty); L^{p}(\Omega)) \quad \text{uniformly in $i$}.
\]
    More specifically, there is a constant $C_*$ dependent only on $X_0^i,\lambda,p,\Lambda_\alpha$, and $\sigma$ such that
\[
        \max_{1\leq i\leq N}\sup_{t\geq 0}\mathbb{E}|X_t^i|^{p}\leq C_{*}.
\]

\end{lemma}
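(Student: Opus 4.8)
The plan is to derive a uniform-in-time $L^p$ bound for each trajectory by feeding the exponential contraction of pairwise distances from Lemma~\ref{lem: diff Lp} back into an It\^o estimate for $\bbE|X_t^i|^p$; the essential point is that that lemma already forces the fluctuation $X_t^i-m_t^h$ to be exponentially small in $L^p$. As preliminaries, I would note that $\bbE|X_t^i|^p<\infty$ for each fixed $t$ — standard, since the coefficients of \eqref{I: eq: main} have at most linear growth in the configuration (as $|m_t^h|\le\max_k|X_t^k|$), and this local-in-time $L^p$ bound is in any case part of the well-posedness analysis in Appendix~\ref{sec: exist} — so it only remains to bound $\phi_i(t):=\bbE|X_t^i|^p$ uniformly in $t$.

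First I would apply It\^o's formula to $|X_t^i|^p$ (legitimate for $p\ge2$, since $x\mapsto|x|^p$ is $C^2$ on $\R^d$), localize so the stochastic integral is a true martingale, pass to the limit, and take expectations, obtaining
\[
\bbE|X_t^i|^p=\bbE|X_0^i|^p+\int_0^t\bbE\Big[-p\lambda|X_s^i|^{p-2}\langle X_s^i,X_s^i-m_s^h\rangle+\tfrac{p\sigma^2}{2}|X_s^i|^{p-2}|X_s^i-m_s^h|^2+\tfrac{p(p-2)\sigma^2}{2}|X_s^i|^{p-4}\langle X_s^i,X_s^i-m_s^h\rangle^2\Big]ds.
\]
Rather than trying to extract dissipation from the drift term — which would require controlling $m_t^h$ and cannot close, because $\Lambda_\alpha\ge1$ — I would simply bound the whole integrand by constants times $|X_s^i|^{p-1}|X_s^i-m_s^h|$ and $|X_s^i-m_s^h|^p$: Cauchy--Schwarz handles the drift term, the bound $\langle X_s^i,X_s^i-m_s^h\rangle^2\le|X_s^i|^2|X_s^i-m_s^h|^2$ collapses the two second-order terms into $\tfrac{p(p-1)\sigma^2}{2}|X_s^i|^{p-2}|X_s^i-m_s^h|^2$, and $|X_s^i|^{p-2}|X_s^i-m_s^h|^2\le|X_s^i|^{p-1}|X_s^i-m_s^h|+|X_s^i-m_s^h|^p$ (according to whether $|X_s^i-m_s^h|\le|X_s^i|$ or not). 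This yields
\[
\phi_i(t)\le\phi_i(0)+\int_0^t\Big(K_1\,\bbE\big[|X_s^i|^{p-1}|X_s^i-m_s^h|\big]+K_2\,\bbE|X_s^i-m_s^h|^p\Big)\,ds,\qquad K_1=p\lambda+\tfrac{p(p-1)\sigma^2}{2},\ \ K_2=\tfrac{p(p-1)\sigma^2}{2}.
\]

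Next I would invoke \eqref{ineq_key}: since $\lambda>\bflambda_{p,\alpha,\sigma}$, with $c:=\lambda-\bflambda_{p,\alpha,\sigma}>0$ and $M_0:=\max_{i,j}\bbE|X_0^i-X_0^j|^p$ one has $\bbE|X_s^i-m_s^h|^p\le\Lambda_\alpha M_0\,e^{-pcs}$. Combining this with H\"older's inequality $\bbE[|X_s^i|^{p-1}|X_s^i-m_s^h|]\le\phi_i(s)^{(p-1)/p}(\bbE|X_s^i-m_s^h|^p)^{1/p}$, the elementary estimate $a^{(p-1)/p}\le 1+a$ for $a\ge0$, and $e^{-pcs}\le e^{-cs}$, one reduces to the integral inequality $\phi_i(t)\le\phi_i(0)+A\int_0^t e^{-cs}(1+\phi_i(s))\,ds$ with $A:=K_1(\Lambda_\alpha M_0)^{1/p}+K_2\Lambda_\alpha M_0$. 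Gr\"onwall's lemma then gives $1+\phi_i(t)\le(1+\bbE|X_0^i|^p)\exp(A/c)$ for all $t\ge0$; taking the maximum over $i$ yields the claim with $C_*:=(1+\max_i\bbE|X_0^i|^p)\,e^{A/c}$, depending only on the initial data, $\lambda$, $p$, $\Lambda_\alpha$, and $\sigma$ (note $M_0$ and $\bbE|X_0^i|^p$ are fixed by the initial data, and $C_*$ is in particular independent of $N$).

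The hard part is conceptual rather than computational: one must recognize that the drift dissipation should be \emph{forfeited} and the entire right-hand side controlled through the fluctuation $X_t^i-m_t^h$, whose $L^p$-decay is already secured by Lemma~\ref{lem: diff Lp} — a direct dissipative estimate cannot be closed because $m_t^h$ is only controllable up to the factor $\Lambda_\alpha\ge1$. The localization needed to justify the It\^o identity and the interchange of limit and expectation, as well as the finiteness of $\bbE|X_t^i|^p$ at fixed times, are routine (and overlap with Appendix~\ref{sec: exist}).
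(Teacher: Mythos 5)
Your proof is correct, and it takes a genuinely different technical route from the paper's even though both proofs ultimately reduce everything to the exponential decay of $\bbE|X_s^i - m_s^h|^p$ from Lemma~\ref{lem: diff Lp}. The paper works with the integral representation $X_t^i = X_0^i - \lambda\int_0^t(X_s^i - m_s^h)\,ds + \sigma\int_0^t(X_s^i - m_s^h)\,dW_s^i$, applies convexity to split $|X_t^i|^p$ into three pieces, controls the stochastic integral with the Burkholder--Davis--Gundy inequality, and bounds both integrals via Minkowski's integral inequality and Lemma~\ref{lem: diff Lp}; no Gr\"onwall argument is needed, and the constant is written out directly. You instead apply It\^o's formula to $|X_t^i|^p$, take expectations (so the martingale vanishes after the usual localization, which must be done with a stopping time controlling \emph{all} $\max_k|X_t^k|$ so that $m_t^h$ stays bounded on $[0,\tau_n]$, followed by Fatou), bound the integrand by $|X_s^i|^{p-1}|X_s^i - m_s^h| + |X_s^i - m_s^h|^p$, and close with H\"older plus $a^{(p-1)/p}\le 1+a$ plus Gr\"onwall. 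Your route trades BDG for Gr\"onwall: it avoids the nonexplicit BDG constant, at the cost of a slightly more delicate localization and an integral inequality. Both are of comparable length and both isolate the same conceptual point — that the drift dissipation cannot be exploited directly because $m_t^h$ is only controlled up to the factor $\Lambda_\alpha\ge1$, so the fluctuation $X_t^i - m_t^h$ must carry the whole estimate. One minor slip: you assert that the fixed-time $L^p$ finiteness of $X_t^i$ is "part of the well-posedness analysis in Appendix~\ref{sec: exist}," but that appendix only proves the $L^2$ bound; however this does not affect the argument, because the localization/Fatou scheme you describe does not actually require an a priori $L^p$ bound to close.
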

\begin{proof}
    Using the equation for each particle $X_t^i$ and the convexity of $z\mapsto z^{p}$, we estimate
    \begin{align*}
        |X_t^i|^{p}&=\left|X_0^i-\lambda\int_0^t(X_s^i-m_s^h)ds+\sigma\int_0^t(X_s^i-m_s^h)dW_s^i\right|^{p}\\
        &\leq 3^{p-1}|X_0^i|^{p}+3^{p-1}\lambda^{p}\left(\int_0^t|X_s^i-m_s^h|ds\right)^{p}+3^{p-1}\sigma^{p}\left|\int_0^t(X_s^i-m_s^h)dW_s^i\right|^{p}.
    \end{align*}
    
  Here, the stochastic integral
\[
      M_t:=  \int_0^t(X_s^i-m_s^h)dW_s^i
\]
    is a continuous $\mathbb{P}$-martingale. Indeed, using It\^o's isometry and Lemma \ref{lem: diff Lp}, we obtain
\[
        \mathbb{E}|M_t|^2=\mathbb{E}\int_0^t|X_s^i-m_s^h|^2ds \leq \int_0^t\left(\mathbb{E}|X_s^i-m_s^h|^{p}\right)^\frac{2}{p}ds \leq \frac{1}{2\left(\lambda -\boldsymbol{\lambda}_{p, \alpha, \sigma}\right)}\left(\Lambdaa\max_{1 \leq i,j \leq N}\mathbb{E}|X_0^i-X_0^j|^{p}\right)^\frac{2}{p} .
\]
    The finiteness of this bound ensures that $M_t$ belongs to $L^2(\Omega)$ for each $t\ge0$, 
and hence $M_t$ is a square-integrable continuous martingale.

We next control higher moments of $M_t$. By the Burkholder--Davis--Gundy inequality (see, e.g. \cite[Theorem 19.20]{S21}), there exists a constant $C_{\textnormal{BDG},p}>0$ depending only on $p$ such that 
\bq\label{eq: xs-ms ws}
        \mathbb{E}\left[\left|\int_0^t(X_s^i-m_s^h)dW_s^i\right|^{p}\right]\leq C_{\textnormal{BDG},p}\mathbb{E}\left(\int_0^t |X_s^i-m_s^h |^2ds\right)^{\frac{p}{2}}.
\eq
Taking expectations in the above inequality for $|X_t^i|^p$ and using \eqref{eq: xs-ms ws}, we get
\[
        \mathbb{E}|X_t^i|^{p}\le 
        3^{p-1}\mathbb{E} |X_0^i |^{p}+3^{p-1}\lambda^{p} I_t^i +3^{p-1}\sigma^{p}C_{\textnormal{BDG},p} II_t^i, 
\]
    where
\[
        I_t^i := \bbE\left(\int_0^t |X_s^i - m_s^h| \,ds\right)^{p}, \quad II_t^i := \bbE\left(\int_0^t |X_s^i - m_s^h|^2\,ds\right)^\frac{p}{2}.
\]
    
    We first estimate $I_t^i$. By Minkowski's integral inequality and Lemma \ref{lem: diff Lp}, we have
        \begin{equation*}
        \begin{split}
I_t^i     &\leq \left(\int_0^t\left(\mathbb{E} |X_s^i-m_s^{h} |^{p}\right)^\frac{1}{p} \, ds\right)^p\\
            &\leq \left(\int_0^t\left(\Lambdaa \max_{1 \leq i,j \leq N}\mathbb{E}|X_0^i-X_0^j|^{p}\right)^\frac{1}{p}e^{-(\lambda-\boldsymbol{\lambda}_{p,\alpha,\sigma})s}ds\right)^p\\
            &\leq \frac{\Lambdaa}{\left(\lambda -\boldsymbol{\lambda}_{p,\alpha,\sigma}\right)^p}\max_{1 \leq i,j \leq N}\mathbb{E}|X_0^i-X_0^j|^{p}.
        \end{split} 
        \end{equation*}
A similar argument yields for $II_t^i$,
        \begin{equation} \label{eq: xs - ms quad p}
        \begin{split}
            II_t^i   &\leq \left(\int_0^t\left(\Lambdaa \max_{1 \leq i,j \leq N}\mathbb{E}|X_0^i-X_0^j|^{p}\right)^\frac{2}{p}e^{-2(\lambda-\bflambda_{p,\alpha,\sigma})s}ds\right)^{\frac{p}{2}}\\
            &\leq \frac{\Lambdaa}{\left(2(\lambda -\boldsymbol{\lambda}_{p,\alpha,\sigma} )\right)^{\frac{p}{2}}}\max_{1 \leq i,j \leq N}\mathbb{E}|X_0^i-X_0^j|^{p} .
        \end{split}
        \end{equation}
    Collecting the estimates for $I_t^i$ and $II_t^i$ above, we obtain the desired uniform bound
\[
        \max_{1\leq i\leq N}\sup_{t\geq 0}\mathbb{E}|X_t^i|^{p}\leq C_*,
\]
where  $C_* > 0$ is constant depending only on $\lambda,\sigma,p,\Lambda_\alpha$, and the initial data. 
\end{proof}
 
In the previous lemma, we established that each process $X_t^i$ remains uniformly bounded in $L^{p}(\Omega)$ under suitable assumptions on the initial data and the parameter $\lambda$. 
We now turn to the large-time behavior of the particle positions in $L^{p}(\Omega)$. 
The next result shows that all particle trajectories converge in $L^{p}(\Omega)$ to a common random limit.

\begin{lemma} \label{lem: rand vec}
    Let $p\ge2$, $\lambda > \boldsymbol{\lambda}_{p,\alpha,\sigma}$, and assume $\{X_0^i\}_{i=1}^N\subset L^{p}(\Omega)$. Then there exists a random variable $X_\infty\in L^{p}(\Omega)$ such that 
\[
 \lim_{t\to\infty}X_t^i=X_\infty \quad \text{in $L^{p}(\Omega)$}, \quad i=1,2,\ldots, N.
\]
\end{lemma}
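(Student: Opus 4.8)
The plan is to prove that for each fixed $i$ the map $t\mapsto X_t^i$ is Cauchy in $L^p(\Omega)$ as $t\to\infty$; completeness of $L^p(\Omega)$ then yields a limit $X_\infty^i$, and a short comparison argument based on Lemma \ref{lem: diff Lp} shows this limit is the same for all $i$.

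First I would fix $0\le t<s$ and use the integral form of \eqref{I: eq: main},
\[
X_s^i-X_t^i=-\lambda\int_t^s (X_r^i-m_r^h)\,dr+\sigma\int_t^s (X_r^i-m_r^h)\,dW_r^i,
\]
and estimate the drift and martingale parts separately in $L^p(\Omega)$. Writing $\mu:=\lambda-\bflambda_{p,\alpha,\sigma}>0$ and $C:=\big(\Lambda_\alpha\max_{1\le i,j\le N}\bbE|X_0^i-X_0^j|^p\big)^{1/p}$, the key decay estimate \eqref{ineq_key} gives $\|X_r^i-m_r^h\|_{L^p}\le C e^{-\mu r}$. For the drift term, Minkowski's integral inequality yields
\[
\Big\|\int_t^s (X_r^i-m_r^h)\,dr\Big\|_{L^p}\le \int_t^s \|X_r^i-m_r^h\|_{L^p}\,dr\le \frac{C}{\mu}\,e^{-\mu t}.
\]
For the stochastic term I would apply the Burkholder--Davis--Gundy inequality followed by Minkowski, exactly as in the proof of Lemma \ref{lem: Lp bound}, to get
\[
\bbE\Big|\int_t^s (X_r^i-m_r^h)\,dW_r^i\Big|^p\le C_{\textnormal{BDG},p}\Big(\int_t^s \|X_r^i-m_r^h\|_{L^p}^2\,dr\Big)^{p/2}\le C_{\textnormal{BDG},p}\Big(\tfrac{C^2}{2\mu}\Big)^{p/2}e^{-p\mu t}.
\]
Combining the two bounds, there is a constant $C'$, independent of $s$, with $\|X_s^i-X_t^i\|_{L^p}\le C' e^{-\mu t}$ for all $s>t\ge0$.

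Since the right-hand side tends to $0$ as $t\to\infty$ uniformly in $s$, the Cauchy criterion applies and $X_t^i\to X_\infty^i$ in $L^p(\Omega)$ for some $X_\infty^i\in L^p(\Omega)$. To remove the dependence on $i$, I would pass to the limit in the estimate $\|X_t^i-X_t^j\|_{L^p}\to0$ from Lemma \ref{lem: diff Lp}, obtaining $\|X_\infty^i-X_\infty^j\|_{L^p}=0$, i.e.\ $X_\infty^i=X_\infty^j$ almost surely; setting $X_\infty:=X_\infty^1$ finishes the proof.

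I do not expect a real obstacle here: the only point requiring care is keeping the bound on $\|X_s^i-X_t^i\|_{L^p}$ uniform in $s$, which is automatic because the integrand decays exponentially and the bounding integrals $\int_t^\infty e^{-\mu r}\,dr$ and $\int_t^\infty e^{-2\mu r}\,dr$ are finite. Everything else reuses the BDG/Minkowski manipulations already carried out for Lemma \ref{lem: Lp bound}.
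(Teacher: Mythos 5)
Your argument is correct, and it reaches the same conclusion via a slightly different (and arguably more elementary) route than the paper. The paper establishes convergence of $X_t^i = X_0^i - \lambda I_t^i + \sigma\,II_t^i$ piecewise: it treats $II_t^i$ as an $L^p$-bounded continuous martingale and invokes the martingale convergence theorem, then separately identifies the $L^p$-limit of the drift integral $I_t^i$ as $\int_0^\infty (X_s^i - m_s^h)\,ds$. You instead bound $\|X_s^i - X_t^i\|_{L^p}$ directly, uniformly in $s>t$, by the same combination of Minkowski and Burkholder--Davis--Gundy applied to increments, and conclude via the Cauchy criterion and completeness of $L^p(\Omega)$. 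Both proofs rest entirely on the exponential decay $\|X_r^i - m_r^h\|_{L^p}\lesssim e^{-\mu r}$ supplied by Lemma \ref{lem: diff Lp}, and the BDG/Minkowski manipulations are the same as in Lemma \ref{lem: Lp bound}. The paper's use of the martingale convergence theorem has the secondary benefit that it immediately yields the almost-sure convergence of $II_t^i$, which is later reused when proving the a.s.\ part of Theorem \ref{thm_main1}~(i); your Cauchy argument does not give that for free, but it is not needed for this lemma. Your identification step (passing to the limit in $\|X_t^i - X_t^j\|_{L^p}\to 0$) matches the paper's.
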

\begin{proof}
We start from the integral representation of $X_t^i$:
\[
        X_t^i =X_0^i-\lambda \int_0^t(X_s^i-m_s^h)ds+\sigma \int_0^t(X_s^i-m_s^h)dW_s^i =: X_0^i-\lambda I_t^i +\sigma II_t^i .
\]
    
We first analyze the martingale term $II_t^i$. From estimates \eqref{eq: xs-ms ws} and \eqref{eq: xs - ms quad p}, we recall that      
\[
\mathbb{E} \lt[|II_t^i|^p\rt] \leq \frac{C_{\textnormal{BDG},p} \Lambdaa}{\left(2(\lambda-\boldsymbol{\lambda}_{p,\alpha,\sigma})\right)^\frac{p}{2}} \max_{1 \leq i,j \leq N}\mathbb{E}|X_0^i-X_0^j|^{p},
\]
    which implies in particular that
\[
        \sup_{t\geq 0}\mathbb{E} \lt[|II_t^i|^p\rt] < \infty.
\]
Hence the sequence $\{II_t^i\}_{t \ge 0}$ is uniformly bounded in $L^p(\Omega)$. Since $II_t^i$ is the stochastic integral of a progressively measurable and square-integrable process, it defines an $L^p$-martingale. By the martingale convergence theorem, there exists a random variable $II_\infty^i \in L^p(\Omega)$ such that
\[
    II_t^i \xrightarrow[t \to \infty]{} II_\infty^i 
    \quad \text{in } L^p(\Omega) \text{ and almost surely.}
\]

We next examine the drift term $I_t^i$.  Using Minkowski's inequality, for any $t \ge 0$ we estimate
    \begin{align*}
     \bbE \left|\int_0^\infty (X_s^i - m_s^h) \,ds - I_t^i  \right|^{p} &= \mathbb{E}\left|\int_t^\infty(X_s^i-m_s^h)ds\right|^{p} \\
        &\leq \left(\int_t^\infty\left(\mathbb{E}|X_s^i-m_s^h|^{p}\right)^\frac{1}{p}ds\right)^{p}\\
        &\leq \left(\int_t^\infty \left(\Lambdaa\max_{1 \leq i,j \leq N}\mathbb{E}|X_0^i-X_0^j|^{p}\right)^\frac{1}{p}e^{-(\lambda-\boldsymbol{\lambda}_{p, \alpha, \sigma})s}ds\right)^{p}\\
        &\leq \frac{\Lambdaa\max_{1 \leq i,j \leq N}\mathbb{E}|X_0^i-X_0^j|^{p}}{(\lambda -\boldsymbol{\lambda}_{p, \alpha, \sigma})^{p}}e^{-p(\lambda-\boldsymbol{\lambda}_{p, \alpha, \sigma})t} \\
        &\xrightarrow[t \to \infty]{} 0.
    \end{align*}
Hence, we have
\[
    \int_0^t (X_s^i - m_s^h)\,ds
    \xrightarrow[t \to \infty]{} 
    \int_0^\infty (X_s^i - m_s^h)\,ds
    \quad \text{in } L^p(\Omega).
\]
 
 Combining both limits, we obtain the existence of 
    \[
    X_\infty^i := X_0^i - \lambda \int_0^\infty (X_s^i - m_s^h)\,ds + \sigma \int_0^\infty(X_s^i-m_s^h)\,dW_s^i
       \in L^p(\Omega),
\]
such that
\bq\label{eq: lim i}
        \lim_{t\to\infty} X_t^i  = X_\infty^i \quad \text{in }L^{p}(\Omega).
\eq
Finally, to identify the common limit, we note that Lemma \ref{lem: diff Lp} ensures
\[
    \mathbb{E}|X_t^i - X_t^j|^p 
    \to 0 \quad \text{as } t \to \infty.
\]
This and \eqref{eq: lim i} imply $X_\infty^i = X_\infty^j$ for all $1 \le i, j \le N$, 
and the proof is complete.    
\end{proof}

%
%
%
%
%
%
%
%
%
%

\subsection{Almost sure convergence}
In this part, we strengthen the result of the previous subsection by establishing that the convergence of the particle system also holds almost surely, that is, pointwise in $\omega \in \Omega$. The motivation for this refinement is that the results of Section \ref{subsection: Lp} ensure convergence only in the sense of expectations, which confirms stability of the system on average but does not guarantee that a single realization of the stochastic dynamics necessarily leads to the coalescence of particles.

To bridge this gap, we make use of the classical  Kolmogorov continuity theorem  (see, e.g., \cite[Theorem 10.1]{S21}), which asserts that if a stochastic process $\{X_t\}_{t\ge0}$ satisfies an estimate of the form
\[
        \mathbb{E}|X_t-X_s|^{\alpha}\leq c|t-s|^{1+\beta}\quad\text{for all }s, t\geq 0
\]
for some $\alpha,\beta>0$, then it admits a modification whose trajectories are $\gamma$-H\"older continuous for every $\gamma \in (0, \frac\beta\alpha)$.
We will verify below that this criterion is fulfilled by the particle trajectories of our system.

\begin{lemma} \label{lem: modification}
    Let $p> 2$ and assume $\lambda > \bflambda_{p,\alpha,\sigma}$. If $\{X_0^i\}_{i=1}^N\subset L^{p}(\Omega)$, then each process $X_t^i$ admits a modification such that its sample paths $t\mapsto X_t^i$ are $\gamma$-H\"older continuous on $[0,\infty)$ for every $\gamma \in (0,\frac{p-2}{2p})$.
\end{lemma}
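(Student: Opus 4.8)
The plan is to verify the hypothesis of the Kolmogorov continuity theorem quoted above with moment exponent $p$ and time-increment exponent $1+\beta=p/2$, i.e.\ $\beta=\frac{p-2}{2}$; since then $\frac\beta p=\frac{p-2}{2p}$, this produces exactly the asserted range of H\"older exponents. Concretely, I would show that there is a constant $C>0$, depending only on $p,\lambda,\sigma,\Lambdaa$ and the initial data, such that
\[
\mathbb{E}\bigl|X_t^i-X_s^i\bigr|^{p}\le C\,|t-s|^{p/2}\qquad\text{for all }0\le s\le t\text{ with }|t-s|\le 1,
\]
which suffices, since $\gamma$-H\"older continuity is a local property (one runs the argument on each interval $[n,n+1]$ and patches the modifications). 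The input from the previous subsection is the uniform bound $K:=\sup_{r\ge0}\mathbb{E}|X_r^i-m_r^h|^{p}<\infty$, which is immediate from \eqref{ineq_key} in Lemma \ref{lem: diff Lp}: here $p\ge2$ and $\lambda>\bflambda_{p,\alpha,\sigma}$, so the exponential factor in \eqref{ineq_key} is at most $1$ (in fact the same estimate, keeping the decaying exponential, gives the above bound for \emph{all} $s\le t$, but only the local version is needed).

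Starting from the integral form $X_t^i-X_s^i=-\lambda\int_s^t(X_r^i-m_r^h)\,dr+\sigma\int_s^t(X_r^i-m_r^h)\,dW_r^i$ and $(a+b)^p\le 2^{p-1}(a^p+b^p)$, it remains to bound the drift and martingale contributions separately. For the drift term, H\"older's inequality in the time variable gives $\bigl|\int_s^t(X_r^i-m_r^h)\,dr\bigr|^{p}\le|t-s|^{p-1}\int_s^t|X_r^i-m_r^h|^{p}\,dr$, so taking expectations and using the definition of $K$ yields $\mathbb{E}\bigl|\int_s^t(X_r^i-m_r^h)\,dr\bigr|^{p}\le K\,|t-s|^{p}\le K\,|t-s|^{p/2}$ for $|t-s|\le1$. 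For the martingale term, the Burkholder--Davis--Gundy inequality (already invoked in \eqref{eq: xs-ms ws}) bounds $\mathbb{E}\bigl|\int_s^t(X_r^i-m_r^h)\,dW_r^i\bigr|^{p}$ by $C_{\textnormal{BDG},p}\,\mathbb{E}\bigl(\int_s^t|X_r^i-m_r^h|^{2}\,dr\bigr)^{p/2}$, and Minkowski's integral inequality for the $L^{p/2}(\Omega)$-norm (valid since $p/2\ge1$) gives $\mathbb{E}\bigl(\int_s^t|X_r^i-m_r^h|^{2}\,dr\bigr)^{p/2}\le\bigl(\int_s^t(\mathbb{E}|X_r^i-m_r^h|^{p})^{2/p}\,dr\bigr)^{p/2}\le K\,|t-s|^{p/2}$. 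Adding the two contributions produces the displayed increment estimate with $C=2^{p-1}K(\lambda^{p}+\sigma^{p}C_{\textnormal{BDG},p})$.

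The Kolmogorov continuity theorem (\cite[Theorem 10.1]{S21}) then furnishes a modification of $X^i$ whose paths are $\gamma$-H\"older continuous on every compact subinterval of $[0,\infty)$ --- hence on $[0,\infty)$ --- for every $\gamma\in(0,\tfrac{p-2}{2p})$, which is the claim. I do not expect a genuine obstacle here; the estimates are routine once Lemma \ref{lem: diff Lp} is available. The only points worth flagging are that the restriction $p>2$ is exactly what makes the H\"older exponent $\tfrac{p-2}{2p}$ strictly positive (and hence the Kolmogorov conclusion non-vacuous), and that the drift term actually contributes the \emph{higher} power $|t-s|^{p}$, so it is the stochastic-integral term that sets the $|t-s|^{p/2}$ scaling and thereby pins down the optimal H\"older exponent in this scheme.
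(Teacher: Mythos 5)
Your proof is correct and follows essentially the same route as the paper: decompose the increment into a drift and a martingale part, bound the drift by a time-H\"older (the paper uses Minkowski's integral inequality, to the same effect) and the martingale by Burkholder--Davis--Gundy plus Minkowski, and feed the resulting estimate $\mathbb{E}|X_t^i-X_s^i|^p\le C|t-s|^{p/2}$ (for $|t-s|\le1$) into the Kolmogorov continuity theorem with $\alpha=p$, $\beta=\tfrac{p}{2}-1$. Your explicit restriction to $|t-s|\le 1$ and patching over intervals $[n,n+1]$ is a small but welcome clarification, since the paper's final increment bound $C(|t-s|^p+|t-s|^{p/2})$ is only of Kolmogorov form after exactly that localization.
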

\begin{proof}
    For $t, s\geq 0$, we write the increment as
\[
        X_t^i-X_s^i=-\lambda\int_s^t(X_r^i-m_r^h)dr+\sigma\int_s^t(X_r^i-m_r^h)dW_r^i.
\]
We estimate the $p$-th moment of this increment. 
Applying almost the same arguments as in the proofs of Lemmas \ref{lem: diff Lp} and \ref{lem: Lp bound},  
together with the Burkholder--Davis--Gundy inequality and Minkowski's integral inequality, yields    
\begin{equation*}
        \begin{split}
            \mathbb{E}|X_t^i-X_s^i|^{p}&\leq 2^{p-1}\lambda ^{p}\mathbb{E}\left(\int_s^t|X_r^i-m_r^h|dr\right)^{p}+2^{p-1}\sigma^{p}C_{\text{BDG}, p}\mathbb{E}\left(\int_s^t|X_r^i-m_r^h|^2dr\right)^{\frac{p}{2}}\\
            &\leq 2^{p-1}\lambda^{p}\left(\int_s^t\left(\mathbb{E}|X_r^i-m_r^h|^{p}\right)^\frac{1}{p}dr\right)^{p}+2^{p-1}\sigma^{p}C_{\text{BDG}, p}\left(\int_s^t\left(\mathbb{E}|X_r^i-m_r^h|^{p}\right)^\frac{2}{p}dr\right)^{\frac{p}{2}}\\
            &\leq 2^{p-1}\lambda^{p}\left(\int_s^t\left(\Lambdaa\max_{1 \leq i,j \leq N}\mathbb{E}|X_0^i-X_0^j|^{p}\right)^\frac{1}{p}e^{-(\lambda -\boldsymbol{\lambda}_{p, \alpha, \sigma})r}dr\right)^{p}\\
            &\quad  +2^{p-1}\sigma^{p}C_{\text{BDG}, p}\left(\int_s^t\left(\Lambdaa\max_{1 \leq i,j \leq N}\mathbb{E}|X_0^i-X_0^j|^{p}\right)^\frac{2}{p}e^{-2(\lambda -  \boldsymbol{\lambda}_{p, \alpha, \sigma})r}dr\right)^\frac{p}{2}.
        \end{split}
    \end{equation*}
Since $\lambda>\bflambda_{p,\alpha,\sigma}$, the exponential factors in the integrals decay uniformly in time. 
Hence, both integrals on the right-hand side can be bounded proportionally to $|t-s|$, that is,
\[
            \int_s^t e^{-(\lambda - \bflambda_{p,\alpha,\sigma}) r}dr \le C |t-s|
\]
for some constant $C>0$ independent of $s,t$. Substituting this estimate into the previous inequality yields
\[
            \mathbb{E}|X_t^i-X_s^i|^{p}\le C\lt(|t-s|^{p} + |t-s|^\frac{p}{2}\rt),
\]
where  $C>0$ is independent of $t$.  
        
This verifies the assumption of the Kolmogorov continuity theorem with 
$\alpha = p$ and $\beta = \tfrac{p}{2}-1$, so each $X_t^i$ admits a modification whose trajectories are $\gamma$-H\"older continuous for every $\gamma \in (0, \frac{p-2}{2p})$.         
\end{proof}

By subadditivity of $\mathbb{P}$, without loss of generality, we may assume that all trajectories $X_t^i$ possess $\gamma$-H\"older continuous sample paths. 
From this point on, we shall always work with such a modification.

\begin{lemma}\label{lem: diff a.s}
    Under the assumptions of Lemma \ref{lem: diff Lp}, if $p> 2$, then we have
\[
        X_t^i-X_t^j\xrightarrow[t\to\infty]{}0\quad  \text{almost surely}.
\]
\end{lemma}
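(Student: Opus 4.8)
The plan is to upgrade the $L^p(\Omega)$ convergence of pairwise differences from Lemma~\ref{lem: diff Lp} to almost sure convergence, using the Borel--Cantelli lemma along an integer subsequence together with the H\"older continuity of sample paths established in Lemma~\ref{lem: modification} to fill in the gaps between integer times. First I would fix $i,j$ and set $D_t := X_t^i - X_t^j$, which by Lemma~\ref{lem: modification} has $\gamma$-H\"older continuous sample paths for some fixed $\gamma\in(0,\tfrac{p-2}{2p})$ (after passing to the modification, as noted after that lemma). From Lemma~\ref{lem: diff Lp} we have $\bbE|D_t|^p \le C e^{-p(\lambda - \bflambda_{p,\alpha,\sigma})t}$ with $\lambda - \bflambda_{p,\alpha,\sigma} > 0$. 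Applying Markov's inequality at integer times $n\in\N$, for any $\delta>0$,
\[
\bbP\big(|D_n| > \delta\big) \le \delta^{-p}\,\bbE|D_n|^p \le C\delta^{-p} e^{-p(\lambda - \bflambda_{p,\alpha,\sigma})n},
\]
which is summable in $n$; hence by Borel--Cantelli, $D_n \to 0$ almost surely as $n\to\infty$ through the integers.

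Next I would control the oscillation of $D_t$ on each unit interval $[n,n+1]$ to pass from the subsequence to the full limit. Here one cannot simply invoke a uniform H\"older constant, since the H\"older seminorm on $[0,\infty)$ need not be finite; instead, let me introduce the random modulus $K_n := \sup_{n \le s < t \le n+1} |D_t - D_s| / |t-s|^\gamma$, which is almost surely finite by path regularity. To make this usable I need $\sup_{t\in[n,n+1]}|D_t| \to 0$ a.s., so I would bound, for $t\in[n,n+1]$, $|D_t| \le |D_n| + K_n$, and it then suffices to show $K_n \to 0$ almost surely. For this I estimate $\bbE[K_n^p]$ (or rather $\bbE$ of a Kolmogorov-type dyadic bound for the oscillation) by re-running the increment estimate from Lemma~\ref{lem: modification}: for $n \le s \le t \le n+1$,
\[
\bbE|D_t - D_s|^p \le C\big(|t-s|^p + |t-s|^{p/2}\big)\, e^{-p(\lambda - \bflambda_{p,\alpha,\sigma})n},
\]
where the extra exponential decay factor in $n$ comes from the exponential decay of $\bbE|X_r^i - m_r^h|^p$ on $[n,n+1]$ (this is the one computation one must actually carry out, tracking the $e^{-(\lambda-\bflambda)r}$ factor through the Minkowski and BDG steps as in Lemma~\ref{lem: modification}). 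By the quantitative Kolmogorov continuity estimate (e.g.\ the chaining/Garsia--Rodemich--Rumsey bound), this yields $\bbE[K_n^p] \le C e^{-p(\lambda - \bflambda_{p,\alpha,\sigma})n}$, and another application of Markov's inequality plus Borel--Cantelli gives $K_n \to 0$ almost surely.

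Combining the two pieces, on the almost sure event where both $D_n\to 0$ and $K_n\to 0$ we get $\sup_{t\in[n,n+1]}|D_t| \le |D_n| + K_n \to 0$, hence $D_t = X_t^i - X_t^j \to 0$ almost surely as $t\to\infty$; a union bound over the finitely many pairs $(i,j)$ finishes the proof. The main obstacle is the second step: the naive application of Lemma~\ref{lem: modification}'s bound only gives $\bbE|D_t-D_s|^p \le C(|t-s|^p + |t-s|^{p/2})$ \emph{without} decay in the base point, which is not summable; the point is to observe that all the pairwise-difference estimates feeding into that bound carry a factor $e^{-c r}$ (with $c = \lambda - \bflambda_{p,\alpha,\sigma}$) at time $r$, so that restricting the integration interval to $[n,n+1]$ produces the extra $e^{-cn}$ needed for Borel--Cantelli to apply to the oscillations. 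Once this decay-in-$n$ refinement of the increment estimate is in hand, the rest is routine.
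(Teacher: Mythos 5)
Your proposal is correct and follows the same two-step plan as the paper's proof: a Borel--Cantelli argument along a discrete time grid, followed by a path-regularity estimate to bridge the gaps between grid points. The one place where you diverge from the paper is precisely a point that deserves scrutiny. The paper asserts, citing Lemma~\ref{lem: modification}, that the sample paths of $X_t^i-X_t^j$ are uniformly continuous on all of $[0,\infty)$, and then fixes a dyadic mesh smaller than the random modulus $\delta(\omega)$. However, the increment estimate actually proved there, $\mathbb{E}|X_t^i-X_s^i|^p\le C\big(|t-s|^p+|t-s|^{p/2}\big)$ with $C$ uniform in the base point, only gives local H\"older regularity: Brownian motion satisfies the same bound and is not uniformly continuous on the half-line (its increments $W_{n+1}-W_n$ do not vanish). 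You are right to flag this, and your fix is the correct one. The exponential decay $\mathbb{E}|X_r^i-m_r^h|^p\le C\,e^{-p(\lambda-\bflambda_{p,\alpha,\sigma})r}$ does propagate through the Minkowski and Burkholder--Davis--Gundy steps in Lemma~\ref{lem: modification} to give the sharper bound $\mathbb{E}|X_t^i-X_s^i|^p\le C\,e^{-p(\lambda-\bflambda_{p,\alpha,\sigma})s}\big(|t-s|^p+|t-s|^{p/2}\big)$ for $s\le t$, after which a Garsia--Rodemich--Rumsey or dyadic-chaining estimate on $[n,n+1]$ yields $\mathbb{E}[K_n^p]\le C\,e^{-p(\lambda-\bflambda_{p,\alpha,\sigma})n}$ and hence $K_n\to 0$ almost surely. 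That decay in the base point is present in the paper's computations but is dropped at the step where $\int_s^t e^{-(\lambda-\bflambda_{p,\alpha,\sigma})r}\,dr$ is bounded by $C|t-s|$ instead of $e^{-(\lambda-\bflambda_{p,\alpha,\sigma})s}|t-s|$. So your version is a more careful and self-contained rendering of the same argument, and it fills a genuine gap in the paper's presentation rather than taking an essentially different route.
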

\begin{proof}
Fix $\varepsilon>0$. By Lemma \ref{lem: modification} and the assumption of $\gamma$-H\"older continuity, the sample paths of $X_t^i - X_t^j$ are uniformly continuous. Hence, for each $\omega\in\Omega$, there exists $\delta=\delta(\omega)>0$, independent of $i$ and $t$, such that
\[
        |t-s|<\delta \implies |X_t^i - X_s^i| < \ve, \quad \forall \,s,t\in [0,\infty).
\]
Choose $n \in \bbN$ large enough that $\frac{1}{2^{n}}\le \delta$, and define a uniform time partition $t_k := \frac{k}{2^n}$ for $k\in\bbN$.  
Let    
\[
        A_k := \{\omega\in\Omega:|X_{t_k}^i - X_{t_k}^j |\geq \varepsilon\}.
\]
      Applying Markov's inequality together with Lemma \ref{lem: diff Lp}, we find
\[
        \mathbb{P}(A_{k})\leq \frac{\mathbb{E}|X_{t_k}^i - X_{t_k}^j|^{p}}{\varepsilon^{p}}\leq\frac{\max_{1 \leq i,j \leq N}\mathbb{E}|X_0^i-X_0^j|^{p}}{\varepsilon^{p}}e^{-p(\lambda-\boldsymbol{\lambda}_{p, \alpha, \sigma})t_{k}} .
\]
    The right-hand side forms a geometric series in $k$, and hence
\[
    \sum_{k=0}^\infty\mathbb{P}(A_{k})\leq \sum_{k=0}^\infty\frac{\max_{1 \leq i,j \leq N}\mathbb{E}|X_0^i-X_0^j|^{p}}{\varepsilon^{p}}e^{-p(\lambda -\bflambda_{p,\alpha,\sigma})t_{k}}<\infty.
\]
By the Borel--Cantelli lemma, we thus have 
    \[
    \mathbb{P}(A_k \text{ infinitely often}) = 0,
\]
 which implies   
\bq\label{eq: as conv tk}
    X_{t_k}^i - X_{t_k}^j\xrightarrow[k\to\infty]{}0 \quad \text{almost surely}.
\eq

To extend this convergence to all $t\ge0$, fix $\omega$ outside a null set for which \eqref{eq: as conv tk} holds. 
Then there exists $k_*=k_*(\omega)$ such that $|X_{t_k}^i - X_{t_k}^j|\le \varepsilon$ for all $k\ge k_*$.  For any $t\ge t_{k_*}$, there is a unique $t_{k} = \frac{k}{2^n}$ with $k\ge k_*$ such that $t_{k} \le t < t_{k + 1}$. Our assumption on $n$ implies $|t_{k+1} - t_{k}| \le \delta$, and therefore
\[
    |X_t^i - X_t^j| \le |X_t^i - X_t^j - (X_{t_k}^i - X_{t_k}^j)| + |X_{t_k}^i - X_{t_k}^j| \le 2 \ve,  \quad \forall t\ge t_{k_*},
\]
from which the result follows.    
\end{proof}

We can now establish the almost sure convergence in Theorem \ref{thm_main1} (i) when $p>2$.

\begin{proof}[Proof of Theorem \ref{thm_main1} (i)]
We adopt the same notations as in the proof of Lemma \ref{lem: rand vec}. Since the $(L^p)$ martingale convergence theorem ensures the almost sure convergence of $II_t^i$, it remains to verify that the deterministic integral term $I_t^i = \int_0^t (X_s^i - m_s^h)\,ds$ also converges almost surely as $t\to\infty$.

From Lemma \ref{lem: rand vec}, we already know that
\[
   I_t^i =      \int_0^t(X_s^i-m_s^h)ds \longrightarrow I_\infty^i= \int_0^\infty(X_s^i-m_s^h)ds \quad\text{in }L^{p}(\Omega).
\]
To strengthen this convergence to almost sure, we first observe that
    \begin{align*}
        \int_0^\infty\mathbb{E}|X_s^i-m_s^h|ds 
        &\leq \int_0^\infty\left(\mathbb{E}|X_s^i-m_s^h|^{p}\right)^\frac{1}{p}ds \leq \frac{\left(\Lambdaa\max_{1 \leq i,j \leq N}\mathbb{E}|X_0^i-X_0^j|^{p}\right)^\frac{1}{p}}{\lambda-\boldsymbol{\lambda}_{p, \alpha, \sigma}} < +\infty.
    \end{align*}
Thus $X^i - m^h \in L^1((0,\infty)\times\Omega)$, and by Fubini's theorem we deduce that, almost surely, $X^i - m^h \in L^1(0,\infty)$.
Consequently, the sample paths
\[
        (0,\infty) \ni t \mapsto    I_t^i =   \int_0^t (X_s^i - m_s^h) \, ds
\]
    are absolutely continuous almost surely, and in particular, uniformly continuous on $[0,\infty)$.

Now, we fix $\varepsilon>0$ and $n\in\mathbb{N}$, and define the time partition $t_k := k/2^n$ and events
\[
    A_k := \{\omega\in\Omega : |I_{t_k}^i - I_\infty^i| \ge \varepsilon\}.
\]    
Applying Markov's inequality, Minkowski's integral inequality, and Lemma \ref{lem: diff Lp}, we obtain
\[
        \mathbb{P}(A_{ k})\leq\frac{\mathbb{E}| I_{t_k}^i - I_\infty^i |^{p}}{\varepsilon^{p}} \leq \frac{1}{\varepsilon^{p}}\left(\int_{t_{k}}^\infty\left(\mathbb{E}|X_s^i-m_s^h|^{p}\right)^\frac{1}{p}ds\right)^{p} \le \frac{\Lambdaa \max_{1 \leq i,j \leq N}\mathbb{E}|X_0^i-X_0^j|^{p}}{\varepsilon^{p}\left(\lambda-\bflambda_{p,\alpha,\sigma}\right)^{p}}e^{-p(\lambda -\bflambda_{p,\alpha,\sigma})t_{k}}.
\]
The right-hand side forms a geometric series in $k$, hence 
\[
    \sum_{k=0}^\infty \mathbb{P}(A_k) < \infty.
\]
Thus, by the Borel--Cantelli Lemma, we have
\[
        I_{t_k}^i \rightarrow I_\infty^i\quad\text{almost surely as }k\to\infty.
\]

As in the proof of Lemma \ref{lem: diff a.s}, the uniform continuity of the sample paths $t\mapsto I_t^i$ ensures that this convergence extends from the discrete sequence $\{t_k\}$ to all $t\ge0$. 
Hence $I_t^i \to I_\infty^i$ almost surely, and therefore
\[
    X_t^i 
    = X_0^i - \lambda I_t^i + \sigma II_t^i
    \xrightarrow[t\to\infty]{\text{a.s.}}
    X_\infty^i := X_0^i - \lambda I_\infty^i + \sigma II_\infty^i.
\]

Finally, Lemma \ref{lem: diff a.s} implies that $X_t^i - X_t^j \to 0$ almost surely, 
and hence the limits $X_\infty^i$ must coincide for all $i$. 
We thus conclude that every particle converges almost surely to a common random vector $X_\infty$, as claimed.
\end{proof}

The preceding analysis has focused on a model where a $d$-dimensional state is driven by a one-dimensional Wiener process. It is worth noting that the core result of Lemma \ref{lem: diff Lp}, namely the exponential contraction condition, can be adapted for different noise structures. The following remark summarizes how the consensus threshold $\bflambda_{p, \alpha, \sigma}$
  is adjusted for several common variations.
\begin{remark}
The baseline consensus threshold established in Lemma \ref{lem: diff Lp},
    \begin{align*}
        \bflambda_{p, \alpha, \sigma}=(p-1)\Lambdaa^{2/p}\sigma^2 \quad \text{with }  \Lambda_\alpha=\frac{e^{-\alpha\underline{f}}+h(\alpha)}{h(\alpha)}
    \end{align*}
corresponds to the case where each $d$-dimensional state vector is driven by an independent one-dimensional Wiener process.  Depending on the dimensionality and structure of the noise, this threshold changes as follows.  
In all cases below, the same assumptions as in Lemma \ref{lem: diff Lp} still apply. 
    \begin{itemize}
        \item {\bf Model with one-dimensional common-direction noise.} Consider
        \begin{equation*}
            dX_t^i=-\lambda (X_t^i-m_t^h)dt+\sigma|X_t^i-m_t^h|\mathbf{1}_d^\top dW_t^i,
        \end{equation*}
        where $\{W_t^i\}_{i=1}^N$ are one-dimensional independent Wiener processes. In this case, the effective diffusion acts identically along all $d$ components, leading to a threshold increased by a factor of $d$:
        \begin{equation*}
            \bbE|X_t^i-X_t^j|^{p}\leq \max_{1 \leq i,j \leq N}\bbE|X_0^i-X_0^j|^{p}e^{-p(\lambda -d\bflambda_{p, \alpha, \sigma})t}.
        \end{equation*}
        \item {\bf Model with $d$-dimensional anisotropic noise.} Consider
        \begin{equation*}
            dX_t^i=-\lambda(X_t^i-m_t^h)dt + \sigma(X_t^i-m_t^h) \odot dW_t^i,
        \end{equation*}
        where $\{W_t^i\}_{i=1}^N$ are $d$-dimensional independent Wiener processes and $\odot$ denotes the Hadamard (component) product. Since the stochastic perturbations are applied componentwise, the previous argument carries over directly, and the threshold remains unchanged:
        \begin{align*}
            \bbE|X_t^i-X_t^j|^p\leq \max_{1 \leq i,j \leq N}\bbE|X_0^i-X_0^j|^pe^{-p(\lambda-\bflambda_{p, \alpha, \sigma})t}.
        \end{align*}
        \item {\bf Model with $d$-dimensional isotropic noise.} Consider
        \begin{equation*}
            dX_t^i=-\lambda(X_t^i-m_t^h)dt+\sigma|X_t^i-m_t^h|dW_t^i,
        \end{equation*}
        where $\{W_t^i\}_{i=1}^N$ are $d$-dimensional independent Wiener processes. Here, the magnitude of the stochastic forcing scales with the state discrepancy $|X_t^i - m_t^h|$, leading to a modified threshold
        \begin{equation*}
            \bbE|X_t^i-X_t^j|^p\leq \max_{1 \leq i,j \leq N}\bbE|X_0^i-X_0^j|^pe^{-p(\lambda-\bflambda_{p,d, \alpha, \sigma})t},
        \end{equation*}
where $\bflambda_{p, d, \alpha, \sigma}:=\bflambda_{p, \alpha, \sigma}+(d-1)\Lambdaa^{2/p}\sigma^2$.
    \end{itemize}
\end{remark}

%
%
%
%
%
%
%
%
%
%

\subsection{Consensus to the minimizer}
In this subsection, we combine the large-time behavior established in Theorem \ref{thm_main1} (i) with the Laplace principle to show that the particle trajectories of \eqref{I: eq: main} indeed converge to the global minimizer of the objective function. This part provides the proof of Theorem \ref{thm_main1} (ii). We start with an auxiliary estimate that will play a key role in linking the consensus dynamics to the decay of the objective functional.

\begin{lemma}\label{lem_imp}
    Let $\{X_t^i\}_{1\le i \le N}$ be the global strong solution to the system \eqref{I: eq: main}. Under the Assumptions \ref{assum: omega} and \ref{assump: f hessian}, we assume that $X_{\rm in}\in L^p(\Omega)$ and that $\lambda>\bflambda_{p, \alpha, \sigma}$ and $\alpha \ge c_1$. Then we have
    \[
     \frac{d}{dt}\left(\frac{1}{N}\sum_{i=1}^N\mathbb{E}\omega_f^\alpha(X_t^i)\right) 
     \ge - \frac{\lambda L_{\omega_f^\alpha}}{N} \sum_{i=1}^N(\bbE|X_t^i-m_t^h|^2)^{\frac{1}{2}}-\frac{\alpha \sigma^2c_0e^{-\alpha\underline{f}}}{2N}\sum_{i=1}^N\mathbb{E}|X_t^i-m_t^h|^2.
    \]
    \end{lemma}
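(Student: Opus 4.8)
The plan is to apply It\^o's formula to the weighted energy $\omega_f^\alpha(X_t^i)=e^{-\alpha f(X_t^i)}$ for each particle, take expectations, and then bound the drift and It\^o-correction contributions from below using Assumptions \ref{assum: omega} and \ref{assump: f hessian}. First I would record the elementary identities
\[
\nabla\omega_f^\alpha=-\alpha\,\omega_f^\alpha\,\nabla f,\qquad \nabla^2\omega_f^\alpha=\alpha\,\omega_f^\alpha\bigl(\alpha\,\nabla f[\nabla f]^\top-\nabla^2 f\bigr),
\]
which make sense since $f\in\calC^2(\bbR^d)$ by Assumption \ref{assump: f hessian}. Assumption \ref{assum: omega} says precisely that $\omega_f^\alpha$ is globally $L_{\omega_f^\alpha}$-Lipschitz, so $|\nabla\omega_f^\alpha|\le L_{\omega_f^\alpha}$ everywhere; this uniform gradient bound is what controls the drift term and turns the stochastic integral into a genuine martingale.

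\textbf{The computation.} Applying It\^o's formula along \eqref{I: eq: main} (whose noise $W^i$ is one-dimensional, so the quadratic-variation term produces $(X_t^i-m_t^h)^\top\nabla^2\omega_f^\alpha(X_t^i)(X_t^i-m_t^h)$) and taking expectations yields
\[
\frac{d}{dt}\bbE\omega_f^\alpha(X_t^i)=-\lambda\,\bbE\bigl[\nabla\omega_f^\alpha(X_t^i)\cdot(X_t^i-m_t^h)\bigr]+\frac{\sigma^2}{2}\,\bbE\bigl[(X_t^i-m_t^h)^\top\nabla^2\omega_f^\alpha(X_t^i)(X_t^i-m_t^h)\bigr].
\]
For the drift term I would use $|\nabla\omega_f^\alpha|\le L_{\omega_f^\alpha}$, the Cauchy--Schwarz inequality, and Jensen's inequality to get $-\lambda\,\bbE[\nabla\omega_f^\alpha(X_t^i)\cdot(X_t^i-m_t^h)]\ge-\lambda L_{\omega_f^\alpha}(\bbE|X_t^i-m_t^h|^2)^{1/2}$. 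For the It\^o-correction term, Assumption \ref{assump: f hessian} gives $\alpha\nabla f[\nabla f]^\top-\nabla^2 f\succeq(\alpha-c_1)\nabla f[\nabla f]^\top-c_0\bbI_d$, which under the hypothesis $\alpha\ge c_1$ is $\succeq-c_0\bbI_d$; multiplying by the nonnegative scalar $\alpha\,\omega_f^\alpha(x)$ and using $\omega_f^\alpha(x)=e^{-\alpha f(x)}\le e^{-\alpha\underline f}$ yields the one-sided Hessian bound $\nabla^2\omega_f^\alpha(x)\succeq-\alpha c_0 e^{-\alpha\underline f}\,\bbI_d$, hence $(X_t^i-m_t^h)^\top\nabla^2\omega_f^\alpha(X_t^i)(X_t^i-m_t^h)\ge-\alpha c_0 e^{-\alpha\underline f}|X_t^i-m_t^h|^2$. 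Inserting the two lower bounds and averaging over $i=1,\dots,N$ gives exactly the asserted inequality; finiteness of all the terms $\bbE|X_t^i-m_t^h|^2$ follows from Lemma \ref{lem: diff Lp} (via \eqref{ineq_key} and $p\ge2$).

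\textbf{Main obstacle.} The step requiring the most care is the passage from the pathwise It\^o identity to its expectation, i.e.\ justifying that the martingale term has zero mean and that the It\^o-correction term is integrable in $(s,\omega)$. The gradient bound $|\nabla\omega_f^\alpha|\le L_{\omega_f^\alpha}$ settles the martingale part at once, since its quadratic variation is dominated by $\sigma^2 L_{\omega_f^\alpha}^2\int_0^t\bbE|X_s^i-m_s^h|^2\,ds<\infty$. However, $\nabla^2\omega_f^\alpha$ need not be globally bounded, so for the correction term I would localize with the stopping times $\tau_R:=\inf\{t:|X_t^i|\ge R\}$ --- on which everything is bounded because $\omega_f^\alpha\in\calC^2$ --- apply It\^o up to $t\wedge\tau_R$, take expectations there, and let $R\to\infty$. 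The limit is handled using the uniform moment bound of Lemma \ref{lem: Lp bound} together with the one-sided estimate $\nabla^2\omega_f^\alpha\succeq-\alpha c_0 e^{-\alpha\underline f}\bbI_d$: adding the nonnegative quantity $\alpha c_0 e^{-\alpha\underline f}|X_s^i-m_s^h|^2$ to the Hessian quadratic form makes it nonnegative, so monotone convergence applies to that part while dominated convergence (using $0<\omega_f^\alpha\le e^{-\alpha\underline f}$ and $|X_s^i-m_s^h|^2\in L^1([0,t]\times\Omega)$) handles the rest. This produces the integrated form of the identity with all terms finite, from which the stated differential inequality follows upon differentiation in $t$.
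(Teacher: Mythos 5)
Your proposal is correct and takes essentially the same route as the paper: apply It\^o's formula to $\omega_f^\alpha(X_t^i)$, take expectations, bound the drift term via the global Lipschitz bound $|\nabla\omega_f^\alpha|\le L_{\omega_f^\alpha}$ from Assumption \ref{assum: omega} together with Jensen's inequality, and bound the It\^o-correction term using the one-sided Hessian estimate from Assumption \ref{assump: f hessian} and $\alpha\ge c_1$. The only material difference is that you add a localization argument with stopping times $\tau_R$ (using monotone convergence for the sign-controlled Hessian quadratic form, and the Lipschitz gradient bound to dominate the martingale) to justify that the stochastic integral has zero mean and that the It\^o correction is integrable; the paper treats this as implicit, so your addition is a small but genuine refinement in rigor rather than a different method.
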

    \begin{proof}  
 Applying the It\^o chain rule to $\frac{1}{N}\sum_{i=1}^N \omega_f^\alpha(X_t^i)$ yields
    \begin{align*}
        d\left(\frac{1}{N}\sum_{i=1}^N\omega_f^\alpha(X_t^i)\right)&=-\frac{\alpha}{N} \sum_{i=1}^N\omega_f^\alpha(X_t^i)\nabla f(X_t^i)\cdot dX_t^i+\frac{1}{2N}\sum_{i=1}^N\text{Tr}(dX_t^i[dX_t^i]^\top\nabla^2\omega_f^\alpha(X_t^i)) .
    \end{align*}
Substituting the dynamics \eqref{I: eq: main} into the above, we obtain
    \begin{align*}
        d\left(\frac{1}{N}\sum_{i=1}^N\omega_f^\alpha(X_t^i)\right)&=\frac{\alpha\lambda}{N}\sum_{i=1}^N\omega_f^\alpha(X_t^i)\nabla f(X_t^i)\cdot (X_t^i-m_t^h)dt -\frac{\alpha\sigma}{N}\sum_{i=1}^N\omega_f^\alpha(X_t^i)\nabla f(X_t^i)\cdot (X_t^i-m_t^h) dW_t^i\\
        &\quad +\frac{\sigma^2\alpha}{2N}\sum_{i=1}^N\omega_f^\alpha(X_t^i)(X_t^i-m_t^h)^{\top}(\alpha\nabla f(X_t^i)\nabla f(X_t^i)^{\top}-\nabla^2 f(X_t^i))(X_t^i-m_t^h)dt.
    \end{align*}
Taking expectations eliminates the stochastic integral term and gives
        \begin{align*}
        \frac{d}{dt}\left(\frac{1}{N}\sum_{i=1}^N \bbE \omega_f^\alpha(X_t^i)\right)&=\frac{\alpha\lambda}{N} \sum_{i=1}^N \mathbb{E}\Big[\omega_f^\alpha(X_t^i)\nabla f(X_t^i)\cdot (X_t^i-m_t^h)\Big]\\
    &\quad +\frac{\alpha\sigma^2}{2N}\sum_{i=1}^N\bbE\left[\omega_f^\alpha(X_t^i)(X_t^i-m_t^h)^{\top}(\alpha\nabla f(X_t^i)\nabla f(X_t^i)^{\top}-\nabla^2 f(X_t^i))(X_t^i-m_t^h)\right].
    \end{align*}
    
For the first term on the right-hand side of the above, we utilize that $-\alpha w_f^\alpha \nabla f = \nabla w_f^\alpha$, and then Assumption \ref{assum: omega} to estimate
    \[
        \mathbb{E} \Big[\alpha \omega_f^\alpha(X_t^i)\nabla f(X_t^i)\cdot [X_t^i-m_t^h] \Big]  \geq -L_{\omega_f^\alpha}\bbE |X_t^i-m_t^h|.
    \]  
For the second term, by Assumption \ref{assump: f hessian}, we have
    \begin{align*}
   &     \bbE\left[\omega_f^\alpha(X_t^i)(X_t^i-m_t^h)^{\top}\nabla ^2f(X_t^i)(X_t^i-m_t^h)\right] \\
   &\quad \leq c_0e^{-\alpha\underline{f}}\bbE|X_t^i-m_t^h|^2 +c_1\bbE\left[\omega_f^\alpha(X_t^i)(\nabla f(X_t^i)\cdot (X_t^i-m_t^h))^2\right].
    \end{align*}
This deduces
    \begin{align*}
&  \bbE\left[\omega_f^\alpha(X_t^i)(X_t^i-m_t^h)^{\top}(\alpha\nabla f(X_t^i)\nabla f(X_t^i)^{\top}-\nabla^2 f(X_t^i))(X_t^i-m_t^h)\right] \cr
&\quad \geq -  c_0e^{-\alpha\underline{f}} \,\mathbb{E}|X_t^i-m_t^h|^2 +   (\alpha-c_1)   \mathbb{E}\Big[\omega_f^\alpha(X_t^i)(\nabla f(X_t^i)\cdot (X_t^i-m_t^h))^2\Big]\cr
&\quad \geq  -  c_0e^{-\alpha\underline{f}} \,\mathbb{E}|X_t^i-m_t^h|^2,
    \end{align*}
where the last inequality follows from $\alpha \ge c_1$.
    
Combining the two bounds above yields the desired differential inequality.
\end{proof}

We are now in a position to provide the proof of Theorem \ref{thm_main1} (ii).

\begin{proof}[Proof of Theorem \ref{thm_main1} (ii)]
By Lemma \ref{lem_imp}, we find
\begin{align*}
 \frac{d}{dt}\left(\frac{1}{N}\sum_{i=1}^N\mathbb{E}\omega_f^\alpha(X_t^i)\right) 
    &\ge-\frac{\lambda L_{\omega_f^\alpha}}{N}\sum_{i=1}^N\left(\bbE|X_t^i-m_t|^2\right)^\frac{1}{2}-\frac{\alpha \sigma^2c_0e^{-\alpha\underline{f}}}{2N}\sum_{i=1}^N\mathbb{E}|X_t^i-m_t^h|^2\\
    &\geq -\lambda L_{\omega_f^\alpha}\left(\Lambdaa \max_{1 \leq i,j \leq N}\mathbb{E}|X_0^i-X_0^j|^{2}\right)^\frac{1}{2}e^{-(\lambda-\boldsymbol{\lambda}_{2, \alpha, \sigma})t}\\
    &\quad \,-\frac{\alpha \sigma^2c_0e^{-\alpha\underline{f}}}{2}\left(\Lambdaa  \max_{1 \leq i,j \leq N}\mathbb{E}|X_0^i-X_0^j|^{2}\right)e^{-2(\lambda-\boldsymbol{\lambda}_{2, \alpha, \sigma})t},
\end{align*}
where the second inequality follows by Lemma \ref{lem: diff Lp} (applied with $p=2$). Now integrating in $t$ gives
\begin{align*}
    \frac{1}{N}\sum_{i=1}^N\mathbb{E}\omega_f^\alpha(X_t^i)&\geq \mathbb{E}\omega_f^\alpha(X_{\rm in})-\lambda L_{\omega_f^\alpha}\left(2\Lambdaa \text{Var}(X_{\rm in})\right)^\frac{1}{2}\frac{1-e^{-(\lambda-\boldsymbol{\lambda}_{2, \alpha, \sigma})t}}{\lambda-\boldsymbol{\lambda}_{2, \alpha, \sigma}}\\
    &\quad\;-\frac{\alpha \sigma^2c_0e^{-\alpha\underline{f}}\Lambdaa(1-e^{-2(\lambda-\boldsymbol{\lambda}_{2, \alpha, \sigma})t})}{2(\lambda-\boldsymbol{\lambda}_{2, \alpha, \sigma})}\text{Var}(X_{\rm in}),
\end{align*}
where we used the identity
\[
\mathbb{E} |X_0^i-X_0^j|^{2} = 2(\mathbb{E}|X_{\rm in}|^2 - |\mathbb{E} X_{\rm in}|^2) = 2\text{Var}(X_{\rm in}) \quad \forall \, i,j \in \{1,\dots,N\}.
\]
Using that $X_t^i\to X_\infty$ in $L^1(\Omega)$, and that $\omega_f^\alpha$ is Lipschitz, we can pass to the limit $t\to\infty$ to obtain
\begin{align*}
    \mathbb{E}\omega_f^\alpha(X_\infty)&\geq \mathbb{E}\omega_f^\alpha(X_{\rm in})-\frac{\lambda L_{\omega_f^\alpha}}{\lambda-\boldsymbol{\lambda}_{2, \alpha, \sigma}}\left(2\Lambdaa \text{Var}(X_{\rm in})\right)^\frac{1}{2}-\frac{\alpha \sigma^2c_0\Lambdaa e^{-\alpha\underline{f}}}{2(\lambda-\boldsymbol{\lambda}_{2, \alpha, \sigma})}\text{Var}(X_{\rm in})\\
    &\geq \epsilon\mathbb{E}\omega_f^\alpha(X_\textnormal{in}).
\end{align*}
Therefore, taking logarithms, we have
\begin{align*}
    -\frac{1}{\alpha}\log\mathbb{E}\omega_f^\alpha(X_\infty)\leq -\frac{1}{\alpha}\log\mathbb{E}\omega_f^\alpha(X_\textnormal{in})-\frac{1}{\alpha}\log\epsilon.
\end{align*}
By Laplace's principle (see, e.g. \cite{DZ10, RS15}), we conclude that 
\begin{align*}
    \essinf_{\omega\in\Omega}f(X_\infty(\omega))\leq\essinf_{\omega\in\Omega}f(X_\textnormal{in}(\omega)) + o(1) \quad (\alpha \to \infty),  
\end{align*}
which shows that the consensus point $X_\infty$ asymptotically approaches to the global minimizer of $f$.
\end{proof}

%
%
%
%
%
%
%
%
%
%

\section{Uniform-in-time propagation of chaos}\label{sec_propa}

In this section, we establish our second main result, Theorem \ref{thm: propagation of chaos}, 
which provides a quantitative mean-field limit for the modified CBO dynamics. More precisely, we show that the empirical distribution associated with the particle system \eqref{I: eq: main} converges uniformly in time toward the law of its McKean--Vlasov counterpart, with a rate depending only on the initial data and model parameters. This result rigorously confirms that the collective dynamics of a large ensemble of particles can be faithfully described by the mean-field equation \eqref{eq: coupling}, even in the long-time regime when the system approaches consensus.

To prepare for the analysis, we first introduce basic notation that will be used throughout this section. For a probability measure $\mu\in\calP(\R^d)$ we denote
\[
  \calM(\mu) :=\intr x\mu(dx), \quad V_p(\mu) :=\intr|x-\calM(\mu)|^p\mu(dx), \quad \textswab{M}_p(\mu) :=\intr |x|^p\mu(dx),
\]
corresponding respectively to the mean, centered $p$-th moment, and absolute $p$-th moment of $\mu$.

For the particle system \eqref{I: eq: main}, we introduce the empirical measure
\begin{align*}
    \mu_t^N = \frac{1}{N}\sum_{i=1}^N \delta_{X_t^i}
\end{align*}
so that the particle dynamics can be written concisely as
\begin{equation} \label{eq: dynamics rewritten}
        dX_t^i = -\lambda (X_t^i-m_{ f}^{h}[\mu_t^N])dt+\sigma(X_t^i-m_f^{h}[\mu_t^N])dW_t^i.
\end{equation}
%
%
%
%
%
%
%
%
%
%
\subsection{Preliminary estimates and stability observations}\label{subsec: apriori obs}

We begin with several elementary but essential estimates that will serve as preliminary ingredients for the proof of Theorem \ref{thm: propagation of chaos}. They provide quantitative control on the fluctuations of various averaged quantities, in particular on the sensitivity of the weighted mean $m_f^h[\mu]$ with respect to perturbations of the underlying measure $\mu$.

The key difficulty in deriving a stability estimate for \eqref{eq: dynamics rewritten} lies in controlling the nonlinear dependence of the drift and diffusion terms on the empirical law through $m_f^h[\mu_t^N]$. While the usual mean operator $\calM$ satisfies the optimal Lipschitz bound
\[
    |\calM(\mu) - \calM(\nu)| \le \calW_1 (\mu, \nu), \quad \forall\, \mu, \nu \in \calP_1(\R^d),
\]
the same is not immediately true for the weighted mean $m_f^h[\mu]$, whose definition involves the nonlinearity of the (regularized) Gibbs weight. To overcome this difficulty, one can interpret $m_f^h[\mu]$ as a perturbation of the simple mean, i.e.,
\[
    m_f^h[\mu] - m_f^h[\nu] = (m_f^h[\mu] - \calM(\mu)) + (\calM(\mu) - \calM(\nu)) + (\calM(\nu) - m_f^h[\nu]).
\]
This decomposition, exploited in \cite{GKHV25} for the classical CBO system, allows one to treat the nonlinear weighted mean as a small deviation from $\calM$ that can be quantified in terms of the Wasserstein distance and the centered moments of $\mu$ and $\nu$.

In the sequel, we adapt this perturbative strategy to our modified model, which features the regularized Gibbs weight $\psi_h(x)=\omega_f^\alpha(x)+h(\alpha)$. Thanks to this regularization and the weaker assumption that $\omega_f^\alpha$ is Lipschitz continuous (see Assumption \ref{assum: omega}), the resulting estimates hold under milder conditions than those imposed in \cite{GKHV25}. In particular, our analysis does not require $f$ to be bounded from above, and only depends on the Lipschitz constant of the exponential weight $\omega_f^\alpha$. Moreover, while the approach of \cite{GKHV25} was developed within an $L^2$ framework, we carry out the analysis in a more general $L^p$ setting ($p \ge 2$), which allows for sharper quantitative estimates and a broader range of integrability assumptions on the initial data. We now make this precise in the following lemma, which captures the quantitative sensitivity of the weighted mean operator.

\begin{lemma}\label{lem: bounding calB}
    Let $p\ge2 $ and assume Assumption \ref{assum: omega}. Then for all $\mu, \nu \in\calP_{p}(\bbR^d)$, we have
    \[
    |m_{ f}^h[\mu]-\mathcal{M}(\mu)-m_{ f}^h[\nu]+\mathcal{M}(\nu)|\leq \frac{L_{\omega_f^\alpha}\Lambdaa}{h(\alpha)}\left(V_{p}^\frac{1}{p}(\mu)+V_{p}^\frac{1}{p}(\nu)\right)\mathcal{W}_{p}(\mu, \nu).
    \]
\end{lemma}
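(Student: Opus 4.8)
The plan is to reduce the estimate to a comparison of the centered weighted mean $B(\mu):=m_f^h[\mu]-\calM(\mu)$ along an optimal Wasserstein coupling, after rewriting $B$ so that the Lipschitz weight $\omega_f^\alpha$ appears in the numerator. Since $\intr (x-\calM(\mu))\,\mu(dx)=0$ and $\psi_h=\omega_f^\alpha+h(\alpha)$, one has
\[
B(\mu)=\frac{\intr(x-\calM(\mu))\psi_h(x)\,\mu(dx)}{\intr\psi_h(x)\,\mu(dx)}=\frac{\intr(x-\calM(\mu))\,\omega_f^\alpha(x)\,\mu(dx)}{\intr\psi_h(x)\,\mu(dx)}=:\frac{N(\mu)}{D(\mu)},
\]
together with the uniform bound $h(\alpha)\le D(\mu)\le h(\alpha)+e^{-\alpha\underline{f}}=\Lambda_\alpha h(\alpha)$. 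I would then split via the identity
\[
B(\mu)-B(\nu)=B(\mu)\,\frac{D(\nu)-D(\mu)}{D(\nu)}+\frac{N(\mu)-N(\nu)}{D(\nu)}
\]
and estimate the two pieces.

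For the first piece, bounding $\omega_f^\alpha\le e^{-\alpha\underline{f}}$ gives $|B(\mu)|=|N(\mu)|/D(\mu)\le\tfrac{e^{-\alpha\underline{f}}}{h(\alpha)}V_1(\mu)\le(\Lambda_\alpha-1)\,V_p^{\frac1p}(\mu)$ by Jensen, while the Lipschitz property of $\omega_f^\alpha$ yields $|D(\nu)-D(\mu)|\le L_{\omega_f^\alpha}\calW_1(\mu,\nu)\le L_{\omega_f^\alpha}\calW_p(\mu,\nu)$; combined with $D(\nu)^{-1}\le h(\alpha)^{-1}$, this contributes exactly the term $\tfrac{L_{\omega_f^\alpha}(\Lambda_\alpha-1)}{h(\alpha)}V_p^{\frac1p}(\mu)\calW_p(\mu,\nu)$. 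For the second piece I take an optimal $\calW_p$-coupling $(X,Y)$ of $(\mu,\nu)$, set $\bar X=\calM(\mu)$ and $\bar Y=\calM(\nu)$, and decompose
\[
(X-\bar X)\omega_f^\alpha(X)-(Y-\bar Y)\omega_f^\alpha(Y)=(X-\bar X)\bigl(\omega_f^\alpha(X)-\omega_f^\alpha(Y)\bigr)+\bigl((X-\bar X)-(Y-\bar Y)\bigr)\omega_f^\alpha(Y).
\]
Taking expectations, the first summand is $\le L_{\omega_f^\alpha}\bbE[|X-\bar X|\,|X-Y|]\le L_{\omega_f^\alpha}V_p^{\frac1p}(\mu)\calW_p(\mu,\nu)$ by H\"older with conjugate exponent $p'=p/(p-1)\le p$; for the second, since $(X-\bar X)-(Y-\bar Y)=(X-Y)-\bbE(X-Y)$ has mean zero, I subtract $\omega_f^\alpha(\bar Y)$ inside the expectation and use the Lipschitz bound, arriving at $L_{\omega_f^\alpha}\,\bigl\|(X-Y)-\bbE(X-Y)\bigr\|_{p'}\,V_p^{\frac1p}(\nu)$.

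The main point to get right — where a careless estimate would introduce a spurious factor $2$ and spoil the sharp constant — is the control of $\bigl\|(X-Y)-\bbE(X-Y)\bigr\|_{p'}$. The resolution is the chain
\[
\bigl\|(X-Y)-\bbE(X-Y)\bigr\|_{p'}\le\bigl\|(X-Y)-\bbE(X-Y)\bigr\|_{2}\le\|X-Y\|_{2}\le\|X-Y\|_{p}=\calW_p(\mu,\nu),
\]
valid because $p\ge2$ forces $p'\le2\le p$: the outer inequalities are Jensen on the probability space, and the middle one is ``variance $\le$ second moment'' applied coordinatewise. This gives $|N(\mu)-N(\nu)|\le L_{\omega_f^\alpha}\bigl(V_p^{\frac1p}(\mu)+V_p^{\frac1p}(\nu)\bigr)\calW_p(\mu,\nu)$, and adding the first-piece estimate yields
\[
|B(\mu)-B(\nu)|\le\frac{L_{\omega_f^\alpha}}{h(\alpha)}\bigl(\Lambda_\alpha V_p^{\frac1p}(\mu)+V_p^{\frac1p}(\nu)\bigr)\calW_p(\mu,\nu)\le\frac{L_{\omega_f^\alpha}\Lambda_\alpha}{h(\alpha)}\bigl(V_p^{\frac1p}(\mu)+V_p^{\frac1p}(\nu)\bigr)\calW_p(\mu,\nu),
\]
using $\Lambda_\alpha\ge1$. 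The remaining checks — the two identities, the two-sided bound on $D$, and the H\"older and Jensen applications — are routine.
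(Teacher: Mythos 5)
Your proof is correct and takes essentially the same approach as the paper's: both exploit the cancellation that lets $\omega_f^\alpha$ replace $\psi_h$ in the centered numerator, decompose the quotient difference into a numerator-perturbation term and a denominator-perturbation term, and bound each along an optimal coupling using the Lipschitz constant $L_{\omega_f^\alpha}$ together with $h(\alpha)\le D\le \Lambda_\alpha h(\alpha)$. The paper packages the two pieces via an auxiliary function $g(x)=(x-\calM(\nu))(\psi_h(x)-Z_\mu)$ and first derives the $\calW_2$ estimate before upgrading via $\calW_1\le\calW_2\le\calW_p$ and $V_1\le V_2^{1/2}\le V_p^{1/p}$, whereas you work directly with conjugate exponents $(p,p')$ and the mean-zero observation $\|(X-Y)-\bbE(X-Y)\|_{p'}\le\|X-Y\|_p$, but the underlying decomposition and resulting constant are the same.
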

\begin{proof}
Although the structure of the proof is similar to that of \cite{GKHV25}, we provide it here for completeness and to highlight that, unlike \cite[Assumption 1]{GKHV25}, our argument does not require any pointwise upper bound on the objective function $f$.
    
Set 
\[
Z_{\mu}:=\intr \psi_h(x)\mu(dx), \quad g(x):=\left(x-\calM(\nu)\right)\left(\psi_h(x)-Z_{\mu}\right),
\]
and let $\pi\in \Pi(\mu, \nu)$ be any coupling. Then, by a simple rearrangement, we get
    \begin{align*}
        m_f^{h}[\mu]-\calM(\mu)-m_f^h[\nu]+\calM(\nu)&=\intr \left(x-\calM(\nu)\right)\left(\psi_h(x)-Z_{\mu}\right)\left(\frac{\mu}{Z_{\mu}}-\frac{\nu}{Z_{\nu}}\right)\\
        &=\frac{1}{Z_{\mu}}\intrr (g(x)-g(y))\pi(dxdy) +\left(\frac{1}{Z_{\mu}}-\frac{1}{Z_{\nu}}\right)\intr g(x)\nu(dx) \\
        &=: I + II.
    \end{align*}

We first estimate $I$. Since $\psi_h(x) = \omega_f^\alpha(x) + h(\alpha)$ is $L_{\omega_f^\alpha}$-Lipschitz by Assumption \ref{assum: omega}, the difference $g(x)-g(y)$ satisfies
\[
        |g(x)-g(y)|\leq |x-y|\left|\psi_h(x)-Z_{\mu}\right|+L_{\omega_f^\alpha}\left|y-\calM(\nu)\right||x-y|.
\]
Applying H\"older's inequality and using that the marginals of $\pi$ are $\mu$ and $\nu$, we find
    \begin{align*}
        &\intrr|g(x)-g(y)|\pi(dxdy)\cr
        &\quad\leq \left(\intrr|x-y|^2\pi(dxdy)\right)^{\frac{1}{2}}\left(\intrr|\psi_h(x)-Z_{\mu}|^2\pi(dxdy)\right)^{\frac{1}{2}}\\
        &\qquad +L_{\omega_f^\alpha}\left(\intrr|x-y|^2\pi(dxdy)\right)^{\frac{1}{2}}\left(\intrr|y-\mathcal{M}(\nu)|^2\pi(dxdy)\right)^{\frac{1}{2}}.
    \end{align*}
    Furthermore, since $\psi_h$ is $L_{\omega_f^\alpha}$-Lipschitz, the first integral satisfies 
\[
        \intrr|\psi_h(x)-Z_{\mu}|^2\pi(dxdy)=\intr|\psi_h(x)-Z_{\mu}|^2\mu(dx)\leq L_{\omega_f^\alpha}^2V_2(\mu).
\]
Hence, we have
\[
|I|\le \frac{L_{\omega_f^\alpha}}{Z_\mu}
\big(\sqrt{V_2(\mu)}+\sqrt{V_2(\nu)}\big)\,\calW_2(\mu,\nu)
\le \frac{L_{\omega_f^\alpha}}{h(\alpha)}
\big(\sqrt{V_2(\mu)}+\sqrt{V_2(\nu)}\big)\,\calW_2(\mu,\nu),
\]
due to $Z_\mu\ge h(\alpha)$.
 
We now turn to $II$.  A straightforward computation yields
    \begin{align*}
        |II|&\leq \frac{1}{h(\alpha)^2}|Z_{\mu}-Z_{\nu}|\intr|x-\mathcal{M}(\nu)|e^{-\alpha f(x)}\nu(dx)\\
        &\leq \frac{e^{-\alpha \underline{f}}}{h(\alpha)^2}\intrr|e^{-\alpha f(x)}-e^{-\alpha f(y)}|\pi(dxdy)\intr|x-\mathcal{M}(\nu)|\nu(dx)\\
        &\leq \frac{L_{\omega_f^\alpha}e^{-\alpha\underline{f}}}{h(\alpha)^2}\intrr |x-y|\pi(dxdy)\intr|x-\mathcal{M}(\nu)|\nu(dx)\\
        &\leq \frac{L_{\omega_f^\alpha}e^{-\alpha\underline{f}}}{h(\alpha)^2}\mathcal{W}_1(\mu, \nu)V_1(\nu).
    \end{align*}
    
    Combining the two estimates gives
    \begin{align*}
        |m_{ f}^h[\mu]-\mathcal{M}(\mu)-m_{ f}^h[\nu]+\mathcal{M}(\nu)|&\leq \frac{L_{\omega_f^\alpha}}{h(\alpha)}\left(\sqrt{V_2(\mu)}+\sqrt{V_2(\nu)}\right)\mathcal{W}_2(\mu, \nu) +\frac{L_{\omega_f^\alpha}e^{-\alpha\underline{f}}}{h(\alpha)^2}\mathcal{W}_1(\mu, \nu)V_1(\nu).
    \end{align*}
    Finally, we use the relations $\calW_1\le \calW_2 \le \calW_p$, $V_1\le V_2^{1/2} \le V_p^{1/p}$ for $p\ge2$, to conclude the desired result.
\end{proof}

%
%
%
%
%
%
%
%
%
%

\subsection{Exponential decay and moment estimates for the synchronous coupling}
In this subsection, we derive quantitative estimates for the mean-field process $\bar X_t$, which solves the limiting SDE \eqref{eq: coupling}. We note that \eqref{eq: coupling} admits a unique strong solution under Assumption \ref{assum: omega}, together with suitable moment bounds on the initial data. This justifies all computations carried out in this section. For readability, however, we postpone the discussion of well-posedness until Section \ref{subsec: well posedness}. Since \eqref{eq: coupling} is well-posed, the associated law $\bar\rho_t = \mathrm{Law}(\bar X_t)$ defines a weak solution to \eqref{eq_pde}.

The following lemma establishes that $\bar X_t$ converges exponentially fast towards its mean 
and that its fluctuations around the weighted average $m_f^h[\bar\rho_t]$ also decay exponentially in time.

\begin{lemma}\label{lem: mean field limit exponential decay}
    Let $p\geq 2$ and assume $\bar{\rho}_0\in\calP_p(\bbR^d)$. Then for $t\geq 0$, we have  
\[
        \bbE|\xm-\bbE\xm|^p\leq C_p \swabM_p(\bar{\rho}_0)e^{-p(\lambda-\bar{\bflambda}_{p, \alpha, \sigma})t}
\]
and
\[
        \bbE|\bar{X}_t-m_f^h[\bar{\rho}_t]|^p\leq C_{p,\Lambda_\alpha} \swabM_p(\bar{\rho}_0)e^{-p(\lambda-\bbflambda_{p, \alpha, \sigma})t},
\]
    where 
    \[
    \bar{\bflambda}_{p, \alpha, \sigma}:=(p-1)(1+\Lambda_\alpha^{2/p})\sigma^2 = \bflambda_{p,\alpha,\sigma} + (p-1)\sigma^2.
    \]
\end{lemma}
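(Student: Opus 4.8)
The plan is to propagate the $L^p(\Omega)$-contraction of the centered process $Y_t:=\bar X_t-\bbE\bar X_t$, and then read off the fluctuation estimate around $m_f^h[\bar\rho_t]$ as a consequence. First I would take expectations in \eqref{eq: coupling}; since the stochastic integral has zero mean (thanks to the moment bounds recalled at the start of this subsection), this yields $\frac{d}{dt}\bbE\bar X_t=-\lambda(\bbE\bar X_t-m_f^h[\bar\rho_t])$, and subtracting it from \eqref{eq: coupling} gives the closed linear equation
\[
dY_t=-\lambda Y_t\,dt+\sigma\,\xi_t\,dW_t,\qquad \xi_t:=\bar X_t-m_f^h[\bar\rho_t]=Y_t-e_t,
\]
where $e_t:=m_f^h[\bar\rho_t]-\bbE\bar X_t$ is a deterministic shift.

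Next I would apply It\^o's formula to $|Y_t|^p$ (a $C^2$ function on $\bbR^d$ for $p\ge2$), take expectations to eliminate the martingale part, and bound the second-order terms using $\langle Y_t,\xi_t\rangle^2\le|Y_t|^2|\xi_t|^2$, which gives
\[
\frac{d}{dt}\bbE|Y_t|^p\le-p\lambda\,\bbE|Y_t|^p+\frac{p(p-1)\sigma^2}{2}\,\bbE\!\left[|Y_t|^{p-2}|\xi_t|^2\right].
\]
The key input is a bound on the shift $e_t$: writing $m_f^h[\bar\rho_t]=\intr y\,\tilde\rho_t(dy)$ with the probability measure $\tilde\rho_t:=\psi_h\bar\rho_t\big/\!\intr\psi_h\,d\bar\rho_t$, and using $\psi_h(y)\le e^{-\alpha\underline{f}}+h(\alpha)$ together with $\intr\psi_h\,d\bar\rho_t\ge h(\alpha)$ so that $\tilde\rho_t\le\Lambda_\alpha\bar\rho_t$ as measures, Jensen's inequality gives $|e_t|^p=\big|\intr(y-\bbE\bar X_t)\,\tilde\rho_t(dy)\big|^p\le\Lambda_\alpha\,\bbE|Y_t|^p$. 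Feeding this into the second-order term via H\"older (or by expanding $|\xi_t|^2=|Y_t-e_t|^2$ and using Young), Minkowski's inequality, and $(a+b)^2\le2(a^2+b^2)$ produces
\[
\bbE\!\left[|Y_t|^{p-2}|\xi_t|^2\right]\le(1+\Lambda_\alpha^{1/p})^2\,\bbE|Y_t|^p\le2\,(1+\Lambda_\alpha^{2/p})\,\bbE|Y_t|^p,
\]
hence $\frac{d}{dt}\bbE|Y_t|^p\le-p\big(\lambda-(p-1)(1+\Lambda_\alpha^{2/p})\sigma^2\big)\bbE|Y_t|^p=-p(\lambda-\bar{\bflambda}_{p,\alpha,\sigma})\bbE|Y_t|^p$. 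Gr\"onwall's lemma and the elementary bound $\bbE|Y_0|^p=V_p(\bar\rho_0)\le2^p\swabM_p(\bar\rho_0)$ then give the first estimate.

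For the second estimate, I would simply observe $\bbE|\bar X_t-m_f^h[\bar\rho_t]|^p=\bbE|\xi_t|^p\le\big((\bbE|Y_t|^p)^{1/p}+|e_t|\big)^p\le(1+\Lambda_\alpha^{1/p})^p\,\bbE|Y_t|^p$ by Minkowski and the same $e_t$-bound, and then insert the first estimate (the two decay rates $\bar{\bflambda}_{p,\alpha,\sigma}$ and $\bbflambda_{p,\alpha,\sigma}$ are the same quantity), producing the claimed rate with a constant of the form $(2(1+\Lambda_\alpha^{1/p}))^p$, depending only on $p$ and $\Lambda_\alpha$. The step demanding genuine care is the rigorous justification of the It\^o-plus-expectation manipulation: that $t\mapsto\bbE|Y_t|^p$ is differentiable, that $\int_0^t p|Y_s|^{p-2}\langle Y_s,\sigma\xi_s\rangle\,dW_s$ is a true (not merely local) martingale, and that $\bbE|\bar X_t|^p$ does not blow up. I would dispatch this in the standard way, by localizing at $\tau_n:=\inf\{t:|\bar X_t|\ge n\}$, deriving the differential inequality for $\bbE|Y_{t\wedge\tau_n}|^p$, and passing the resulting Gr\"onwall bound through $n\to\infty$ via Fatou's lemma; the uniform-in-time moment bound quoted at the start of this subsection is exactly what makes $e_t$ and $m_f^h[\bar\rho_t]$ well-defined throughout. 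Apart from this, the only mildly delicate point is tracking the constant $(1+\Lambda_\alpha^{1/p})^2$ and converting it to the clean threshold $\bar{\bflambda}_{p,\alpha,\sigma}$; everything else is routine.
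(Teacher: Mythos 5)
Your proposal is correct and follows essentially the same route as the paper: apply It\^o's formula to $|\bar X_t-\bbE\bar X_t|^p$, bound the second-order term via Cauchy--Schwarz and H\"older, control the deterministic shift $|\bbE\bar X_t-m_f^h[\bar\rho_t]|^p\le\Lambda_\alpha\,\bbE|\bar X_t-\bbE\bar X_t|^p$ from the $\psi_h$ bounds, elementary-inequality the constant into $2(1+\Lambda_\alpha^{2/p})$, and close with Gr\"onwall, then deduce the second estimate by the same decomposition. Your extra remark on localization to justify the It\^o/expectation step is a sensible technical point (which the paper leaves implicit, relying on the moment bounds), and your constant $(1+\Lambda_\alpha^{1/p})^p$ in the second estimate is a harmlessly tighter variant of the paper's $2^{p-1}(1+\Lambda_\alpha)$.
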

\begin{proof}
We start by analyzing the evolution of the mean of $\bar X_t$. Taking the expectation in \eqref{eq: coupling} gives 
\[
\frac{d}{dt}\mathbb{E}\bar X_t=-\lambda(\mathbb{E}\bar X_t-m_f^h[\bar\rho_t]).
\]
Substituting this into the It\^o's formula for $|\bar{X}_t-\bbE\bar{X}_t|^p$ yields
    \begin{equation} \label{eq: bar x pvar}
    \begin{split}
        d|\bar{X}_t-\bbE\bar{X}_t|^p=&-p\lambda |\bar{X}_t-\bbE\bar{X}_t|^pdt +\frac{p\sigma^2}{2}|\bar{X}_t-\bbE\bar{X}_t|^{p-2}|\bar{X}_t-\mrho|^2dt\\
        &+\frac{p(p-2)\sigma^2}{2}|\bar{X}_t-\bbE\bar{X}_t|^{p-4}\langle \bar{X}_t-\bbE\bar{X}_t, \bar{X}_t-\mrho\rangle^2dt\\
        &+p\sigma |\bar{X}_t-\bbE\bar{X}_t|^{p-2}\langle \bar{X}_t-\bbE\bar{X}_t, \bar{X}_t-\mrho\rangle dW_t.
    \end{split}
    \end{equation}
 Similarly as in the proof of Lemma \ref{lem: diff Lp}, using the Cauchy--Schwarz and H\"older's inequalities, then taking expectations, we find
    \begin{equation} \label{eq: coupled cont lp}
    \begin{split}
        \frac{d}{dt}\bbE|\bar{X}_t-\bbE\bar{X}_t|^p\leq &-p\lambda\bbE|\bar{X}_t-\bbE\bar{X}_t|^p +\frac{p(p-1)\sigma^2}{2}\left[\left(\bbE|\bar{X}_t-\bbE\bar{X}_t|^p\right)^\frac{p-2}{p}\left(\bbE|\xm-\mrho|^p\right)^\frac{2}{p}\right].
    \end{split}
    \end{equation}
    
Next, we control the distance between the mean $\mathbb{E}\bar X_t$ and the weighted mean $m_f^h[\bar\rho_t]$.  
By direct calculation,
    \begin{equation}\label{eq: EbarX-mrho}
        \begin{split}
        |\bbE\xm-\mrho|^p &=\left|\frac{\intr(x-\calM(\bar{\rho}_t))\psi_h(x)\bar{\rho}_t(dx)}{\intr\psi_h(x)\bar{\rho}_t(dx)}\right|^p\\
        &\leq \lt| \frac{ \lt(\intr |x-\calM(\bar{\rho}_t)|^p \,\psi_h(x)\,\bar{\rho}_t(\dx) \rt)^{1/p} \lt(\intr \psi_h(x) \,\bar{\rho}_t(\dx) \rt)^{(p-1)/p} }{\intr \psi_h(x) \bar{\rho}_t(\dx)} \rt|^p \\
        &\leq \Lambdaa \intr|x-\calM(\bar{\rho}_t)|^p\bar{\rho}_t(dx)   \\
        &= \Lambdaa \bbE |\bar X_t - \bbE \bar X_t |^p,
        \end{split}
    \end{equation}
    where we used $h(\alpha)\le \psi_h\le e^{-\alpha\underline f}+h(\alpha)$.
    
This deduces
\[
        \left(\bbE|\xm-\mrho|^p\right)^\frac{2}{p}  \leq \left[\left(\bbE|\xm-\bbE\xm|^p\right)^\frac{1}{p}+\left(|\bbE\xm-\mrho|^p\right)^\frac{1}{p}\right]^2 \leq 2 (1+\Lambdaa^\frac{2}{p})\left(\bbE|\xm-\bbE\xm|^p\right)^\frac{2}{p}.
\]
Plugging this estimate into \eqref{eq: coupled cont lp} yields
\[
        \frac{d}{dt}\bbE|\xm-\bbE\xm|^p\leq -\left(p\lambda- p(p-1)(1+\Lambdaa^\frac{2}{p})\sigma^2\right)\bbE|\xm-\bbE\xm|^p.
\]
    Then by Gr\"onwall's inequality, we have
\[
        \bbE|\xm-\bbE\xm|^p\leq \bbE|\bar{X}_0-\bbE\bar{X}_0|^pe^{-p(\lambda-(p-1)(1+\Lambdaa^{2/p})\sigma^2)t} \leq 2^p\swabM_p(\bar{\rho}_0)e^{-p(\lambda-(p-1)(1+\Lambdaa^{2/p})\sigma^2)t}.
\]
    
To estimate $\bbE|\bar X_t-m_f^h[\bar\rho_t]|^p$, we use the decomposition
\[
|\bar X_t-m_f^h[\bar\rho_t]|^p \le 2^{p-1}|\bar X_t-\bbE\bar X_t|^p +2^{p-1}|\bbE\bar X_t-m_f^h[\bar\rho_t]|^p,
\]
together with \eqref{eq: EbarX-mrho}.  
Taking expectations yields
\[
\bbE|\bar X_t-m_f^h[\bar\rho_t]|^p \le 2^{p-1}(1+\Lambda_\alpha) \bbE|\bar X_t-\bbE\bar X_t|^p,
\]
and substituting the previous decay estimate concludes the proof.    
\end{proof}
 
 The exponential decay established above immediately implies that the mean-field trajectories remain uniformly concentrated around their average. We now show that this concentration yields a uniform-in-time $L^p$ bound for $\bar X_t$.

\begin{lemma}\label{lem: barX Lp bound}
    Let $p\ge 2$. If $\lambda>\bbflambda_{p, \alpha, \sigma}$ and $\bar{\rho}_0\in \calP_{p}(\bbR^d)$, then there exists a constant $\bar C_* = \bar{C}_*(\lambda, \sigma, p,\Lambdaa, \bar{\rho}_0) > 0$ such that 
\[
        \sup_{t\geq 0}\mathbb{E}|\bar{X}_t|^{p}\leq \bar{C}_*.
\]
\end{lemma}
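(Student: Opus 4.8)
The plan is to mirror the proof of Lemma~\ref{lem: Lp bound} at the mean-field level, replacing the particle decay estimate of Lemma~\ref{lem: diff Lp} by the fluctuation decay estimate of Lemma~\ref{lem: mean field limit exponential decay}. Starting from the integral representation
\[
\xm = \bar X_0 - \lambda \int_0^t (\bar X_s - m_f^h[\bar\rho_s])\,ds + \sigma \int_0^t (\bar X_s - m_f^h[\bar\rho_s])\,dW_s,
\]
I would use the convexity of $z\mapsto z^p$ to bound $|\xm|^p$ by $3^{p-1}$ times the sum of $|\bar X_0|^p$, $\lambda^p\big(\int_0^t |\bar X_s - m_f^h[\bar\rho_s]|\,ds\big)^p$, and $\sigma^p\big|\int_0^t (\bar X_s - m_f^h[\bar\rho_s])\,dW_s\big|^p$, and then take expectations. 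The first term equals $3^{p-1}\swabM_p(\bar\rho_0)<\infty$ since $\bar\rho_0\in\calP_p(\bbR^d)$.

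For the drift term, Minkowski's integral inequality combined with the second estimate of Lemma~\ref{lem: mean field limit exponential decay} gives
\[
\bbE\lt(\int_0^t |\bar X_s - m_f^h[\bar\rho_s]|\,ds\rt)^p \le \lt(\int_0^\infty \big(\bbE|\bar X_s - m_f^h[\bar\rho_s]|^p\big)^{1/p}\,ds\rt)^p \le \frac{C_{p,\Lambdaa}\,\swabM_p(\bar\rho_0)}{(\lambda - \bbflambda_{p,\alpha,\sigma})^p},
\]
where finiteness is exactly where the hypothesis $\lambda>\bbflambda_{p,\alpha,\sigma}$ enters, making $e^{-(\lambda-\bbflambda_{p,\alpha,\sigma})s}$ integrable on $(0,\infty)$. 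For the stochastic integral, I would first observe that the exponential decay estimate ensures $\bar X_\cdot - m_f^h[\bar\rho_\cdot]\in L^2((0,t)\times\Omega)$ for each $t$, so the integral is a square-integrable continuous martingale, and then apply the Burkholder--Davis--Gundy inequality followed by Minkowski's integral inequality once more:
\[
\bbE\lt|\int_0^t (\bar X_s - m_f^h[\bar\rho_s])\,dW_s\rt|^p \le C_{\textnormal{BDG},p}\,\bbE\lt(\int_0^t |\bar X_s - m_f^h[\bar\rho_s]|^2\,ds\rt)^{p/2} \le C_{\textnormal{BDG},p}\,\frac{C_{p,\Lambdaa}\,\swabM_p(\bar\rho_0)}{\big(2(\lambda - \bbflambda_{p,\alpha,\sigma})\big)^{p/2}}.
\]
Collecting the three bounds produces a constant $\bar C_* = \bar C_*(\lambda,\sigma,p,\Lambdaa,\bar\rho_0)$, independent of $t$, with $\sup_{t\ge0}\bbE|\xm|^p \le \bar C_*$.

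The argument is essentially routine once the preceding lemmas are in hand; the only point requiring attention is that the relevant decay rate at the mean-field level is governed by the larger threshold $\bbflambda_{p,\alpha,\sigma}=\bflambda_{p,\alpha,\sigma}+(p-1)\sigma^2$ rather than $\bflambda_{p,\alpha,\sigma}$ itself — this accounts for the extra fluctuation coming from $\calM(\bar\rho_t)$ versus $m_f^h[\bar\rho_t]$ and is precisely the content of Lemma~\ref{lem: mean field limit exponential decay}, which is why the hypothesis is stated in terms of $\bbflambda_{p,\alpha,\sigma}$. A secondary technical point is the justification that the stochastic integral defines a genuine $L^p$-martingale; this follows from the well-posedness of \eqref{eq: coupling} (established in Section~\ref{subsec: well posedness}) together with the per-time $L^p$ bounds already available from Lemma~\ref{lem: mean field limit exponential decay}.
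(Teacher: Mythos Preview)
Your proposal is correct and follows essentially the same route as the paper: both mirror the proof of Lemma~\ref{lem: Lp bound}, replacing the particle-level decay from Lemma~\ref{lem: diff Lp} with the mean-field decay from Lemma~\ref{lem: mean field limit exponential decay}, then applying Minkowski's integral inequality to the drift term and BDG followed by Minkowski to the stochastic term. The paper's proof is organized identically, with the same $3^{p-1}$ splitting and the same use of $\lambda>\bbflambda_{p,\alpha,\sigma}$ to ensure the exponential integrals converge.
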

\begin{proof}
The proof follows the same strategy as Lemma \ref{lem: Lp bound}, now exploiting the explicit exponential decay established in Lemma \ref{lem: mean field limit exponential decay}.  From \eqref{eq: coupling} and the triangle inequality, we first estimate
    \begin{align*}
        \bbE|\bar{X}_t|^p&\leq 3^{p-1}\bbE|\bar{X}_0|^p+3^{p-1}\bbE\left[\int_0^t|\bar{X}_s-m_f^h[\bar{\rho}_s]|ds\right]^p+3^{p-1}C_{\textnormal{BDG}, p}\bbE\left[\int_0^t|\bar{X}_s-m_f^h[\bar{\rho}_s]|^2ds\right]^\frac{p}{2}\\
        &=:3^{p-1}\bbE|\bar{X}_0|^p+3^{p-1}I_t + 3^{p-1}C_{\textnormal{BDG}, p}II_t.
    \end{align*} 
    where the last term comes from the Burkholder--Davis--Gundy inequality.  

To control $I$, we apply Minkowski's integral inequality together with Lemma \ref{lem: mean field limit exponential decay}, obtaining
        \begin{align*}
           I_t &\leq \left(\int_0^t\left(\bbE|\bar{X}_s-m_f^h[\bar{\rho}_s]|^p\right)^\frac{1}{p}ds\right)^p\\
            &\leq 2^{2p-1}(1+\Lambdaa)\swabM_p(\bar{\rho}_0)\left(\int_0^te^{-(\lambda-\bbflambda_{p, \alpha, \sigma})s}ds\right)^p\\
            &\leq \frac{2^{2p-1}(1+\Lambdaa)\swabM_p(\bar{\rho}_0)}{(\lambda-\bbflambda_{p, \alpha, \sigma})^p}.
        \end{align*}
Similarly, for $II$ we have
        \begin{align*}
            II_t &\leq \left(\int_0^t\left(\bbE|\bar{X}_s-m_f^h[\bar{\rho}_s]|^p\right)^\frac{2}{p}ds\right)^\frac{p}{2}\\
            &\leq 2^{2p-1}(1+\Lambdaa)\swabM_p(\bar{\rho}_0)\left(\int_0^te^{-2(\lambda-\bbflambda_{p, \alpha, \sigma})s}ds\right)^\frac{p}{2}\\
            &\leq \frac{2^{2p-1}(1+\Lambdaa)\swabM_p(\bar{\rho}_0)}{\left(2(\lambda-\bbflambda_{p, \alpha, \sigma})\right)^\frac{p}{2}}.
        \end{align*}
        
Combining all bounds, we deduce that there exists a constant $\bar C_*>0$, depending only on the parameters
$(\lambda,\sigma,p,\Lambda_\alpha,\bar\rho_0)$, such that
\[
\sup_{t\ge0}\mathbb E|\bar X_t|^p\le \bar C_*.
\]
This completes the proof. 
\end{proof}
%
%
%
%
%
%
%
%
%
%
\subsection{Concentration estimates for the synchronous coupling}
We now quantify how tightly the empirical weighted mean concentrates around its population counterpart in the synchronous coupling. This provides the probabilistic ingredient required later to pass from the moment estimates in $L^p$ to the uniform-in-time propagation of chaos.

For clarity, we recall the relevant notation. Let $\{W_t^i\}_{i=1}^N$ be a family of independent one-dimensional Wiener processes and let
$\bar X_t^i$, $i=1,\dots,N$, denote the corresponding i.i.d. solutions of the mean-field SDE \eqref{eq: coupling} with common initial law $\bar\rho_0$. We denote by $\bar\rho_t=\mathrm{Law}(\bar X_t^i)$ their common distribution, and by
\[
\mmu:=\frac{1}{N}\sum_{i=1}^N\delta_{\bar X_t^i}
\]
the associated empirical measure of the system. 

The next lemma quantifies the deviation between $m_f^h[\bar\mu_t^N]$ and its population value $m_f^h[\bar\rho_t]$, 
showing that it is of order $N^{-1/2}$ in $L^p$, with an exponential decay in time as obtained from Lemma \ref{lem: mean field limit exponential decay}. 

\begin{lemma}\label{lem: mrho-mmu}
    Let $p\ge 2$ and assume $\bar{\rho}_0\in \calP_p(\bbR^d)$. Then there exists a constant $C=C(p, \Lambdaa, \bar{\rho}_0)>0$ such that for all $t \ge 0$,
    \begin{equation*}
        \bbE|m_f^h[\mmu]-m_f^h[\bar{\rho}_t]|^p\leq  \frac{C}{N^\frac{p}{2}}e^{-p(\lambda-\bbflambda_{p, \alpha, \sigma})t}.
    \end{equation*}
\end{lemma}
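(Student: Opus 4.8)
The plan is to bound $m_f^h[\bar\mu_t^N]-m_f^h[\bar\rho_t]$ by splitting it through the unweighted mean $\mathcal M$, exactly mirroring the perturbative decomposition used to prove Lemma \ref{lem: bounding calB}:
\[
m_f^h[\mmu]-m_f^h[\bar\rho_t]=\big(m_f^h[\mmu]-\mathcal M(\mmu)\big)-\big(m_f^h[\bar\rho_t]-\mathcal M(\bar\rho_t)\big)+\big(\mathcal M(\mmu)-\mathcal M(\bar\rho_t)\big).
\]
For the last term, $\mathcal M(\mmu)-\mathcal M(\bar\rho_t)=\frac1N\sum_{i=1}^N(\bar X_t^i-\bbE\bar X_t^i)$ is an average of i.i.d.\ centered random vectors, so by Rosenthal's (or Marcinkiewicz--Zygmund) inequality its $p$-th moment is $\lesssim N^{-p/2}(\bbE|\bar X_t^1-\bbE\bar X_t^1|^p)$, and Lemma \ref{lem: mean field limit exponential decay} supplies the factor $e^{-p(\lambda-\bar\bflambda_{p,\alpha,\sigma})t}$; note $\bar\bflambda_{p,\alpha,\sigma}\le\bbflambda_{p,\alpha,\sigma}$, so this term is even better than claimed.

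For the first two (grouped) terms I would apply Lemma \ref{lem: bounding calB} with $\mu=\mmu$, $\nu=\bar\rho_t$, giving the pointwise (in $\omega$) bound
\[
|m_f^h[\mmu]-\mathcal M(\mmu)-m_f^h[\bar\rho_t]+\mathcal M(\bar\rho_t)|\le \frac{L_{\omega_f^\alpha}\Lambdaa}{h(\alpha)}\big(V_p^{1/p}(\mmu)+V_p^{1/p}(\bar\rho_t)\big)\,\calW_p(\mmu,\bar\rho_t).
\]
Then I would take $p$-th powers, use $(a+b+c)^p\le 3^{p-1}(a^p+b^p+c^p)$ to separate the three contributions, take expectations, and control each factor: $\bbE[V_p(\bar\rho_t)]=\bbE|\bar X_t^1-\bbE\bar X_t^1|^p$ decays exponentially by Lemma \ref{lem: mean field limit exponential decay}; $\bbE[V_p(\mmu)]\le \frac1N\sum_i\bbE|\bar X_t^i-\mathcal M(\mmu)|^p$ is likewise bounded (and decaying) once one absorbs $\mathcal M(\mmu)$ against $\bbE\bar X_t^1$ using the i.i.d.\ average bound just established; and $\calW_p(\mmu,\bar\rho_t)^p\le \frac1N\sum_i|\bar X_t^i-\bar X_t^i|^p$ — wait, that is not useful, so instead one uses the trivial coupling bound $\calW_p(\mmu,\bar\rho_t)^p\le \frac1N\sum_i|\bar X_t^i-\bbE\bar X_t^1|^p+|\bbE\bar X_t^1-\mathcal M(\bar\rho_t)|^p$ together with the fact that $\bbE\calW_p(\mmu,\bar\rho_t)^p$ admits a uniform-in-time bound coming from the uniform moment estimate of Lemma \ref{lem: barX Lp bound}. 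To extract the $N^{-1/2}$ rate one must be careful: $\calW_p(\mmu,\bar\rho_t)$ alone is only $O(N^{-1/d})$, but it is multiplied by $V_p^{1/p}(\mmu)+V_p^{1/p}(\bar\rho_t)$, each of which is $O(e^{-(\lambda-\bar\bflambda)t})$ and uniformly bounded, so after Hölder in $\omega$ across the product one combines a uniformly-bounded $\calW_p$ factor with an exponentially small, $N$-independent variance factor; the genuine $N^{-1/2}$ then comes \emph{only} from the $\mathcal M(\mmu)-\mathcal M(\bar\rho_t)$ term.

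I expect the main obstacle to be obtaining the stated rate $N^{-p/2}$ from the first two terms rather than merely a uniform bound: the naive estimate gives $\bbE\big[(V_p^{1/p}(\mmu)+V_p^{1/p}(\bar\rho_t))^p\calW_p(\mmu,\bar\rho_t)^p\big]$, and $\calW_p$ of an empirical measure does not decay like $N^{-1/2}$ in high dimension. The resolution is that the right-hand side of Lemma \ref{lem: bounding calB} is not sharp for our purposes; one should instead rerun the proof of that lemma with the \emph{specific} coupling $\pi=\frac1N\sum_i\delta_{(\bar X_t^i,\bar X_t^i)}$ — i.e.\ compare $m_f^h[\mmu]$ directly against a ``continuum'' version sharing the same $W_t^i$ — so that the term $\calW_2(\mu,\nu)$ gets replaced by $\big(\frac1N\sum_i|\bar X_t^i-\bbE\bar X_t^i|^2\big)^{1/2}$-type quantities whose expectation does carry the $N^{-1/2}$ from the law of large numbers. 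Concretely, expanding $m_f^h[\mmu]-m_f^h[\bar\rho_t]$ as $\frac1{Z_{\mmu}}$ times an empirical average of $\bar\rho_t$-centered terms, plus a correction from $Z_{\mmu}-Z_{\bar\rho_t}$ (itself an i.i.d.\ average of the bounded quantities $\psi_h(\bar X_t^i)$, hence $O(N^{-1/2})$), and invoking Rosenthal together with Lemma \ref{lem: mean field limit exponential decay} and the uniform bound of Lemma \ref{lem: barX Lp bound}, yields $\bbE|m_f^h[\mmu]-m_f^h[\bar\rho_t]|^p\le C N^{-p/2}e^{-p(\lambda-\bbflambda_{p,\alpha,\sigma})t}$ with $C=C(p,\Lambdaa,\bar\rho_0)$, as claimed.
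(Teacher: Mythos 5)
Your final sketch lands on the right tool (Marcinkiewicz--Zygmund/Rosenthal applied to an i.i.d.\ centered empirical average), but the path to it contains two concrete errors, and the decomposition you ultimately propose is more complicated than what is actually needed.

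First, the "specific coupling $\pi=\frac1N\sum_i\delta_{(\bar X_t^i,\bar X_t^i)}$" is not a valid coupling of $\bar\mu_t^N$ and $\bar\rho_t$: both of its marginals equal $\bar\mu_t^N$, so it cannot be plugged into the proof of Lemma \ref{lem: bounding calB} to compare these two measures. Second, the "correction from $Z_{\bar\mu_t^N}-Z_{\bar\rho_t}$" you add at the end is phantom. The key observation that the paper uses, and that your write-up circles around without stating, is the \emph{exact} algebraic identity
\[
m_f^h[\bar\mu_t^N]-m_f^h[\bar\rho_t]
=\frac{\sum_{i=1}^N\big(\bar X_t^i-m_f^h[\bar\rho_t]\big)\psi_h(\bar X_t^i)}{\sum_{i=1}^N\psi_h(\bar X_t^i)},
\]
obtained simply by subtracting $m_f^h[\bar\rho_t]$ from the fraction defining $m_f^h[\bar\mu_t^N]$. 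The summands $Y_i:=(\bar X_t^i-m_f^h[\bar\rho_t])\psi_h(\bar X_t^i)$ are i.i.d.\ with $\bbE Y_i=0$, precisely because the centering point is $m_f^h[\bar\rho_t]$ and $\bar X_t^i\sim\bar\rho_t$; no linearization in the denominator, and hence no cross term, is required. This centering choice is also what makes the exponential decay immediate: after bounding the denominator below by $Nh(\alpha)$ and applying Marcinkiewicz--Zygmund plus Jensen, one is left with $\bbE|Y_1|^p\le(e^{-\alpha\underline f}+h(\alpha))^p\,\bbE|\bar X_t^1-m_f^h[\bar\rho_t]|^p$, and Lemma \ref{lem: mean field limit exponential decay} supplies the factor $e^{-p(\lambda-\bbflambda_{p,\alpha,\sigma})t}$. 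By contrast, your proposed centering $\bar X_t^i\psi_h(\bar X_t^i)-\bbE[\bar X_t\psi_h(\bar X_t)]$ would leave you needing a decay estimate for the higher moments of $x\mapsto x\psi_h(x)$, which is not Lipschitz under the stated hypotheses, so that route does not close easily.

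In short: your diagnosis that the $\calM$-plus-Lemma-\ref{lem: bounding calB} route cannot give the $N^{-p/2}$ rate is correct, and your instinct to write the difference as a denominator-normalized centered average is the right one; the missing step is recognizing that the centering is already exact, which eliminates the spurious "correction" and makes both the $N^{-p/2}$ and the exponential-in-$t$ factors fall out of a single application of Marcinkiewicz--Zygmund.
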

\begin{proof}
    By definition of $m_f^h[\cdot]$ and using that $\psi_h\ge h(\alpha)$, we find
    \begin{align*}
        \bbE|m_f^h[\mmu]-\mrho|^p &=\bbE\left|\frac{\sum_{i=1}^N(\bar{X}_t^i-m_f^h[\bar{\rho}_t])\psi_h(\bar{X}_t^i)}{\sum_{i=1}^N\psi_h (\bar{X}_t^i)}\right|^p \leq \frac{1}{h(\alpha)^p}\bbE\left|\frac{1}{N}\sum_{i=1}^N(\bar{X}_t^i-m_f^h[\bar{\rho}_t])\psi_h(\bar{X}_t^i)\right|^p.
    \end{align*}
The random variables 
\[
Y_i:=(\bar X_t^i-m_f^h[\bar\rho_t])\psi_h(\bar X_t^i)
\]
are independent and centered (since $\mathbb E Y_i=0$). Thus, by applying the Marcinkiewicz--Zygmund inequality (see, e.g., \cite[Theorem 2, \S 10.3]{ST97}), Jensen's inequality, and using Lemma \ref{lem: mean field limit exponential decay}, we deduce
    \begin{align*}
        \bbE|m_f^h[\mmu]-\mrho|^p&\leq \frac{C_{\textnormal{MZ},p}}{N^\frac{p}{2}h(\alpha)^p}\bbE\left(\frac{1}{N}\sum_{i=1}^N|\bar{X}_t^i-m_f^h[\bar{\rho}_t]|^2\psi_h(\bar{X}_t^i)^2\right)^\frac{p}{2}\\
        &\leq \frac{C_{\textnormal{MZ},p}\Lambdaa^p}{N^\frac{p}{2}}\left[\frac{1}{N}\sum_{i=1}^N\bbE|\bar{X}_t^i-m_f^h[\bar{\rho}_t]|^p\right]\\
        &\leq \frac{C_{\textnormal{MZ},p}\Lambdaa^p}{N^\frac{p}{2}}\bbE|\bar{X}_t^1-m_f^h[\bar{\rho}_t]|^p\\
        &\leq \frac{C_{p,\Lambda_\alpha}}{N^\frac{p}{2}}\swabM_p(\bar{\rho}_0)e^{-p(\lambda-\bbflambda_{p, \alpha, \sigma})t}.
    \end{align*}
This completes the proof.    
\end{proof}

We now extend the $L^p$ control to a uniform-in-time concentration inequality for the synchronous coupling. 
Under a quantitative contraction on $\lambda$, an exponentially weighted energy functional yields a drift-martingale estimate leading to a sharp-in-$N$ tail bound for $\sup_{t\ge0}e^{\bar\kappa t}V_p(\bar\mu_t^N)$, consistent with the exponential decay above.

\begin{lemma}[Concentration for the synchronous coupling] \label{lem: coupling concentration}
Let $p,q\geq 2$ and assume $\bar{\rho}_0 \in \calP_{pq}(\bbR^d)$. For the particle trajectories, we suppose that $\bar{X}_0^i$ are i.i.d. with $\bar{X}_0^i \sim \bar{\rho}_0$ for all $i=1,\dots, N$. Denoting 
\[
\bar{c}_{\textnormal{con},p}:=2\bar{\bflambda}_{p, \alpha, \sigma}+4(p-1)\sigma^2, 
\]
further assume
\bq\label{eq: bar lambda prime}
        \lambda > \bbflambda'_{p, q, \alpha,\sigma} :=\bbflambda_{p q,\alpha, \sigma} + \bar{c}_{\textnormal{con},p}.
\eq
    Then, for all small enough $\bar\kappa>0$ and every $A>0$, the following concentration estimate holds:
    \begin{align}\label{eq: coupling concentration}
        \bbP\left[\sup_{t\geq 0}e^{\bar\kappa t}V_p(\mmu)\geq 2^p V_p(\bar{\rho}_0)+2^pA\right]\leq \frac{C}{A^{q}N^\frac{q}{2}},
    \end{align}
    where $C>0$ depends only on $\lambda, p, q,\sigma, \Lambdaa$ and $\bar{\rho}_0$. In particular, \eqref{eq: coupling concentration} holds for all fixed $0 < \bar\kappa < p (\lambda - (\bar c_{\textnormal{con},p} \vee \bbflambda_{pq,\alpha,\sigma}))$.
\end{lemma}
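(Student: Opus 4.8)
The plan is to apply It\^o's formula to the exponentially weighted centered $p$-th moment $t\mapsto e^{\bar\kappa t}V_p(\bar\mu_t^N)$, reduce it to a drift--martingale decomposition in which the quadratic contraction dominates the weight $\bar\kappa$, estimate the resulting initial, source, and martingale contributions in $L^q(\Omega)$, and conclude \eqref{eq: coupling concentration} by Markov's inequality.

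First I would compute $dV_p(\bar\mu_t^N)$ with $V_p(\bar\mu_t^N)=\frac1N\sum_i|\bar X_t^i-\bar\calM_t^N|^p$, $\bar\calM_t^N=\frac1N\sum_j\bar X_t^j$, applying It\^o in the variables $(\bar X_t^1,\dots,\bar X_t^N)$ driven by \eqref{eq: coupling}. The first-order drift collapses to exactly $-p\lambda V_p(\bar\mu_t^N)$ (the contributions of $\bar\calM_t^N-m_f^h[\bar\rho_t]$ cancel because $\sum_i(\bar X_t^i-\bar\calM_t^N)=0$), while the It\^o correction is bounded by $\tfrac{\sigma^2p(p-1)}{2}\cdot\tfrac1N\sum_i|\bar X_t^i-\bar\calM_t^N|^{p-2}|\bar X_t^i-m_f^h[\bar\rho_t]|^2$ up to an $O(1/N)$ remainder. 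Writing $\bar X_t^i-m_f^h[\bar\rho_t]=(\bar X_t^i-\bar\calM_t^N)+u_t$ with $u_t:=\bar\calM_t^N-m_f^h[\bar\rho_t]$, expanding, and using Young's inequality, this becomes $\le(1+c\,\delta)\tfrac{\sigma^2p(p-1)}{2}V_p(\bar\mu_t^N)+C_\delta|u_t|^p$ for a free parameter $\delta>0$, so that
\[
dV_p(\bar\mu_t^N)\le\big[-(p\lambda-r_p(\delta))V_p(\bar\mu_t^N)+C_\delta|u_t|^p+O(1/N)\big]dt+d\calN_t^N,
\]
where $\calN_t^N$ is a martingale whose diffusion coefficients each carry a factor $1/N$, and $\delta$ is chosen so that $p\lambda-r_p(\delta)>\bar\kappa$ with $r_p(\delta)\le p\,\bar c_{\textnormal{con},p}$; this is exactly what the threshold $\lambda>\bbflambda'_{p,q,\alpha,\sigma}$ together with $\bar\kappa$ small provides.

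Next I would control the source by splitting $u_t=(\bar\calM_t^N-\bbE\bar X_t)-(\bbE\bar X_t-m_f^h[\bar\rho_t])$. The deterministic part decays exponentially, via the sharpened bound $|\bbE\bar X_t-m_f^h[\bar\rho_t]|\lesssim V_2(\bar\rho_t)^{1/2}$ (a refinement of \eqref{eq: EbarX-mrho} using the Lipschitz bound in Assumption \ref{assum: omega}) and Lemma \ref{lem: mean field limit exponential decay}; the first part is an empirical mean of i.i.d.\ centered vectors, so Marcinkiewicz--Zygmund together with Lemma \ref{lem: mean field limit exponential decay} makes it $O(N^{-p/2})$ in $L^{pq}(\Omega)$ with exponential time decay. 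Hence $\int_0^\infty e^{\bar\kappa s}C_\delta|u_s|^p\,ds$ splits into a deterministic quantity bounded by a fixed multiple of $V_p(\bar\rho_0)$ plus a random quantity that is $O(N^{-p/2})$ in $L^q(\Omega)$; convergence of these integrals uses $\bar\kappa<p(\lambda-\bbflambda_{pq,\alpha,\sigma})$. Multiplying the differential inequality by $e^{\bar\kappa t}$ and absorbing the linear term gives $\sup_{t\ge0}e^{\bar\kappa t}V_p(\bar\mu_t^N)\le V_p(\bar\mu_0^N)+(\text{const})V_p(\bar\rho_0)+\calR+\sup_{t\ge0}\widetilde\calN_t^N$ with $\widetilde\calN_t^N=\int_0^t e^{\bar\kappa s}\,d\calN_s^N$. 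By Doob's $L^q$ maximal inequality and the Burkholder--Davis--Gundy inequality $\bbE[\sup_t|\widetilde\calN_t^N|^q]\lesssim\bbE[\langle\widetilde\calN^N\rangle_\infty^{q/2}]$, and by Minkowski's integral inequality, the $1/N$ factor, the uniform $L^{pq}(\Omega)$ moment bound from Lemma \ref{lem: barX Lp bound} (valid since $\lambda>\bbflambda_{pq,\alpha,\sigma}$), and the exponential decay, this is $O(N^{-q/2})$. Since $\bbE\calR^q=O(N^{-pq/2})$ and, by Marcinkiewicz--Zygmund with $\bar\rho_0\in\calP_{pq}(\bbR^d)$, $\bbE[(V_p(\bar\mu_0^N)-2^{p-1}V_p(\bar\rho_0))_+^q]=O(N^{-q/2})$, collecting terms — and using that the threshold forces the deterministic constant in front of $V_p(\bar\rho_0)$ to be $\le2^{p-1}$ — yields $\bbE[(\sup_{t\ge0}e^{\bar\kappa t}V_p(\bar\mu_t^N)-2^pV_p(\bar\rho_0))_+^q]\le CN^{-q/2}$, and Markov's inequality gives \eqref{eq: coupling concentration}. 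Tracking which steps constrained $\bar\kappa$ then gives the stated range $0<\bar\kappa<p(\lambda-(\bar c_{\textnormal{con},p}\vee\bbflambda_{pq,\alpha,\sigma}))$.

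The main obstacle is the It\^o-correction term: once $\bar X_t^i-m_f^h[\bar\rho_t]$ is reinserted it is genuinely quadratic in the fluctuation $u_t$, and $u_t$ must be split so that no piece scales superlinearly in $V_p(\bar\mu_t^N)$ itself — this is where the decomposition through the population mean $\bbE\bar X_t$, the sharpened sensitivity bound for the regularized weighted mean, and Lemmas \ref{lem: bounding calB}--\ref{lem: mrho-mmu} are essential. Carrying all constants through the Young and Minkowski steps so that the clean baseline $2^pV_p(\bar\rho_0)$ survives — which is precisely why the consensus threshold is inflated by $\bar c_{\textnormal{con},p}$ rather than just $\bbflambda_{pq,\alpha,\sigma}$ — is the delicate bookkeeping; the remaining ingredients (Doob/BDG, Minkowski, Marcinkiewicz--Zygmund) are routine.
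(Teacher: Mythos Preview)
Your overall strategy (It\^o differential inequality, exponential weight, drift--martingale split, then Markov plus BDG/MZ on the pieces) matches the paper's proof in spirit, but you diverge in one important structural choice and this creates a real gap in your justification of the constant $2^p$.

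The paper does \emph{not} apply It\^o to $V_p(\bar\mu_t^N)=\frac1N\sum_i|\bar X_t^i-\bar\calM_t^N|^p$ directly. Instead it first centers around the \emph{population} mean via the elementary inequality $V_p(\bar\mu_t^N)\le \frac{2^p}{N}\sum_i|\bar X_t^i-\calM(\bar\rho_t)|^p$, then runs the It\^o argument on $\bar\calE_t:=e^{\bar\kappa t}\cdot\frac1N\sum_i|\bar X_t^i-\calM(\bar\rho_t)|^p$. The payoff is twofold. First, the $2^p$ in \eqref{eq: coupling concentration} is simply the conversion factor between these two centerings; it is not produced by any parameter tuning. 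Second, and more importantly, the source terms that appear after splitting $\bar X_t^i-m_f^h[\bar\rho_t]$ through $\calM(\bar\rho_t)$, $\calM(\bar\mu_t^N)$ and $m_f^h[\bar\mu_t^N]$ are $|\calM(\bar\rho_t)-\calM(\bar\mu_t^N)|^p$ and $|m_f^h[\bar\mu_t^N]-m_f^h[\bar\rho_t]|^p$, both of which are random and $O(N^{-p/2})$ in $L^q$ (by Marcinkiewicz--Zygmund and Lemma~\ref{lem: mrho-mmu}). There is no deterministic source at all, so the baseline $\bbE\bar\calE_0=V_p(\bar\rho_0)$ is never contaminated.

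In your approach, centering around $\bar\calM_t^N$ produces the source $|u_t|^p$ with $u_t=\bar\calM_t^N-m_f^h[\bar\rho_t]$, and your split $u_t=(\bar\calM_t^N-\bbE\bar X_t)+(\bbE\bar X_t-m_f^h[\bar\rho_t])$ leaves the second piece deterministic and of size $\lesssim V_p(\bar\rho_t)^{1/p}$. Integrating in time contributes a term $K\cdot V_p(\bar\rho_0)$ to the baseline, with $K$ depending on $C_\delta$, $\Lambda_\alpha$, and $(p(\lambda-\bbflambda_{p,\alpha,\sigma})-\bar\kappa)^{-1}$. Your assertion that ``the threshold forces the deterministic constant in front of $V_p(\bar\rho_0)$ to be $\le 2^{p-1}$'' is not correct: the threshold $\lambda>\bbflambda'_{p,q,\alpha,\sigma}$ is imposed to make the exponential integrals converge, not to cap this constant, and in general $K$ can be large. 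Your argument therefore yields a concentration inequality of the form $\bbP[\sup_t e^{\bar\kappa t}V_p(\bar\mu_t^N)\ge K'V_p(\bar\rho_0)+A]\le CA^{-q}N^{-q/2}$ for some parameter-dependent $K'$, which is perfectly adequate for the downstream applications (Lemma~\ref{lem: deltas} only needs a fixed threshold), but does not prove the statement as written with the explicit $2^p$. The fix is exactly the paper's centering trick; alternatively, routing your split through $m_f^h[\bar\mu_t^N]$ instead of $\bbE\bar X_t$ would also eliminate the deterministic piece.
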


\begin{proof} 
\noindent \textbf{Step 1: Reduction to centered $p$-moments and a differential inequality.}
    We first observe
    \begin{equation}\label{eq: frakM}
    \begin{split}
        V_p(\mmu) &:= \frac1N \sum_{i=1}^N |\bar X_t^i - \calM(\bar\mu_t^N)|^p\cr
        &\leq \frac{2^{p-1}}{N} \sum_{i=1}^N \lt( |\bar X_t^i - \calM(\bar\rho_t)|^p + \lt|\calM(\bar\rho_t) - \frac1N \sum_{i=1}^N \bar X_t^i \rt|^p \rt) \\
        &\leq \frac{2^p}{N}\sum_{i=1}^N|\bar{X}_t^i-\calM(\bar{\rho}_t)|^p,
    \end{split}
    \end{equation}
    and thus the evolution of $V_p(\bar\mu_t^N)$ can be estimated by that of $\frac1N\sum_{i=1}^N |\bar X_t^i - \bbE \bar X_t^i|^p$.
    
    From the identity in \eqref{eq: bar x pvar} and Jensen's inequality, we have
    \begin{align*}
        d\left(\frac{1}{N}\sum_{i=1}^N|\bar{X}_t^i-\bbE\bar{X}_t^i|^p\right)&\leq -\frac{p\lambda}{N}\sum_{i=1}^N |\bar{X}_t^i-\bbE\bar{X}_t^i|^pdt +\frac{p(p-1)\sigma^2}{2N}\sum_{i=1}^N|\bar{X}_t^i-\bbE\bar{X}_t^i|^{p-2}|\bar{X}_t^i-m_f^h[\bar{\rho}_t]|^2dt\\
        &\quad +\frac{p\sigma}{N}\sum_{i=1}^N |\bar{X}_t^i-\bbE\bar{X}_t^i|^{p-2}\langle \bar{X}_t^i-\bbE\bar{X}_t^i, \bar{X}_t^i-\mrho\rangle dW_t^i.
    \end{align*}
Here we estimate the second term on the right-hand side as follows. By H\"older's inequality, we first get
\bq\label{est_sec}
        \frac{1}{N}\sum_{i=1}^N|\bar{X}_t^i-\bbE\bar{X}_t^i|^{p-2}|\bar{X}_t^i-m_f^h[\bar{\rho}_t]|^2 \le \left(\frac{1}{N}\sum_{i=1}^N|\bar{X}_t^i - \bbE \bar{X}_t^i|^p \right)^\frac{p-2}{p}\left( \frac{1}{N} \sum_{i=1}^N|\bar{X}_t^i - m_f^h[\bar{\rho}_t]|^p\right)^\frac{2}{p},
\eq
    where the last term can be estimated further, using \eqref{eq: EbarX-mrho}, Minkowski and Jensen's inequalities:
    \begin{align*}
        \left(\frac{1}{N}\sum_{i=1}^N|\bar{X}_t^i-m_f^h[\bar{\rho}_t]|^p\right)^\frac{2}{p}&\leq 4\left(\frac{1}{N}\sum_{i=1}^N|\bar{X}_t^i-\bbE\bar{X}_t^i|^p\right)^\frac{2}{p}+4|\calM(\bar{\rho}_t)-\calM(\mmu)|^2\\
        &\quad \;+4|\calM(\mmu)-m_f^h[\mmu]|^2+4|m_f^h[\mmu]-m_f^h[\bar{\rho}_t]|^2\\
        &\leq 4(1+\Lambdaa^\frac{2}{p})\left(\frac{1}{N}\sum_{i=1}^N|\bar{X}_t^i-\bbE\bar{X}_t^i|^p\right)^\frac{2}{p}+4|\calM(\bar{\rho}_t)-\calM(\mmu)|^2\\
        &\quad \;+4|m_f^h[\mmu]-m_f^h[\bar{\rho}_t]|^2.
    \end{align*}
Moreover, by using Young's inequality, we obtain
    \begin{align*}
            4\left[\frac{1}{N}\sum_{i=1}^N|\bar{X}_t^i-\bbE\bar{X}_t^i|^p\right]^\frac{p-2}{p}|\calM(\bar{\rho}_t)-\calM(\mmu)|^2&\leq \frac{4(p-2)}{p}\left[\frac{1}{N}\sum_{i=1}^N|\bar{X}_t^i-\bbE\bar{X}_t^i|^p\right]+\frac{8}{p}|\calM(\bar{\rho}_t)-\calM(\mmu)|^p\\
            &\leq 4\left[\frac{1}{N}\sum_{i=1}^N|\bar{X}_t^i-\bbE\bar{X}_t^i|^p\right]+\frac{8}{p}|\calM(\bar{\rho}_t)-\calM(\mmu)|^p,
                \end{align*}
                and similarly,
                \[
        4\left[\frac{1}{N}\sum_{i=1}^N|\bar{X}_t^i-\bbE\bar{X}_t^i|^p\right]^\frac{p-2}{p}|m_f^h[\mmu]-m_f^h[\bar{\rho}_t]|^2 
            \leq 4\left[\frac{1}{N}\sum_{i=1}^N|\bar{X}_t^i-\bbE\bar{X}_t^i|^p\right]+\frac{8}{p}|m_f^h[\mmu]-m_f^h[\bar{\rho}_t]|^p.
\]
These combined with \eqref{est_sec} yield
    \begin{equation} \label{eq: bar pvar 2}
    \begin{split}
        d\left(\frac{1}{N}\sum_{i=1}^N|\bar{X}_t^i-\bbE\bar{X}_t^i|^p\right)\leq &-p(\lambda-\bar c_{\mathrm{con},p})\left[\frac{1}{N}\sum_{i=1}^N |\bar{X}_t^i-\bbE\bar{X}_t^i|^p\right]dt\\
        &+4(p-1)\left[|\calM(\bar{\rho}_t)-\calM(\mmu)|^p+|m_f^h[\mmu]-\mrho|^p\right]dt\\
        &+\frac{p\sigma}{N}\sum_{i=1}^N |\bar{X}_t^i-\bbE\bar{X}_t^i|^{p-2}\langle \bar{X}_t^i-\bbE\bar{X}_t^i, \bar{X}_t^i-\mrho\rangle dW_t^i.
    \end{split}
    \end{equation}
 By the hypothesis \eqref{eq: bar lambda prime}, we have $p(\lambda-\bar c_{\mathrm{con},p})>p \boldsymbol\lambda_{pq,\alpha,\sigma}>0$. Thus we fix
\[
0<\bar\kappa<p(\lambda-\bar c_{\mathrm{con},p}),
\]
to be further restricted later, and proceed to the weighted energy estimate in Step 2.

\medskip
\noindent \textbf{Step 2: Exponential weight and decomposition into drift + martingale.}  Motivated from \cite{GKHV25}, we define
\[
        \bar{\calE}_t:=\left[\frac{1}{N}\sum_{i=1}^N|\bar{X}_t^i-\bbE\bar{X}_t^i|^p\right]e^{\bar\kappa t}, \quad         \bar{Z}_t :=4(p-1)e^{\kappa t}\left[|\calM(\bar{\rho}_t)-\calM(\mmu)|^p+|m_f^h[\mmu]-\mrho|^p\right], 
        \]
        and
        \[
        d\bar{M}_t:=\left[\frac{p\sigma}{N}\sum_{i=1}^N |\bar{X}_t^i-\bbE\bar{X}_t^i|^{p-2}\langle \bar{X}_t^i-\bbE\bar{X}_t^i, \bar{X}_t^i-\mrho\rangle dW_t^i\right]e^{\bar \kappa t}.
\]
Multiplying \eqref{eq: bar pvar 2} by $e^{\bar\kappa t}$ and using $\bar\kappa<p(\lambda-\bar c_{\mathrm{con},p})$, we derive 
    \begin{align*}
        d\bar{\calE}_t\leq \bar{Z}_tdt+d\bar{M}_t \implies \bar{\calE}_t\leq \bar{\calE}_0+\int_0^t\bar{Z}_sds+\int_0^td\bar{M}_s.
    \end{align*}
Using Markov's inequality, we can split and estimate the event of concentration as
    \begin{align}\label{eq: three-split}
        \begin{aligned}
            &\bbP\left[\sup_{s\in[0, t]}\bar{\calE}_s\geq \bbE\bar{\calE}_0+A\right] \cr
            &\quad \leq \bbP\left[\sup_{s\in[0, t]}\left(\bar{\calE}_0+\int_0^s\bar{Z}_rdr+\int_0^sd\bar{M}_r\right)\geq \bbE\bar{\calE}_0+A\right]\\
        &\quad\leq \bbP\left[\bar{\calE}_0-\bbE\bar{\calE}_0\geq \frac{A}{3}\right]+\bbP\left[\sup_{s\in[0, t]}\left|\int_0^s\bar{Z}_rdr\right|\geq \frac{A}{3}\right] +\bbP\left[\sup_{s\in[0, t]}\left|\int_0^sd\bar{M}_r\right|\geq \frac{A}{3}\right]\\
        &\quad\leq \frac{3^q}{A^q}\bbE|\bar{\calE}_0-\bbE\bar{\calE}_0|^q+\frac{3^q}{A^q}\bbE\left[\sup_{s\in[0, t]}\left|\int_0^s\bar{Z}_rdr\right|^q\right] +\frac{3^q}{A^q}\bbE\left[\sup_{s\in[0, t]}\left|\int_0^sd\bar{M}_r\right|^q\right] \\
        &\quad=: \frac{3^q}{A^q}(I + II + III).
        \end{aligned}
    \end{align}
    
\noindent \textbf{ Step 3: Bounds for $I$, $II$, and $III$.} First, by the Marcinkiewicz--Zygmund inequality and independence of $\{\bar X_0^i\}$, we estimate
    \begin{align*}
       I &=\frac{1}{N^{q}}\bbE\left|\sum_{i=1}^N\left(|\bar{X}_0^i-\calM(\bar{\rho}_0)|^{p}-\bbE|\bar{X}_0^1-\calM(\bar{\rho}_0)|^{p}\right)\right|^{q}\\
        &\leq \frac{C_{\textnormal{MZ}, q}}{N^{\frac{q}{2}}}\bbE\left[\frac{1}{N}\sum_{i=1}^N\left[|\bar{X}_0^i-\calM(\bar{\rho}_0)|^{p}-\bbE|\bar{X}_0^i-\calM(\bar{\rho}_0)|^{p}\right]^2\right]^\frac{q}{2}\\
        &\leq \frac{C_{\textnormal{MZ},q}}{N^\frac{q}{2}}\left[\frac{1}{N}\sum_{i=1}^N\bbE\left[|\bar{X}_0^i-\calM(\bar{\rho}_0)|^{p}-\bbE|\bar{X}_0^i-\calM(\bar{\rho}_0)|^{p}\right]^q\right]\\
        &\leq \frac{2^{q}C_{\textnormal{MZ},q}}{N^\frac{q}{2}}\left[\frac{1}{N}\sum_{i=1}^N\bbE|\bar{X}_0^i-\calM(\bar{\rho}_0)|^{pq}\right]\\
        &\leq \frac{C}{N^\frac{q}{2}},
    \end{align*}
    where $C>0$ depends on $p,q$, and $\swabM_{pq}(\bar\rho_0)$, and the last line follows from Lemma \ref{lem: mean field limit exponential decay}.
    
    For the second term, we
    apply Minkowski's integral inequality and Lemmas \ref{lem: mean field limit exponential decay}, \ref{lem: mrho-mmu}, to find
    \begin{align*}
II &\leq \bbE\left[\left|\int_0^\infty \bar{Z}_rdr\right|^q\right]\\
        &\leq \left(\int_0^\infty \left(\bbE|\bar{Z}_r|^q\right)^\frac{1}{q}dr\right)^q\\
        &\leq 4^q(p-1)^q\left(\int_0^\infty \left[\left(\bbE|\calM(\bar{\rho}_t)-\calM(\mmu)|^{pq}\right)^\frac{1}{q}+\left(\bbE|m_f^h[\mmu]-\mrho|^{pq}\right)^\frac{1}{q}\right]e^{\bar \kappa r}dr\right)^q\\
        &\leq  \frac{4^q(p-1)^q}{N^\frac{pq}{2}}\left(\int_0^\infty C e^{-\left(p(\lambda-\bbflambda_{pq, \alpha, \sigma})-\bar \kappa \right)r}dr\right)^q\\
        &\leq \frac{C}{N^\frac{pq}{2}\left(p(\lambda-\bbflambda_{pq, \alpha, \sigma})-\bar \kappa\right)^q} ,
    \end{align*}
    where $C>0$ depends on $p,q,\Lambda_\alpha,\swabM_{pq}(\bar\rho_0)$, and the last line is valid provided that $\bar\kappa < p(\lambda-\bbflambda_{pq, \alpha, \sigma})$. Since $\lambda - \bbflambda_{pq,\alpha,\sigma} > \bar c_{\textnormal{con},p}$, it is clear that we can assume $\bar\kappa$ is small enough so that it lies in this range.

We finally estimate $III$. Under the given condition of $\lambda$, and owing to Lemma \ref{lem: mean field limit exponential decay}, we note that the process $\{\bar{M}_s\}_{s\geq 0}$ is a $\bbP$-martingale. Then we have
    \begin{align*}
        \bbE\left[\sup_{s\leq t}\left|\bar{M}_t\right|^q\right]&\leq C_{\textnormal{BDG}, q}\bbE\left[\left\langle\bar{M} \right\rangle_t^\frac{q}{2}\right]\\
        &\leq \frac{C_{\textnormal{BDG}, q}p^q\sigma^q}{N^\frac{q}{2}}\bbE\left[\frac{1}{N}\sum_{i=1}^N\left(\int_0^te^{2\bar \kappa s} |\bar{X}_s^i-\bbE\bar{X}_s^i|^{2p-2}|\bar{X}_s^i-m_f^h[\bar{\rho}_s]|^2 ds\right)^\frac{q}{2}\right]\\
        &= \frac{C_{\textnormal{BDG}, q}p^q\sigma^q}{N^\frac{q}{2}}\bbE\left[\int_0^te^{2\bar \kappa s} |\bar{X}_s^1-\bbE\bar{X}_s^1|^{2p-2}|\bar{X}_s^1-m_f^h[\bar{\rho}_s]|^2 ds\right]^\frac{q}{2}\\
        &\leq \frac{C_{\textnormal{BDG}, q}p^q\sigma^q}{N^\frac{q}{2}}\left[\int_0^te^{2\bar \kappa s}\left(\bbE\left[|\bar{X}_s^1-\bbE\bar{X}_s^1|^{(p-1)q}|\bar{X}_s^1-m_f^h[\bar{\rho}_s]|^q\right]\right)^\frac{2}{q}ds\right]^\frac{q}{2}\\
        &\leq \frac{C_{\textnormal{BDG}, q}p^q\sigma^q}{N^\frac{q}{2}}\left[\int_0^te^{2\bar \kappa s}\left[\left(\bbE|\bar{X}_s^1-\bbE\bar{X}_s^1|^{pq}\right)^\frac{p-1}{p}\left(\bbE|\bar{X}_s^1-m_f^h[\bar{\rho}_s]|^{pq}\right)^\frac{1}{p}\right]^\frac{2}{q}ds\right]^\frac{q}{2}\\
        &\leq \frac{C(p,q,\sigma,\swabM_{pq}(\bar\rho_0),\Lambda_\alpha)}{N^\frac{q}{2}}\left[\int_0^te^{(2\bar \kappa-2p(\lambda-\bbflambda_{pq, \alpha, \sigma}))s}ds\right]^\frac{q}{2} \\
        &\leq \frac{C}{N^\frac{q}{2}\left(p(\lambda-\bbflambda_{pq, \alpha, \sigma})-\bar \kappa \right)^\frac{q}{2}}.
    \end{align*}
In the above, the first line is due to the BDG inequality; the second line is by definition of the quadratic variation, the Cauchy--Schwarz inequality, and the following estimate
\begin{align*}
    \lt(\frac{1}{N^2}\sum_{i=1}^N \int_0^t |\sigma_i(s)|^2\,ds \rt)^{\frac{q}{2}} &= \frac{1}{N^{\frac{q}{2}}} \lt(\frac{1}{N}\sum_{i=1}^N \int_0^t |\sigma_i(s)|^2\,ds \rt)^{\frac{q}{2}} \le \frac{1}{N^{\frac{q}{2}}} \frac{1}{N} \sum_{i=1}^N \lt(\int_0^t |\sigma_i(s)|^2 \,ds \rt)^{\frac{q}{2}},
\end{align*}
which holds by Jensen for general sufficiently integrable processes $\sigma_i$. The third line then follows as the $\bar{X}_t^i$ are i.i.d. Next, the fourth line is by Minkowski's integral inequality, and the fifth is due to H\"older. Finally, the second-to-last line is due to Lemma \ref{lem: mean field limit exponential decay}  and the last line follows from direct computation.

\medskip
\noindent \textbf{ Step 4: Conclusion and choice of $\bar\kappa$.}
The bounds for $I,II,III$ obtained in Step 3 are independent of $t$. Hence, by the bounded convergence theorem we can pass to the limit $t\to\infty$ in \eqref{eq: three-split} and deduce
\begin{align*}
    \bbP\lt[\sup_{t\ge 0} \bar{\calE}_t \ge \bbE\bar{\calE}_0 + A \rt] 
    &\le \frac{C}{N^{\frac{q}{2}} A^q} \Bigg[ 1 + \frac{1}{N^{\frac{q}{2}(p-1)}\left(p(\lambda-\bbflambda_{pq, \alpha, \sigma})-\bar\kappa \right)^q} + \frac{1}{\left(p(\lambda-\bbflambda_{pq, \alpha, \sigma})-\bar \kappa \right)^\frac{q}{2}} \Bigg].
\end{align*}
Recalling \eqref{eq: frakM}, we have the relations
\[
2^p\bar{\calE}_t \ge e^{\bar\kappa t}V_p(\mmu), \quad \bbE\bar{\calE}_0 = V_p(\bar\rho_0).
\]
Consequently,
\[
        \bbP\left[\sup_{t\ge 0}e^{\bar\kappa t}V_p(\mmu)\geq 2^p \bbE V_p(\bar{\rho}_0) + 2^p A\right] \leq \bbP\left[\sup_{t\ge 0}\bar{\calE}_t\geq \bbE V_p(\bar{\rho}_0)+ A\right]  .
\]
Combining the two estimates yields the claimed concentration inequality
\[
\bbP\!\left[\sup_{t\ge0}e^{\bar\kappa t}V_p(\mmu)
\ge 2^pV_p(\bar\rho_0)+2^pA\right]
\le
\frac{C}{A^qN^{\frac q2}},
\]
where $C>0$ depends only on $\lambda,p,q,\sigma,\Lambdaa$, and $\bar\rho_0$.  
Finally, $\bar\kappa$ can be chosen in the admissible range
\[
0 < \bar\kappa < p (\lambda - (\bar c_{\textnormal{con},p} \vee \bbflambda_{pq,\alpha,\sigma})),
\]
which ensures all exponential integrals above are finite.  
\end{proof}

An analogous argument applies to the particle trajectories $\{X_t^i\}_{i=1}^N$, yielding a concentration estimate of the same form as in Lemma \ref{lem: coupling concentration}. 
The proof closely follows the reasoning of \cite[Remark 4.10]{GKHV25}, with modifications analogous to those used in Lemma \ref{lem: coupling concentration}. 
For completeness, we include a detailed proof in Appendix \ref{app: stability}.

\begin{lemma}[Concentration for the particle system] \label{lem: again concentration estimate}Let $p,q\geq 2$ and assume $X_{\rm in} \in L^{pq}(\Omega)$. For the particle trajectories, we suppose that $X_0^i$ are i.i.d. with $X_0^i \sim X_{\rm in}$ for all $i=1,\dots, N$. Denoting  
\[
c_{\textnormal{con},p}:= 2\Lambdaa^{1-\frac{2}{p}}\bflambda_{p, \alpha, \sigma}+2(p-1)\sigma^2, 
\]
further assume
\begin{equation}\label{eq: c con p 1}
    \lambda >\bflambda_{p,q, \alpha, \sigma}' :=\bflambda_{pq, \alpha, \sigma} + c_{\textnormal{con},p}.
\end{equation}
    Then for all small enough $\kappa>0$ and every $A>0$, the following concentration estimate holds:
\bq \label{eq: concentration} 
\bbP\left[\sup_{t\geq 0}e^{\kappa t}V_{p}(\mu_t^N)\geq \bbE\left[V_{p}(\mu_0^N)\right]+A\right]\leq \frac{C}{A^{q}N^\frac{q}{2}}, 
\eq
for a constant $C>0$ independent of $A$ and $N$. In particular, \eqref{eq: concentration} holds for all fixed $0 < \kappa < p(\lambda -( c_{\textnormal{con},p} \vee \bflambda_{pq,\alpha,\sigma}))$.
\end{lemma}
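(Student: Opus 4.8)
The plan is to mirror the four-step structure of Lemma~\ref{lem: coupling concentration} (following \cite[Remark~4.10]{GKHV25}), exploiting one simplification specific to the interacting system: since the noise in \eqref{eq: dynamics rewritten} already involves the \emph{empirical} weighted mean $m_f^h[\mu_t^N]$, there is no population reference to fluctuate against, so the entire analysis closes directly in terms of $V_p(\mu_t^N)$ and no analogue of the remainder $|\calM(\bar\rho_t)-\calM(\bar\mu_t^N)|^p+|m_f^h[\bar\mu_t^N]-m_f^h[\bar\rho_t]|^p$ of the coupling proof appears.

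\emph{Step 1 (a closed differential inequality).} I would apply It\^o's formula to $V_p(\mu_t^N)=\frac1N\sum_i|X_t^i-\calM(\mu_t^N)|^p$. Differentiating $\calM(\mu_t^N)$ gives $d\calM(\mu_t^N)=-\lambda(\calM(\mu_t^N)-m_t^h)\,dt+\frac\sigma N\sum_k(X_t^k-m_t^h)\,dW_t^k$, so $\xi_t^i:=X_t^i-\calM(\mu_t^N)$ has the \emph{clean} drift $-\lambda\xi_t^i\,dt$ while its diffusion coefficient against $dW_t^k$ equals $\sigma(X_t^i-m_t^h)\delta_{ik}-\frac\sigma N(X_t^k-m_t^h)$. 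The second-order It\^o terms then reproduce $\frac{p(p-1)}2\sigma^2\,\frac1N\sum_i|\xi_t^i|^{p-2}|X_t^i-m_t^h|^2\,dt$, the correlated part of the noise contributing only a factor $(1-\frac1N)^2\le1$ on the dominant piece and $O(1/N)$ positive remainders. Invoking the pathwise H\"older bounds (as in \eqref{eq: xt - mt}, but before taking expectations)
\[
|X_t^i-m_t^h|^p\le\Lambda_\alpha\,\tfrac1N\sum_k|X_t^i-X_t^k|^p,\qquad \tfrac1N\sum_i|X_t^i-m_t^h|^p\le 2^{p-1}(1+\Lambda_\alpha)V_p(\mu_t^N),
\]
together with $|\calM(\mu_t^N)-m_t^h|^p\le\Lambda_\alpha V_p(\mu_t^N)$ and Young's inequality, every $dt$-term is dominated by a multiple of $V_p(\mu_t^N)$; collecting everything (and absorbing the $O(1/N)$ pieces into the non-sharp constant $c_{\textnormal{con},p}$) yields
\[
dV_p(\mu_t^N)\le -p\bigl(\lambda-c_{\textnormal{con},p}\bigr)V_p(\mu_t^N)\,dt+dM_t,
\]
where $M_t$ is the It\^o martingale --- a genuine, not merely local, martingale because Lemma~\ref{lem: diff Lp} and $X_{\rm in}\in L^{pq}(\Omega)$ furnish finite, exponentially decaying moments of $|\xi_t^i|$ and $|X_t^i-m_t^h|$ up to order $pq$.

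\emph{Steps 2--3 (exponential weight and moment bounds).} Fixing $0<\kappa<p(\lambda-c_{\textnormal{con},p})$ and setting $\calE_t:=e^{\kappa t}V_p(\mu_t^N)$, the inequality above gives $\calE_t\le\calE_0+M_t'$ with $M_t':=\int_0^t e^{\kappa s}\,dM_s$ a martingale, whence by Markov and Doob's maximal inequality
\[
\bbP\Bigl[\sup_{t\ge0}\calE_t\ge\bbE\calE_0+A\Bigr]\le\frac{C}{A^q}\Bigl(\bbE|\calE_0-\bbE\calE_0|^q+\bbE\bigl[\sup_{t\ge0}|M_t'|^q\bigr]\Bigr).
\]
For the first term, $V_p(\mu_0^N)$ --- after a triangle inequality reducing the centering from $\calM(\mu_0^N)$ to the deterministic $\bbE X_{\rm in}$ (as in \eqref{eq: frakM}) --- is controlled by an average of i.i.d.\ random variables, so Marcinkiewicz--Zygmund and $X_{\rm in}\in L^{pq}(\Omega)$ give $\bbE|\calE_0-\bbE\calE_0|^q\le CN^{-q/2}$. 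For the martingale term, BDG followed by the Jensen step of Step~3 of Lemma~\ref{lem: coupling concentration} extracts a factor $N^{-q/2}$ from the quadratic variation; exchangeability of the $X_t^i$, Minkowski's integral inequality and H\"older then bound $\bbE[\sup_t|M_t'|^q]$ by $N^{-q/2}$ times $\bigl(\int_0^\infty e^{2\kappa s}(\bbE|\xi_s^1|^{pq})^{\frac{2(p-1)}{pq}}(\bbE|X_s^1-m_s^h|^{pq})^{\frac2{pq}}\,ds\bigr)^{q/2}$, which is finite by the exponential decay in Lemma~\ref{lem: diff Lp} (valid since $\lambda>\bflambda_{pq,\alpha,\sigma}$) as long as $\kappa<p(\lambda-\bflambda_{pq,\alpha,\sigma})$.

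\emph{Step 4 and the main obstacle.} Combining the two estimates yields $\bbP[\sup_{t\ge0}e^{\kappa t}V_p(\mu_t^N)\ge\bbE V_p(\mu_0^N)+A]\le CA^{-q}N^{-q/2}$ with $C$ independent of $A$ and $N$, and intersecting the two admissible ranges for $\kappa$ gives exactly $0<\kappa<p(\lambda-(c_{\textnormal{con},p}\vee\bflambda_{pq,\alpha,\sigma}))$, nonempty under $\lambda>\bflambda_{p,q,\alpha,\sigma}'$. I expect the delicate step to be the bookkeeping in Step~1: unlike the synchronous coupling, $\calM(\mu_t^N)$ here is driven by the correlated noise $\frac\sigma N\sum_k(X_t^k-m_t^h)\,dW_t^k$, which links all the particles' It\^o corrections, and one must verify that its net effect is only the harmless factor $(1-\frac1N)\le1$ on the leading second-order term plus genuinely $O(1/N)$ lower-order terms --- this, together with the pathwise $\Lambda_\alpha$-bound on $|X_t^i-m_t^h|$, is what pins down $c_{\textnormal{con},p}$ and must be tracked uniformly in $N$ (including small $N$). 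A secondary point is that the $X_t^i$ are only exchangeable, not independent, for $t>0$, so Marcinkiewicz--Zygmund can be invoked only at $t=0$; the argument is arranged precisely so that this suffices, the positive-time fluctuations being entirely absorbed into the closed inequality for $V_p(\mu_t^N)$.
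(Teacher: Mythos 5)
Your proposal matches the paper's proof in all essential respects: the paper likewise applies It\^o's formula directly to $V_p(\mu_t^N) = \frac1N\sum_i|\calK_t^i|^p$ with $\calK_t^i = X_t^i - \calM(\mu_t^N)$, absorbs the correlated noise coming from $\calM(\mu_t^N)$ (including the $(1-\frac1N)^2$ and $O(1/N)$ pieces you flag) into a closed differential inequality with coefficient $c_{\textnormal{con},p}$ via the pathwise triangle/H\"older bound \eqref{eq: triangle}, then runs the Markov + Marcinkiewicz--Zygmund + BDG machinery on $\calE_t = e^{\kappa t}V_p(\mu_t^N)$ exactly as in Lemma \ref{lem: coupling concentration}, but with a two-term split because no population-reference remainder is present. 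Your observations that the bookkeeping must be tracked uniformly in $N$ and that i.i.d.\ structure is invoked only at $t=0$ (exchangeability sufficing for $t>0$) are precisely the subtleties the paper's argument relies on.
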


%
%
%
%
%
%
%
%
%
%
\subsection{Proof of Theorem \ref{thm: propagation of chaos}}\label{subsec: proof of prop of chaos}
We now establish the uniform-in-time propagation of chaos.  
Throughout this subsection, we assume that
\bq\label{eq: lambda condition}
\begin{split}
    &\lambda > \widetilde \bflambda_{p,q,\alpha,\sigma}  := \max\{\bar\bflambda'_{p,q,\alpha,\sigma}, \bflambda'_{p,q,\alpha,\sigma}\} 
\end{split}
\eq      
where we recall that $\bar\bflambda'_{p,q,\alpha,\sigma}$ is given in \eqref{eq: bar lambda prime}, and $\bflambda'_{p,q,\alpha,\sigma}$   in \eqref{eq: c con p 1}. 
This condition ensures that both Lemmas \ref{lem: coupling concentration} and \ref{lem: again concentration estimate} are applicable.  Note that we can choose $\widetilde\kappa>0$ satisfying
\begin{equation}\label{eq: tilde kappa}
    \widetilde\kappa < p \lt[ \Big( \lambda - \left(\bar c_{\textnormal{con},p} \vee \bbflambda_{pq,\alpha,\sigma}\right) \Big) \wedge \Big(\lambda - \left(c_{\textnormal{con},p} \vee \bflambda_{pq,\alpha,\sigma}\right) \Big) \rt],
\end{equation}
so that both concentration estimates \eqref{eq: coupling concentration} and \eqref{eq: concentration} hold simultaneously with $\widetilde\kappa$ in place of $\bar\kappa$ and $\kappa$, respectively.

Moreover, the inequalities \eqref{eq: lambda condition} and \eqref{eq: bar lambda prime} together imply
\begin{align*}
    \lambda \ge  \bbflambda'_{p,q,\alpha,\sigma} > \bar c_{\textnormal{con},p} > 2\bbflambda_{p,\alpha,\sigma} > 2\bflambda_{p,\alpha,\sigma}.
\end{align*}
This guarantees that all preceding results, such as Lemmas \ref{lem: diff Lp} and \ref{lem: mean field limit exponential decay}, can be employed in the forthcoming analysis.

Following the classical strategy, we introduce the fluctuation variable
\begin{align*}
    Z_t^i := X_t^i - \bar X_t^i.
\end{align*}
For convenience, we also set
\begin{equation}\label{eq: notations}\begin{split}
    &\dm =m_f^h[\mu_t^N]-m_f^h[\mmu], \quad\dcal=\calM(\mu_t^N)-\calM(\mmu), \\
    &\bar{\calG}_{t, p}:=\frac{1}{N}\sum_{i=1}^N|Z_t^i|^{p}, \quad \bar{\calD}_{t, p}:=\frac{1}{N}\sum_{i=1}^N|Z_t^i-\dcal|^{p}, \quad \bar{\calO}_{t, p}:=|\dcal|^{p}.
\end{split} \end{equation}
Then each $Z_t^i$ satisfies
    \begin{align*}
        d Z_t^i =&-\lambda \left(Z_t^i -\left(m_f^h[\mu_t^N]-m_f^h[\bar{\rho}_t]\right)\right)dt +\sigma\left(Z_t^i - \left(m_f^h[\mu_t^N]-m_f^h[\bar{\rho}_t]\right)\right)dW_t^i.
    \end{align*}

%
%
%
%
%
%
%
%
%
%
 \subsubsection{Estimates for the fluctuation functionals}

   As discussed in Section \ref{subsec: apriori obs}, we regard the right-hand side as a perturbation of the mean-field dynamics. 
It is therefore crucial to control the discrepancy $\Delta_t^\calM - \Delta_t^m$, 
which quantifies the deviation between the empirical means of the interacting particle system and the mean-field process.  
The following lemma, relying on the concentration estimates from Lemmas \ref{lem: coupling concentration} and \ref{lem: again concentration estimate}, provides an $L^p$-estimate for this quantity.

\begin{lemma}[Control of mean discrepancies]\label{lem: deltas}
Let $p\ge 2$ and $q>2$. Let the law $\bar\rho_0$ (corresponding to $\bar X_0$) have finite $pq$-moments. Then whenever $\lambda > \widetilde\bflambda_{p,q,\alpha,\sigma}$, and $\tilde\kappa$ satisfies \eqref{eq: tilde kappa}, there exists $0 < \theta <  \widetilde\kappa$ for which
    \begin{equation*}
    \begin{split}
        \bbE|\dcal-\dm|^{p}&\leq C\left[\frac{1}{N^{q-2}}+ \bbE\bar{\calG}_{t, p}\right]e^{-\theta t},
    \end{split}
\end{equation*}
    where $C>0$ depends only on $\lambda, \alpha, \sigma, p, q$ and $\bar{\rho}_0$.
\end{lemma}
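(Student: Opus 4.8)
The plan is to recognize $\dcal-\dm$ as exactly the weighted-mean bias difference controlled by Lemma \ref{lem: bounding calB}. Indeed,
\[
\dcal-\dm=\big(\calM(\mu_t^N)-m_f^h[\mu_t^N]\big)-\big(\calM(\mmu)-m_f^h[\mmu]\big),
\]
so Lemma \ref{lem: bounding calB} applied with $(\mu,\nu)=(\mu_t^N,\mmu)$ gives
\[
|\dcal-\dm|\le\frac{L_{\omega_f^\alpha}\Lambdaa}{h(\alpha)}\big(V_p^{1/p}(\mu_t^N)+V_p^{1/p}(\mmu)\big)\,\calW_p(\mu_t^N,\mmu).
\]
Since the index-wise coupling $\frac1N\sum_i\delta_{(X_t^i,\bar X_t^i)}$ has marginals $\mu_t^N$ and $\mmu$, we have $\calW_p(\mu_t^N,\mmu)^p\le\frac1N\sum_i|Z_t^i|^p=\bar{\calG}_{t,p}$, and hence, using $(a+b)^p\le 2^{p-1}(a^p+b^p)$, the pointwise bound
\[
|\dcal-\dm|^p\le C\big(V_p(\mu_t^N)+V_p(\mmu)\big)\bar{\calG}_{t,p}.
\]

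From this bound I extract both the exponential decay and the negative power of $N$. The crucial structural point is that the factor $V_p(\mu_t^N)+V_p(\mmu)$ must be kept as a genuine multiplier, because it is the only quantity on the right that decays: $\bar{\calG}_{t,p}$ need not (in the synchronously coupled regime $\bar{\calG}_{t,p}\to|X_\infty^{N,\alpha}-x_\infty^\alpha|^p\ne0$). Two facts about this factor are used. First, from $V_p(\mu_t^N)\le N^{-2}\sum_{i,j}|X_t^i-X_t^j|^p$ and the analogue for the coupling, Lemmas \ref{lem: diff Lp} and \ref{lem: mean field limit exponential decay} yield $\big\|V_p(\mu_t^N)+V_p(\mmu)\big\|_{L^r}\le Ce^{-\rho_r t}$ for every $r\le q$, with $\rho_r>0$ whenever $\lambda>\widetilde\bflambda_{p,q,\alpha,\sigma}$. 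Second, the concentration estimates (Lemmas \ref{lem: coupling concentration} and \ref{lem: again concentration estimate}), applied with the exponent $\widetilde\kappa$ of \eqref{eq: tilde kappa}, give $e^{\widetilde\kappa t}\big(V_p(\mu_t^N)+V_p(\mmu)\big)\le K+\calR$, where $K$ depends only on $p$ and $\bar\rho_0$ and $\calR:=\big(\sup_{s\ge0}e^{\widetilde\kappa s}(V_p(\mu_s^N)+V_p(\bar\mu_s^N))-K\big)_+$ satisfies $\bbP[\calR\ge A]\le CA^{-q}N^{-q/2}$, hence $\bbE[\calR^{r}]\le CN^{-r/2}$ for each $r<q$ by the layer-cake formula.

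Substituting $V_p(\mu_t^N)+V_p(\mmu)\le e^{-\widetilde\kappa t}(K+\calR)$ into the pointwise bound and taking expectations, the $K$-contribution is $Ce^{-\widetilde\kappa t}\,\bbE\bar{\calG}_{t,p}$, already of the claimed form. For the $\calR$-contribution one estimates $\bbE[(\ldots)\bar{\calG}_{t,p}\mathbf 1_{\{\calR\text{ large}\}}]$ by Hölder's inequality, combining the $N^{-q/2}$ smallness of the ``bad'' event with the uniform-in-time moment bound $\sup_t\bbE\bar{\calG}_{t,p}^{r'}\le C$ (valid for $r'\le q$, from Lemmas \ref{lem: Lp bound} and \ref{lem: barX Lp bound} via $|Z_t^i|^p\le 2^{p-1}(|X_t^i|^p+|\bar X_t^i|^p)$) and, so as not to lose the time-decay, with the $L^r$-decay of the $V_p$-factor noted above. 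A careful choice of Hölder exponents — legitimate because $\bar\rho_0\in\calP_{pq}$ and $q>2$ — converts this into a bound $C\,N^{-(q-2)}e^{-\rho t}$ with $\rho>0$. One then takes $\theta$ to be any positive number strictly below $\widetilde\kappa$, below $\rho$, and below all the rates $\rho_r$ invoked; since these are all positive, such $\theta\in(0,\widetilde\kappa)$ exists, and this yields the asserted estimate.

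The step I expect to be the main obstacle is precisely this last one. One must keep the decaying $V_p$-factor multiplying $\bar{\calG}_{t,p}$ throughout, so that even the $N^{-(q-2)}$ term carries the factor $e^{-\theta t}$ (this is why one cannot simply dominate $\bar{\calG}_{t,p}$ by a time-uniform moment on the ``bad'' event); simultaneously one must balance the $N^{-q/2}$ concentration tail against the $pq$ moments of $\bar\rho_0$ in the Hölder triple so as to land exactly on the exponent $q-2$; and finally one must verify that the window of admissible $\theta$ is non-empty and contained in $(0,\widetilde\kappa)$. The remaining ingredients — the application of Lemma \ref{lem: bounding calB}, the index-wise coupling bound on $\calW_p$, and the uniform moment estimates — are routine.
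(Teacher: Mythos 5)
Your plan shares the paper's route up to the final H\"older step, which you flag as the main obstacle and leave unargued; that is indeed where the gap is, and there are two issues with what you sketch. First, the target rate $N^{-(q-2)}$ is not attainable with $pq$-moments, and the exponent in the lemma statement appears to be a misprint: the paper's own proof yields $N^{-(q-2)/2}$, which is the power actually used when this lemma is invoked in the estimate of $I$ in the proof of Lemma~\ref{lem: barcalD}. To see the ceiling, on the bad set $\Omega_{\widetilde\kappa}$ from the paper's proof a three-factor H\"older reads
\begin{equation*}
    \bbE\Big[\big(V_p(\mu_t^N)+V_p(\mmu)\big)\,\bar{\calG}_{t,p}\,\mathds{1}_{\Omega_{\widetilde\kappa}}\Big]
    \le \bbP[\Omega_{\widetilde\kappa}]^{1/a}\,\big\|V_p(\mu_t^N)+V_p(\mmu)\big\|_{L^b}\,\big\|\bar{\calG}_{t,p}\big\|_{L^c},
    \qquad \tfrac1a+\tfrac1b+\tfrac1c=1,
\end{equation*}
and since $V_p(\cdot)^{b}\le V_{pb}(\cdot)$ and $\bar{\calG}_{t,p}^{c}\le\bar{\calG}_{t,pc}$ by Jensen, finiteness of the last two norms costs $pb$- resp.\ $pc$-moments, forcing $b,c\le q$. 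Hence $1/a\le 1-2/q$ and the best available power of $N$ is $\bbP[\Omega_{\widetilde\kappa}]^{(q-2)/q}\lesssim N^{-(q-2)/2}$; the exponent $q-2$ is out of reach.

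Second, the $K+\calR$ packaging you lead with is not the right vehicle for even the $(q-2)/2$ rate. Once you substitute $V_p(\mu_t^N)+V_p(\mmu)\le e^{-\widetilde\kappa t}(K+\calR)$ and isolate the $\calR$-contribution $e^{-\widetilde\kappa t}\,\bbE[\calR\,\bar{\calG}_{t,p}]$, two-factor H\"older together with your (correct) layer-cake bound $\bbE\calR^{r}\le CN^{-r/2}$ for $r<q$ gives $(\bbE\calR^{a})^{1/a}(\bbE\bar{\calG}_{t,p}^{a'})^{1/a'}\le CN^{-1/2}$ for every admissible pair, because the outer exponent $1/a$ cancels the $a$-th moment of $\calR$; you get $N^{-1/2}$ no matter what, which is neither $q-2$ nor $(q-2)/2$ (and is worse than $(q-2)/2$ once $q>3$). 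When you then speak of a three-way H\"older ``on the bad event'', you are silently switching back to the $\Omega_{\widetilde\kappa}$ split, which is a different decomposition from $K+\calR$, and the two get conflated. The argument that closes is the paper's: split on $\Omega_{\widetilde\kappa}$ directly, use $(V_p(\mu_t^N)+V_p(\mmu))\mathds{1}_{\Omega\setminus\Omega_{\widetilde\kappa}}\le Ce^{-\widetilde\kappa t}$ pointwise on the good event, apply H\"older with exponents $(q/(q-2),q,q)$ on the bad event, and control the resulting $pq$-moments with Lemmas~\ref{lem: diff Lp}, \ref{lem: mean field limit exponential decay}, \ref{lem: Lp bound} and~\ref{lem: barX Lp bound}. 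This produces $N^{-(q-2)/2}\,e^{-p(\lambda-\bbflambda_{pq,\alpha,\sigma})t}+C\,\bbE\bar{\calG}_{t,p}\,e^{-\widetilde\kappa t}$, which is the claimed estimate with the corrected exponent and any $\theta\le\widetilde\kappa$.
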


\begin{proof}
    We recall from Lemma \ref{lem: bounding calB} that for $p\ge2$:
\begin{align}\label{eq: recall deltas}
    |\dcal-\dm|^{p}\leq \frac{1}{2}\left(\frac{2L_{\omega_f^\alpha}\Lambdaa}{h(\alpha)}\right)^{p}\left(V_{p}(\mu_t^N)+V_{p}(\mmu)\right)\calW_{p}^{p}(\mu_t^N, \mmu).
\end{align}
Thus, it suffices to estimate the expectation of 
\begin{equation}\label{eq: v w}
\left(V_{p}(\mu_t^N)+V_{p}(\mmu)\right)\calW_{p}^{p}(\mu_t^N, \mmu).
\end{equation}

We will compute the expectation of \eqref{eq: v w} by writing $\Omega = \Omega_{\tilde\kappa}\cup (\Omega\setminus \Omega_{\tilde\kappa})$, where
\begin{align*}
    \Omega_{\widetilde\kappa} = \left\{\omega \in \Omega\;:\; \sup_{t\geq 0} e^{\widetilde\kappa t}V_{p}(\mu_t^N)\geq V_p(\bar{\rho}_0)+1\right\} \cup \left\{\omega\in\Omega \;:\; \sup_{t\geq 0} e^{\widetilde\kappa t}V_{p}(\mmu)\geq2^pV_{p}(\bar{\rho}_0)+2^p\right\}.
\end{align*}
By definition of $\Omega_{\widetilde\kappa}$, we have
\begin{align*}
    \forall t\geq 0, \quad \left(V_{p}(\mu_t^N)+V_{p}(\mmu)\right) \mathds{1}_{\Omega \backslash \Omega_{\widetilde\kappa}} \leq (2^p+1)(V_p(\bar{\rho}_0)+1)e^{-\widetilde\kappa t}.
\end{align*}
On the other hand, by subadditivity of $\bbP$ and Lemmas \ref{lem: coupling concentration}, \ref{lem: again concentration estimate}, we find
\begin{align*}
    \mathbb{P}\left[\Omega_{\widetilde\kappa}\right]\leq \frac{C}{N^\frac{q}{2}}.
\end{align*}
Thus, the expectation of \eqref{eq: v w} is computed as
\begin{equation}\label{eq: vvw}
\begin{split}
    &\mathbb{E}\big[\big(V_{p}(\mu_t^N)+V_{p}(\mmu)\big)\mathcal{W}_{p}^{p}(\mu_t^N, \mmu)\big]\\
    &\quad= \mathbb{E}\left[\left(V_{p}(\mu_t^N)+V_{p}(\mmu)\right)\mathcal{W}_{p}^{p}(\mu_t^N,\mmu) \,\mathds{1}_{\Omega_{\widetilde\kappa}} \right]+\mathbb{E}\left[\left(V_{p}(\mu_t^N)+V_{p}(\mmu)\right)\mathcal{W}_{p}^{p}(\mu_t^N,\mmu)\,\mathds{1}_{\Omega\backslash\Omega_{\widetilde\kappa}}\right]\\
    &\quad\leq \mathbb{P}\big[\Omega_{\widetilde\kappa}\big]^\frac{q-2}{q} \left(\mathbb{E}\left[\left(V_{p}(\mu_t^N)+V_{p}(\mmu)\right)^{q/2}\mathcal{W}_{p}^{pq/2}(\mu_t^N,\mmu)\right] \right)^{\frac{2}{q}}+e^{-\widetilde\kappa t}  C_1 \bbE\bar{\calG}_{t, p},
\end{split}
\end{equation}
where $C_1:= (2^p+1)(V_p(\bar{\rho}_0)+1)$. It remains to compute the term in parentheses. Applying the Cauchy--Schwarz and Jensen's inequalities, and using \eqref{eq: frakM}, we obtain
\begin{align*}
    &\mathbb{E}\left[\left(V_{p}(\mu_t^N)+V_{p}(\mmu)\right)^{q/2}\mathcal{W}_{p}^{pq/2}(\mu_t^N,\mmu)\right]\\
    &\quad\leq \left(\mathbb{E}\left[\left(V_{p}(\mu_t^N)+V_{p}(\mmu)\right)^{q}\right]\right)^\frac{1}{2}\left(\mathbb{E}\mathcal{W}_{p}^{pq}(\mu_t^N,\mmu)\right)^\frac{1}{2}\\
    &\quad\leq \left(2^{q-1}\mathbb{E}V_{pq}(\mu_t^{N})+2^{q-1}\mathbb{E}V_{pq}(\mmu)\right)^\frac{1}{2}\left(\bbE\bar{\calG}_{t, pq}\right)^\frac{1}{2}\\
    &\quad\leq \left(2^{q-1}\mathbb{E}V_{pq}(\mu_t^{N})+\frac{2^{(p+1)q-1}}{N}\sum_{i=1}^N\bbE|\bar{X}_t^i-\calM(\bar{\rho}_t)|^{pq}\right)^\frac{1}{2}\left(\bbE\bar{\calG}_{t, pq}\right)^\frac{1}{2}.
\end{align*}
The right-hand side can be estimated further as follows. We use Lemmas \ref{lem: diff Lp} and \ref{lem: mean field limit exponential decay} to have
\begin{align*}
    2^{q-1}\bbE V_{pq}(\mu_t^N)&=\frac{2^{q-1}}{N}\sum_{i=1}^N\bbE|X_t^i-\calM(\mu_t^N)|^{pq}\\
    &\leq 2^{q-1}\bbE|X_0^1-X_0^2|^{pq}e^{-pq(\lambda-\bflambda_{pq, \alpha, \sigma})t} \\
        &\leq C \swabM_{pq}(\bar{\rho}_0)e^{-pq(\lambda-\bflambda_{pq, \alpha, \sigma})t}  
\end{align*}
and
\[    
    \frac{2^{(p+1)q-1}}{N}\sum_{i=1}^N\bbE|\bar{X}_t^i-\calM(\bar{\rho}_t)|^{pq} \le C \swabM_{pq}(\bar{\rho}_0)e^{-pq(\lambda -\bbflambda_{pq, \alpha, \sigma})t}.  
\]
Using Lemmas \ref{lem: Lp bound} and \ref{lem: barX Lp bound}, we also find
\[
  \bbE\calG_{t, pq} =\frac{1}{N}\sum_{i=1}^N\bbE|X_t^i-\bar{X}_t^i|^{pq} \le \frac{2^{pq-1}}{N} \sum_{i=1}^N (\bbE|X_t^i|^{pq} + \bbE|\bar X_t^i|^{pq} ) \leq C,
  \]
since we assume $\bar\rho_0$ has finite $pq$-th moments. Thus, combining the above estimates yields
\[
\mathbb{E}\left[\left(V_{p}(\mu_t^N)+V_{p}(\mmu)\right)^{q/2}\mathcal{W}_{p}^{pq/2}(\mu_t^N,\mmu)\right] \leq C e^{-\frac{pq}{2}(\lambda-\bbflambda_{pq, \alpha, \sigma})t}.
\]
where $C>0$ depends on $p,q,\sigma,\Lambda_\alpha,\bar\rho_0$ but is independent of $N$ and $t$. Subsequently, inserting this into \eqref{eq: vvw} gives
\[
\bbE[(V_p(\mu_t^N) + V_p(\bar\mu_t^N)) \calW_p^p(\mu_t^N,\bar\mu_t^N)] \leq \frac{C}{N^{\frac{q-2}{2}}}e^{-p(\lambda-\bbflambda_{pq, \alpha, \sigma})t}+C_1\bbE\bar{\calG}_{t, p}e^{-\widetilde\kappa t}. 
\]
Finally, taking expectations in \eqref{eq: recall deltas}, then inserting the above bound, we conclude the desired result.
\end{proof}

At this point we remind the reader of the notations in \eqref{eq: notations}. Our goal is to control the evolution of $\bar\calG_{t,p}$ uniformly in time and with explicit $N$-dependence. Using the elementary inequality
\begin{equation}\label{eq: relation}
            \bar\calG_{t, p} \leq 2^{p-1}\left(\bar\calD_{t, p}+\bar\calO_{t, p}\right) \leq \left(2^{2p-1}+2^{p-1}\right)\bar\calG_{t, p},
    \end{equation}
it suffices to estimate $\bar\calD_{t,p}$ and $\bar\calO_{t,p}$. We begin with $\bar\calD_{t,p}$.
\begin{lemma}[Estimate for $\bar\calD_{t,p}$] \label{lem: barcalD}
Assume that the assumptions of Theorem \ref{thm: propagation of chaos} hold. Then for any
\[
\widetilde\theta\in \lt(0,\frac{\theta}{p}\wedge (\lambda - \bbflambda_{p,\alpha,\sigma})\rt)
\] 
(where $\theta$ is the constant from Lemma \ref{lem: deltas}), there exists a positive constant $C>0$, dependent only on $\lambda, \sigma, p, q, \Lambdaa$, and $\bar{\rho}_0$, such that
    \begin{equation}\label{eq: barcalD}
    \frac{d}{dt}\bbE\bar{\calD}_{t, p}\leq C \lt( \bbE\bar{\calD}_{t,p}e^{-\widetilde\theta t} + \bbE\bar{\calO}_{t, p}e^{-\widetilde\theta t}+\frac{1}{N^{\frac{p}{2}\wedge \frac{q-2}{2}}}e^{-\widetilde\theta t} \rt).
\end{equation}
    \end{lemma}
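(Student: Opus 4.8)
The plan is to derive a closed differential inequality for $\bbE\bar{\calD}_{t,p}=\frac1N\sum_{i=1}^N\bbE|Z_t^i-\dcal|^p$ by applying It\^o's formula to $|Z_t^i-\dcal|^p$, averaging over $i$, taking expectations, and then feeding in the mean-discrepancy bound of Lemma \ref{lem: deltas} together with the decay estimates of Lemmas \ref{lem: mrho-mmu} and \ref{lem: mean field limit exponential decay}. The uniform $L^{pq}$ bounds of Lemmas \ref{lem: Lp bound} and \ref{lem: barX Lp bound} (available since $\lambda>\widetilde\bflambda_{p,q,\alpha,\sigma}$) guarantee that all martingale terms below have zero expectation. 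First I would set $\xi_t^i:=Z_t^i-\dcal$ and compute its SDE: since $X_t^i$ and $\bar X_t^i$ share the driving noise $W_t^i$, subtracting the two equations and then subtracting the empirical mean of the resulting $Z$-equations makes the $\mrho$-terms cancel in the drift, leaving
\[
d\xi_t^i=-\lambda\xi_t^i\,dt+\sigma\sum_{j=1}^N c_{ij}\big(Z_t^j-B_t\big)\,dW_t^j,\qquad B_t:=m_f^h[\mu_t^N]-\mrho,
\]
with $c_{ii}=1-\tfrac1N$ and $c_{ij}=-\tfrac1N$ for $j\neq i$. Applying It\^o to $|\xi_t^i|^p$ and using the standard bound $\tfrac p2|\xi|^{p-2}\mathrm{Tr}(\cdot)+\tfrac{p(p-2)}2|\xi|^{p-4}(\cdot)\le\tfrac{p(p-1)}2|\xi|^{p-2}\sum_j c_{ij}^2|Z_t^j-B_t|^2$ for the second-order term (valid as $p\ge2$), averaging over $i$ and taking expectations yields
\[
\frac{d}{dt}\bbE\bar{\calD}_{t,p}\le -p\lambda\,\bbE\bar{\calD}_{t,p}+\frac{p(p-1)}2\sigma^2\,\frac1N\sum_{i=1}^N\bbE\Big[|\xi_t^i|^{p-2}\sum_{j=1}^N c_{ij}^2|Z_t^j-B_t|^2\Big].
\]

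The core step is to estimate the noise term. I would use the decomposition $Z_t^j-B_t=\xi_t^j+S_t$ with $S_t:=\dcal-B_t=(\dcal-\dm)-R_t$ and $R_t:=m_f^h[\mmu]-\mrho$; this is where Lemma \ref{lem: bounding calB} enters, through Lemma \ref{lem: deltas}. Expanding $|Z_t^j-B_t|^2=|\xi_t^j|^2+2\langle\xi_t^j,S_t\rangle+|S_t|^2$ and using $\sum_j c_{ij}^2\le1$, $\sum_{j\neq i}c_{ij}^2\le\tfrac1N$, the identity $\sum_j\xi_t^j=0$ (so that $\sum_j c_{ij}^2\xi_t^j=(1-\tfrac2N)\xi_t^i$, which makes the cross term vanish after averaging over $i$ when $p=2$ and, for $p>2$, lets it be absorbed by Young's inequality with an arbitrarily small prefactor on $|\xi_t^i|^p$), together with Jensen to reduce lower centered moments to $\bar{\calD}_{t,p}$, I obtain, for every $\epsilon>0$,
\[
\frac1N\sum_{i=1}^N\bbE\Big[|\xi_t^i|^{p-2}\sum_{j=1}^N c_{ij}^2|Z_t^j-B_t|^2\Big]\le\big(1+\tfrac1N+\epsilon\big)\bbE\bar{\calD}_{t,p}+C_\epsilon\,\bbE|S_t|^p .
\]
Since $\lambda>\widetilde\bflambda_{p,q,\alpha,\sigma}$ forces in particular $\lambda>\bbflambda_{p,\alpha,\sigma}\ge(p-1)\sigma^2>\tfrac34(p-1)\sigma^2$, one has $-p\lambda+\tfrac{p(p-1)}2\sigma^2(1+\tfrac1N+\epsilon)<0$ for a suitable small $N$-independent $\epsilon$ and all $N\ge2$ (the case $N=1$ is trivial, as the martingale part then vanishes), so the $\bbE\bar{\calD}_{t,p}$ contribution coming from the drift is strictly negative and may be discarded.

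It then remains to control $\bbE|S_t|^p\le 2^{p-1}(\bbE|\dcal-\dm|^p+\bbE|R_t|^p)$. By Lemma \ref{lem: deltas} and \eqref{eq: relation}, $\bbE|\dcal-\dm|^p\le C\big(N^{-(q-2)}+\bbE\bar{\calD}_{t,p}+\bbE\bar{\calO}_{t,p}\big)e^{-\theta t}$, and by Lemma \ref{lem: mrho-mmu}, $\bbE|R_t|^p\le CN^{-p/2}e^{-p(\lambda-\bbflambda_{p,\alpha,\sigma})t}$. Substituting these, the total coefficient of $\bbE\bar{\calD}_{t,p}$ becomes $\le-\epsilon+Ce^{-\theta t}\le Ce^{-\widetilde\theta t}$ (using $\bbE\bar{\calD}_{t,p}\ge0$ and $\widetilde\theta\le\theta$), the $\bbE\bar{\calO}_{t,p}$-term carries the factor $e^{-\theta t}\le e^{-\widetilde\theta t}$, and the two $N$-dependent remainders combine — via $q-2\ge(q-2)/2$ and $\widetilde\theta\le\lambda-\bbflambda_{p,\alpha,\sigma}\le p(\lambda-\bbflambda_{p,\alpha,\sigma})$ — into $CN^{-(\frac p2\wedge\frac{q-2}2)}e^{-\widetilde\theta t}$. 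This is precisely \eqref{eq: barcalD}, with $C$ depending only on $\lambda,\sigma,p,q,\Lambdaa,\bar\rho_0$.

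The main obstacle I anticipate is the noise-term bookkeeping in the second step: getting the $\tfrac1N$ factors and the cross-term cancellation exactly right for general $p\ge2$, and verifying that the threshold on $\lambda$ leaves enough room after the Young splitting to keep the $|\xi_t^i|^p$-coefficient strictly below $p\lambda$ uniformly in $N$; everything else is an assembly of the already-established Lemmas \ref{lem: bounding calB}, \ref{lem: mrho-mmu-placeholder}–type estimates (i.e.\ Lemmas \ref{lem: mrho-mmu}, \ref{lem: mean field limit exponential decay} and \ref{lem: deltas}).
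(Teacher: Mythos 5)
Your proof is correct, and it takes a genuinely different (if parallel) route from the paper's. You first write the explicit SDE for $\xi_t^i = Z_t^i - \Delta_t^\calM$ and apply It\^o to $|\xi_t^i|^p$, whereas the paper applies It\^o directly to the averaged quantity $\bar\calD_{t,p}$; the two are equivalent after averaging, since your coefficient identity $\sum_j c_{ij}^2|Z_t^j - B_t|^2 = (1-\tfrac{2}{N})|Z_t^i - B_t|^2 + \tfrac{1}{N^2}\sum_j|Z_t^j-B_t|^2$ reproduces exactly the paper's grouping of the single-sum and $\tfrac1{N^3}$ double-sum terms. The more substantive difference is in how the It\^o correction is estimated: you use the \emph{exact} algebraic expansion $|Z_t^j - B_t|^2 = |\xi_t^j|^2 + 2\langle\xi_t^j,S_t\rangle + |S_t|^2$ with $S_t = (\Delta_t^\calM - \Delta_t^m) - (m_f^h[\bar\mu_t^N]-m_f^h[\bar\rho_t])$ and exploit $\sum_j\xi_t^j=0$ to kill the cross term exactly for $p=2$ (absorbing it by Young for $p>2$), then apply Lemma \ref{lem: deltas}, Lemma \ref{lem: mrho-mmu}, and \eqref{eq: relation} to $\bbE|S_t|^p$ as a whole. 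The paper instead uses the crude triangle bound $|Z_t^i - B_t|^2\le 3\big(|\xi_t^i|^2 + |\Delta_t^\calM - \Delta_t^m|^2 + |m_f^h[\bar\mu_t^N]-m_f^h[\bar\rho_t]|^2\big)$, H\"older-splits each increment as $(\bbE\bar\calD_{t,p})^{\frac{p-2}{p}}(\cdots)^{2/p}$ before invoking Lemma \ref{lem: deltas}, and treats the $\tfrac1{N^3}$ double-sum term separately via Lemmas \ref{lem: diff Lp} and \ref{lem: mean field limit exponential decay}. Your decomposition yields slightly sharper constants — a decay factor $e^{-\theta t}$ and an $N$-rate $N^{-(q-2)}$ from the $\Delta_t^\calM-\Delta_t^m$ contribution, versus the paper's $e^{-\frac{2\theta}{p}t}$ and $N^{-\frac{q-2}{2}}$ — but both are dominated by the $e^{-\widetilde\theta t}$ and $N^{-(\frac p2\wedge\frac{q-2}2)}$ stated in \eqref{eq: barcalD}. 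One small point: your coefficient bound $1+\tfrac1N+\epsilon$ can actually be tightened to $1-\tfrac1N+\epsilon$ using $\sum_j c_{ij}^2 = 1-\tfrac1N$ rather than $\le 1$, though either suffices since $\lambda > \widetilde\bflambda_{p,q,\alpha,\sigma} > \bbflambda_{p,\alpha,\sigma}\ge(p-1)\sigma^2$ leaves ample room after Young.
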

\begin{proof}
    Applying the It\^o formula to $\bar{\calD}_{t, p}=\frac{1}{N}\sum_{i=1}^N|Z_t^i-\dcal|^{p}$ and using the SDE for $Z_t^i$ yields
    \begin{align}
            d\bar{\calD}_{t, p}=&-p\lambda \bar{\calD}_{t, p}dt \nonumber\\
            &+\frac{p\sigma^2}{2N}\left(1-\frac{2}{N}\right)\sum_{i=1}^N|Z_t^i-\dcal|^{p-2}|Z_t^i-(m_f^h[\mu_t^N]-m_f^h[\bar{\rho}_t])|^2dt \label{eq: bar d 2}\\
            &+\frac{p\sigma^2}{2N^3}\sum_{i=1}^N\sum_{j=1}^N|Z_t^i-\dcal|^{p-2}|Z_t^j-(m_f^h[\mu_t^N]-m_f^h[\bar{\rho}_t])|^2dt \label{eq: bar d 3}\\
            &+\frac{p(p-2)\sigma^2}{2N}\left(1-\frac{2}{N}\right)\sum_{i=1}^N|Z_t^i-\dcal|^{p-4}\langle Z_t^i-\dcal,Z_t^i-(m_f^h[\mu_t^N]-m_f^h[\bar{\rho}_t])\rangle ^2dt \label{eq: bar d 4}\\
            &+\frac{p(p-2)\sigma^2}{2N^3}\sum_{i=1}^N\sum_{j=1}^N|Z_t^i-\dcal|^{p-4}\langle Z_t^i       -\dcal,Z_t^j-(m_f^h[\mu_t^N]-m_f^h[\bar{\rho}_t])\rangle ^2dt \label{eq: bar d 5}\\
            &+\frac{p\sigma}{N}\sum_{i=1}^N|Z_t^i-\dcal|^{p-2}\langle Z_t^i-\dcal, Z_t^i-(m_f^h[\mu_t^N]-m_f^h[\bar{\rho}_t])\rangle dW_t^i \nonumber\\
            &-\frac{p\sigma}{N^2}\sum_{i=1}^N\sum_{j=1}^N|Z_t^i-\dcal|^{p-2}\langle Z_t^i-\dcal, Z_t^j-(m_f^h[\mu_t^N]-m_f^h[\bar{\rho}_t])\rangle dW_t^j. \nonumber
        \end{align}
Taking expectations, the stochastic integrals vanish. Regarding the expectation of \eqref{eq: bar d 2}, we use simple algebraic manipulations such as
    \begin{align*}
        &\sum_{i=1}^N \bbE |Z_t^i - \Delta_t^\calM|^{p-2} |Z_t^i - (m_f^h[\mu_t^N] - m_f^h[\bar\rho_t])|^2 \\
        &\quad = \sum_{i=1}^N \bbE |Z_t^i - \Delta_t^\calM|^{p-2} |Z_t^i - \dcal +\dcal  -\dm -m_f^h[\bar{\mu}_t^N]+ m_f^h[\bar\rho_t])|^2 \\
        &\quad \le  3\bbE\calD_{t, p}+3\sum_{i=1}^N\bbE|Z_t^i-\dcal|^{p-2}|\Delta_t^\calM  - \dm|^2+3\sum_{i=1}^N\bbE|Z_t^i-\dcal|^{p-2}|m_f^h[\mu_t^N] - m_f^h[\bar\rho_t]|^2.
    \end{align*}       
    An analogous estimate holds for \eqref{eq: bar d 4}. Grouping \eqref{eq: bar d 3} and \eqref{eq: bar d 5} together, we then have at this point
    \begin{equation*}
        \begin{split}
            \frac{d}{dt}\bbE\bar{\calD}_{t, p}&\leq-p\left({\lambda}-\frac{3(p-1)\sigma^2}{2}\right) \bbE\bar{\calD}_{t, p} +\frac{3p(p-1)\sigma^2}{2N}\sum_{i=1}^N\bbE\left[|Z_t^i-\dcal|^{p-2}|\dcal-\dm|^2\right]\\
        &\quad \;+\frac{3p(p-1)\sigma^2}{2N}\sum_{i=1}^N\bbE\left[|Z_t^i-\dcal|^{p-2}|m_f^h[\bar{\mu}_t^N]-m_f^h[\bar{\rho}_t]|^2\right]\\
        &\quad\;+\frac{p(p-1)\sigma^2}{2N^3}\sum_{i=1}^N\sum_{j=1}^N\bbE\left[|Z_t^i-\dcal|^{p-2}|Z_t^j-(m_f^h[\mu_t^N]-\mrho)|^2\right]\\
        &=:-p\left({\lambda}-\frac{3(p-1)\sigma^2}{2}\right) \bbE\calD_{t, p}+ I + II + III.
        \end{split}
    \end{equation*}
    By our assumption on $\lambda$, the first term in the inequality above is negative, thus it suffices to estimate the remaining terms, $I, II$, and $III$.  
    
    For the estimate of $I$, by using H\"older's inequality, Lemma \ref{lem: deltas}, and \eqref{eq: relation}, we obtain
    \begin{align*}
   I&\leq \frac{3p(p-1)\sigma^2}{2}\left(\bbE\bar{\calD}_{t, p}\right)^\frac{p-2}{p}\left(\bbE|\dcal-\dm|^p\right)^\frac{2}{p}\\
        &\leq C \left(\bbE\bar{\calD}_{t, p}\right)^\frac{p-2}{p}\left[\frac{1}{N^{\frac{q-2}{2}}} + \bbE\bar{\calG}_{t, p}\right]^\frac{2}{p}e^{-\frac{2}{p}\theta t}   \\
        &\leq C \left(\bbE\bar{\calD}_{t, p}\right)^\frac{p-2}{p}\left[\frac{1}{N^{\frac{q-2}{2}}}+2^{p-1}(\bbE\bar{\calD}_{t, p}+\bbE\bar{\calO}_{t, p})\right]^\frac{2}{p}e^{-\frac{2}{p}\theta t}  \\
        &\leq C \left[\frac{1}{N^\frac{q-2}{p}}\left(\bbE\bar{\calD}_{t, p}\right)^\frac{p-2}{p}+4\bbE\bar{\calD}_{t, p}+4\left(\bbE\bar{\calD}_{t, p}\right)^\frac{p-2}{p}\left(\bbE\bar{\calO}_{t, p}\right)^\frac{2}{p}\right]e^{-\frac{2}{p}\theta t}\\
        &\leq C \left[\frac{1}{N^{\frac{q-2}{2}}} + \bbE\bar{\calD}_{t, p}+\bbE\bar{\calO}_{t, p} \right]e^{-\frac{2}{p}\theta t} ,
    \end{align*}
    where the last line follows from Young's inequality.

    In a similar manner, by using Lemma \ref{lem: mrho-mmu} and Young's inequality, we estimate
    \begin{align*}
            II&\leq \frac{3p(p-1)\sigma^2}{2N}\left(\bbE\bar{\calD}_{t, p}\right)^\frac{p-2}{p}\left(\bbE|m_f^h[\bar{\mu_t}^N]-m_f^h[\bar{\rho}_t]|^p\right)^\frac{2}{p}\\
        &\leq \frac{ C}{N^2}\left(\bbE\bar{\calD}_{t, p}\right)^\frac{p-2}{p}e^{-2(\lambda-\bbflambda_{p, \alpha, \sigma})t}  \\
        &\leq C \left(\frac{1}{N^p} + \bbE\bar{\calD}_{t, p}\right)e^{-2(\lambda-\bbflambda_{p, \alpha, \sigma})t}.
        \end{align*}
        
For $III$, we recall (see the beginning of Section \ref{subsec: proof of prop of chaos}) that both Lemmas \ref{lem: diff Lp} and \ref{lem: mean field limit exponential decay} are available due to $\lambda$ being large enough. Hence, we have
    \begin{align*}
III &\leq \frac{p(p-1)\sigma^2}{2N^3}\sum_{i=1}^N\sum_{j=1}^N\left(\bbE|Z_t^i-\dcal|^p\right)^\frac{p-2}{p}\left(\bbE|Z_t^j-(m_f^h[\mu_t^N]-\mrho)|^p\right)^\frac{2}{p}\\
        &\leq \frac{p(p-1)\sigma^2}{N}\left(\bbE\calD_{t, p}\right)^\frac{p-2}{p}\left(\frac{1}{N}\sum_{i=1}^N\left(\bbE|X_t^i-m_f^h[\mu_t^N]|^p\right)^\frac{2}{p}+\left(\bbE|\bar{X}_t^i-\mrho|^p\right)^\frac{2}{p}\right)\\
        &\leq \frac{C}{N}\left(\bbE\bar{\calD}_{t, p}\right)^\frac{p-2}{p} e^{-2(\lambda - \bbflambda_{p,\alpha,\sigma})t} \\ 
        &\leq C \left[\frac{1}{N^\frac{p}{2}} + \bbE\bar{\calD}_{t, p}\right]e^{-2(\lambda-\bbflambda_{p, \alpha, \sigma})t}.
    \end{align*}
    
Collecting the bounds for $I,II,III$ and choosing $\widetilde\theta>0$ sufficiently small ($\widetilde\theta<\frac{\theta}{p}$ and $\widetilde\theta<\lambda-\bbflambda_{p,\alpha,\sigma}$), we absorb the exponentially decaying factors into $e^{-\widetilde\theta t}$ and obtain \eqref{eq: barcalD} with a suitable constant $C>0$ that is independent of $t$ and $N$.
\end{proof}

\begin{lemma}[Estimate for $\bar\calO_{t,p}$]  \label{lem: baecalO}
Assume that the assumptions of Theorem \ref{thm: propagation of chaos} hold. Then there exists a constant $C>0$, depending only on $\lambda,\sigma,p,q,\Lambda_\alpha$ and $\bar\rho_0$, such that for the same $\widetilde\theta>0$ as in Lemma \ref{lem: barcalD}:
\begin{equation}\label{eq: barcalO}
    \frac{d}{dt}\bbE\bar{\calO}_{t, p}\leq C\lt( \bbE\bar{\calO}_{t,p}e^{-\widetilde\theta t} + \bbE\bar{\calD}_{t, p}e^{-\widetilde\theta t}+\frac{1}{N^{\frac{p}{2}\wedge\frac{q-2}{2}}}e^{-\widetilde\theta t} \rt).
\end{equation}
\end{lemma}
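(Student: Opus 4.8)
The plan is to run the It\^o-calculus argument of Lemma~\ref{lem: barcalD} directly on the scalar quantity $\bar{\calO}_{t, p}=|\dcal|^p$, where $\dcal=\calM(\mu_t^N)-\calM(\mmu)=\tfrac1N\sum_{i=1}^N Z_t^i$. Averaging the SDE for $Z_t^i$ over $i$ gives, abbreviating $\zeta_t:=m_f^h[\mu_t^N]-m_f^h[\bar\rho_t]$,
\[
d\dcal=-\lambda\bigl(\dcal-\zeta_t\bigr)\,dt+\frac{\sigma}{N}\sum_{i=1}^N\bigl(Z_t^i-\zeta_t\bigr)\,dW_t^i,
\]
so that the martingale part carries a prefactor $1/N$ and has quadratic variation $\tfrac{\sigma^2}{N^2}\sum_{i=1}^N(Z_t^i-\zeta_t)\otimes(Z_t^i-\zeta_t)\,dt$. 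Applying It\^o's formula to $|\dcal|^p$, taking expectations (the stochastic integral has zero mean, by the uniform moment bounds of Lemmas~\ref{lem: Lp bound} and \ref{lem: barX Lp bound} together with the decay of $\bbE|Z_t^i-\zeta_t|^p$ recorded below), and bounding $|\dcal|^{p-4}\langle\dcal,Z_t^i-\zeta_t\rangle^2\le|\dcal|^{p-2}|Z_t^i-\zeta_t|^2$, I arrive at
\[
\frac{d}{dt}\bbE\bar{\calO}_{t,p}\le -p\lambda\,\bbE\bigl[|\dcal|^{p-2}\langle\dcal,\dcal-\zeta_t\rangle\bigr]+\frac{p(p-1)\sigma^2}{2N^2}\sum_{i=1}^N\bbE\bigl[|\dcal|^{p-2}|Z_t^i-\zeta_t|^2\bigr].
\]

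The drift term is the delicate one: the process $\dcal$ has no self-contraction, so the naive estimate would retain $-p\lambda|\dcal|^p$ without any way to dominate it by the small right-hand quantities. The remedy is the identity $\dcal-\zeta_t=(\dcal-\dm)-e_t$, where $e_t:=m_f^h[\mmu]-m_f^h[\bar\rho_t]$; the $|\dcal|^2$ contribution then cancels and
\[
-p\lambda\,|\dcal|^{p-2}\langle\dcal,\dcal-\zeta_t\rangle=-p\lambda\,|\dcal|^{p-2}\langle\dcal,\dcal-\dm\rangle+p\lambda\,|\dcal|^{p-2}\langle\dcal,e_t\rangle\le p\lambda\,|\dcal|^{p-1}\bigl(|\dcal-\dm|+|e_t|\bigr),
\]
a product of $|\dcal|^{p-1}$ with manifestly small discrepancies. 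By H\"older's inequality, Lemma~\ref{lem: deltas} (using \eqref{eq: relation} to replace $\bbE\bar{\calG}_{t, p}$ by $\bbE\bar{\calD}_{t, p}+\bbE\bar{\calO}_{t, p}$), Lemma~\ref{lem: mrho-mmu} applied to $e_t$, and Young's inequality, this contribution is bounded by $C\bigl(\bbE\bar{\calO}_{t,p}+\bbE\bar{\calD}_{t,p}+N^{-(q-2)}\bigr)e^{-\theta t/p}+C\bigl(\bbE\bar{\calO}_{t,p}+N^{-p/2}\bigr)e^{-(\lambda-\bbflambda_{p, \alpha, \sigma})t}$, with $\theta>0$ the exponent from Lemma~\ref{lem: deltas}.

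For the diffusion correction I would use the other decomposition $Z_t^i-\zeta_t=(X_t^i-m_f^h[\mu_t^N])-(\bar X_t^i-m_f^h[\bar\rho_t])$: each bracket decays exponentially by \eqref{ineq_key} and Lemma~\ref{lem: mean field limit exponential decay}, so $\tfrac1N\sum_{i=1}^N\bbE|Z_t^i-\zeta_t|^p\le C\,\swabM_p(\bar\rho_0)\,e^{-p(\lambda-\bbflambda_{p, \alpha, \sigma})t}$. Writing $\tfrac1{N^2}\sum_i|\dcal|^{p-2}|Z_t^i-\zeta_t|^2=\tfrac1N|\dcal|^{p-2}\bigl(\tfrac1N\sum_i|Z_t^i-\zeta_t|^2\bigr)$ and combining H\"older's inequality, Jensen's inequality in the form $(\tfrac1N\sum_i a_i^2)^{p/2}\le\tfrac1N\sum_i a_i^p$, and Young's inequality, this term is dominated by $\tfrac CN(\bbE\bar{\calO}_{t,p})^{(p-2)/p}e^{-2(\lambda-\bbflambda_{p, \alpha, \sigma})t}\le C\bigl(\bbE\bar{\calO}_{t,p}+N^{-p/2}\bigr)e^{-2(\lambda-\bbflambda_{p, \alpha, \sigma})t}$. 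Gathering everything, the exponential rates that occur are $\theta/p$, $\lambda-\bbflambda_{p, \alpha, \sigma}$ and $2(\lambda-\bbflambda_{p, \alpha, \sigma})$, each exceeding the $\widetilde\theta\in\bigl(0,\tfrac\theta p\wedge(\lambda-\bbflambda_{p, \alpha, \sigma})\bigr)$ fixed in Lemma~\ref{lem: barcalD}; since also $N^{-(q-2)}\le N^{-(q-2)/2}\le N^{-\left(\frac p2\wedge\frac{q-2}{2}\right)}$, absorbing all decaying factors into $e^{-\widetilde\theta t}$ produces \eqref{eq: barcalO}.

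I expect the cancellation $\dcal-\zeta_t=(\dcal-\dm)-e_t$ to be the real crux of the argument: without it the evolution of $\bar{\calO}_{t,p}$ appears to admit no contraction and could in principle grow, and it is precisely this rewriting that reveals $\bar{\calO}_{t,p}$ to be slaved to the fluctuation functional $\bar{\calD}_{t,p}$ and to the $O(N^{-1/2})$ concentration errors coming from the i.i.d.\ structure, all of which decay. Everything else — tracking the various powers of $N$ and the exponential rates through the H\"older/Young chain, and verifying that the stochastic integral above is a genuine martingale from the moment bounds of Lemmas~\ref{lem: diff Lp}, \ref{lem: mean field limit exponential decay}, \ref{lem: Lp bound} and \ref{lem: barX Lp bound} — is routine.
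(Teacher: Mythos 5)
Your proof is correct and runs along essentially the same lines as the paper: Itô on $|\dcal|^p$, the drift rewritten via $\dcal-\zeta_t=(\dcal-\dm)-e_t$ (the paper labels these two contributions $I$ and $II$), each piece controlled by Lemmas \ref{lem: deltas} and \ref{lem: mrho-mmu} together with H\"older and Young, and the diffusion correction handled by splitting $Z_t^i-\zeta_t$ into $(X_t^i-m_f^h[\mu_t^N])-(\bar X_t^i-m_f^h[\bar\rho_t])$ and invoking \eqref{ineq_key} and Lemma \ref{lem: mean field limit exponential decay}. One small imprecision worth noting: it is not that ``the $|\dcal|^2$ contribution cancels'' after the decomposition --- the term $\langle\dcal,\dcal-\dm\rangle$ still contains $|\dcal|^2$; the point is rather that since $\dcal-\dm$ and $e_t$ are both genuinely small quantities, one may simply apply Cauchy--Schwarz in the form $-\langle\dcal,\dcal-\dm\rangle\le|\dcal||\dcal-\dm|$ without needing the negative self-interaction at all, so no uncontrolled term proportional to $\bar\calO_{t,p}$ without an exponential factor survives, which is what you correctly exploit.
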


\begin{proof}
Applying the It\^o formula to $\bar{\calO}_{t, p}$ gives
\begin{align*}
    d\bar{\calO}_{t, p}= & -p\lambda|\Delta_t^\mathcal{M}|^{p-2}\langle \Delta_t^\mathcal{M}, \Delta_t^\mathcal{M} -(m_f^h[\mu_t^N]-m_f^h[\bar{\rho}_t])\rangle dt\\
    &+\frac{p\sigma^2}{2N^2}|\Delta_t^\mathcal{M}|^{p-2}\sum_{i=1}^N |Z_t^i -(m_f^h[\mu_t^N]-m_f^h[\bar{\rho}_t])|^2dt\\
    &+ \frac{p(p-2)\sigma^2}{2N^2}|\Delta_t^\mathcal{M}|^{p-4}\sum_{i=1}^N\langle \dcal , Z_t^i-(m_f^h[\mu_t^N]-m_f^h[\bar{\rho}_t])\rangle ^2 dt\\
    &+\frac{p\sigma}{N}|\dcal|^{p-2}\sum_{i=1}^N\langle \dcal , Z_t^i-(m_f^h[\mu_t^N]-m_f^h[\bar{\rho}_t])\rangle dW_t^i.
\end{align*}
Taking expectations and using the Cauchy--Schwarz inequality, we get
\begin{align*}
     \frac{d}{dt}\bbE\bar{\calO}_{t, p}&\leq  -p\lambda\bbE\left[|\Delta_t^\mathcal{M}|^{p-2}\langle \Delta_t^\mathcal{M}, \Delta_t^\mathcal{M} - \dm \rangle \right]  +p\lambda \bbE\left[|\dcal|^{p-2}\langle \dcal, m_f^h[\mmu]-\mrho\rangle\right]\\
    &\quad \;+\frac{p(p-1)\sigma^2}{2N^2}\bbE\left[|\Delta_t^\mathcal{M}|^{p-2}\sum_{i=1}^N |Z_t^i -(m_f^h[\mu_t^N]-m_f^h[\bar{\rho}_t])|^2\right]\\
    &=:I+II+III,
\end{align*}
where by using a similar argument as in the proof of Lemma \ref{lem: barcalD}, we estimate
\begin{align*}
    I & \leq p\lambda \bbE\left[|\dcal|^{p-1}|\dcal-\dm|\right]\\
    &\leq p\lambda \left(\bbE\bar{\calO}_{t, p}\right)^\frac{p-1}{p}\left(\bbE|\dcal-\dm|^p\right)^\frac{1}{p}\\
    &\leq C \left(\bbE\bar{\calO}_{t, p}\right)^\frac{p-1}{p}\left[\frac{1}{N^\frac{q-2}{2}} + \bbE\bar{\calG}_{t, p}\right]^\frac{1}{p}e^{-\frac{1}{p}\theta t}  \\
    &\leq C \left(\bbE\bar{\calO}_{t, p}\right)^\frac{p-1}{p}\left[\frac{1}{N^{\frac{q-2}{2}}}+2^{p-1}(\bbE\bar{\calD}_{t, p}+\bbE\bar{\calO}_{t, p})\right]^\frac{1}{p}e^{-\frac{1}{p}\theta t}   \\
    &\leq C \left[\frac{1}{N^\frac{q-2}{2p}}\left(\bbE\bar{\calO}_{t, p}\right)^\frac{p-1}{p}+2\bbE\bar{\calO}_{t, p}+2\left(\bbE\bar{\calO}_{t, p}\right)^\frac{p-1}{p}\left(\bbE\bar{\calD}_{t, p}\right)^\frac{1}{p}\right]e^{-\frac{1}{p}\theta t}\\
    &\leq C\left[\frac{1}{N^{\frac{q-2}{2}}} + \bbE\bar{\calO}_{t, p} + \bbE\bar{\calD}_{t, p}  \right]e^{-\frac{1}{p}\theta t},
\end{align*}
\begin{align*}
    II & \leq p\lambda\bbE\left[|\dcal|^{p-1}|m_f^h[\mmu]-\mrho|\right]\\
    &\leq p\lambda \left(\bbE\bar{\calO}_{t, p}\right)^\frac{p-1}{p}\left(\bbE|m_f^h[\mmu]-\mrho|^p\right)^\frac{1}{p}\\
    &\leq\frac{C}{N^\frac{1}{2}} \left(\bbE\bar{\calO}_{t, p}\right)^\frac{p-1}{p}e^{-(\lambda-\bbflambda_{p, \alpha, \sigma})t}\\
    &\leq C\left( \frac{1}{N^\frac{p}{2}} + \bbE\bar{\calO}_{t, p} \right) e^{-(\lambda-\bbflambda_{p, \alpha, \sigma})t},
\end{align*}
and
\begin{align*}
III &\leq \frac{p(p-1)\sigma^2}{2N}\left[\frac{1}{N}\sum_{i=1}^N \left(\bbE\calO_{t, p}\right)^\frac{p-2}{p}\left(\bbE|Z_t^i -(m_f^h[\mu_t^N]-m_f^h[\bar{\rho}_t])|^p\right)^\frac{2}{p}\right]\\
    &\leq \frac{p(p-1)\sigma^2}{N}\left(\bbE\calO_{t, p}\right)^\frac{p-2}{p}\left(\frac{1}{N}\sum_{i=1}^N\left(\bbE|X_t^i-m_f^h[\mu_t^N]|^p\right)^\frac{2}{p}+\left(\bbE|\bar{X}_t^i-\mrho|^p\right)^\frac{2}{p}\right)\\
    &\leq \frac{C}{N} \left(\bbE\bar{\calO}_{t, p}\right)^\frac{p-2}{p}e^{-2(\lambda-\bbflambda_{p, \alpha, \sigma})t}\\
    &\leq C\left[\frac{1}{N^\frac{p}{2}} + \bbE\bar{\calO}_{t, p} \right]e^{-2(\lambda-\bbflambda_{p, \alpha, \sigma})t}.
\end{align*}

Collecting the bounds and choosing $\widetilde\theta\in\big(0,\frac{\theta}{p}\wedge(\lambda-\bbflambda_{p,\alpha,\sigma})\big)$, we see that the decay rate of $I,II,III$ is given by $e^{-\widetilde\theta t}$, and we thus obtain \eqref{eq: barcalO} up to constructive constants.
\end{proof}

%
%
%
%
%
%
%
%
%
%
\subsubsection{Uniform-in-time bound for the fluctuation energy}

Combining \eqref{eq: barcalD} and \eqref{eq: barcalO} yields  
\[
    \frac{d}{dt}\left(\bbE\bar{\calD}_{t, p}+\bbE\bar{\calO}_{t, p}\right) \leq C \left(\bbE\bar{\calD}_{t, p}+\bbE\bar{\calO}_{t, p}\right)e^{-\widetilde\theta t} + \frac{C}{N^{\frac{p}{2}\wedge \frac{q-2}{2}}}e^{-\widetilde\theta t}
\]
for some constant $C>0$ depending only on $\lambda,\sigma,p,q,\Lambda_\alpha,\bar\rho_0$.
Thus, applying Gr\"onwall's inequality yields
\begin{equation*}
    \bbE\bar{\calD}_{t, p}+\bbE\bar{\calO}_{t, p}\leq \left[\bbE\bar{\calD}_{0, p}+\bbE\bar{\calO}_{0, p}+\frac{C}{N^{\frac{p}{2}\wedge \frac{q-2}{2}}}\right]e^{C/\widetilde{\theta}},
\end{equation*}
and by \eqref{eq: relation}, we deduce
\[
    \bbE\bar{\calG}_{t, p}\leq \left[(2^{2p-1}+2^{p-1})\bbE\bar{\calG}_{0, p}+\frac{2^{p-1}C}{N^{\frac{p}{2}\wedge \frac{q-2}{2}}}\right]e^{C/\widetilde{\theta}}.
\]
Hence, for $\lambda > \widetilde \bflambda_{p,q,\alpha,\sigma}$ , there exist a positive constant $C >0$ dependent only on $\lambda,  \sigma, p, q, \Lambdaa$ and $\bar{\rho}_0$ such that 
\[
        \sup_{t\ge0}\frac{1}{N}\sum_{i=1}^N\bbE|X_t^i-\bar{X}_t^i|^{p}\leq \frac{C}{N}\sum_{i=1}^N\bbE|X_0^i-\bar{X}_0^i|^p+\frac{C}{N^{\frac{p}{2}\wedge \frac{q-2}{2}}}.
\]

%
%
%
%
%
%
%
%
%
%

\subsubsection{Mean-field convergence and propagation of chaos}

We now rigorously deduce the uniform-in-time propagation of chaos from the quantitative estimates obtained above. The key quantity of interest is the $p$-Wasserstein distance between the empirical distribution of the particle system and the limiting mean-field distribution. By the triangle inequality, we can decompose it as
\bq\label{eq: wasserstein}
        \calW_p(\mu_t^N, \bar{\rho}_t)\leq \calW_p(\mu_t^N, \mmu)+\calW_p(\mmu, \bar{\rho}_t),
\eq
where $\mmu := \frac{1}{N}\sum_{i=1}^N \delta_{\bar X_t^i}$ denotes the empirical measure associated with the nonlinear (mean-field) processes $\{\bar X_t^i\}_{i=1}^N$.  
Each term in \eqref{eq: wasserstein} has a distinct meaning: the first term quantifies the average discrepancy between the interacting particle trajectories and their mean-field counterparts, while the second term measures the deviation of the mean-field empirical distribution from its law $\bar\rho_t$. 

To estimate the second term, we recall a classical result from probability theory concerning the convergence rate of empirical measures in the Wasserstein distance. The following statement, adapted from Fournier and Guillin \cite[Theorem 1]{FG15}, provides a quantitative bound that depends only on the number of particles $N$, the dimension $d$, and the available moment bounds of the limiting distribution.
 
\begin{lemma}\label{lem: decay rate}
 For $\mu \in \calP(\bbR^d)$, consider an i.i.d. sequence $(X_k)_{k\ge1}$
of $\mu$-distributed random variables and, for $N \geq 1$, the empirical measure 
\[
\mu_N:=\frac1N\sum_{k=1}^N \delta_{X_k}. 
\]
Assume that $p>1$ and $\swabM_r(\mu)<\infty$ for some $r>p$.
There exists a constant $C$ depending only on $p,d,r$ such that, for all $N\geq 1$,
\[
    \bbE\calW_p(\mu_N,\mu) \leq
C \swabM_r^{\frac pr}(\mu)\left\{\begin{array}{ll}
N^{-\frac12} +N^{-\frac{r-p}r}& \!\!\!\hbox{if $p>\frac d2$ and $r\ne 2p$},  \\[+3pt]
N^{-\frac12} \log(1+N)+N^{-\frac{r-p}r} &\!\!\! \hbox{if $p=\frac d2$ and $r\ne 2p$}, \\[+3pt]
N^{-\frac pd}+N^{-\frac{r-p}r} &\!\!\!\hbox{if $p\in (0,\frac d2)$ and $r\ne \frac {d}{d-p}$}.
\end{array}\right.
\]
\end{lemma}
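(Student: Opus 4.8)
This is the content of Theorem~1 of Fournier--Guillin \cite{FG15}, so the plan is to reproduce their argument. It rests on two ingredients: a purely deterministic multiscale upper bound for the Wasserstein cost, and the elementary variance estimate for the fluctuation of an empirical mass on a fixed set. First I would record the \emph{layered dyadic bound}: if $\mu,\nu\in\calP(\R^d)$ are both supported in a cube $Q$ of side $\ell$, then
\[
\calW_p^p(\mu,\nu)\le C_{d,p}\sum_{n\ge0}(\ell\,2^{-n})^p\sum_{F\in\mathcal D_n(Q)}|\mu(F)-\nu(F)|,
\]
where $\mathcal D_n(Q)$ is the partition of $Q$ into $2^{nd}$ congruent dyadic subcubes. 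This is proved by constructing a transport plan scale by scale: inside each cube of $\mathcal D_{n-1}(Q)$ one rearranges mass among its children to cancel the discrepancies that first become visible at scale $n$, and each such move costs at most $(\mathrm{diam}\,\mathcal D_{n-1})^p$ per unit of transported mass. To lift this to all of $\R^d$, I would decompose space into the ball $B(0,1)$ and the dyadic annuli $A_k=B(0,2^k)\setminus B(0,2^{k-1})$, $k\ge1$, rescale each piece to unit size, apply the box bound to $\mu|_{A_k},\nu|_{A_k}$ with weight $2^{kp}$, and control the far annuli through $\mu(A_k)\le 2^{-(k-1)r}\swabM_r(\mu)$.

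The stochastic input is then minimal: taking $\nu=\mu_N$, the variable $N\mu_N(F)$ is $\mathrm{Binomial}(N,\mu(F))$, so $\bbE|\mu_N(F)-\mu(F)|\le\min\bigl\{\sqrt{\mu(F)/N},\,2\mu(F)\bigr\}$. Summing over the $\le2^{nd}$ cubes of a scale-$n$ partition of a fixed box, Cauchy--Schwarz gives $\sum_F\sqrt{\mu(F)/N}\le\sqrt{2^{nd}/N}$, while $\sum_F2\mu(F)\le2$; hence the expected scale-$n$ contribution is $\lesssim 2^{-np}\min\{2^{nd/2}N^{-1/2},\,2\}$.

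It remains to sum. Spatially, one truncates the resolution at $n_\ast\simeq d^{-1}\log_2N$ (the scale where $2^{nd/2}N^{-1/2}\approx1$), using the first bound for $n\le n_\ast$ and the second for $n>n_\ast$; the sign of $d/2-p$ dictates the trichotomy: for $p>d/2$ the series $\sum_n2^{n(d/2-p)}$ converges and is dominated by the coarsest scales, giving $N^{-1/2}$; for $p=d/2$ each of the $\simeq n_\ast$ retained terms has order $N^{-1/2}$, giving $N^{-1/2}\log(1+N)$; for $p<d/2$ the sum is dominated by the cutoff scale, giving $N^{-p/d}$. Radially, the contribution of $A_k$ carries the factor $2^{kp}$ against the rescaled moment mass $\lesssim2^{-kr}\swabM_r(\mu)$, so the geometric series in $k$ converges precisely because $r>p$; optimizing the radial cutoff radius $R=R(N)$ as a power of $N$ balances the in-ball sampling error against the discarded tail mass and produces the extra term $N^{-(r-p)/r}$ and the prefactor $\swabM_r^{p/r}(\mu)$. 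The excluded cases $r=2p$ and $r=\tfrac{d}{d-p}$ are exactly the logarithmically critical balances in these two optimizations.

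The main obstacle is the deterministic step, and even more the bookkeeping of the \emph{double} summation: the spatial cutoff $n_\ast$ and the radial cutoff $R$ must be chosen simultaneously and traded off against one another, and it is this joint optimization that yields the sharp exponents, the logarithmic factor at $p=d/2$, and the critical-case exclusions. By contrast, the probabilistic ingredient is entirely soft, using only the binomial variance.
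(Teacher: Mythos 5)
The paper does not prove this lemma; it is presented as a citation of \cite[Theorem~1]{FG15}, and your outline is a faithful high-level reproduction of the Fournier--Guillin dyadic/annular argument. However, there is a genuine mismatch you should have flagged. Your deterministic input is the bound on $\calW_p^p(\mu,\nu)$ (you write it that way), so the argument you sketch produces a bound on $\bbE\,\calW_p^p(\mu_N,\mu)$ — and indeed that is exactly what \cite[Theorem~1]{FG15} asserts, with precisely the right-hand side displayed here. But the lemma as stated in the paper puts $\bbE\,\calW_p(\mu_N,\mu)$ on the left, with no power $p$, while keeping FG's right-hand side. For $p>1$ these are not interchangeable: passing from $\bbE\calW_p^p$ to $\bbE\calW_p$ via Jensen gives $\bbE\calW_p \le (\bbE\calW_p^p)^{1/p}$, which turns the exponents into $N^{-1/(2p)}$, $N^{-1/d}$, $N^{-(r-p)/(rp)}$ and the prefactor into $\swabM_r^{1/r}$; it does not reproduce the displayed $N^{-1/2}$, $N^{-p/d}$, $N^{-(r-p)/r}$, $\swabM_r^{p/r}$.

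Concretely, for $p\in(1,d/2)$ the stated bound $\bbE\calW_p\lesssim N^{-p/d}$ is strictly stronger than what your argument (or Jensen applied to FG) can deliver, and it contradicts the known typical order $\bbE\calW_p\asymp N^{-1/d}$ for, e.g., the uniform measure on $[0,1]^d$ when $d>2p$. So the step from your $\bbE\calW_p^p$ conclusion to the lemma's $\bbE\calW_p$ claim is missing and cannot be filled as stated. This appears to be a transcription slip in the paper's ``adaptation'' of FG rather than a flaw specific to your reasoning, but a careful reproduction of FG ought to notice and record it, since whichever way the statement is repaired (either write $\calW_p^p$ on the left, or take $1/p$-th powers throughout the right-hand side) propagates into the exponents quoted in Theorem~\ref{thm: propagation of chaos}.
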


We first analyze the term $\calW_p(\mu_t^N, \mmu)$, which compares the particle system with the mean-field system. Under the assumptions of Theorem \ref{thm: propagation of chaos}, the coupling argument used in the proof of Theorem \ref{thm: propagation of chaos} yields
\[
        \bbE\calW_p(\mu_t^N, \mmu)\leq \left(\frac{1}{N}\sum_{i=1}^N\bbE|X_t^i-\bar{X}_t^i|^p\right)^\frac{1}{p}\leq \left(\frac{1}{N}\sum_{i=1}^N\bbE|X_0^i-\bar{X}_0^i|^p\right)^\frac{1}{p} + \frac{C}{N^{\frac{1}{2}\wedge\frac{q-2}{2p}}},\quad t\ge0,
\]
where $C>0$ is independent of $t$ and $N$. This bound is both uniform in time and quantitative with respect to $N$, providing a precise rate of convergence for the particle system towards its mean-field approximation.

We next address the second term $\calW_p(\mmu, \bar\rho_t)$, which describes the discrepancy between the empirical distribution of the nonlinear processes and the law of a single McKean--Vlasov particle. This purely probabilistic term is independent of the interaction structure and can thus be controlled by Lemma \ref{lem: decay rate}.  
By setting $r=pq$ and recalling that $\sup_{t\ge0}\swabM_{pq}(\bar\rho_t)<\infty$ (cf. Lemma \ref{lem: barX Lp bound}), we deduce that
    \begin{align*}
    \bbE\calW_p(\mmu,\bar{\rho}_t) \leq
C \swabM_{pq}^{\frac{1}{q}}(\bar{\rho}_t)\left\{\begin{array}{ll}
N^{- \frac12} +N^{- \frac{q-1}q}& \!\!\!\hbox{if $p>\frac d2$},  \\[+3pt]
N^{-\frac12} \log(1+N)+N^{-\frac{q-1}q} &\!\!\! \hbox{if $p= \frac d2$ }, \\[+3pt]
N^{-\frac pd}+N^{-\frac{q-1}q} &\!\!\!\hbox{if $p\in (0,\frac d2)$ and $pq\ne \frac {d}{d-p}$},
\end{array}\right.
\end{align*}
uniformly for all $t\ge0$. The crucial point here is that the $pq$-moment of $\bar\rho_t$ remains bounded in time, guaranteeing that the probabilistic convergence rate does not depend on $t$. 
 
Combining the two bounds above with \eqref{eq: wasserstein}, we obtain the quantitative mean-field limit
\begin{align*}
\bbE\calW_p(\mu_t^N,\bar\rho_t) &\le C \bbE\calW_p(\mu_0^N,\bar\rho_0) + \frac{C}{N^{\frac{1}{2}\wedge\frac{q-2}{2p}}} \cr
&\quad + C \left\{\begin{array}{ll}
N^{- \frac12} +N^{- \frac{q-1}q}& \!\!\!\hbox{if $p>\frac d2$},  \\[+3pt]
N^{-\frac12} \log(1+N)+N^{-\frac{q-1}q} &\!\!\! \hbox{if $p= \frac d2$ }, \\[+3pt]
N^{-\frac pd}+N^{-\frac{q-1}q} &\!\!\!\hbox{if $p\in (0,\frac d2)$ and $pq\ne \frac {d}{d-p}$}.
\end{array}\right.
\end{align*}
 This completes the proof.


\section{Well-posedness and large-time analysis of the mean-field system}\label{sec:mfs}
 
This section is devoted to the rigorous study of the mean-field equation \eqref{eq: coupling}. We first establish the existence and uniqueness of strong solutions, thereby justifying all subsequent computations. We then obtain uniform decay estimates that yield deterministic consensus formation in the large-time limit.  Finally, using the structural assumptions on the objective function $f$, we quantify the asymptotic closeness between the consensus point and the global minimizer.
 
\subsection{Well-posedness} \label{subsec: well posedness}
 
The mean-field equation \eqref{eq: coupling} differs fundamentally from the finite-particle system since its drift and diffusion coefficients depend on the evolving law $\bar\rho_t$.  While each linearized equation with a prescribed path can be handled by classical SDE theory,  the full McKean--Vlasov dynamics require closing a nonlinear self-consistency condition between the process and its own distribution.  For this reason, the well-posedness of \eqref{eq: coupling} does not follow automatically from standard existence results and must be established separately.

The theorem below confirms that the McKean--Vlasov equation admits a unique strong solution on every finite time interval.

\begin{theorem}\label{thm: WP MFS}
     Let $f$ satisfy Assumption \ref{assum: omega}, and suppose $\bar{\rho}_0\in\calP_2(\bbR^d)$. Then the following nonlinear stochastic differential equation
     \[d\bar{X}_t=-\lambda(\bar{X}_t-m_f^h[\bar{\rho}_t])dt+\sigma(\bar{X}_t-m_f^h[\bar{\rho}_t])dW_t, \quad \bar{\rho}_t:=\textnormal{Law}(\bar{X}_t),\]
     has a unique continuous strong solution $\bar{X}\in \calC([0, T]; \bbR^d)$ for each time horizon $T>0$.
\end{theorem}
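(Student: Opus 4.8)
The plan is to construct the solution by a Banach fixed-point argument on a short time interval whose length depends only on the model parameters, and then concatenate finitely many such pieces to reach an arbitrary horizon $[0,T]$. The two structural facts I will lean on are consequences of $h(\alpha)\le\psi_h\le e^{-\alpha\underline f}+h(\alpha)$ and the $L_{\omega_f^\alpha}$-Lipschitz continuity of $\psi_h$: first, the crude bound $|m_f^h[\mu]|\le\Lambdaa\intr|x|\,\mu(dx)$ valid for every $\mu\in\calP_1(\bbR^d)$; and second, the moment-weighted Lipschitz estimate obtained by combining Lemma~\ref{lem: bounding calB} (with $p=2$) with $|\calM(\mu)-\calM(\nu)|\le\calW_1(\mu,\nu)$, namely
\[
|m_f^h[\mu]-m_f^h[\nu]|\le\lt(1+\tfrac{L_{\omega_f^\alpha}\Lambdaa}{h(\alpha)}\big(V_2(\mu)^{1/2}+V_2(\nu)^{1/2}\big)\rt)\calW_2(\mu,\nu).
\]
For a prescribed flow $\nu\in\calC([0,\tau];\calP_2(\bbR^d))$ the second fact makes $t\mapsto m_f^h[\nu_t]$ continuous, hence bounded, on $[0,\tau]$, so the linear SDE $dX_t=-\lambda(X_t-m_f^h[\nu_t])\,dt+\sigma(X_t-m_f^h[\nu_t])\,dW_t$ with $X_0\sim\bar\rho_0$ has globally Lipschitz coefficients and therefore a unique strong solution $X^\nu$ with continuous paths and $\bbE\sup_{t\le\tau}|X^\nu_t|^2<\infty$; I then set $\Phi(\nu):=(\mathrm{Law}(X^\nu_t))_{t\in[0,\tau]}$.

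First I would show that $\Phi$ leaves invariant a closed moment ball. Applying It\^o to $|X^\nu_t|^2$, taking expectations, inserting $|m_f^h[\nu_s]|\le\Lambdaa\swabM_2(\nu_s)^{1/2}$ from the crude bound, and using Gr\"onwall yields $\sup_{t\le\tau}\swabM_2(\Phi(\nu)_t)\le C_0\big(\swabM_2(\bar\rho_0)+\Lambdaa^2\tau\sup_{t\le\tau}\swabM_2(\nu_t)\big)$ with $C_0=C_0(\lambda,\sigma)$ bounded as $\tau\to0$; hence for $R:=2C_0\swabM_2(\bar\rho_0)$ and $\tau$ small (depending only on $\lambda,\sigma,\Lambdaa$) the set $B_R:=\{\nu:\sup_{t\le\tau}\swabM_2(\nu_t)\le R\}$ is mapped into itself. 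Next I would prove contraction: given $\nu,\tilde\nu\in B_R$, couple $X^\nu$ and $X^{\tilde\nu}$ through the same Brownian motion and the same initial datum, so that $Y_t:=X^\nu_t-X^{\tilde\nu}_t$ solves $dY_t=-\lambda(Y_t-\delta_t)\,dt+\sigma(Y_t-\delta_t)\,dW_t$ with $\delta_t:=m_f^h[\nu_t]-m_f^h[\tilde\nu_t]$. It\^o on $|Y_t|^2$, expectation, Young's inequality, and the Lipschitz estimate (whose prefactor is controlled on $B_R$ via $V_2\le\swabM_2\le R$) give $\tfrac{d}{dt}\bbE|Y_t|^2\le C\,\bbE|Y_t|^2+C\calW_2^2(\nu_t,\tilde\nu_t)$, hence $\sup_{t\le\tau}\calW_2^2(\Phi(\nu)_t,\Phi(\tilde\nu)_t)\le\sup_{t\le\tau}\bbE|Y_t|^2\le C\tau e^{C\tau}\sup_{t\le\tau}\calW_2^2(\nu_t,\tilde\nu_t)$. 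Shrinking $\tau$ once more makes $\Phi$ a strict contraction on the complete metric space $\big(B_R,\sup_{t\le\tau}\calW_2\big)$, and its unique fixed point is the desired strong solution on $[0,\tau]$.

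Because $\tau$ depends only on $\lambda,\sigma,\Lambdaa$ and never on the size of the data, I would then iterate: the law $\bar\rho_\tau\in\calP_2(\bbR^d)$ produced on $[0,\tau]$ serves as the initial datum for $[\tau,2\tau]$, and after finitely many steps the solution reaches $[0,T]$ for any $T>0$; uniqueness on $[0,T]$ follows by running the same coupling/Gr\"onwall estimate globally, and path-continuity is inherited from the continuity in $t$ of the SDE coefficients. The main obstacle, and the reason the argument has to be staged, is that $m_f^h[\cdot]$ is \emph{not} globally Lipschitz on $\calP_2(\bbR^d)$ — the numerator $\intr x\,\psi_h(x)\,\mu(dx)$ grows linearly in $x$ — so one cannot invoke the Lipschitz estimate before the second moments are under control; this is circumvented by first establishing invariance of $B_R$ using only the crude bound on $m_f^h$, and only then applying the moment-dependent estimate from Lemma~\ref{lem: bounding calB} restricted to $B_R$. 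Note that, in contrast with the uniform-in-time results, no smallness or largeness condition on $\lambda$ enters here, since every estimate is confined to the finite horizon $[0,T]$.
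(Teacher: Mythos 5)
Your argument is a genuine alternative to the paper's: the paper applies the Leray--Schauder--Schaefer fixed-point theorem to the map $\calT:v\mapsto m_f^h[\eta^v]$ on the Banach space $\calC([0,T];\bbR^d)$ (compactness via Ascoli from a H\"older-in-time estimate on $m_f^h[\eta^v_\cdot]$, continuity via Lemma~\ref{lem: bounding calB}, and an a priori bound on the eigenvector set $\mathscr{A}$), obtaining existence on the whole interval in one step, with uniqueness proved separately by Gr\"onwall. You instead perform a Banach contraction on the measure-flow space and iterate. Both strategies are standard for McKean--Vlasov equations, and your reduction to the two ingredients (the crude bound $|m_f^h[\mu]|\le\Lambdaa\intr|x|\,\mu(dx)$ and the moment-weighted Lipschitz estimate from Lemma~\ref{lem: bounding calB}) is correct and parallels the lemmas the paper invokes. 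The Schaefer route avoids the need for a contraction constant and hence any stepsize bookkeeping; the Banach route is more elementary and yields uniqueness automatically from the contraction.

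There is, however, a genuine gap in the iteration. Your claim that ``$\tau$ depends only on $\lambda,\sigma,\Lambdaa$ and never on the size of the data'' is only true for the \emph{invariance} of $B_R$; the \emph{contraction} constant $C\,C_R\,\tau\, e^{C\tau}$ involves $C_R\sim\bigl(1+\tfrac{L_{\omega_f^\alpha}\Lambdaa}{h(\alpha)}R^{1/2}\bigr)^2$, so the stepsize you need does depend on $R=2C_0\swabM_2(\bar\rho_0)$. When you restart from $\bar\rho_\tau$ on $[\tau,2\tau]$, the relevant radius may have grown to $R'\le 2C_0 R$, forcing a smaller stepsize; in the worst case $R_k\sim(2C_0)^k\swabM_2(\bar\rho_0)$ so $\tau_k\sim C_{R_k}^{-1}$ decays geometrically and $\sum_k\tau_k$ may converge before reaching $T$. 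The fix is easy but must be stated: first show, by applying It\^o and Gr\"onwall to the \emph{nonlinear} equation and using $|m_f^h[\bar\rho_t]|^2\le\Lambdaa^2\swabM_2(\bar\rho_t)$, that any solution on $[0,T]$ satisfies $\swabM_2(\bar\rho_t)\le e^{C(1+\Lambdaa^2)t}\swabM_2(\bar\rho_0)$; then set $R_T:=2C_0\,e^{C(1+\Lambdaa^2)T}\swabM_2(\bar\rho_0)$ and choose $\tau$ once, small enough for both invariance and contraction inside $B_{R_T}$, so that a uniform number $\lceil T/\tau\rceil$ of steps reaches $T$. (Alternatively, work once on $[0,T]$ with the exponentially weighted metric $\sup_{t\le T}e^{-Lt}\calW_2(\nu_t,\tilde\nu_t)$ for $L$ large: contraction then holds without any time-splitting, and the only dependence on the data is through the uniformly bounded moment radius $R_T$.)
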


\begin{remark}
    According to the estimate in Lemma \ref{lem: barX Lp bound}, the second moments of $\bar X_t$ remains bounded uniformly in time, and thus we can deduce that $\bar X_t\in \calC([0,\infty);\R^d)$ by a simple continuation argument. 
\end{remark}

\begin{proof}[Proof of Theorem \ref{thm: WP MFS}]
    For each $v\in \calC([0, T]; \bbR^d)$, let $Y_t = Y_t(v)$ be the unique strong solution to the linear SDE:
    \[dY_t=-\lambda(Y_t-v_t)dt+\sigma(Y_t-v_t)dW_t, \quad Y_0\sim \bar{\rho}_0\in \calP_2(\bbR^d).\]
    We refer to \cite{Durrett1996} as a classical reference for the well-posedness of the above equation. Then, we denote $\eta_t:=\textnormal{law}(Y_t)$, and define the operator $\calT :\calC([0, T]; \bbR^d)\to \calC([0, T]; \bbR^d)$ with $\calT : v \mapsto m_f^h[\eta]$. Note that the solution to \eqref{eq: coupling} is realized by a fixed point of $\calT$. We will thus show in the steps to follow that $\calT$ satisfies all assumptions of the Leray--Schauder--Schaefer fixed point theorem.

    (Compactness) We denote $B_R:=\left\{v\in \calC([0, T]; \bbR^d): \|v\|_\infty\leq R\right\}$, with $\|v\|_{\infty} := \sup_{t\in [0,T]} |v_t|$. Through elementary calculations, we can first check that $\sup_{t\in [0, T]}\bbE|Y_t|^2\leq K$ for some constant $K=K(R, T, \bar{\rho}_0)>0.$ Thus, using It\'o's isometry: 
    \begin{align*}
        \bbE\left|Y_t-Y_s\right|^2&\leq 2\lambda^2\bbE\left|\int_s^t (Y_r-v_r)dr\right|^2+2\sigma^2\int_s^t\bbE\left|Y_r-v_r\right|^2dr\\
        &\leq 2\lambda ^2|t-s|\int _s^t\bbE\left|Y_r-v_r\right|^2dr+2\sigma^2\int_s^t\bbE\left|Y_r-v_r\right|^2dr\\
        &\leq 4(\lambda^2T+\sigma^2) (K+ \|v\|_\infty^2) |t-s|.
    \end{align*}
    By definition of the Wasserstein distance we have $\calW_2(\eta_t,\eta_s) \le \lt(\bbE|Y_t-Y_s|^2\rt)^{\frac12}$, from which we deduce $\calW_2(\eta_t, \eta_s)\leq C|t-s|^\frac{1}{2}$ for some time-independent constant $C>0$. Utilizing Lemma \ref{lem: bounding calB}, with $\mu=\eta_t$, $\nu=\eta_s$, and also using that $\sup_{[0, T]}\int_{\bbR^d} |x|^2d\eta_t\leq C$, we have
    \begin{equation}\label{eq: m_f diff}
        \begin{split}
            |m_f^h[\eta_t]-m_f^h[\eta_s]| &\leq |m_f^h[\eta_t] - \calM(\eta_t) - m_f^h[\eta_s] + \calM(\eta_s)| + |\calM(\eta_t) - \calM(\eta_s)| \\
    &\leq C\calW_2(\eta_t, \eta_s) \\
    &\leq C|t-s|^\frac{1}{2},
        \end{split}
    \end{equation}
    thanks to the elementary estimate $|\calM(\mu)-\calM(\nu)| \le \calW_1(\mu,\nu) \le \calW_2(\mu,\nu)$. In particular, we note that $t\mapsto m_f^h[\eta_t]$ is H\"older continuous with exponent $\frac12$. Thus $\calT(B_R)\subset \calC^{0,\frac{1}{2}}([0,T];\R^d)$, and since the latter is compact in $\calC([0, T]; \bbR^d)$ by Ascoli's theorem, we conclude that $\calT$ is a compact operator.
    
    (Continuity) We consider two dynamical equations with a common Wiener process $W_t$ and initial law $\eta_0$:
    \begin{equation*}
        \begin{split}
            dY_t=-\lambda(Y_t-v_t)dt+\sigma(Y_t-v_t)dW_t, \\
            d\hat{Y}_t=-\lambda(\hat{Y}_t-\hat{v}_t)dt+\sigma(\hat{Y}_t-\hat{v}_t)dW_t.
        \end{split}
    \end{equation*}
    Applying It\'o's formula and Cauchy-Schwarz inequality, we obtain
    \begin{align*}
        \frac{d}{dt}\bbE|Y_t-\hat{Y}_t|^2&=-2\lambda \bbE\langle Y_t-\hat{Y}_t, Y_t-\hat{Y}_t-(v_t-\hat{v}_t)\rangle +\sigma^2\bbE|Y_t-\hat{Y}_t-(v_t-\hat{v}_t)|^2\\
        &\leq (-\lambda +2\sigma^2)\bbE|Y_t-\hat{Y}_t|^2+(\lambda +2\sigma^2)|v_t-\hat{v}_t|^2.
    \end{align*}
    By Gr\"onwall's inequality, we have
    \[\sup_{t\in [0, T]} \bbE|Y_t-\hat{Y}_t|^2\leq C \|v-\hat{v}\|_\infty^2\]
    for some $C>0$ depending only on $T, \lambda, \sigma$, and hence, $\sup_{t\in [0, T]}\calW_2(\eta_t, \hat{\eta}_t)\leq C\|v-\hat{v}\|_\infty$. Now Lemma \ref{lem: bounding calB} shows, similarly as before, that
    \[\sup_{t\in [0, T]}|m_f^h[\eta_t]-m_f^h[\hat{\eta}_t]|\leq C\|v-\hat{v}\|_\infty,\]
    which implies $\calT$ is continuous on $\calC([0,T];\R^d)$.
    
    (Eigenvector bounds)
    We now prove that the set
    \[\mathscr{A}:=\left\{v\in \calC([0, T], \bbR^d): \exists \, \tau\in [0, 1]\textnormal{ such that } v=\tau \calT v\right\}\]
    is bounded. Let $v\in \mathscr{A}$, so that by definition there exists $\rho\in \calC([0, T]; \calP_2(\bbR^d))$ such that $v=\tau m_f^h[\rho]$. The definition of $m_f^h$ gives that for all $t\in (0, T)$:
    \begin{equation}\label{eq: |v|^2}
        |v_t|^2=\tau^2\left|m_f^h[\bar{\rho}_t]\right|^2\leq \tau^2\Lambdaa\int _{\bbR^d}|x|^2d\bar{\rho}_t.
    \end{equation}
    Further, using \eqref{eq: |v|^2}, we can compute
    \begin{align*}
        \frac{d}{dt}\int_{\bbR^d}|x|^2d\bar{\rho}_t&=\int_{\bbR^d} \left[-2\lambda \langle x-v_t, x\rangle +\sigma^2 |x-v_t|^2\right]d\bar{\rho}_t\\
        &\leq (-2\lambda +\sigma^2+|\lambda-\sigma^2|)\int_{\bbR^d} |x|^2d\bar{\rho}_t+(\sigma^2+|\lambda-\sigma^2|)|v_t|^2\\
        &\leq C \int_{\bbR^d} |x|^2d\bar{\rho}_t,
    \end{align*}
    where $C = -2\lambda +(1+\tau^2\Lambdaa)\sigma^2+(1+\tau^2\Lambdaa)|\lambda-\sigma^2|$. From Gr\"onwall's inequality, we deduce that 
    \[\int_{\R^d} |x|^2d\bar{\rho}_t\leq \left(\int_{\R^d} |x|^2d\bar{\rho}_0\right)e^{C t},\]
    and therefore, $\|v\|_\infty \leq \Lambdaa K e^{C T}$ which guarantees the boundedness of $\mathscr{A}$.

    (Uniqueness)
    Up to this point, we have seen that all conditions of the Leray--Schauder--Schaefer fixed point theorem are satisfied. Therefore, the existence of a fixed point $v_t = m_f^h[\bar{\rho}_t]$, and thus a solution $\bar{X}_t$ to \eqref{eq: coupling}, is guaranteed. We now verify this solution is unique. Let $\bar{X}_t$ and $\hat{X}_t$ be two solutions to \eqref{eq: coupling} with same initial data $\bar{X}_0$ and driven by the same Wiener process $W_t$. Let $\bar{\rho}_t:=\textnormal{Law}(\bar{X}_t)$ and $\hat{\rho}_t:=\textnormal{Law}(\hat{X}_t).$ Denote the difference $Z_t=\bar{X}_t-\hat{X}_t$, then $Z_t$ satisfies:
    \[dZ_t=-\lambda(Z_t-\Delta m_t)dt+\sigma(Z_t-\Delta m_t)dW_t,\]
    where $\Delta m_t := m_f^h[\bar{\rho}_t]-m_f^h[\hat{\rho}_t]$. Applying the It\'o formula and Cauchy--Schwarz inequality yields:
    \begin{align*}
        \frac{d}{dt}\bbE|Z_t|^2&=-2\lambda \bbE \langle Z_t, Z_t-\Delta m_t\rangle +\sigma^2 \bbE|Z_t-\Delta m_t|^2\\
        &\leq (-2\lambda +\sigma^2 +|\lambda-\sigma^2|)\bbE|Z_t|^2+(\sigma^2+|\lambda-\sigma^2|)|\Delta m_t|^2.
    \end{align*}
    Utilizing Lemma \ref{lem: bounding calB}, in the same  manner as in \eqref{eq: m_f diff}, we have $|\Delta m_t|^2\leq C\calW_2(\bar{\rho}_t, \hat{\rho}_t)^2\leq  C\bbE |Z_t|^2$. Substituting this into the differential inequality yields:
    \[\frac{d}{dt}\bbE|Z_t|^2\leq C \bbE|Z_t|^2\]
    for some time independent constant $C>0$. Applying Gr\"onwall's inequality then gives $\bbE|Z_t|^2=0$ for all $t\in [0, T]$.
 \end{proof}


\subsection{Consensus formation and convergence toward minimizers} \label{subsec: McKVlasov: consensus}
 
We now turn to the long-time behavior of the mean-field dynamics \eqref{eq: coupling}, providing the proof of Theorem \ref{thm:mf_main}. In contrast with the finite-particle system, the evolution of $\bar X_t$ is entirely determined by its own distribution, and the resulting limit point turns out to be deterministic. This feature plays an important role: the self-consistent structure of the McKean--Vlasov equation allows one to derive sharper decay estimates than those available for the particle trajectories, where the interaction with all other particles creates additional fluctuations.  

Our first objective is to show that the solution $\bar X_t$ indeed collapses to a single point in the large-time limit.
 
\begin{proof}[Proof of Theorem \ref{thm:mf_main} (i)]
    We first recall that $\calM(\bar{\rho}_t) = \bbE[\bar X_t]$ satisfies (by taking expectations in \eqref{eq: coupling})
    \begin{align*}
        \frac{d}{dt}\calM(\bar{\rho}_t) = -\lambda\bbE\left[\bar{X}_t-m_f^h[\bar{\rho}_t]\right].
    \end{align*}
    Hence, by \eqref{eq: EbarX-mrho} and Lemma \ref{lem: mean field limit exponential decay}, we find
    \begin{align*}
        \int_0^\infty \left|\frac{d}{dt}\calM(\bar{\rho}_t)\right|dt&\leq C \int _0^\infty \bbE |\bar{X}_t-m_f^h[\bar{\rho}_t]|dt\\
        &\leq C \int_0^\infty \left(\bbE|\bar{X}_t-m_f^h[\bar{\rho}_t]|^p\right)^\frac{1}{p}dt\\
        &\leq C \int_0^\infty \left(\bbE|\bar{X}_t-\bbE\bar{X}_t|^p\right)^\frac{1}{p}dt \\
        &\leq C \int_0^\infty  e^{-(\lambda -\bbflambda_{p, \alpha, \sigma})t}dt<\infty.
    \end{align*}
    Therefore, $\frac{d\calM(\bar{\rho}_t)}{dt}\in L^1(0, \infty)$, and this provides the existence of
    \[x_\infty := \calM(\bar{\rho}_0)+\int_0^\infty \frac{d}{dt}\calM(\bar{\rho}_t)dt=\lim_{t\to\infty} \calM(\bar{\rho}_t).\]
    Using Jensen's inequality, we then write
    \[\bbE|\bar{X}_t-x_\infty|^p\leq 2^{p-1}\bbE|\bar{X}_t - \calM(\bar\rho_t)|^p+2^{p-1}|\calM(\bar\rho_t) - x_\infty|^p.\]
    As $t\to\infty$, the first term on the right-hand side tends to $0$ by applying Lemma \ref{lem: mean field limit exponential decay}, and the second term tends to zero by definition of $x_\infty$. 
\end{proof}

Theorem \ref{thm:mf_main} (i) shows that the mean-field dynamics converge to a deterministic point $x_\infty^\alpha$. 

The next result addresses the central question of optimization: whether this consensus point is close to a global minimizer of the objective function $f$ in the regime of large $\alpha$.

\begin{proof}[Proof of Theorem \ref{thm:mf_main} (ii)]
    We follow the strategy used for the particle system in Lemma \ref{lem_imp} and Theorem \ref{thm_main1} (ii). Applying It\^o's formula to $\omega_f^\alpha(\bar{X}_t)$ using the McKean--Vlasov SDE \eqref{eq: coupling}, and then taking expectations, yields:
    \begin{align*}
        \frac{d}{dt}\|\omega_f^\alpha\|_{L^1(\bar{\rho}_t)}=&-\lambda \int_{\bbR^d} \alpha e^{-\alpha f(x)}\nabla f(x)\cdot (x-m_f^h[\bar{\rho}_t])d\bar{\rho}_t\\
        & +\frac{1}{2}\alpha \sigma^2 \int_{\bbR^d} (x-m_f^h[\bar{\rho}_t])^\top(\alpha \nabla f(x)\nabla f(x)^\top -\nabla ^2f(x))(x-m_f^h[\bar{\rho}_t])d\bar{\rho}_t.
    \end{align*}
    Using Assumptions \ref{assum: omega} and \ref{assump: f hessian}, for $\alpha \geq c_1$, we have
    \begin{equation}\label{eq: lp omega}
        \begin{split}
            \frac{d}{dt}\|\omega_f^\alpha\|_{L^1(\bar{\rho}_t)}&\geq -\lambda L_{\omega _f^\alpha }\int_{\bbR^d} |x-m_f^h[\bar{\rho}_t]|d\bar{\rho}_t-\frac{\alpha \sigma^2 c_0e^{-\alpha \underline{f}} }{2}\int_{\bbR^d} |x-m_f^h[\bar{\rho}_t]|^2d\bar{\rho}_t\\
            &\geq -\lambda L_{\omega _f^\alpha }\Lambdaa\left(\int_{\bbR^d}|x-\calM(\bar{\rho}_t)|^2d\bar{\rho}_t\right)^\frac{1}{2}-\alpha \sigma^2 c_0e^{-\alpha \underline{f}}\Lambdaa\int_{\bbR^d}|x-\calM(\bar{\rho}_t)|^2d\bar{\rho}_t,
        \end{split}
    \end{equation}
    where we used the fact that 
    \[\int_{\bbR^d} |x-m_f^h[\bar{\rho}_t]|^2d\bar{\rho}_t\leq \frac{\iint_{\bbR^d\times \bbR^d} |x-y|^2\psi_h(y)d\bar{\rho}_td\bar{\rho}_t}{\int_{\bbR^d} \psi_h(y)d\bar{\rho}_t}\leq 2\Lambdaa \int_{\bbR^d} |x-\calM(\bar{\rho}_t)|^2d\bar{\rho}_t.\]
    Therefore, integrating both sides of \eqref{eq: lp omega} then applying Lemma \ref{lem: mean field limit exponential decay} yields
    \begin{align*}
        \|\omega_f^\alpha\|_{L^1(\bar{\rho}_t)}\geq &\|\omega_f^\alpha\|_{L^1(\bar{\rho}_0)}-\frac{\lambda L_{\omega_f^\alpha}\Lambdaa}{\lambda-\bbflambda_{2, \alpha, \sigma}}\textnormal{Var}(\bar{X}_0)^\frac{1}{2}(1-e^{-(\lambda-\bbflambda_{2, \alpha, \sigma})t})\\
        &-\frac{\alpha\sigma^2c_0e^{-\alpha \underline{f}}\Lambdaa}{2(\lambda-\bbflambda_{2, \alpha, \sigma})}\textnormal{Var}(\bar{X}_0)(1-e^{-2(\lambda-\bbflambda_{2, \alpha, \sigma})t}). 
    \end{align*}
    Letting $t\to\infty$ and applying the dominated convergence theorem yields
    \begin{align*}
        e^{-\alpha f(x_\infty)}&\geq \|\omega_f^\alpha\|_{L^1(\bar{\rho}_0)}-\frac{\lambda L_{\omega_f^\alpha}\Lambdaa}{\lambda-\bbflambda_{2, \alpha, \sigma}}\textnormal{Var}(\bar{X}_0)^\frac{1}{2}
        -\frac{\alpha\sigma^2c_0e^{-\alpha \underline{f}}\Lambdaa}{2(\lambda-\bbflambda_{2, \alpha, \sigma})}\textnormal{Var}(\bar{X}_0)\\
        &\geq \epsilon\|\omega_f^\alpha\|_{L^1(\bar{\rho}_0)}.
    \end{align*}
    Therefore, taking logarithms on both sides, we have
    \[f(x_\infty)\leq -\frac{1}{\alpha}\log\|\omega_f^\alpha\|_{L^1(\bar{\rho}_0)}-\frac{1}{\alpha}\log \epsilon.\]
    By Laplace's principle, we conclude that 
    \[
    f(x_\infty)\leq \inf _{x\in \textnormal{supp}(\bar{\rho}_0)}f(x)+o(1)\quad (\alpha \to\infty).
    \]
    This completes the proof.
\end{proof}
 
%
%
%
%
%
%
%
%
%
%

%
%
%
%
%
%
%
%
%
%

%
%
%
%
%
%
%
%
%
%

%
%
%
%
%
%
%
%
%
%

\section*{Acknowledgments}
This work is supported by NRF grant no. RS-2024-00406821.

%
%
%
%
%
%
%
%
%
%

\appendix

%
%
%
%
%
%
%
%
%
%

\section{Exchangability of the solutions}\label{app_exch}
In this appendix, we verify that the finite-particle system \eqref{I: eq: main} preserves exchangeability over time.

\begin{lemma}
Let $\{X_t^i\}_{i=1}^N$ be the unique strong solution to the particle system \eqref{I: eq: main}
with i.i.d.\ initial data $\{X_0^i\}_{i=1}^N$ and independent Wiener processes $\{W_t^i\}_{i=1}^N$.
Then the family $\{X_t^i\}_{i=1}^N$ is exchangeable for all $t\ge0$.
\end{lemma}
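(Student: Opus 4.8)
The plan is to exploit the permutation symmetry built into the coefficients of \eqref{I: eq: main}: both the drift $-\lambda(x-m_t^h)$ and the diffusion $\sigma(x-m_t^h)$ depend on the particle configuration only through the empirical measure $\frac1N\sum_{i=1}^N\delta_{X_t^i}$, since $m_t^h$ is a symmetric function of the positions through the weight $\psi_h$. So I would fix $t\ge 0$ and an arbitrary permutation $\tau$ of $\{1,\dots,N\}$, and aim to prove $(X_t^{\tau(1)},\dots,X_t^{\tau(N)})\overset{d}{=}(X_t^1,\dots,X_t^N)$.

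First I would set $\widetilde X_\cdot^i:=X_\cdot^{\tau(i)}$ and $\widetilde W_\cdot^i:=W_\cdot^{\tau(i)}$ and observe that $\frac1N\sum_i\delta_{\widetilde X_t^i}=\frac1N\sum_i\delta_{X_t^i}$, so the consensus point for the relabeled family is again $m_t^h$ and the family solves the \emph{same} system,
\[
d\widetilde X_t^i=-\lambda(\widetilde X_t^i-m_t^h)\,dt+\sigma(\widetilde X_t^i-m_t^h)\,d\widetilde W_t^i,\qquad \widetilde X_0^i=X_0^{\tau(i)},
\]
now driven by $(\widetilde W^i)_{i=1}^N$ and started from $(X_0^{\tau(i)})_{i=1}^N$. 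Next I would note that, because the initial positions are i.i.d., the Wiener processes are independent among themselves, and the whole initial configuration is independent of the driving noise, the pair $\big((X_0^{\tau(i)})_i,(W^{\tau(i)})_i\big)$ has exactly the same joint law as $\big((X_0^i)_i,(W^i)_i\big)$.

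Finally I would invoke the well-posedness result: Theorem \ref{theorem: uniqueness} (proved in Appendix \ref{sec: exist}) gives pathwise uniqueness for \eqref{I: eq: main}, hence by the Yamada--Watanabe principle the strong solution is a measurable functional of its driving data and, in particular, uniqueness in law holds — the law of the solution process is determined by the joint law of (initial data, Wiener processes). Applying this to the two statistically identical driving pairs above yields $(\widetilde X_t^i)_{t\ge0,\,i}\overset{d}{=}(X_t^i)_{t\ge0,\,i}$; restricting to a fixed time slice gives exchangeability. The one delicate point is the implication ``pathwise uniqueness $\Rightarrow$ uniqueness in law'' for this McKean--Vlasov-type system, whose coefficients are neither globally Lipschitz nor bounded and depend nonlinearly on the empirical measure; this is precisely what the analysis in Appendix \ref{sec: exist} supplies, so no genuinely new estimate is needed — the main work is organizational, namely stating the existence/uniqueness theory so that the solution map is manifestly equivariant under relabeling of the indices.
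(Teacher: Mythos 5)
Your argument is essentially identical to the paper's proof: relabel both positions and Wiener processes, observe the empirical measure (hence $m_t^h$) is permutation-invariant, note the relabeled driving data has the same joint law, and conclude via pathwise uniqueness from Theorem~\ref{theorem: uniqueness}. The one point you flag as delicate is in fact not: \eqref{I: eq: main} is a finite-dimensional $Nd$-dimensional SDE (not a genuine McKean--Vlasov equation) with locally Lipschitz, linear-growth coefficients as verified in Lemma~\ref{lem: local lipshitz}, so the classical Yamada--Watanabe implication applies without any extra care.
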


\begin{proof}
    Let $\pi\in \text{Perm}(N)$ be any   permutation of $\{1, 2, \ldots, N\}$. We define a new process $\{\mathbf{\hat{X}}_t\}_{t\ge 0}$ by permuting the components of $\bfX_t$, that is, 
    \[
    \hat{X}_t^i:=X_t^{\pi(i)}, \quad i=1, 2, \ldots, N.
    \]

    First, we rewrite the dynamics  \eqref{I: eq: main} for each particle using the empirical measure 
    \[
    \mu_{\mathbf{X}_t^N}=\frac{1}{N}\sum_{j=1}^N\delta_{X_t^j}
    \] 
    as
\[
        dX_t^i=F\left(X_t^i, \mu_{\mathbf{X}_t^N}\right)dt+G\left(X_t^i, \mu_{\mathbf{X}_t^N}\right)dW_t^i,
\]
    where the drift and diffusion coefficients $F$ and $G$ are given, for any $(x, \mu)\in\bbR^d\times \calP(\bbR^d)$, by
\[
        F(x, \mu)=-\lambda (x-m_f^h[\mu]), \quad G(x, \mu)=\sigma(x-m_f^h[\mu]).
\]
    Since the empirical measure is invariant under permutation, we have
\[
        \mu_{\mathbf{\hat{X}}_t^N}=\frac{1}{N}\sum_{i=1}^N\delta_{\hat{X}_t^i}=\frac{1}{N}\sum_{i=1}^N\delta_{X_t^{\pi(i)}}=\mu_{\mathbf{X}_t^N}.
\]
    This invariance implies that each component $\hat{X}_t^i=X_t^{\pi(i)}$ satisfies the dynamics
\[
        d\hat{X}_t^i=dX_t^{\pi(i)} =F\left(X_t^{\pi(i)}, \mu_{\mathbf{X}_t^N}\right)dt+G\left(X_t^{\pi(i)}, \mu_{\mathbf{X}_t^N}\right)dW_t^{\pi(i)} =F\left(\hat{X}_t^i, \mu_{\mathbf{\hat{X}}_t^N}\right)dt+G\left(\hat{X}_t^i, \mu_{\mathbf{\hat{X}}_t^N}\right)d\hat{W}_t^i,
\]
    where we introduced the permuted Wiener processes $\hat{W}_t^i:=W_t^{\pi(i)}$.

Next, we check the initial and noise distributions:
    \begin{enumerate}
        \item[(i)] Since the initial data $\{X_0^i\}_{i=1}^N$ are i.i.d., the permuted vector $\widehat{\mathbf{X}}_0 = (X_0^{\pi(1)},\dots,X_0^{\pi(N)})^\top$ is identically distributed to $\mathbf{X}_0$.
        \item[(ii)] 
        Since $\{W_t^i\}_{i=1}^N$ are independent one-dimensional Wiener processes, the permuted family $\{\hat{W}_t^i\}_{i=1}^N$ also consists of independent one-dimensional Wiener processes.  Hence, the vector processes $\{\hat{\bfW}_t\}_{t\ge 0}$  and $\{\mathbf{W}_t\}_{t\ge0}$ have the same law.  
    \end{enumerate}
    
Therefore, $\{\mathbf{X}_t\}_{t\ge0}$ and $\{\widehat{\mathbf{X}}_t\}_{t\ge0}$ satisfy the same SDE, driven by initial data and Wiener processes that are equal in law.  
By pathwise uniqueness of strong solutions (Theorem \ref{theorem: uniqueness}),  
the two processes coincide in law:
\[
        \textnormal{Law}(\{\hat{\bfX}_t\}_{t\ge0})=\textnormal{Law}\left(\{\bfX_t\}_{t\ge0}\right).
\]
Since this holds for any permutation $\pi\in\textnormal{Perm}(N)$, we conclude that the family $\{X_t^i\}_{i=1}^N$ is exchangeable.
\end{proof}

%
%
%
%
%
%
%
%
%
%
\section{Well-posedness of the modified CBO system}\label{sec: exist}

In this appendix, we provide a detailed proof of Theorem \ref{theorem: uniqueness}. Throughout this section, we fix $N \in \N$ and denote 
\[
\mathbf{X}_t := (X_t^1, X_t^2, \ldots, X_t^N)^{\top}\in\mathbb{R}^{Nd}.
\]
The system \eqref{I: eq: main} can then be rewritten in the vector form
\[
    d\mathbf{X}_t=-\lambda \mathbf{F}_N(\mathbf{X}_t)dt+\sigma \mathbf{M}_N(\mathbf{X}_t)d\mathbf{W}_t,
\]
where $\mathbf{W}_t=(W^1_t\mathbf{1}_d,W^2_t\mathbf{1}_d ,\ldots,W^N_t\mathbf{1}_d)^{\top}\in \mathbb{R}^{Nd}$ , with  $\{W^i\}_{i=1}^N$ being a collection of independent one-dimensional Wiener processes 
and $\mathbf{1}_d=(1, 1, \ldots, 1)\in\mathbb{R}^d$.
The drift term
\[
    \mathbf{F}_N(\mathbf{X})=(F_N^{1}(\mathbf{X}), F_N^{2}(\mathbf{X}),\ldots,F_N^{N}(\mathbf{X}))^{\top}\in \mathbb{R}^{Nd}
\]
has components 
\[
    F_N^i(\mathbf{X}) =\frac{\sum_{j=1}^N(X^i-X^j)\psi_h(X^j)}{\sum_{j=1}^N\psi_h(X^j)}\in\R^d.
\]
The diffusion matrix $\bfM_N(\bfX)$ is block-diagonal:
\[
    \bfM_N(\bfX)=\begin{pmatrix}
        \bfM_N^1(\bfX) & \mathbf{0} & \mathbf{0}&\cdots &\mathbf{0}\\
        \mathbf{0} & \bfM_N^2(\bfX) & \mathbf{0}&\cdots &\mathbf{0}\\
        \mathbf{0} & \mathbf{0} & \bfM_N^3(\bfX)&\cdots &\mathbf{0}\\
        \vdots &\vdots &\vdots &\ddots &\vdots \\
        \mathbf{0}&\mathbf{0}& \mathbf{0} &\cdots &\bfM_N^N(\bfX)
    \end{pmatrix},
\]
where  each $ \bfM_N^i(\bfX) \in \R^{d \times d}$ has entries
\[
    (\bfM_N^i(\bfX))_{nm}=\begin{cases}
        (F_N^i)_n&n=m,\\
        0&n\neq m.
    \end{cases}
\]

Under the local Lipschitz regularity assumption on $f$ and the positivity of $h$, one verifies that 
each $F_N^i$ is locally Lipschitz and has at most linear growth, hence $\mathbf{F}_N$ and $\mathbf{M}_N$ inherit these properties.
\begin{lemma}\label{lem: local lipshitz}
    Let $N\in\mathbb{N}$, $k, \alpha>0$ be arbitrary and assume $h$ is strictly positive function. Then for any $\mathbf{X}, \tilde{\mathbf{X}}\in\mathbb{R}^{Nd}$ and all $i=1, 2, \ldots, N$ with $|\mathbf{X}|, |\tilde{\mathbf{X}}|\leq k$ it holds,
    \begin{align*}
        |F_N^i(\mathbf{X})-F_N^i(\tilde{\mathbf{X}})|&\leq |X^i-\tilde{X}^i|+\left(\frac{\Lambdaa}{\sqrt{N}}+\frac{c_k\left(1+\Lambdaa\right)}{Nh(\alpha)}\sqrt{N|\tilde{X}^i|+|\tilde{\mathbf{X}}|^2}\right)|\mathbf{X}-\tilde{\mathbf{X}}|,\\
        |F_N^i(\mathbf{X})|&\leq |X^i|+\frac{\Lambdaa}{\sqrt{N}}|\mathbf{X}|,
    \end{align*}
    where $c_k=\sqrt{2}\alpha\|\nabla f\cdot\omega_f^\alpha\|_{L^\infty(B(0, k))}$.
\end{lemma}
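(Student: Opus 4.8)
The plan is to reduce everything to elementary manipulations of the quotient defining $F_N^i$, using only the two-sided bound on $\psi_h$ and the \emph{local} Lipschitz property of $\omega_f^\alpha$. I would write $S_N(\mathbf X):=\sum_{j=1}^N\psi_h(X^j)$ and $U^i(\mathbf X):=\sum_{j=1}^N(X^i-X^j)\psi_h(X^j)$, so that $F_N^i(\mathbf X)=U^i(\mathbf X)/S_N(\mathbf X)$. From $\psi_h(x)=\omega_f^\alpha(x)+h(\alpha)$ and $0<\omega_f^\alpha(x)=e^{-\alpha f(x)}\le e^{-\alpha\underline f}$ one records the pointwise bound $h(\alpha)\le\psi_h(x)\le e^{-\alpha\underline f}+h(\alpha)$, hence $S_N(\mathbf X)\ge Nh(\alpha)$, and $(e^{-\alpha\underline f}+h(\alpha))/h(\alpha)=\Lambdaa$. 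Since any two points of a ball lie on a segment contained in it, the local Lipschitz property of $f$ gives, for $|x|,|y|\le k$, the bound $|\psi_h(x)-\psi_h(y)|=|\omega_f^\alpha(x)-\omega_f^\alpha(y)|\le L_k|x-y|$ with $L_k:=\alpha\|\nabla f\cdot\omega_f^\alpha\|_{L^\infty(B(0,k))}$; the constant in the statement is $c_k=\sqrt2\,L_k$.

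The growth bound is immediate: writing $U^i(\mathbf X)=X^iS_N(\mathbf X)-\sum_j X^j\psi_h(X^j)$ and dividing by $S_N(\mathbf X)$ gives $|F_N^i(\mathbf X)|\le|X^i|+\tfrac1{S_N(\mathbf X)}(e^{-\alpha\underline f}+h(\alpha))\sum_j|X^j|\le|X^i|+\tfrac{\Lambdaa}{N}\sum_j|X^j|$, and Cauchy--Schwarz, $\sum_j|X^j|\le\sqrt N\,|\mathbf X|$, yields $|F_N^i(\mathbf X)|\le|X^i|+\Lambdaa N^{-1/2}|\mathbf X|$.

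For the Lipschitz bound the device is the identity $\tfrac ab-\tfrac cd=\tfrac{a-c}{b}-\tfrac{c(b-d)}{bd}$ applied with $a=U^i(\mathbf X)$, $b=S_N(\mathbf X)$, $c=U^i(\tilde{\mathbf X})$, $d=S_N(\tilde{\mathbf X})$. In $U^i(\mathbf X)-U^i(\tilde{\mathbf X})$ I would split each summand as $(X^i-X^j)\psi_h(X^j)-(\tilde X^i-\tilde X^j)\psi_h(\tilde X^j)=\big[(X^i-\tilde X^i)-(X^j-\tilde X^j)\big]\psi_h(X^j)+(\tilde X^i-\tilde X^j)\big[\psi_h(X^j)-\psi_h(\tilde X^j)\big]$; the key observation is that summing the first piece over $j$ produces $(X^i-\tilde X^i)S_N(\mathbf X)-\sum_j(X^j-\tilde X^j)\psi_h(X^j)$, so after dividing by $S_N(\mathbf X)$ the term $X^i-\tilde X^i$ is extracted with coefficient exactly one, which is precisely what keeps $\Lambdaa$ away from the $|X^i-\tilde X^i|$ term. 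I would then bound the leftover pieces one by one: for $S_N(\mathbf X)^{-1}\sum_j(X^j-\tilde X^j)\psi_h(X^j)$ use $\psi_h\le e^{-\alpha\underline f}+h(\alpha)$, $\sum_j|X^j-\tilde X^j|\le\sqrt N|\mathbf X-\tilde{\mathbf X}|$ and $S_N\ge Nh(\alpha)$ to get a $\Lambdaa N^{-1/2}|\mathbf X-\tilde{\mathbf X}|$ term; for $S_N(\mathbf X)^{-1}\sum_j(\tilde X^i-\tilde X^j)[\psi_h(X^j)-\psi_h(\tilde X^j)]$ use the $\psi_h$-Lipschitz bound, Cauchy--Schwarz, and the elementary inequality $\sum_j|\tilde X^i-\tilde X^j|^2\le 2\big(N|\tilde X^i|^2+|\tilde{\mathbf X}|^2\big)$; and for the second fraction $U^i(\tilde{\mathbf X})(S_N(\mathbf X)-S_N(\tilde{\mathbf X}))/(S_N(\mathbf X)S_N(\tilde{\mathbf X}))$ combine $|U^i(\tilde{\mathbf X})|\le(e^{-\alpha\underline f}+h(\alpha))\sum_j|\tilde X^i-\tilde X^j|$, $|S_N(\mathbf X)-S_N(\tilde{\mathbf X})|\le L_k\sqrt N|\mathbf X-\tilde{\mathbf X}|$, $S_N(\mathbf X)S_N(\tilde{\mathbf X})\ge N^2h(\alpha)^2$, and again $\sum_j|\tilde X^i-\tilde X^j|^2\le 2(N|\tilde X^i|^2+|\tilde{\mathbf X}|^2)$. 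Adding the three contributions and using $(e^{-\alpha\underline f}+h(\alpha))/h(\alpha)^2=\Lambdaa/h(\alpha)$ yields the stated estimate with the factor $(1+\Lambdaa)$ and $c_k=\sqrt2\,L_k$ (with $N|\tilde X^i|^2$, rather than $N|\tilde X^i|$, under the square root).

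I do not expect a genuine obstacle: every step is an application of the triangle inequality, Cauchy--Schwarz, or the elementary bounds above. The only real care is bookkeeping — each Cauchy--Schwarz turns a sum of $N$ terms into $\sqrt N$ times the Euclidean norm, and several such steps are nested, so the final powers of $N$ must be tracked carefully — together with the observation that, since $|\mathbf X|,|\tilde{\mathbf X}|\le k$ forces every component into $B(0,k)$, it is exactly the \emph{local} (hence $k$-dependent) Lipschitz constant of $\omega_f^\alpha$ that must be invoked, which is why $c_k$ carries the factor $\|\nabla f\cdot\omega_f^\alpha\|_{L^\infty(B(0,k))}$.
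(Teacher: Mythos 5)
Your proposal is correct and follows essentially the same route as the paper: the identity $\frac{a}{b}-\frac{c}{d}=\frac{a-c}{b}-\frac{c(b-d)}{bd}$ combined with the further splitting of $U^i(\mathbf X)-U^i(\tilde{\mathbf X})$ reproduces exactly the paper's three-term decomposition $I^i+II^i+III^i$, and the subsequent bounds via $h(\alpha)\le\psi_h\le e^{-\alpha\underline f}+h(\alpha)$, the local Lipschitz constant of $\omega_f^\alpha$, Cauchy--Schwarz, and $\sum_j|\tilde X^i-\tilde X^j|^2\le 2(N|\tilde X^i|^2+|\tilde{\mathbf X}|^2)$ are the same. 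You are also right that the lemma statement has a typo — the term under the square root should be $N|\tilde X^i|^2$, not $N|\tilde X^i|$, as the paper's own estimate of $II^i$ and $III^i$ shows.
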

\begin{proof}
    Let $\mathbf{X}, \tilde{\mathbf{X}}\in\mathbb{R}^{Nd}$ with $|\mathbf{X}|, |\tilde{\mathbf{X}}|\leq k$ for some $k\geq 0$ and $i\in\{1, 2, \ldots, N\}$ be arbitrary. We can easily notice that 
\[
        F_N^i(\mathbf{X})-F_N^i(\tilde{\mathbf{X}})=\frac{\sum_{j\neq i}(X^i-X^j)\psi_h(X^j)}{\sum_{j=1}^N\psi_h(X^j)}-\frac{\sum_{j\neq i}(\tilde{X}^i-\tilde{X}^j)\psi_h(\tilde{X}^j)}{\sum_{j=1}^N\psi_h(\tilde{X}^j)}=: I^i + II^i + III^i,
\]
    where  
    \begin{align*}
       |I^i| &=\lt|\frac{\sum_{j\neq i}(X^i-\tilde{X}^i+\tilde{X}^j-X^j)\psi_h(X^j)}{\sum_{j=1}^N\psi_h(X^j)}\rt| \leq |X^i-\tilde{X}^i|+\frac{\Lambdaa}{\sqrt{N}}|\bfX-\tilde{\bfX}|, \\
       |II^i|&=\lt|\frac{\sum_{j\neq i}(\tilde{X}^i-\tilde{X}^j)(\psi_h(X^j)-\psi_h(\tilde{X}^j))}{\sum_{j=1}^N\psi_h(X^j)}\rt|\leq \frac{\sqrt{2}\alpha\|\nabla f\cdot\omega_f^\alpha\|_{L^\infty(B(0, k))}}{Nh(\alpha)}\sqrt{N|\tilde{X}^i|^2+|\tilde{\mathbf{X}}|^2}|\mathbf{X}-\tilde{\mathbf{X}}|, \\
    | III^i |&=\lt|\sum_{j\neq i}(\tilde{X}^i-\tilde{X}^j)\psi_h(\tilde{X}^j)\frac{\sum_{j=1}^N(\psi_h(\tilde{X}^j)-\psi_h(X^j))}{\sum_{j=1}^N\psi_h(X^j)\sum_{j=1}^N\psi_h(\tilde{X}^j)}\rt|\cr
    &\leq \frac{\sqrt{2}\alpha\Lambdaa \|\nabla f\cdot\omega_f^\alpha\|_{L^\infty(B(0, k))}}{Nh(\alpha)}\sqrt{N|\tilde{X}^i|^2+|\tilde{\mathbf{X}}|^2}|\mathbf{X}-\tilde{\mathbf{X}}|.
    \end{align*}
    Summing all these together yields the required estimate. As for the estimate of $|F_N^i(\bfX)|$, we easily obtain, 
\bq\label{eq: F_N^i}
        |F_N^i(\bfX)|\leq |X^i|+\frac{\Lambdaa}{\sqrt{N}}|\bfX|.
\eq
This completes the proof.
\end{proof}

Lemma \ref{lem: local lipshitz} implies that both $\mathbf{F}_N$ and $\mathbf{M}_N$ satisfy the local Lipschitz and linear growth conditions. We can therefore invoke standard SDE theory to establish global existence and uniqueness, as stated in Theorem \ref{theorem: uniqueness}.

\begin{proof}[Proof of Theorem \ref{theorem: uniqueness}]
Applying It\^{o}'s formula to $|\mathbf{X}_t|^2$, we obtain
\[
            d|\mathbf{X}_t|^2=-2\lambda \langle \mathbf{X}_t, \mathbf{F}_N(\mathbf{X}_t)\rangle dt+2\sigma \langle \mathbf{X}_t, \mathbf{M}_N(\mathbf{X}_t)d\mathbf{W}_t\rangle  +\sigma^2\|\mathbf{M}_N(\mathbf{X}_t)\|_\textnormal{F}^2\,dt.
  \]
Using the estimate
\[
        -\langle X_t^i, F_N^i(\mathbf{X}_t)\rangle = -\left\langle X_t^i, \frac{\sum_{j=1}^N(X_t^i-X_t^j)\psi_h(X_t^j)}{\sum_{j=1}^N\psi_h(X_t^j)} \right\rangle\leq -|X_t^i|^2+\frac{\Lambdaa}{\sqrt{N}}|X_t^i||\mathbf{X}_t|,
\]
together with \eqref{eq: F_N^i} and H\"older's inequality, we deduce
    \begin{equation}\label{eq: bfX_t esti}
        \begin{split}
            -2\lambda \langle \mathbf{X}_t, \mathbf{F}_N(\mathbf{X}_t)\rangle 
        +\sigma^2\|\bfM_N(\bfX_t)\|_\textnormal{F}^2&=\sum_{i=1}^N\left[-2\lambda \langle X_t^i, F_N^i(\mathbf{X}_t)\rangle+\sigma^2|F_N^i(\bfX_t)|^2 \right]\\
        &\leq -2\lambda \sum_{i=1}^N\left[|X_t^i|^2-\frac{\Lambdaa}{\sqrt{N}}|X_t^i||\bfX_t|\right]+2\sigma^2(1+\Lambdaa )|\bfX_t|^2\\
        &\leq  \lt(-2\lambda+2\lambda \Lambdaa+2\sigma^2(1+\Lambdaa)\rt) |\bfX_t|^2.
        \end{split}
    \end{equation}
   
    The local Lipschitz continuity and linear growth of $\bfF_N$ and $\bfM_N$, established in Lemma \ref{lem: local lipshitz}, allow us to apply the standard theory of stochastic differential equations (see, e.g. \cite[Theorem 3.1]{Durrett1996}), which guarantees the existence and uniqueness of a local-in-time strong solution $\mathbf{X}_t$ to \eqref{I: eq: main}.
    
    To show that the solution is in fact global, we derive a uniform second-moment bound. It follows from \eqref{eq: bfX_t esti} that 
\[
        \frac{d}{dt}\bbE|\bfX_t|^2\leq b_\alpha\bbE |\bfX_t|^2, \quad b_\alpha:= -2\lambda+2\lambda \Lambdaa+2\sigma^2(1+\Lambdaa).
\]
    Applying Gr\"onwall's inequality then yields:
\[
        \mathbb{E}|\mathbf{X}_t|^2\leq \mathbb{E}|\mathbf{X}_0|^2e^{b_\alpha t} \quad \text{for all }t\geq 0.
\]
This bound ensures that $\mathbb{E}|\mathbf{X}_t|^2$ remains finite for every finite time $t$, 
so no explosion can occur before $t=\infty$. Hence, the local strong solution extends globally in time.

Finally, note that the estimate above does not depend on the number of particles $N$. Therefore, the family of solutions $\{\mathbf{X}_t\}_{N\in\mathbb{N}}$ enjoys a uniform second-moment bound, which implies a strong stability of the system with respect to $N$.
\end{proof}

%
%
%
%
%

%
%
%
%
%
%
%
%
%
%

\section{Uniform-in-time stability of particle system} \label{app: stability}

In this appendix, we provide the statements and proofs of the concentration and stability estimates for the particle system \eqref{I: eq: main}. 

The first result below is a concentration estimate, which can be viewed as an adaptation of \cite[Lemma 4.9]{GKHV25} to our setting. It also extends that result in the sense that we work under weaker assumptions on the objective function. 
For the reader's convenience, we restate the concentration estimate here and include a sketch of the proof.

\begin{lemma}[Concentration estimate] \label{lem: concentration estimate}
    Let $p,q\geq 2$ and assume $X_{\rm in} \in L^{pq}(\Omega)$. For the particle trajectories, we suppose that $X_0^i$ are i.i.d. with $X_0^i \sim X_{\rm in}$ for all $i=1,\dots, N$. Denoting  
\[
c_{\textnormal{con},p}= 2\Lambdaa^{1-\frac{2}{p}}\bflambda_{p, \alpha, \sigma}+2(p-1)\sigma^2, 
\]
further assume
\[
\lambda > \bflambda_{p,q, \alpha, \sigma}':=\bflambda_{pq, \alpha, \sigma} + c_{\textnormal{con},p}.
\]
    Then for all small enough $\kappa>0$ and every $A>0$, the following concentration estimate holds:
\bq \label{eq: concentration 2} 
\bbP\left[\sup_{t\geq 0}e^{\kappa t}V_{p}(\mu_t^N)\geq \bbE\left[V_{p}(\mu_0^N)\right]+A\right]\leq \frac{C}{A^{q}N^\frac{q}{2}}
\eq
for some constant $C>0$ independent of $A$ and $N$. In particular, \eqref{eq: concentration 2} holds for all fixed $0 < \kappa < p(\lambda -( c_{\textnormal{con},p} \vee \bflambda_{pq,\alpha,\sigma}))$.
\end{lemma}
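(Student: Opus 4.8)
The plan is to adapt the proof of Lemma~\ref{lem: coupling concentration} to the genuine particle trajectories, the essential simplification being that one centers throughout by the \emph{empirical} mean $\calM(\mu_t^N)$, for which no population counterpart is available and which, crucially, coincides with the base point of the consensus map entering the diffusion. First I would pass to the centered variables $Y_t^i := X_t^i - \calM(\mu_t^N)$, so that $\sum_{i=1}^N Y_t^i \equiv 0$ and $V_p(\mu_t^N) = \frac1N\sum_{i=1}^N|Y_t^i|^p$. Averaging \eqref{eq: dynamics rewritten} over $i$ gives
\[
d\calM(\mu_t^N) = -\lambda\bigl(\calM(\mu_t^N)-m_f^h[\mu_t^N]\bigr)\,dt + \frac{\sigma}{N}\sum_{j=1}^N\bigl(X_t^j-m_f^h[\mu_t^N]\bigr)\,dW_t^j,
\]
and subtracting this from the equation for $X_t^i$ the consensus point cancels in the drift, leaving
\[
dY_t^i = -\lambda Y_t^i\,dt + \sigma\bigl(X_t^i-m_f^h[\mu_t^N]\bigr)\,dW_t^i - \frac{\sigma}{N}\sum_{j=1}^N\bigl(X_t^j-m_f^h[\mu_t^N]\bigr)\,dW_t^j.
\]

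Applying It\^o's formula to $|Y_t^i|^p$, summing over $i$, dividing by $N$ and taking expectations kills the stochastic integrals; the It\^o corrections are then bounded via $\langle Y_t^i,\zeta\rangle^2\le|Y_t^i|^2|\zeta|^2$, $(1-\tfrac1N)^2\le1$, H\"older's inequality, and the pathwise estimate
\[
\frac1N\sum_{i=1}^N|X_t^i-m_f^h[\mu_t^N]|^p \le 2^{p-1}(1+\Lambda_\alpha)\,V_p(\mu_t^N),
\]
which follows by Jensen's inequality (the weighted mean is a convex combination), from $h(\alpha)\le\psi_h\le e^{-\alpha\underline f}+h(\alpha)$, and from the empirical analogue $|\calM(\mu_t^N)-m_f^h[\mu_t^N]|^p\le\Lambda_\alpha V_p(\mu_t^N)$ of \eqref{eq: EbarX-mrho}. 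A Young-inequality bookkeeping in the spirit of \eqref{eq: bar pvar 2}, with $c_{\textnormal{con},p}$ absorbing the $\sigma^2$-coefficients together with the $O(1/N)$ remainders produced by the $(1-\tfrac1N)^2$ versus $N^{-2}$ split of the quadratic variation, yields
\[
dV_p(\mu_t^N) \le -p(\lambda-c_{\textnormal{con},p})\,V_p(\mu_t^N)\,dt + dM_t
\]
with $M_t$ a continuous local martingale. In contrast with Lemma~\ref{lem: coupling concentration}, \emph{no} residual fluctuation terms of the form $|\calM(\bar\rho_t)-\calM(\bar\mu_t^N)|^p$ or $|m_f^h[\bar\mu_t^N]-m_f^h[\bar\rho_t]|^p$ arise here, precisely because the centering point and the consensus point are computed from the same empirical measure.

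Next, fix $\kappa$ with $0<\kappa<p\bigl(\lambda-(c_{\textnormal{con},p}\vee\bflambda_{pq,\alpha,\sigma})\bigr)$ (possible under the hypothesis on $\lambda$). Then the drift of $\calE_t:=e^{\kappa t}V_p(\mu_t^N)$ is non-positive, so $\calE_t\le\calE_0+\widetilde M_t$ with $\widetilde M_t:=\int_0^t e^{\kappa s}\,dM_s$, and Markov's inequality with a union bound gives
\[
\bbP\Bigl[\sup_{s\le t}\calE_s\ge\bbE V_p(\mu_0^N)+A\Bigr] \le \frac{2^q}{A^q}\,\bbE\bigl|V_p(\mu_0^N)-\bbE V_p(\mu_0^N)\bigr|^q + \frac{2^q}{A^q}\,\bbE\Bigl[\sup_{s\le t}|\widetilde M_s|^q\Bigr].
\]
The martingale term is treated exactly as term $III$ in Lemma~\ref{lem: coupling concentration}: the Burkholder--Davis--Gundy inequality, a Jensen step in the particle sum of $\langle\widetilde M\rangle_t$ to pull out the factor $N^{-q/2}$, Minkowski's integral inequality, H\"older, and the decay $\max_{i,j}\bbE|X_s^i-X_s^j|^{pq}\lesssim e^{-pq(\lambda-\bflambda_{pq,\alpha,\sigma})s}$ from Lemma~\ref{lem: diff Lp} (valid since $\lambda>\bflambda_{pq,\alpha,\sigma}$), combined with the pathwise bound above used with exponent $pq$, give $\bbE[\sup_{s\le t}|\widetilde M_s|^q]\le C\,N^{-q/2}\bigl(p(\lambda-\bflambda_{pq,\alpha,\sigma})-\kappa\bigr)^{-q/2}$, the exponential integral being finite because $\kappa<p(\lambda-\bflambda_{pq,\alpha,\sigma})$. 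All bounds being $t$-independent, one lets $t\to\infty$ by bounded convergence to obtain \eqref{eq: concentration 2}.

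The main obstacle is the initial-fluctuation term $\bbE\bigl|V_p(\mu_0^N)-\bbE V_p(\mu_0^N)\bigr|^q\lesssim N^{-q/2}$. Unlike in Lemma~\ref{lem: coupling concentration}, where the centering is by the deterministic vector $\bbE\bar X_0$, so that $\{|\bar X_0^i-\bbE\bar X_0|^p\}_i$ are i.i.d.\ and Marcinkiewicz--Zygmund applies directly, here $V_p(\mu_0^N)$ depends on the random empirical mean. I would compare $V_p(\mu_0^N)$ with the i.i.d.\ average $\frac1N\sum_i|X_0^i-\bbE X_{\rm in}|^p$: setting $\delta_N:=|\calM(\mu_0^N)-\bbE X_{\rm in}|=\bigl|\frac1N\sum_j(X_0^j-\bbE X_{\rm in})\bigr|$ and using $\bigl||a|^p-|b|^p\bigr|\le p(|a|+|b|)^{p-1}|a-b|$, the discrepancy between the two is bounded by $C\,\delta_N\bigl(\frac1N\sum_i|X_0^i-\bbE X_{\rm in}|^{p-1}+\delta_N^{p-1}\bigr)$; a H\"older split with conjugate exponents $(p,p/(p-1))$ and the Marcinkiewicz--Zygmund bound $\bbE\delta_N^{pq}\lesssim N^{-pq/2}$ control its $q$-th moment at rate $N^{-q/2}$, while a further Marcinkiewicz--Zygmund estimate on the i.i.d.\ average and the triangle inequality complete the bound. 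Both of these steps—as well as the martingale estimate through Lemma~\ref{lem: diff Lp}—use precisely the integrability $X_{\rm in}\in L^{pq}$, which is where that hypothesis enters.
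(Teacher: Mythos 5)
Your proposal follows the same route as the paper's proof in Appendix~\ref{app: stability}: derive the SDE for $\calK_t^i=X_t^i-\calM(\mu_t^N)$, apply It\^o to $V_p(\mu_t^N)$, use the Jensen-type bound \eqref{eq: triangle} with H\"older and Young to reach the pathwise inequality $dV_p(\mu_t^N)\le -p(\lambda-c_{\textnormal{con},p})V_p(\mu_t^N)\,dt+dM_t$, weight by $e^{\kappa t}$, and conclude via Markov, BDG, and Marcinkiewicz--Zygmund. One small misstatement: ``taking expectations kills the stochastic integrals'' should not happen at the It\^o step, since the subsequent supremum bound requires the \emph{pathwise} inequality with the local martingale retained; the rest of your argument correctly works pathwise, so this is a matter of phrasing, not substance.

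Where your write-up genuinely improves on the paper's sketch is the initial-fluctuation term $\bbE\bigl|V_p(\mu_0^N)-\bbE V_p(\mu_0^N)\bigr|^q$. The paper's Appendix~\ref{app: stability} invokes Marcinkiewicz--Zygmund ``in the same way'' as Lemma~\ref{lem: coupling concentration}, but there the centering point $\calM(\bar\rho_0)$ is deterministic, so the summands in $\bar\calE_0$ are i.i.d.\ and MZ applies directly; here $V_p(\mu_0^N)=\frac1N\sum_i|X_0^i-\calM(\mu_0^N)|^p$ centers by the random empirical mean, the summands are not independent, and MZ does not apply verbatim. Your comparison of $V_p(\mu_0^N)$ with the i.i.d.\ average $\frac1N\sum_i|X_0^i-\bbE X_{\rm in}|^p$ via the mean-value bound $\bigl||a|^p-|b|^p\bigr|\le p(|a|+|b|)^{p-1}|a-b|$, a H\"older split, and MZ applied both to $\delta_N$ and to the i.i.d.\ average, correctly closes this gap at the required rate $N^{-q/2}$ and is precisely where the $L^{pq}$ hypothesis on $X_{\rm in}$ is consumed.
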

\begin{proof}
    Since the proof is similar to that of Lemma \ref{lem: coupling concentration}, we give a sketch of proof. We apply It\^o's formula to  $V_p(\mu_t^N) = \frac1N \sum_{i=1}^N |\calK_t^i|^p$, with $\calK_t^i:=X_t^i-\calM(\mu_t^N)$, and then use the Cauchy--Schwarz inequality to find
    \begin{equation*}
        \begin{split}
            dV_p(\mu_t^N)&\leq -\lambda pV_p(\mu_t^N)dt +\frac{p(p-1)\sigma^2}{2N}\sum_{i=1}^N|\calK_t^i|^{p-2}|X_t^i-m_f^h[\mu_t^N]|^2dt\\
           &\quad \;+\frac{p(p-1)\sigma^2}{2N^3}\sum_{i=1}^N\sum_{j=1}^N|\calK_t^i|^{p-2}|X_t^j-m_f^h[\mu_t^N]|^2dt\\
           &\quad \;+\frac{p\sigma }{N}\sum_{i=1}^N\Bigg[|\calK_t^i|^{p-2}\langle \calK_t^i, X_t^i-m_f^h[\mu_t^N]\rangle
        -\frac{1}{N}\sum_{i=1}^N\sum_{j=1}^N|\calK_t^j|^{p-2}\langle \calK_t^j, X_t^i-m_f^h[\mu_t^N]\rangle \Bigg]dW_t^i.
        \end{split}
    \end{equation*}    
Next, by Jensen's inequality, for any $m\ge1$ we have
\begin{align}\label{eq: triangle}
    |X_t^i-m_{f}^h[\mu_t^\mathbf{X}]|^m\leq 2^{m-1}|\calK_t^i|^m+\frac{2^{m-1} \Lambda_\alpha}{N}\sum_{i=1}^N|\calK_t^i|^m.
\end{align}
Applying \eqref{eq: triangle} together with H\"older's inequality yields the differential inequality
\begin{align*}
    dV_{p}(\mu_t^N)&\leq -\Bigg(p\lambda  -p(p-1)(1 + \Lambda_\alpha)\left(1+\frac{1}{N}\right)\sigma^2\Bigg)V_{p}(\mu_t^N)dt\\
    &\quad\;+\frac{p\sigma }{N}\sum_{i=1}^N\left[|\calK_t^i|^{p-2}\langle\calK_t^i, X_t^i-m_f^h[\mu_t^N]\rangle
    -\frac{1}{N}\sum_{j=1}^N|\calK_t^j|^{p-2}\langle \calK_t^j, X_t^i-m_f^h[\mu_t^N]\rangle \right]dW_t^i\\
    &\leq -p(\lambda-c_{\textnormal{con},p})V_{p}(\mu_t^N)dt+dM_t.
\end{align*}
Since $\lambda - c_{\textnormal{con},p} > \bflambda_{pq,\alpha,\sigma}$, we may choose $\kappa \in (0, p(\lambda - c_{\textnormal{con},p}))$. Then, we define the auxiliary process $\calE_t=e^{\kappa t}V_p(\mu_t^N)$. Applying Markov's inequality, along with Marcinkiewicz--Zygmund inequality and the Burkholder--Davis--Gundy inequality, we obtain in the same way as in the proof of Lemma \ref{lem: coupling concentration}:
\[
    \bbP\left[\sup_{t\geq 0}\calE_t\geq \bbE\calE_0+A\right]\leq \frac{C}{A^{q}N^\frac{q}{2}}\left(1 + \frac{1}{(p(\lambda-\bflambda_{pq, \alpha, \sigma})-\kappa)^\frac{q}{2}}\right),
\]
where $C>0$ is dependent on $p,q,\bbE|X_{\rm in}|^{pq},\Lambda_\alpha$, and $\sigma$ but independent of $A$ or $N$. We remark that the above estimate is valid for
\begin{equation}\label{eq: kappa 2}
\kappa < p(\lambda - \bflambda_{pq,\alpha,\sigma}).
\end{equation}
Since
\[
    \lambda - \bflambda_{pq,\alpha,\sigma} > c_{\textnormal{con},p} \ge \Lambda_\alpha^{1-\frac{2}{p}}\bflambda_{p,\alpha,\sigma} + (p-1)\sigma^2,
\]
we can choose $\kappa>0$ small enough so that \eqref{eq: kappa 2} holds. We conclude the proof by proceeding in the same way as at the end of the proof of Lemma \ref{lem: coupling concentration}.
\end{proof}

We now proceed to prove the stability estimates for $p\ge 2$ in Theorem \ref{thm: stability}. 
The proof relies on Lemma \ref{lem: bounding calB}, which allows us to decompose the difference between the consensus points and the empirical means of the particles. 
This decomposition introduces terms involving the empirical $p$-th variances, which are subsequently controlled through the concentration estimate obtained in Lemma \ref{lem: concentration estimate}. 
The latter ensures that these variance terms remain uniformly bounded with high probability.

\begin{theorem}\label{thm: stability}
    Let $p\ge 2$, $q>2$, and assume that $\lambda >\bflambda_{p,q, \alpha, \sigma}'$. Suppose that the objective function $f$ satisfies Assumptions  \ref{assum: omega}. Consider two copies $\{X_t^i\}_{i=1}^N$ and $\{Y_t^i\}_{i=1}^N$ of the particle system \eqref{I: eq: main}, driven by the same one-dimensional Wiener processes $\{W_t^i\}_{i=1}^N$, but with different i.i.d. initial data: for each $1\leq i\leq N$, assume $X_0^i\sim X_\textnormal{in}$ and $Y_0^i\sim Y_\textnormal{in}$, where  $X_\textnormal{in}, Y_\textnormal{in}\in L^{pq}(\Omega)$. Then the following stability estimates hold:
    \begin{enumerate}
        \item if $p=2$, 
        \begin{equation*}
            \frac{1}{N}\sum_{i=1}^N\bbE|X_t^i-Y_t^i|^{2}\leq \frac{C}{N}\sum_{i=1}^N\bbE|X_0^i-Y_0^i|^{2}+\frac{C}{N^\frac{q-2}{2}} ,
        \end{equation*}
        \item  if $p>2$, 
        \begin{equation*}
            \frac{1}{N}\sum_{i=1}^N\bbE|X_t^i-Y_t^i|^{p}\leq \frac{C}{N}\sum_{i=1}^N\bbE|X_0^i-Y_0^i|^{p}+\frac{C}{N^{\frac{p}{2}\wedge \frac{q-2}{2}}}
        \end{equation*}
    \end{enumerate}
    holds, where $C>0$ is a constant depending only on $\lambda, p,q, \sigma, \Lambdaa,  X_\textnormal{in}$, and $Y_\textnormal{in}$.
\end{theorem}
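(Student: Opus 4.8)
The proof will follow, essentially verbatim, the scheme used for Theorem~\ref{thm: propagation of chaos} in Section~\ref{subsec: proof of prop of chaos}, the only structural change being that the mean-field family $\{\bar X_t^i\}_{i=1}^N$ is replaced by the second particle family $\{Y_t^i\}_{i=1}^N$. Write $\mu_t^N$ and $\nu_t^N$ for the empirical measures of the $X$- and $Y$-ensembles and set $Z_t^i := X_t^i - Y_t^i$, so that
\begin{align*}
    dZ_t^i = -\lambda\bigl(Z_t^i - \Delta_t^m\bigr)\,dt + \sigma\bigl(Z_t^i - \Delta_t^m\bigr)\,dW_t^i, \qquad \Delta_t^m := m_f^h[\mu_t^N] - m_f^h[\nu_t^N].
\end{align*}
Exactly as in \eqref{eq: notations} I would introduce $\Delta_t^\calM := \calM(\mu_t^N) - \calM(\nu_t^N) = \frac1N\sum_i Z_t^i$ together with $\bar\calG_{t,p} = \frac1N\sum_i|Z_t^i|^p$, $\bar\calD_{t,p} = \frac1N\sum_i|Z_t^i - \Delta_t^\calM|^p$, $\bar\calO_{t,p} = |\Delta_t^\calM|^p$, and reduce the problem, via the relation \eqref{eq: relation}, to a uniform-in-time bound on $\bbE\bar\calD_{t,p} + \bbE\bar\calO_{t,p}$.

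The key preliminary step is the analogue of Lemma~\ref{lem: deltas} controlling the mean discrepancy $\bbE|\Delta_t^\calM - \Delta_t^m|^p$. Since $\lambda > \bflambda_{p,q,\alpha,\sigma}'$, the particle concentration estimate, Lemma~\ref{lem: concentration estimate}, applies to \emph{both} ensembles: outside an event $\Omega_{\widetilde\kappa}$ of probability $\le CN^{-q/2}$ one has $V_p(\mu_t^N), V_p(\nu_t^N) \le Ce^{-\widetilde\kappa t}$. I would then bound $|\Delta_t^\calM - \Delta_t^m|^p$ using Lemma~\ref{lem: bounding calB} applied to $\mu_t^N$ and $\nu_t^N$ (whose difference decomposes as $(m_f^h[\mu_t^N]-\calM(\mu_t^N)) - (m_f^h[\nu_t^N]-\calM(\nu_t^N))$), invoke the canonical coupling $X_t^i \leftrightarrow Y_t^i$ so that $\calW_p^p(\mu_t^N, \nu_t^N) \le \bar\calG_{t,p}$, and split the expectation over $\Omega_{\widetilde\kappa}$ and its complement; on the complement the variance factors decay exponentially, while on $\Omega_{\widetilde\kappa}$ one uses H\"older's inequality together with the uniform $L^{pq}$ bound of Lemma~\ref{lem: Lp bound} and the decay of Lemma~\ref{lem: diff Lp} (both applicable since $\lambda > \bflambda_{pq,\alpha,\sigma}$). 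The outcome should read
\begin{align*}
    \bbE|\Delta_t^\calM - \Delta_t^m|^p \le C\Bigl[N^{-(q-2)/2} + \bbE\bar\calG_{t,p}\Bigr]e^{-\theta t}
\end{align*}
for some $0 < \theta < \widetilde\kappa$. This is the step I expect to be the main obstacle: it is the only place the probabilistic machinery enters, and one must carefully balance the $N^{-q/2}$ probability of the bad event against the only available control of $(V_p+V_p)\calW_p^p$ there, which comes merely from $L^{pq}$ moments.

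With this estimate established, the remainder is deterministic It\^o--Gr\"onwall bookkeeping, and it is in fact lighter than in Section~\ref{subsec: proof of prop of chaos}: because the drift of $Z_t^i$ involves only $\Delta_t^m$ (and not an additional term of the form $m_f^h[\bar\mu_t^N] - m_f^h[\bar\rho_t]$), the terms in Lemmas~\ref{lem: barcalD} and \ref{lem: baecalO} that relied on Lemma~\ref{lem: mrho-mmu} simply disappear. Applying It\^o's formula to $\bar\calD_{t,p}$ and $\bar\calO_{t,p}$ and then H\"older's and Young's inequalities, together with the displayed bound above, Lemma~\ref{lem: diff Lp} and Lemma~\ref{lem: Lp bound}, yields
\begin{align*}
    \frac{d}{dt}\bigl(\bbE\bar\calD_{t,p} + \bbE\bar\calO_{t,p}\bigr) \le C\bigl(\bbE\bar\calD_{t,p} + \bbE\bar\calO_{t,p}\bigr)e^{-\widetilde\theta t} + \frac{C}{N^{\frac p2 \wedge \frac{q-2}2}}e^{-\widetilde\theta t}
\end{align*}
for a suitable $\widetilde\theta > 0$; Gr\"onwall's inequality and \eqref{eq: relation} then give the claim for $p > 2$. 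Finally, for $p = 2$ I would argue slightly more directly, using $\bar\calG_{t,2} = \bar\calD_{t,2} + \bar\calO_{t,2}$ and estimating the double-sum (quadratic-variation) contributions while retaining the factor $\frac1N\sum_i|Z_t^i - \Delta_t^m|^2$ instead of invoking Young's inequality; this removes the $N^{-1}$ source term that is otherwise unavoidable when $p>2$ and produces the sharper rate $N^{-(q-2)/2}$ stated in part~(1).
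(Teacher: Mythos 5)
Your proposal matches the paper's proof in both structure and substance: the key estimate $\bbE|\Delta_t^\calM-\Delta_t^m|^p \le C\big[N^{-(q-2)/2} + \bbE\calG_{t,p}\big]e^{-\eta t}$ is obtained exactly as you describe, by applying Lemma~\ref{lem: bounding calB} to $(\mu_t^N,\nu_t^N)$, using the particle concentration estimate (Lemma~\ref{lem: concentration estimate}) on both ensembles to split $\Omega$, and controlling the bad event via $L^{pq}$ moments; the terms tied to Lemma~\ref{lem: mrho-mmu} indeed drop out; and the $p=2$ case is treated directly via the exact identity $\frac1N\sum_i|z_i|^2 = \frac1N\sum_i|z_i-z^{\mathrm{avg}}|^2 + |z^{\mathrm{avg}}|^2$ applied to $\calG_{t,2}$, thereby avoiding the Young's-inequality-induced $N^{-p/2}$ source term, precisely for the reason you give. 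No gaps.
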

\begin{proof}
    We adopt the same notations as in the proof of Theorem \ref{thm: stability}, replacing $\bar X_t^i$ there by $Y_t^i$ :
\[
        Z_t^i=X_t^i-Y_t^i,\quad \nu_t^N := \frac{1}{N}\sum_{i=1}^N \delta_{Y_t^i}, \quad \Delta_t^m=m_f^h[\mu_t^N]-m_f^h[\nu_t^N], \quad \Delta_t^\calM=\calM(\mu_t^N)-\calM(\nu_t^N).
\]
We also set
\[
        \calG_{t, p}:=\frac{1}{N}\sum_{i=1}^N|Z_t^i|^{p}, \quad \calD_{t, p}:=\frac{1}{N}\sum_{i=1}^N|Z_t^i-\dcal|^{p}, \quad \calO_{t, p}:=|\dcal|^{p}.
\]
The overall strategy is similar to that of Theorem \ref{thm: propagation of chaos}. Our goal is to control  $\calG_{t, p}$ by estimating $\calD_{t, p}$ and $\calO_{t, p}$. The first crucial step is to derive a bound for $\bbE|\dcal-\dm|^p$. From Lemma \ref{lem: bounding calB}, we recall
\[
    |\dcal-\dm|^{p}\leq \frac{1}{2}\left(\frac{2L(\alpha)C_\alpha}{h(\alpha)}\right)^{p}\left(V_{p}(\mu_t^N)+V_{p}(\nu_t^N)\right)\calW_{p}^{p}(\mu_t^N, \nu_t^N).
\]
Using the concentration estimate from Lemma \ref{lem: concentration estimate}, we can proceed as in Lemma \ref{lem: deltas} and deduce that for some $0<\eta<\kappa$,
\begin{equation}\label{eq: dcal-dm}
        \bbE|\dcal-\dm|^{p}\leq  \left[\frac{C}{N^{\frac{q-2}{2}}}+C\bbE\calG_{t, p}\right]e^{-\eta t},
\end{equation}
where $C>0$ is a constant depending only on $\lambda, \sigma, p, q, \Lambdaa, X_\textnormal{in}$, and $Y_\textnormal{in}$. 

We now consider the two cases $p=2$ and $p>2$ separately.

\noindent $\bullet $ (The case $p=2$) Applying the It\^o formula to $\calG_{t, 2}$ leads to
\[
    d\calG_{t, 2}=-\left[\frac{2\lambda}{N}\sum_{i=1}^N|Z_t^i|^2-2\lambda \langle \dcal, \dm\rangle-\frac{\sigma^2}{N}\sum_{i=1}^N|Z_t^i-\dm|^2\right]dt+\frac{2\sigma}{N}\sum_{i=1}^N\langle Z_t^i, Z_t^i-\dm\rangle dW_t^i.
\]
By using the elementary identity
\[
    \frac{1}{N}\sum_{i=1}^N |z_i|^2 = \frac{1}{N}\sum_{i=1}^N |z_i - z^{\rm avg}|^2 + |z^{\rm avg}|^2, \quad z^{\rm avg} := \frac{1}{N}\sum_{i=1}^N z_i,
\]
and taking the expectation of both sides, we obtain for some $0<\widetilde\eta<\frac{\eta}{2}$ 
\begin{align*}
    \frac{d}{dt}\bbE\calG_{t, 2}&= -\frac{2\lambda -\sigma^2}{N}\sum_{i=1}^N\bbE|Z_t^i-\dcal|^2-2\lambda \bbE\langle \dcal, \dcal -\dm\rangle+\sigma^2\bbE|\dcal-\dm|^2\\
    &\leq 2\lambda \left(\bbE\calG_{t, 2}\right)^\frac{1}{2}\left(\bbE|\dcal-\dm|^2\right)^\frac{1}{2}+\sigma^2\bbE|\dcal-\dm|^2\nonumber\\
    &\leq \lambda \bbE\calG_{t, 2}e^{-\widetilde\eta t}+(\lambda e^{\widetilde\eta t}+\sigma^2)\bbE|\dcal-\dm|^2.\nonumber
\end{align*}
The first term on the right-hand side is negative by the assumption on $\lambda$, while the remaining terms can be controlled by Cauchy--Schwarz and Young's inequalities. 
Substituting the estimate \eqref{eq: dcal-dm} into the above yields
\[
    \frac{d}{dt}\bbE\calG_{t, 2}\leq C\bbE \calG_{t, 2}e^{-\widetilde\eta t}+\frac{C}{N^\frac{q-2}{2}}e^{-\widetilde\eta t}.
\]
Applying Gr\"onwall's inequality yields the desired estimate for the $p=2$ case:
\[
    \bbE\calG_{t, 2}\leq \left(\bbE\calG_{0, 2}+\frac{C}{N^{\frac{q-2}{2}}}\right)e^{C/\widetilde\eta}.
\]

\noindent $\bullet $ (The case $p>2$) In the same way as in \eqref{eq: relation}, we have
\begin{equation}\label{eq: relation2}
        \calG_{t, p} \leq 2^{p-1}(\calD_{t, p}+\calO_{t, p}) \leq (2^{2p-1}+2^{p-1})\calG_{t, p}.
\end{equation}

Applying It\^o's formula to $\calD_{t,p}$ and using Cauchy--Schwarz gives
\begin{align*}
    \frac{d}{dt}\bbE\calD_{t, p}&\leq-p\left({\lambda}-(p-1)\sigma^2\right) \bbE\calD_{t, p} +\frac{p(p-1)\sigma^2}{N}\sum_{i=1}^N\bbE\left[|Z_t^i-\dcal|^{p-2}|\dcal-\dm|^2\right]\\
    &\quad +\frac{p(p-1)\sigma^2}{2N^3}\sum_{i,j=1}^N \bbE\left[|Z_t^i-\dcal|^{p-2}|Z_t^j-\dm|^2\right].
\end{align*}
Since the leading term is strictly negative, the remaining ones are handled as in Lemma \ref{lem: barcalD}, using Lemmas \ref{lem: diff Lp} and \ref{lem: concentration estimate}, together with Young's inequality and \eqref{eq: dcal-dm}.  Thus, there exist a constant $C>0$, depending only on $\lambda, p, q,  \sigma, \Lambdaa,X_\textnormal{in}$ and $Y_\textnormal{in}$, and also $\widehat\eta>0$ such that
\bq\label{eq: calD}
    \frac{d}{dt}\bbE\calD_{t, p}\leq C \bbE\calD_{t,p}e^{-\widehat\eta t}+C \bbE\calO_{t, p}e^{-\widehat\eta t}+\frac{C}{N^{\frac{p}{2}\wedge \frac{q-2}{2}}}e^{-\widehat\eta t} .
\eq

Next, applying It\^{o}'s formula to $\calO_{t, p}$ and applying the Cauchy-Schwarz inequality shows
\[
    \frac{d}{dt}\bbE\calO_{t, p} \leq  -p\lambda\bbE|\Delta_t^\mathcal{M}|^{p-2}\langle \Delta_t^\mathcal{M}, \Delta_t^\mathcal{M} - \Delta_t^m \rangle +\frac{p(p-1)\sigma^2}{2N^2}\bbE\left[|\Delta_t^\mathcal{M}|^{p-2}\sum_{i=1}^N |Z_t^i -\dm|^2\right].
\]
Then, by using a similar argument as in Lemma \ref{lem: baecalO}, we obtain
\begin{align}\label{eq: calO}
    \frac{d}{dt}\bbE\calO_{t, p}\leq C \bbE\calO_{t, p}e^{-\widehat\eta t}+C \bbE\calD_{t, p}e^{-\widehat\eta t}+\frac{C }{N^{\frac{p}{2}    \wedge \frac{q-2}{2}}}e^{-\widehat\eta t}
\end{align}
for some $C>0$, which depends only on $\lambda, p, q,  \sigma, \Lambdaa,X_\textnormal{in}$, and $Y_\textnormal{in}$.

Combining \eqref{eq: calD} and \eqref{eq: calO}, we have 
\[
    \frac{d}{dt}\left(\bbE\calD_{t, p}+\bbE\calO_{t, p}\right)\leq C \left(\bbE\calD_{t, p}+\bbE\calO_{t, p}\right)e^{-\widehat\eta t}+\frac{C }{N^{\frac{p}{2}\wedge\frac{q-2}{2}}}e^{-\widehat\eta t}
\]
for some positive constant $C$, depending only on $\lambda, p, q, \sigma, \Lambdaa, X_\textnormal{in}$ and $Y_\textnormal{in}$.
Applying Gr\"onwall's inequality gives 
\[
    \bbE\calD_{t, p}+\bbE\calO_{t, p}\leq \left[\bbE\calD_{0, p}+\bbE\calO_{0, p}+\frac{C }{N^{\frac{p}{2}\wedge \frac{q-2}{2}}}\right]e^{C/\widehat\eta}.
\]
Finally, using \eqref{eq: relation2}, we deduce
\[
    \bbE\calG_{t, p}\leq C\left[\bbE\calG_{0, p}+\frac{1}{ N^{\frac{p}{2}\wedge \frac{q-2}{2}}}\right]e^{C /\widehat\eta},
\]
completing the proof of Theorem \ref{thm: stability}.
\end{proof}

%
%
%
%
%
%
%
 
\bibliographystyle{abbrv}
\bibliography{cbo}

@incollection {S91,
    AUTHOR = {Sznitman, Alain-Sol},
     TITLE = {Topics in propagation of chaos},
 BOOKTITLE = {\'{E}cole d'\'{E}t\'{e} de {P}robabilit\'{e}s de {S}aint-{F}lour {XIX}---1989},
    SERIES = {Lecture Notes in Math.},
    VOLUME = {1464},
     PAGES = {165--251},
 PUBLISHER = {Springer, Berlin},
      YEAR = {1991},
   MRCLASS = {60J60 (60K35 82C40)},
  MRNUMBER = {1108185},
MRREVIEWER = {Maria E. Vares},
       DOI = {10.1007/BFb0085169},
       URL = {https://doi.org/10.1007/BFb0085169},
}

@article {CD22,
    AUTHOR = {Chaintron, Louis-Pierre and Diez, Antoine},
     TITLE = {Propagation of chaos: a review of models, methods and
              applications. {I}. {M}odels and methods},
   JOURNAL = {Kinet. Relat. Models},
  FJOURNAL = {Kinetic and Related Models},
    VOLUME = {15},
      YEAR = {2022},
    NUMBER = {6},
     PAGES = {895--1015},
      ISSN = {1937-5093},
   MRCLASS = {82C22 (35Q70 65C35 65P20 82C40 92-10)},
  MRNUMBER = {4489768},
MRREVIEWER = {Julian Tugaut},
       DOI = {10.3934/krm.2022017},
       URL = {https://doi.org/10.3934/krm.2022017},
}

@article {PTTM17,
    AUTHOR = {Pinnau, Ren\'{e} and Totzeck, Claudia and Tse, Oliver and Martin,
              Stephan},
     TITLE = {A consensus-based model for global optimization and its
              mean-field limit},
   JOURNAL = {Math. Models Methods Appl. Sci.},
  FJOURNAL = {Mathematical Models and Methods in Applied Sciences},
    VOLUME = {27},
      YEAR = {2017},
    NUMBER = {1},
     PAGES = {183--204},
      ISSN = {0218-2025},
   MRCLASS = {90C26 (34D06 35B40 35Q84 60K35)},
  MRNUMBER = {3597012},
       DOI = {10.1142/S0218202517400061},
       URL = {https://doi.org/10.1142/S0218202517400061},
}

@misc{GKHV25,
      title={Uniform-in-time propagation of chaos for Consensus-Based Optimization}, 
      author={Nicolai J. Gerber and Franca Hoffmann and Dohyeon Kim and Urbain Vaes},
      year={arXiv:2505.08669},
      eprint={2505.08669},
      archivePrefix={arXiv},
      primaryClass={math.PR},
      url={https://arxiv.org/abs/2505.08669}, 
}

@book{S21,
  author       = {Schilling, Ren{\'e} L.},
  title        = {Brownian Motion: A Guide to Random Processes and Stochastic Calculus},
  edition      = {3rd revised and extended},
  publisher    = {De Gruyter},
  address      = {Berlin and Boston},
  year         = {2021},
}

@misc{HKS25,
      title={Faithful global convergence for the rescaled Consensus--Based Optimization}, 
      author={Hui Huang and Hicham Kouhkouh and Lukang Sun},
      year={arXiv:2503.08578},
      eprint={2503.08578},
      archivePrefix={arXiv},
      primaryClass={math.OC},
      url={https://arxiv.org/abs/2503.08578}, 
}

@article {WB25 ,
    AUTHOR = {Wei, Jiazhen and Bian, Wei},
     TITLE = {Smoothing iterative consensus-based optimization algorithm for
              nonsmooth nonconvex optimization problems with global
              optimality},
   JOURNAL = {J. Sci. Comput.},
  FJOURNAL = {Journal of Scientific Computing},
    VOLUME = {104},
      YEAR = {2025},
    NUMBER = {1},
     PAGES = {Paper No. 22, 33},
      ISSN = {0885-7474,1573-7691},
   MRCLASS = {65K05 (90C26)},
  MRNUMBER = {4911805},
       DOI = {10.1007/s10915-025-02936-8},
       URL = {https://doi.org/10.1007/s10915-025-02936-8},
}

@misc{BEZ25,
      title={Uniform-in-time weak propagation of chaos for consensus-based optimization}, 
      author={Erhan Bayraktar and Ibrahim Ekren and Hongyi Zhou},
      year={arXiv:2502.00582},
      eprint={2502.00582},
      archivePrefix={arXiv},
      primaryClass={math.OC},
      url={https://arxiv.org/abs/2502.00582}, 
}

@article {HK25a,
    AUTHOR = {Huang, Hui and Kouhkouh, Hicham},
     TITLE = {Self-interacting {CBO}: existence, uniqueness, and long-time
              convergence},
   JOURNAL = {Appl. Math. Lett.},
  FJOURNAL = {Applied Mathematics Letters. An International Journal of Rapid
              Publication},
    VOLUME = {161},
      YEAR = {2025},
     PAGES = {Paper No. 109372, 6},
      ISSN = {0893-9659,1873-5452},
   MRCLASS = {60J60 (65C30 90C26)},
  MRNUMBER = {4826787},
       DOI = {10.1016/j.aml.2024.109372},
       URL = {https://doi.org/10.1016/j.aml.2024.109372},
}

@article {HK25b,
    AUTHOR = {Huang, Hui and Kouhkouh, Hicham},
     TITLE = {Uniform-in-time mean-field limit estimate for the
              {C}onsensus-{B}ased {O}ptimization},
   JOURNAL = {ESAIM Control Optim. Calc. Var.},
  FJOURNAL = {ESAIM. Control, Optimisation and Calculus of Variations},
    VOLUME = {31},
      YEAR = {2025},
     PAGES = {Paper No. 69},
      ISSN = {1292-8119,1262-3377},
   MRCLASS = {99-06},
  MRNUMBER = {4947952},
       DOI = {10.1051/cocv/2025056},
       URL = {https://doi.org/10.1051/cocv/2025056},
}

@article{FKR24,
    AUTHOR = {Fornasier, Massimo and Klock, Timo and Riedl, Konstantin},
     TITLE = {Consensus-based optimization methods converge globally},
   JOURNAL = {SIAM J. Optim.},
  FJOURNAL = {SIAM Journal on Optimization},
    VOLUME = {34},
      YEAR = {2024},
    NUMBER = {3},
     PAGES = {2973--3004},
      ISSN = {1052-6234,1095-7189},
   MRCLASS = {65K10 (35Q84 35Q90 90C26 90C56)},
  MRNUMBER = {4793478},
MRREVIEWER = {Aurea\ Mart\'inez},
       DOI = {10.1137/22M1527805},
       URL = {https://doi.org/10.1137/22M1527805},
}

@article {HJK20,
    AUTHOR = {Ha, Seung-Yeal and Jin, Shi and Kim, Doheon},
     TITLE = {Convergence of a first-order consensus-based global
              optimization algorithm},
   JOURNAL = {Math. Models Methods Appl. Sci.},
  FJOURNAL = {Mathematical Models and Methods in Applied Sciences},
    VOLUME = {30},
      YEAR = {2020},
    NUMBER = {12},
     PAGES = {2417--2444},
      ISSN = {0218-2025,1793-6314},
   MRCLASS = {90C26 (37H10 37N40 60H30)},
  MRNUMBER = {4179193},
MRREVIEWER = {Rentsen\ Enkhbat},
       DOI = {10.1142/S0218202520500463},
       URL = {https://doi.org/10.1142/S0218202520500463},
}

@article {FRRS25,
    AUTHOR = {Fornasier, Massimo and Richt\'arik, Peter and Riedl,
              Konstantin and Sun, Lukang},
     TITLE = {Consensus-based optimisation with truncated noise},
   JOURNAL = {European J. Appl. Math.},
  FJOURNAL = {European Journal of Applied Mathematics},
    VOLUME = {36},
      YEAR = {2025},
    NUMBER = {2},
     PAGES = {292--315},
      ISSN = {0956-7925,1469-4425},
   MRCLASS = {90C56 (35Q84 35Q90 65K10 90C26)},
  MRNUMBER = {4875347},
       DOI = {10.1017/S095679252400007X},
       URL = {https://doi.org/10.1017/S095679252400007X},
}

@article {HJK21,
    AUTHOR = {Ha, Seung-Yeal and Jin, Shi and Kim, Doheon},
     TITLE = {Convergence and error estimates for time-discrete
              consensus-based optimization algorithms},
   JOURNAL = {Numer. Math.},
  FJOURNAL = {Numerische Mathematik},
    VOLUME = {147},
      YEAR = {2021},
    NUMBER = {2},
     PAGES = {255--282},
      ISSN = {0029-599X,0945-3245},
   MRCLASS = {37H10 (37N40 90C26 90C59)},
  MRNUMBER = {4215338},
       DOI = {10.1007/s00211-021-01174-y},
       URL = {https://doi.org/10.1007/s00211-021-01174-y},
}

@article {CCTT18,
    AUTHOR = {Carrillo, Jos\'e{} A. and Choi, Young-Pil and Totzeck, Claudia
              and Tse, Oliver},
     TITLE = {An analytical framework for consensus-based global
              optimization method},
   JOURNAL = {Math. Models Methods Appl. Sci.},
  FJOURNAL = {Mathematical Models and Methods in Applied Sciences},
    VOLUME = {28},
      YEAR = {2018},
    NUMBER = {6},
     PAGES = {1037--1066},
      ISSN = {0218-2025,1793-6314},
   MRCLASS = {90C26 (35Q84 37N40 60H10 90C59)},
  MRNUMBER = {3804923},
       DOI = {10.1142/S0218202518500276},
       URL = {https://doi.org/10.1142/S0218202518500276},
}

@article {GHV25,
    AUTHOR = {Gerber, Nicolai Jurek and Hoffmann, Franca and Vaes, Urbain},
     TITLE = {Mean-field limits for consensus-based optimization and
              sampling},
   JOURNAL = {ESAIM Control Optim. Calc. Var.},
  FJOURNAL = {ESAIM. Control, Optimisation and Calculus of Variations},
    VOLUME = {31},
      YEAR = {2025},
     PAGES = {Paper No. 74, 44},
      ISSN = {1292-8119,1262-3377},
   MRCLASS = {35Q89 (35K55 65C35 70F45)},
  MRNUMBER = {4950971},
       DOI = {10.1051/cocv/2025060},
       URL = {https://doi.org/10.1051/cocv/2025060},
}

@book{ST97,
  author    = {Shih, Yuan Chow and Teicher, Henry},
  title     = {Probability Theory},
  subtitle  = {Independence, Interchangeability, Martingales},
  publisher = {Springer New York, NY},
  year      = {1997},
  edition   = {3},
  doi       = {10.1007/978-1-4612-1950-7},
  isbn      = {978-0-387-98228-1},
  series    = {Springer Texts in Statistics},
  issn      = {1431-875X}
}

@book{Durrett1996,
  author    = {Richard Durrett},
  title     = {Stochastic Calculus: A Practical Introduction},
  publisher = {CRC Press},
  year      = {1996},
  edition   = {1st},
  address   = {Boca Raton},
  pages     = {341},
  isbn      = {9780203738283},
  doi       = {10.1201/9780203738283},
  note      = {eBook Published 29 March 2018}
}

@article{FG15,
    AUTHOR = {Fournier, Nicolas and Guillin, Arnaud},
     TITLE = {On the rate of convergence in {W}asserstein distance of the
              empirical measure},
   JOURNAL = {Probab. Theory Related Fields},
  FJOURNAL = {Probability Theory and Related Fields},
    VOLUME = {162},
      YEAR = {2015},
    NUMBER = {3-4},
     PAGES = {707--738},
      ISSN = {0178-8051,1432-2064},
   MRCLASS = {60F25 (60E15 60F10)},
  MRNUMBER = {3383341},
MRREVIEWER = {Jos\'e\ Trashorras},
       DOI = {10.1007/s00440-014-0583-7},
       URL = {https://doi.org/10.1007/s00440-014-0583-7},
}

@article{KST23,
    AUTHOR = {Kalise, Dante and Sharma, Akash and Tretyakov, Michael V.},
     TITLE = {Consensus-based optimization via jump-diffusion stochastic
              differential equations},
   JOURNAL = {Math. Models Methods Appl. Sci.},
  FJOURNAL = {Mathematical Models and Methods in Applied Sciences},
    VOLUME = {33},
      YEAR = {2023},
    NUMBER = {2},
     PAGES = {289--339},
      ISSN = {0218-2025,1793-6314},
   MRCLASS = {60H10 (60J76 65C30 65C35 90C26)},
  MRNUMBER = {4553241},
       DOI = {10.1142/S0218202523500082},
       URL = {https://doi.org/10.1142/S0218202523500082},
}

@article{BRW25,
    AUTHOR = {Bungert, Leon and Roith, Tim and Wacker, Philipp},
     TITLE = {Polarized consensus-based dynamics for optimization and
              sampling},
   JOURNAL = {Math. Program.},
  FJOURNAL = {Mathematical Programming},
    VOLUME = {211},
      YEAR = {2025},
    NUMBER = {1-2},
     PAGES = {125--155},
      ISSN = {0025-5610,1436-4646},
   MRCLASS = {90C26 (35B40 35Q84 65N21)},
  MRNUMBER = {4899391},
       DOI = {10.1007/s10107-024-02095-y},
       URL = {https://doi.org/10.1007/s10107-024-02095-y},
}

@book {DZ10,
    AUTHOR = {Dembo, Amir and Zeitouni, Ofer},
     TITLE = {Large deviations techniques and applications},
    SERIES = {Stochastic Modelling and Applied Probability},
    VOLUME = {38},
      NOTE = {Corrected reprint of the second (1998) edition},
 PUBLISHER = {Springer-Verlag, Berlin},
      YEAR = {2010},
     PAGES = {xvi+396},
      ISBN = {978-3-642-03310-0},
   MRCLASS = {60F10},
  MRNUMBER = {2571413},
       DOI = {10.1007/978-3-642-03311-7},
       URL = {https://doi.org/10.1007/978-3-642-03311-7},
}

@book {RS15,
    AUTHOR = {Rassoul-Agha, Firas and Sepp\"al\"ainen, Timo},
     TITLE = {A course on large deviations with an introduction to {G}ibbs
              measures},
    SERIES = {Graduate Studies in Mathematics},
    VOLUME = {162},
 PUBLISHER = {American Mathematical Society, Providence, RI},
      YEAR = {2015},
     PAGES = {xiv+318},
      ISBN = {978-0-8218-7578-0},
   MRCLASS = {60-01 (60F10 60J10 60K35 60K37 82B20)},
  MRNUMBER = {3309619},
MRREVIEWER = {Jean-Ren\'e\ Chazottes},
       DOI = {10.1090/gsm/162},
       URL = {https://doi.org/10.1090/gsm/162},
}

@INPROCEEDINGS{KE95,
  author={Kennedy, J. and Eberhart, R.},
  booktitle={Proceedings of ICNN'95 - International Conference on Neural Networks}, 
  title={Particle swarm optimization}, 
  year={1995},
  volume={4},
  number={},
  pages={1942-1948 vol.4},
  keywords={Particle swarm optimization;Birds;Educational institutions;Marine animals;Testing;Humans;Genetic algorithms;Optimization methods;Artificial neural networks;Performance evaluation},
  doi={10.1109/ICNN.1995.488968}}

@article {KGV83,
    AUTHOR = {Kirkpatrick, S. and Gelatt, Jr., C. D. and Vecchi, M. P.},
     TITLE = {Optimization by simulated annealing},
   JOURNAL = {Science},
  FJOURNAL = {American Association for the Advancement of Science. Science},
    VOLUME = {220},
      YEAR = {1983},
    NUMBER = {4598},
     PAGES = {671--680},
      ISSN = {0036-8075,1095-9203},
   MRCLASS = {90C27 (82A05)},
  MRNUMBER = {702485},
       DOI = {10.1126/science.220.4598.671},
       URL = {https://doi.org/10.1126/science.220.4598.671},
}

@book {LA87,
    AUTHOR = {van Laarhoven, P. J. M. and Aarts, E. H. L.},
     TITLE = {Simulated annealing: theory and applications},
    SERIES = {Mathematics and its Applications},
    VOLUME = {37},
 PUBLISHER = {D. Reidel Publishing Co., Dordrecht},
      YEAR = {1987},
     PAGES = {xii+186},
      ISBN = {90-277-2513-6},
   MRCLASS = {90C27 (82A57 90B15 90B35)},
  MRNUMBER = {904050},
MRREVIEWER = {Michel\ Troyon},
       DOI = {10.1007/978-94-015-7744-1},
       URL = {https://doi.org/10.1007/978-94-015-7744-1},
}

@article{Hol92,
 ISSN = {00368733, 19467087},
 URL = {http://www.jstor.org/stable/24939139},
 author = {John H. Holland},
 journal = {Scientific American},
 number = {1},
 pages = {66--73},
 publisher = {Scientific American, a division of Nature America, Inc.},
 title = {Genetic Algorithms},
 urldate = {2025-10-31},
 volume = {267},
 year = {1992}
}

@article{Ras63,
  title={The convergence of the random search method in the extremal control of a many parameter system},
  author={Rastrigin, LA},
  journal={Automaton \& Remote Control},
  volume={24},
  pages={1337--1342},
  year={1963}
}

@article {BDGG09,
    AUTHOR = {Bianchi, Leonora and Dorigo, Marco and Gambardella, Luca Maria
              and Gutjahr, Walter J.},
     TITLE = {A survey on metaheuristics for stochastic combinatorial
              optimization},
   JOURNAL = {Nat. Comput.},
  FJOURNAL = {Natural Computing. An International Journal},
    VOLUME = {8},
      YEAR = {2009},
    NUMBER = {2},
     PAGES = {239--287},
      ISSN = {1567-7818,1572-9796},
   MRCLASS = {90C27 (90C15 90C59)},
  MRNUMBER = {2505751},
       DOI = {10.1007/s11047-008-9098-4},
       URL = {https://doi.org/10.1007/s11047-008-9098-4},
}

@book{BDT99,
    author = {Bonabeau, Eric and Dorigo, Marco and Theraulaz, Guy},
    title = {Swarm Intelligence: From Natural to Artificial Systems},
    publisher = {Oxford University Press},
    year = {1999},
    month = {10},
    abstract = {Social insects--ants, bees, termites, and wasps--can be viewed as powerful problem-solving systems with sophisticated collective intelligence. Composed of simple interacting agents, this intelligence lies in the networks of interactions among individuals and between individuals and the environment. A fascinating subject, social insects are also a powerful metaphor for artificial intelligence, and the problems they solve--finding food, dividing labor among nestmates, building nests, responding to external challenges--have important counterparts in engineering and computer science. This book provides a detailed look at models of social insect behavior and how to apply these models in the design of complex systems. The book shows how these models replace an emphasis on control, preprogramming, and centralization with designs featuring autonomy, emergence, and distributed functioning. These designs are proving immensely flexible and robust, able to adapt quickly to changing environments and to continue functioning even when individual elements fail. In particular, these designs are an exciting approach to the tremendous growth of complexity in software and information. Swarm Intelligence draws on up-to-date research from biology, neuroscience, artificial intelligence, robotics, operations research, and computer graphics, and each chapter is organized around a particular biological example, which is then used to develop an algorithm, a multiagent system, or a group of robots. The book will be an invaluable resource for a broad range of disciplines.},
    isbn = {9780195131581},
    doi = {10.1093/oso/9780195131581.001.0001},
    url = {https://doi.org/10.1093/oso/9780195131581.001.0001},
}

@article{BHKR2025,
author = {Bungert, Leon and Hoffmann, Franca and Kim, Doh Yeon and Roith, Tim},
title = {MirrorCBO: A consensus-based optimization method in the spirit of mirror descent},
   JOURNAL = {Math. Models Methods Appl. Sci.},
fjournal = {Mathematical Models and Methods in Applied Sciences},
volume = {35},
number = {14},
pages = {3083-3170},
year = {2025},
doi = {10.1142/S0218202525500563},

URL = { 
     
        https://doi.org/10.1142/S0218202525500563
    
    

},
eprint = { 
    
        https://doi.org/10.1142/S0218202525500563
    
    

}
,
    abstract = { In this work, we propose MirrorCBO, a consensus-based optimization (CBO) method which generalizes standard CBO in the same way that mirror descent generalizes gradient descent. For this, we apply the CBO methodology to a swarm of dual particles and retain the primal particle positions by applying the inverse of the mirror map, which we parametrize as the subdifferential of a strongly convex function ϕ. In this way, we combine the advantages of a derivative-free non-convex optimization algorithm with those of mirror descent. As a special case, the method extends CBO to optimization problems with convex constraints. Assuming bounds on the Bregman distance associated to ϕ, we provide asymptotic convergence results for MirrorCBO with explicit exponential rate. Another key contribution is an exploratory numerical study of this new algorithm across different application settings, focusing on (i) sparsity-inducing optimization, and (ii) constrained optimization, demonstrating the competitive performance of MirrorCBO. We observe empirically that the method can also be used for optimization on (non-convex) submanifolds of Euclidean space, can be adapted to mirrored versions of other recent CBO variants, and that it inherits from mirror descent the capability to select desirable minimizers, like sparse ones. We also include an overview of recent CBO approaches for constrained optimization and compare their performance to MirrorCBO. }
}

@article {CJLZ21,
    AUTHOR = {Carrillo, Jos\'e{} A. and Jin, Shi and Li, Lei and Zhu, Yuhua},
     TITLE = {A consensus-based global optimization method for high
              dimensional machine learning problems},
   JOURNAL = {ESAIM Control Optim. Calc. Var.},
  FJOURNAL = {ESAIM. Control, Optimisation and Calculus of Variations},
    VOLUME = {27},
      YEAR = {2021},
     PAGES = {Paper No. S5, 22},
      ISSN = {1292-8119,1262-3377},
   MRCLASS = {60H35 (65K10 68T05 70F10)},
  MRNUMBER = {4222159},
       DOI = {10.1051/cocv/2020046},
       URL = {https://doi.org/10.1051/cocv/2020046},
}

@incollection {Tot22,
    AUTHOR = {Totzeck, Claudia},
     TITLE = {Trends in consensus-based optimization},
 BOOKTITLE = {Active particles. {V}ol. 3. {A}dvances in theory, models, and
              applications},
    SERIES = {Model. Simul. Sci. Eng. Technol.},
     PAGES = {201--226},
 PUBLISHER = {Birkh\"auser/Springer, Cham},
      YEAR = {[2022] \copyright 2022},
      ISBN = {978-3-030-93301-2; 978-3-030-93302-9},
   MRCLASS = {93A16 (93D50 93E03)},
  MRNUMBER = {4433532},
       DOI = {10.1007/978-3-030-93302-9\_6},
       URL = {https://doi.org/10.1007/978-3-030-93302-9_6},
}

@Inbook{Ken10,
author="Kennedy, James",
title="Particle Swarm Optimization",
bookTitle="Encyclopedia of Machine Learning",
year="2010",
publisher="Springer US",
address="Boston, MA",
pages="760--766",
isbn="978-0-387-30164-8",
doi="10.1007/978-0-387-30164-8_630",
url="https://doi.org/10.1007/978-0-387-30164-8_630"
}

@article {BHKLMY22,
    AUTHOR = {Bae, Hyeong-Ohk and Ha, Seung-Yeal and Kang, Myeongju and Lim,
              Hyuncheul and Min, Chanho and Yoo, Jane},
     TITLE = {A constrained consensus based optimization algorithm and its
              application to finance},
   JOURNAL = {Appl. Math. Comput.},
  FJOURNAL = {Applied Mathematics and Computation},
    VOLUME = {416},
      YEAR = {2022},
     PAGES = {Paper No. 126726, 10},
      ISSN = {0096-3003,1873-5649},
   MRCLASS = {65K10 (70F10 90C90 91G60)},
  MRNUMBER = {4337731},
       DOI = {10.1016/j.amc.2021.126726},
       URL = {https://doi.org/10.1016/j.amc.2021.126726},
}

@incollection{CTV23,
  title={Consensus-based optimization and ensemble {K}alman inversion for global optimization problems with constraints},
  author={Carrillo, Jos{\'e} Antonio and Totzeck, Claudia and Vaes, Urbain},
  booktitle={Modeling and simulation for collective dynamics},
  pages={195--230},
  year={2023},
  publisher={World Scientific}
}

@article {BHP23,
    AUTHOR = {Borghi, Giacomo and Herty, Michael and Pareschi, Lorenzo},
     TITLE = {Constrained consensus-based optimization},
   JOURNAL = {SIAM J. Optim.},
  FJOURNAL = {SIAM Journal on Optimization},
    VOLUME = {33},
      YEAR = {2023},
    NUMBER = {1},
     PAGES = {211--236},
      ISSN = {1052-6234,1095-7189},
   MRCLASS = {65K10 (35Q70 35Q84 35Q93 90C26 90C56)},
  MRNUMBER = {4538901},
       DOI = {10.1137/22M1471304},
       URL = {https://doi.org/10.1137/22M1471304},
}

@book {BNO03,
    AUTHOR = {Bertsekas, Dimitri P. and Nedi\'c, Angelia and Ozdaglar,
              Asuman E.},
     TITLE = {Convex analysis and optimization},
 PUBLISHER = {Athena Scientific, Belmont, MA},
      YEAR = {2003},
     PAGES = {xvi+534},
      ISBN = {1-886529-45-0},
   MRCLASS = {90-01 (49-01)},
  MRNUMBER = {2184037},
MRREVIEWER = {Serge\ G.\ Kruk},
}

@article {FHPS22,
    AUTHOR = {Fornasier, Massimo and Huang, Hui and Pareschi, Lorenzo and
              S\"unnen, Philippe},
     TITLE = {Anisotropic diffusion in consensus-based optimization on the
              sphere},
   JOURNAL = {SIAM J. Optim.},
  FJOURNAL = {SIAM Journal on Optimization},
    VOLUME = {32},
      YEAR = {2022},
    NUMBER = {3},
     PAGES = {1984--2012},
      ISSN = {1052-6234,1095-7189},
   MRCLASS = {65K10 (35Q84 35Q90 90C26 90C56)},
  MRNUMBER = {4468621},
       DOI = {10.1137/21M140941X},
       URL = {https://doi.org/10.1137/21M140941X},
}

@article {FHPS20,
    AUTHOR = {Fornasier, Massimo and Huang, Hui and Pareschi, Lorenzo and
              S\"unnen, Philippe},
     TITLE = {Consensus-based optimization on hypersurfaces: well-posedness
              and mean-field limit},
   JOURNAL = {Math. Models Methods Appl. Sci.},
  FJOURNAL = {Mathematical Models and Methods in Applied Sciences},
    VOLUME = {30},
      YEAR = {2020},
    NUMBER = {14},
     PAGES = {2725--2751},
      ISSN = {0218-2025,1793-6314},
   MRCLASS = {90C26 (35Q70 35Q84 35Q93 37N40 60H30)},
  MRNUMBER = {4204858},
MRREVIEWER = {Markus\ Fischer},
       DOI = {10.1142/S0218202520500530},
       URL = {https://doi.org/10.1142/S0218202520500530},
}

@article {FHPS21,
    AUTHOR = {Fornasier, Massimo and Huang, Hui and Pareschi, Lorenzo and
              S\"unnen, Philippe},
     TITLE = {Consensus-based optimization on the sphere: convergence to
              global minimizers and machine learning},
   JOURNAL = {J. Mach. Learn. Res.},
  FJOURNAL = {Journal of Machine Learning Research (JMLR)},
    VOLUME = {22},
      YEAR = {2021},
     PAGES = {Paper No. 237, 55},
      ISSN = {1532-4435,1533-7928},
   MRCLASS = {90C56 (65K10 90C59)},
  MRNUMBER = {4329816},
MRREVIEWER = {Ctirad\ Matonoha},
}

@article {HQ22,
    AUTHOR = {Huang, Hui and Qiu, Jinniao},
     TITLE = {On the mean-field limit for the consensus-based optimization},
   JOURNAL = {Math. Methods Appl. Sci.},
  FJOURNAL = {Mathematical Methods in the Applied Sciences},
    VOLUME = {45},
      YEAR = {2022},
    NUMBER = {12},
     PAGES = {7814--7831},
      ISSN = {0170-4214,1099-1476},
   MRCLASS = {60K35 (34F05 35Q70 35Q84 60B10 68W50 90C26 90C59)},
  MRNUMBER = {4456068},
       DOI = {10.1002/mma.8279},
       URL = {https://doi.org/10.1002/mma.8279},
}

@article {HKKKY22,
    AUTHOR = {Ha, Seung-Yeal and Kang, Myeongju and Kim, Dohyun and Kim,
              Jeongho and Yang, Insoon},
     TITLE = {Stochastic consensus dynamics for nonconvex optimization on
              the {S}tiefel manifold: mean-field limit and convergence},
   JOURNAL = {Math. Models Methods Appl. Sci.},
  FJOURNAL = {Mathematical Models and Methods in Applied Sciences},
    VOLUME = {32},
      YEAR = {2022},
    NUMBER = {3},
     PAGES = {533--617},
      ISSN = {0218-2025,1793-6314},
   MRCLASS = {35Q93 (49N80 60H30 82C31)},
  MRNUMBER = {4404221},
       DOI = {10.1142/S0218202522500130},
       URL = {https://doi.org/10.1142/S0218202522500130},
}

@article {CHSV22,
    AUTHOR = {Carrillo, J. A. and Hoffmann, F. and Stuart, A. M. and Vaes,
              U.},
     TITLE = {Consensus-based sampling},
   JOURNAL = {Stud. Appl. Math.},
  FJOURNAL = {Studies in Applied Mathematics},
    VOLUME = {148},
      YEAR = {2022},
    NUMBER = {3},
     PAGES = {1069--1140},
      ISSN = {0022-2526,1467-9590},
   MRCLASS = {82C22 (34F05)},
  MRNUMBER = {4429081},
       DOI = {10.1111/sapm.12470},
       URL = {https://doi.org/10.1111/sapm.12470},
}

@article {TW20,
    AUTHOR = {Totzeck, Claudia and Wolfram, Marie-Therese},
     TITLE = {Consensus-based global optimization with personal best},
   JOURNAL = {Math. Biosci. Eng.},
  FJOURNAL = {Mathematical Biosciences and Engineering. MBE},
    VOLUME = {17},
      YEAR = {2020},
    NUMBER = {5},
     PAGES = {6026--6044},
      ISSN = {1547-1063,1551-0018},
   MRCLASS = {90C26 (60H10 60K35 90C59)},
  MRNUMBER = {4160256},
       DOI = {10.3934/mbe.2020320},
       URL = {https://doi.org/10.3934/mbe.2020320},
}

\end{document}